\documentclass[reqno]{amsart}
\usepackage{amsthm,amsmath,amsfonts,amssymb,amscd,mathrsfs,graphics,diagrams}
\usepackage{txfonts}
\usepackage{hyperref,supertabular}

\newcommand{\A}{{\mathscr A}}
\newcommand{\BG}{\B G} 
\newcommand{\B}{{\mathscr B}}
\newcommand{\CP}{{\mathbb{CP}}}
\newcommand{\C}{{\mathbb{C}}}

\newcommand{\D}{{\mathscr D}}

\newcommand{\F}{{\mathscr F}}
\newcommand{\Gss}[1]{H^{mid}(\C^{N_{J^{#1}}}, W_{J^{#1}}^{\infty}, \Q)} 

\newcommand{\Hess}{\operatorname{Hess}}

\newcommand{\IC}{{\bigwedge\cC}}
\newcommand{\Ipi}{{I\pi}}
\newcommand{\J}{{\mathscr J}}
\newcommand{\Jac}{\operatorname{Jac}} 
\newcommand{\K}{{\mathscr K}}
\newcommand{\LL}{{\mathscr L}} 

\newcommand{\MM}{\overline{\M}} 
\newcommand{\M}{{\mathscr M}}
\newcommand{\N}{{\mathscr N}}
\newcommand{\Pic}{\operatorname{Pic}}
\newcommand{\Q}{{\mathbb{Q}}}
\newcommand{\R}{{\mathbb{R}}}
\newcommand{\Res}{{\operatorname{Res}}} 

\newcommand{\W}{\overline{\mathscr W}}  
\newcommand{\Z}{{\mathbb{Z}}}
\newcommand{\aut}{\operatorname{Aut}}
\newcommand{\balpha}{\boldsymbol{\alpha}}
\newcommand{\barGamma}{\overline{\Gamma}}
\newcommand{\be}{\mathbf{e}}
\newcommand{\bgamma}{\boldsymbol{\gamma}} 
\newcommand{\bga}{\bgamma}
\newcommand{\bGa}{{\boldsymbol{\Ga}}}
\newcommand{\bGagkW}{{\bGa_{g,k,W}}}
\newcommand{\bGagkWcut}{{\bGa_{g,k,W,\mathrm{cut}}}}

\newcommand{\bone}{\be}

\newcommand{\bt}{\mathbf{t}}

\newcommand{\bx}{\mathbf{x}}
\newcommand{\cCbb}{\cCb} 
\newcommand{\cCb}{\overline{\mathscr C}} 
\newcommand{\cC}{\mathscr C} 

\newcommand{\cS}{\mathscr{S}}

\newcommand{\chat}{\hat{c}}
\newcommand{\ch}{{\mathscr H}}
\newcommand{\cht}{{\widetilde{Ch}}}

\newcommand{\coker}{\operatorname{coker}}
\newcommand{\corf}[2]{{\langle #2\rangle_{0}^{#1}}} 
\newcommand{\ct}{\tilde{c}}  

\newcommand{\cZ}{{\mathscr{Z}}}

\newcommand{\dsand}{\quad \text{ and }\quad }

\newcommand{\eul}{{e}} 
\newcommand{\fL}{\mathfrak{L}}
\newcommand{\fix}{\operatorname{Fix}}
\newcommand{\frkc}{\mathfrak{C}} 

\newcommand{\g}{\varepsilon} 
\newcommand{\ga}{{\gamma}}
\newcommand{\gap}{{\gamma_{+}}}
\newcommand{\Ga}{{\Gamma}}
\newcommand{\Gac}{{\Ga_{\mathrm{cut}}}}
\newcommand{\genj}{\langle J \rangle}
\newcommand{\gerbeprod}{\omega} 

\newcommand{\grp}{G_W} 
\newcommand{\hGamma}{\widehat{\Gamma}}

\newcommand{\iit}{\hat{i}}

\newcommand{\inc}{adm} 
\newcommand{\ind}{\operatorname{index}}
\newcommand{\irightarrow}{\widetilde{\longrightarrow}}
\newcommand{\itt}{\tilde{i}}

\newcommand{\kappat}{\widetilde{\kappa}}
\newcommand{\lgr}[1]{\langle\gamma_{#1}\rangle}
\newcommand{\mfm}{\mathfrak{m}}
\newcommand{\milnor}{\mathscr{Q}}

\newcommand{\nut}{r}

\newcommand{\psit}{\widetilde{\psi}}
\renewcommand{\r}{{\upsilon}}
\newcommand{\res}{\operatorname{res}}
\newcommand{\restr}[2]{\left.#1\right|_{#2}} 

\newcommand{\so}{so} 
\newcommand{\spa}[1]{\operatorname{span}\{#1\}}
\newcommand{\spanl}{\operatorname{span}(}
\newcommand{\spanr}{)}
\newcommand{\st}{st} 
\newcommand{\stt}{\widehat{\st}}
\newcommand{\suth}{:} 
\newcommand{\tGamma}{\widetilde{\Gamma}}
\newcommand{\tcC}{\widetilde{\cC}}
\newcommand{\tdt}{{\widetilde{Td}}}
\newcommand{\tW}{\widetilde{W}}
\newcommand{\union}{\cup}
\newcommand{\unit}{\boldsymbol{1}}

\newcommand{\ve}{{\varepsilon}}

\newcommand{\varrhob}{\overline{\varrho}}
\newcommand{\vpt}{\tilde{\varphi}}
\newcommand{\wit}{{\D}} 
\newcommand{\wt}{\operatorname{wt}}

\newcommand{\x}{{\mathbf x}}

\renewcommand{\O}{{\mathscr{O}}}
\renewcommand{\hom}{\operatorname{Hom}}

\renewcommand{\Re}{\mathfrak{Re}}



\newcommand{\Nb}{{\mathbb{N}}}


\newtheorem{thm}{Theorem}[subsection]
\newtheorem{lm}[thm]{Lemma}
\newtheorem{prop}[thm]{Proposition}
\newtheorem{crl}[thm]{Corollary}
\newtheorem{conj}[thm]{Conjecture}

\theoremstyle{definition}
\newtheorem{rem}[thm]{Remark}

\newtheorem{df}[thm]{Definition}
\newtheorem{ex}[thm]{Example}
\theoremstyle{remark}

\begin{document}

\date{\today}

\title[Quantum singularity theory]{The Witten equation, mirror symmetry
and quantum singularity theory}
\author{Huijun Fan}
\thanks{Partially Supported by NSFC 10401001, NSFC 10321001, and NSFC 10631050}
\address{School of Mathematical Sciences, Peking University and  and Beijing International Center For Mathematical Research, Beijing 100871, China,}
\email{fanhj@math.pku.edu.cn}
\author{Tyler Jarvis}
\thanks{Partially supported by National Science Foundation grants DMS-0605155 and DMS-0105788 and the Institut Mittag-Leffler (Djursholm, Sweden)}
\address{Department of Mathematics, Brigham Young University, Provo, UT 84602, USA}
\email{jarvis@math.byu.edu}
\author{Yongbin Ruan}
\thanks{Partially supported by the National Science Foundation and
the Yangtze Center of Mathematics at Sichuan University}
\address{Yangtz Center of Mathematics at Sichuan University, Chengdu,China  and Department of Mathematics, University of Michigan Ann Arbor, MI 48105 U.S.A
} \email{ruan@umich.edu}

\begin{abstract}
For any non-degenerate, quasi-homogeneous hypersurface singularity, we describe a family of moduli spaces, a virtual cycle,
and a corresponding cohomological field theory associated to the singularity.
This theory is analogous to Gromov-Witten theory and generalizes the theory of $r$-spin curves,
which corresponds to the simple singularity $A_{r-1}$.

We also resolve two outstanding conjectures of
Witten.  The first conjecture is that ADE-singularities are self-dual; and the second conjecture is that the total potential functions of ADE-singularities satisfy
corresponding ADE-integrable hierarchies. Other cases of integrable hierarchies
are also discussed.
\end{abstract}

\maketitle

\tableofcontents

\section{Introduction}
 The study of singularities has a long history in mathematics. For
 example, in algebraic geometry it is often necessary to study
 algebraic varieties with singularities even if the initial goal was
 to work only with smooth varieties. Many important surgery
 operations, such as flops and flips, are closely associated with
 singularities. In lower-dimensional topology, links of singularities
 give rise to many important examples of 3-manifolds.  Singularity
 theory is also an important subject in its own right.  In fact,
 singularity theory has been well-established for many decades (see
 \cite{AGV}). One of the most famous examples is the
 ADE-classification of hypersurface singularities of zero modality. We
 will refer to this part of singularity theory as \emph{classical
   singularity theory} and review some aspects of the classical theory
 later. Even though we are primarily interested in the quantum aspects
 of singularity theory, the classical theory always serves as a source
 of inspiration.

Singularity theory also appears in physics. Given a polynomial $W:
\C^n\rTo \C$ with only isolated critical (singular) points, one can
associate to it the so-called \emph{Landau-Ginzburg model}.  In the
early days of quantum cohomology, the Landau-Ginzburg model and
singularity theory gave some of the first examples of Frobenius
manifolds.  It is surprising that although the Landau-Ginzburg model
is one of the best understood models in physics, there has been no
construction of Gromov-Witten type invariants for it until now.
However, our initial motivation was not about singularities and the
Landau-Ginzburg model.  Instead, we wanted to solve the \emph{Witten
  equation}:
    $$\bar{\partial}u_i+\overline{\frac{\partial W}{\partial
    u_i}}=0,$$ where $W$ is a quasi-homogeneous polynomial, and $u_i$
is interpreted as a section of an appropriate orbifold line bundle
on a Riemann surface $\cC$.  

The simplest Witten equation is the $A_{r-1}$ case.  This is of the
form
    $$\bar{\partial}u+r\bar{u}^{r-1}=0.$$ It was introduced by Witten
\cite{Wi2} more than fifteen years ago as a generalization of
topological gravity. Somehow, it was buried in the literature without
attracting much attention. Several years ago, Witten generalized his
equation for an arbitrary quasi-homogeneous polynomial \cite{Wi3} and
coined it the ``Landau-Ginzburg A-model.'' Let us briefly recall the
motivation behind Witten's equation.  Around 1990, Witten proposed a
remarkable conjecture relating the intersection numbers of the
Deligne-Mumford moduli space of stable curves with the KdV hierarchy
\cite{Wi1}. His conjecture was soon proved by Kontsevich
\cite{K}. About the same time, Witten also proposed a generalization
of his conjecture. In his generalization, the stable curve is replaced
by a curve with a root of the canonical bundle ($r$-spin curve), and
the KdV-hierarchy was replaced by more general KP-hierarchies called
nKdV, or Gelfand-Dikii, hierarchies.  The $r$-spin curve can be
thought of as the background data to be used to set up the Witten equation in
the $A_{r-1}$-case. Since then, the moduli space of $r$-spin curves
has been rigorously constructed by Abramovich, Kimura, Vaintrob and
the second author \cite{AJ,J1,J2,JKV1}.  The more general Witten
conjecture was proved in genus zero several years ago \cite{JKV1}, in
genus one and two by Y.-P. Lee \cite{Lee}, and recently in higher
genus by Faber, Shadrin, and Zvonkine \cite{FSZ}.

The theory of $r$-spin curves (corresponding to the $A_{r-1}$-case of our theory) does not need the Witten equation at
all. This partially explains the fact that the Witten equation has been neglected in the
literature for more than ten years.   In the $r$-spin case the algebro-geometric data is an
orbifold line bundle $\LL$ satisfying the equation $\LL^r=K_{\log}$.
Assume that all the orbifold points are marked points.  A marked
point with trivial orbifold structure is called a \emph{broad} (or \emph{Ramond} in our old notation) 
marked point, and a marked point with non-trivial orbifold
structure is called a \emph{narrow} (or \emph{Neveu-Schwarz} in our old notation) marked point.  Contrary
to intuition, broad marked points are much harder to study than
narrow marked points. If there is no broad marked point, a
simple lemma of Witten's shows that the Witten equation has only
the zero solution. Therefore, our moduli problem becomes an
algebraic geometry problem.  In the $r$-spin case the contribution from the  broad marked point to the corresponding field
theory is zero (the decoupling of the broad
sector).  This was conjectured by Witten and proved true for genus zero in \cite{JKV1} and
for higher genus in \cite{P}.  This means that in the $r$-spin case, there is no need for the Witten equation, which partly explains why the moduli space
of higher spin curves has been around for a long time while the
Witten equation seems to have been lost in the literature.

In the course of our investigation, we discovered that in the
$D_{n}$-case the broad sector gives a nonzero contribution.
Hence, we had to develop a theory that accounts for the
contribution of the solution of the Witten equation in the
presence of broad marked points.

It has taken us a while to understand the general picture, as well as
various technical issues surrounding our current theory. In fact, an
announcement was made in 2001 by the last two authors for some special cases 
coupled with an orbifold target. We apologize for the
long delay because we realized later that (1) the theory admits a vast
generalization to an arbitrary quasi-homogeneous singularity, and (2) the
broad sector has to be investigated. We would like to mention that
the need to invesigate the broad sector led us to the space of
Lefschetz thimbles and other interesting aspects of the
Landau-Ginzburg model, including Seidel's work on the Landau-Ginzburg
A-model derived category \cite{Si}. In many ways, we are happy to have
waited for several years to arrive at a much more complete and more
interesting theory!

To describe our theory, let's first review some classical singularity
theory.  Let $W: \C^N \rTo \C$ be a quasi-homogeneous polynomial.
    Recall that $W$ is a quasi-homogeneous polynomial if there are
    positive integers $d, n_1, \dots, n_n$ such that $W(\lambda^{n_1}
    x_1, \dots, \lambda^{n_N} x_n)=\lambda^d w(x_1, \dots, x_N).$ We
    define the \emph{weight} (or \emph{charge}), of $x_i$ to be
    $q_i:=\frac{n_i}{d}.$ We say $W$ is \emph{nondegenerate} if (1)
    the choices of weights $q_i$ are unique, and (2) $W$ has a singularity
    only at zero. There are many examples of non-degenerate
    quasi-homogeneous singularities, including all the nondegenerate
    homogeneous polynomials and the famous $ADE$-examples.
    \begin{ex}
\
    \begin{description}
    \item[$A_n$] $W=x^{n+1}, \ n\geq 1;$ \glossary{An@$A_n$ & The singularity defined by $W=x^{n+1}$}
    \item[$D_n$] $W=x^{n-1}+xy^2, \ n\geq 4;$ \glossary{Dn@$D_n$ & The singularity defined by $W=x^{n-1}+xy^2$}
    \item[$E_6$] $W=x^3+y^4;$\glossary{E6@$E_6$ & The singularity defined by $W=x^{3}+y^4$}
    \item[$E_7$] $W=x^3+xy^3;$\glossary{E7@$E_7$ & The singularity defined by $W=x^{3}+xy^3$}
    \item[$E_8$] $W=x^3+y^5;$\glossary{E8@$E_8$ & The singularity defined by $W=x^{3}+y^5$}
    \end{description}
    \end{ex}

 The simple singularities (A, D, and E) are the only examples
 with so-called \emph{central charge} $\chat_W<1$. There are many more
 examples with $\chat_W \geq 1$.

In addition to the choice of a non-degenerate singularity $W$, our theory also depends on a choice of subgroup $G$ of the the group $\aut(W)$ of diagonal
matrices $\gamma$ such that $W(\gamma x)=W(x)$.  We often use the notation $G_W:=\aut(W)$, and we call this group
the \emph{maximal diagonal symmetry group} of $W$. The group  $G_W$
always contains the \emph{exponential grading} (or \emph{total
monodromy}) \emph{element} $J=diag(e^{2\pi i q_1}, \dots, e^{2\pi
i q_N})$, and hence is always nontrivial.

Given a choice of non-degenerate $W$ and a choice of admissible (see Section~\ref{sec:changeGroup}) subgroup $G\le G_W$  with $\genj \le G \le \aut(W)$, we construct a cohomological field theory whose state space is defined as follows.  For each $\gamma\in G$, let $\C^{N_{\gamma}}$ be the fixed point set
of $\gamma$ and $W_{\gamma}=\restr{W}{\C^{N_{\gamma}}}$. Let
$\ch_{\gamma,G}$ be the $G$-invariants of the middle-dimensional
relative cohomology 
    $$\ch_{\gamma,G}= H^{mid}(\C^{N_{\gamma}}, (\Re W)^{-1}(M, \infty),
\C)^{G}$$ of $\C^{N_{\gamma}}$ for $M>\!\!>0$, as described in Section~\ref{sec:QcohGroup}. The \emph{state space} of our theory is the sum
    $$\ch_{W,G}=\bigoplus_{\gamma\in G} \ch_{\gamma,G}.$$ The state space
$\ch_{W,G}$ admits a
grading and a natural non-degenerate
pairing.

For $\alpha_1, \dots, \alpha_k\in \ch_{W,G}$ and a sequence of
non-negative integers $l_1, \dots, l_k$, we define (see
Definition \ref{df:correlator}) the genus-$g$ correlator
    $$\langle\tau_{l_1}(\alpha_1),\dots,
\tau_{l_k}(\alpha_k)\rangle_g^{W,G}$$ by integrating over a
certain virtual fundamental cycle.  In this paper we describe
the axioms that this cycle satisfies and the consequences of those
axioms.  In a separate paper \cite{FJR-WEVFC} we construct
the cycle and prove that it satisfies the axioms.
       \begin{thm}
    The correlators $\langle\tau_{l_1}(\alpha_1),\dots,
    \tau_{l_k}(\alpha_k)\rangle^{W,G}_g$ satisfy the usual axioms of
    Gromov-Witten theory (see
    Subsection~\ref{sec:CohFT}), but where the divisor axiom is replaced with another axiom that facilitates computation.
    \end{thm}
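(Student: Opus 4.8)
The plan is to derive each axiom of a cohomological field theory with gravitational descendants from a corresponding property of the virtual cycle on the moduli space $\overline{\W}_{g,k}(W,G)$, whose construction and basic properties are carried out in the companion paper~\cite{FJR-ip-WEVFC}. Concretely, that cycle determines, for each stable $(g,k)$, a linear map
$$\Lambda^{W,G}_{g,k}\colon \ch_{W,G}^{\otimes k}\longrightarrow H^{*}(\MM_{g,k},\Q),$$
obtained by pushing the virtual cycle forward along the map $\st\colon\overline{\W}_{g,k}(W,G)\to\MM_{g,k}$ forgetting the $W$-structure and using the pairing to identify $\ch_{W,G}$ with its dual, and one then sets
$$\langle\tau_{l_1}(\alpha_1),\dots,\tau_{l_k}(\alpha_k)\rangle^{W,G}_g=\int_{\MM_{g,k}}\Lambda^{W,G}_{g,k}(\alpha_1,\dots,\alpha_k)\cup\psi_1^{l_1}\cup\dots\cup\psi_k^{l_k}.$$
Every CohFT identity is then to be obtained by pulling this definition back along one of the standard morphisms among moduli of stable curves and invoking the matching axiom for the cycle.

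First I would dispose of the formal axioms. $S_k$-covariance follows at once from the symmetry of the cycle under relabeling of the marked points. The grading (selection rule) axiom comes from comparing the virtual dimension of $\overline{\W}_{g,k}(W,G)$, computed by the dimension axiom for the cycle, with the degrees of $\psi_1^{l_1}\cup\dots\cup\psi_k^{l_k}$ and of the classes produced by the insertions; a correlator must vanish unless these balance. The pairing (metric) axiom and the identity-insertion axiom reduce to explicit computations on $\MM_{0,3}$, where the moduli space and its virtual cycle are completely transparent, so one checks directly that $\langle\unit,\alpha,\beta\rangle^{W,G}_0$ is the pairing on $\ch_{W,G}$ (which is nondegenerate by its construction from middle-dimensional relative cohomology) and that $\unit$ is a unit. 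The string and dilaton equations follow from the ``forgetting tails'' property of the cycle together with the usual comparison of $\psi$-classes under $\pi\colon\MM_{g,k+1}\to\MM_{g,k}$.

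The substantive part is the factorization (gluing) axioms. For a separating node one pulls back along $\MM_{g_1,k_1+1}\times\MM_{g_2,k_2+1}\to\MM_{g,k}$, and for a non-separating node along $\MM_{g-1,k+2}\to\MM_{g,k}$; in each case the ``degenerating'' and ``disconnecting graph'' axioms for the virtual cycle identify the restriction of $[\overline{\W}_{g,k}(W,G)]^{\mathrm{vir}}$ to the corresponding boundary divisor with a product of virtual cycles attached to the normalization. Summing over the decomposition $\ch_{W,G}=\bigoplus_{\ga\in G}\ch_{\ga,G}$ and using that the pairing is a direct sum of perfect pairings $\ch_{\ga,G}\otimes\ch_{\ga^{-1},G}\to\Q$ then produces the usual insertion of the diagonal of the metric across the node. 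I expect this to be the main obstacle: one must control the degeneration of the $W$-structures (the line bundles together with the resolved Witten map) and of the tautological insertion classes at the node, and in particular one must handle the Ramond-type nodes, for which -- in contrast with the $r$-spin case -- there is no decoupling, so the sector glued across such a node genuinely contributes. This is exactly the phenomenon that forces $\ch_{W,G}$ to be built from Lefschetz-thimble (relative) cohomology rather than from a Jacobian ring, and checking that the boundary restriction of the cycle is compatible with the pairing in this Ramond sector is the delicate point.

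Finally, since there is no distinguished divisor class available to pull back, the divisor equation is to be replaced by the computational axioms furnished by the cycle: the ``concavity''/index-zero axiom, which expresses the virtual cycle -- when an appropriate pushforward bundle is concave -- as a top Chern class capped with $[\MM_{g,k}]$, and the ``sums of singularities'' (Thom--Sebastiani) axiom, which relates the theory of $W_1\oplus W_2$ to those of $W_1$ and $W_2$. Combined with the string, dilaton, and factorization identities already established, these reduce the evaluation of the correlators in the cases studied later in the paper to Chern-class computations on $\MM_{g,k}$, which is the sense in which the replacement axiom ``facilitates computation.''
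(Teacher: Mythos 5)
Your overall architecture matches the paper's: define $\Lambda^{W,G}_{g,k}$ by pushing the virtual cycle down to $\MM_{g,k}$ via $\st$, deduce symmetry and the dimension constraint from the corresponding cycle axioms, get the unit/pairing axioms and string/dilaton from forgetting tails, and get factorization from the composition and degenerating-graph axioms. That is exactly how Theorems~\ref{thm:CohFT} and~\ref{thm:StringDilaton} are proved.

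However, you gloss over the one genuinely delicate point in the factorization argument, and it is not the Ramond sector (the composition axiom for the virtual cycle already handles gluing across Ramond-type nodes; the contraction $\langle\,\rangle_\pm$ in Equation~(\ref{eq:cutting}) is built in). The real issue is a degree-counting discrepancy: the map $\st\colon\W_{g,k}\to\MM_{g,k}$ has degree $|G_W|^{2g-1}$, but when you restrict $\st$ over a boundary stratum $\MM(\Gamma)$ you do \emph{not} get a Cartesian square. By Propositions~\ref{prp:ramif-tree} and~\ref{prp:ramif-loop}, $\st$ is ramified along $\W(\Gamma)$ with ramification $|\lgr{}|$ (loops additionally pick up the stabilizer change from $G_W$ to $G_W\times_{G_W/\lgr{}}G_W$). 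If you simply push $[\W_{g,k}(\bgamma)]^{vir}$ down and compare boundary restrictions, the factors of $|\lgr{}|$ do not cancel by themselves. The paper handles this by (i) inserting the normalization $\frac{|G|^g}{\deg(\st)}$ in the definition of $\Lambda^{W,G}_{g,k}$ — a factor you omit entirely — and (ii) proving Lemma~\ref{lm:RamifCover} to compare $p_*i^*/\deg(p)$ with $j^*q_*/\deg(q)$ for non-Cartesian but ramified-cover squares, then summing over all edge decorations $\gamma$ as in Equation~(\ref{eq:sumgamma}). Without this bookkeeping the splitting axioms (C2, C3) come out with spurious $|\lgr{}|$ factors, so as written your proposal has a gap at precisely the step you labeled ``the main obstacle.'' Your identification of which axiom replaces the divisor axiom (concavity / index-zero / Thom--Sebastiani) is the intended reading, and that part is fine.
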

In particular, the three-point correlator together with the
pairing defines a Frobenius algebra structure on $\ch_{W,G}$ by the
formula
    $$\langle\alpha\star\beta, \gamma\rangle=\langle\tau_0(\alpha),
\tau_0(\beta), \tau_0(\gamma)\rangle_0^{W,G}.$$

One important point is the fact that our construction depends crucially on the Abelian
automorphism group $G$. Although there are at least two choices of group that might be considered canonical (the group generated by the exponential grading operator $J$ or the maximal diagonal symmetry group $G_W$), we do not know how to construct a Landau-Ginzburg
A-model defined by $W$ alone. In this sense, the orbifold LG-model
$W/G$ is more natural than the LG-model for $W$ itself.

We also remark that our theory is also new in physics. Until now there has been no description of the closed-string sector of the Landau-Ginzburg model.

Let's come back to the Witten-Kontsevich theorem regarding the KdV hierarchy in geometry.
Roughly speaking, an integrable hierarchy is a system of differential
    equations
    for a function of infinitely many time variables $F(x, t_1, t_2, \cdots)$ where $x$ is
    a spatial variable and $t_1, t_2, \cdots,  $ are time variables. The PDE is a system of evolution
    equations of the form
    $$\frac{\partial F}{\partial t_n}=R_n(x, F_x, F_{xx}, \cdots),$$
    where $R_n$ is a polynomial. Usually, $R_n$ is constructed
    recursively. There is an alternative formulation in terms of the so-called Hirota bilinear equation
    which $e^F$ will satisfy. We often say that $e^F$ is a \emph{$\tau$-function} of hierarchy.
    It is well-known that KdV is the $A_1$-case of more general $ADE$-hierarchies. As far as we know, there are two versions of ADE-integrable hierarchies: the first constructed by
    Drinfeld-Sokolov \cite{DS} and the second constructed by Kac-Wakimoto
    \cite{KW}. Both of them are constructed from integrable
    representations of affine Kac-Moody algebras.  These two constructions are equivalent by the work of Hollowood and Miramontes \cite{HM}.

Witten's original motivation was to generalize the geometry of Deligne-Mumford space to realize ADE-integrable hierarchies.
Now, we can state his integrable hierarchy conjecture rigorously.
Choose a basis $\alpha_i$ ($i\leq s$) of
$\ch_{W,G}$. Define the genus-$g$ generating function
$${\F}_{g, W, G}=\sum_{k\geq 0} \langle\tau_{l_1}(\alpha_{i_1}), \dots,
\tau_{l_n}(\alpha_{i_n})\rangle^{W,G}_g\frac{t^{l_1}_{i_1} \cdots
t^{l_n}_{i_s}}{n!}.$$
     Define the total potential function
    $${\D}_{W,G}=\exp(\sum_{g\geq 0} h^{2g-2}{\F}_{g, W,G}).$$

\begin{conj}[Witten's ADE-Integrable Hierarchy Conjecture:] The total potential functions of  the A, D, and E singularities
with the symmetry group $\genj$
generated by the exponential grading operator, are $\tau$-functions of the corresponding A, D, and E integrable hierarchies.
\end{conj}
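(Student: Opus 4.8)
The plan is to reduce the conjecture to a genus-zero statement plus a reconstruction principle, and then to match the resulting genus-zero structure with the Frobenius manifold attached to the singularity, for which the integrable-hierarchy statement is already available. For the $A$-series the theory with group $\genj$ is the theory of $r$-spin curves, so the conjecture is the generalized Witten conjecture established (in all genera) by Faber, Shadrin, and Zvonkine, building on Witten and Kontsevich; the genuinely new content is the $D$ and $E$ cases. So I would concentrate on those, keeping the $A$-case as the model and consistency check.

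\textbf{Step 1 (Reconstruction).} First I would show that the correlators $\langle\cdots\rangle^{W,\genj}_g$ are determined by a finite amount of genus-zero data. In genus zero this is a WDVV/reconstruction argument in the style of Kontsevich--Manin: the state space $\ch_{W,\genj}$ is generated, in the appropriate sense, by low-degree classes, and the computational axiom that replaces the divisor axiom makes the recursion effective, so all genus-zero descendant correlators follow from the three- and four-point primary correlators. For higher genus I would invoke semisimplicity: the genus-zero Frobenius manifold of each $ADE$ singularity is generically semisimple (this is visible from the Euler field together with the explicit quantum product), so Givental's quantization formula reconstructs the full descendant potential from the genus-zero structure, and Teleman's classification of semisimple cohomological field theories forces ${\D}_{W,\genj}$ to coincide with that reconstruction. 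Hence it suffices to (a) pin down the genus-zero theory and (b) know that Givental's potential built from it is a tau function of the right hierarchy.

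\textbf{Step 2 (Identifying the genus-zero theory).} Using the concavity axiom, the index-zero axiom, and orbifold Riemann--Roch, I would compute the genus-zero two- and three-point correlators and identify the Frobenius algebra $(\ch_{W,\genj},\star,\langle\;,\;\rangle)$ with the Milnor (Jacobian) ring of $W$ equipped with its residue pairing; extending by WDVV then yields the full genus-zero Frobenius manifold, which I would match with the Saito Frobenius structure of the primitive form of $W$. For $D_n$ the Witten map fails to be concave because of Ramond marked points, so the corresponding Ramond correlators must be extracted directly from the virtual-cycle axioms rather than from an Euler-class computation. The self-duality of $ADE$ singularities (the first Witten conjecture, also proved here) is what guarantees that the choice $G=\genj$ produces exactly the Saito Frobenius manifold; for $D_n$ this passes through the transpose polynomial $W^T$ and the mirror identification with the Milnor-ring picture.

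\textbf{Step 3 (The hierarchy) and the main obstacle.} With the genus-zero Frobenius manifold identified, I would invoke the known results that, for simple singularities, Givental's total descendant potential attached to the Saito Frobenius manifold is a tau function of the corresponding Drinfeld--Sokolov (equivalently Kac--Wakimoto) hierarchy: the $A$-case is Witten--Kontsevich/Faber--Shadrin--Zvonkine, and the $D$ and $E$ cases are due to Givental and to Frenkel--Givental--Milanov. Combined with Steps 1--2 this gives that ${\D}_{W,\genj}$ is the asserted tau function. I expect the main obstacle to be the $D$-case (and $E_7$) of Step 2: because of the Ramond sector the virtual cycle is not computed by a naive concavity argument, and the Ramond contributions must be teased out of the axioms — this is precisely the difficulty that motivated the whole construction. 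A secondary, but real, difficulty is fixing the normalization and calibration so that the FJRW Frobenius manifold matches Saito's on the nose rather than merely up to a change of frame, since the integrable-hierarchy conclusion is sensitive to this choice (most delicately for $D_n$, where the relevant hierarchy is the one canonically attached to this particular Frobenius structure).
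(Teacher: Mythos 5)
Your overall strategy --- reduce to a finite amount of genus-zero data, identify the resulting Frobenius manifold with Saito's, and then invoke Frenkel--Givental--Milanov to read off the Kac--Wakimoto tau-function statement --- is indeed the strategy of the paper, and you correctly flag the Ramond sector in the $D$-series as the main computational obstacle. But there is one genuine and decisive gap: the conjecture as stated is actually \emph{false}, and your Step 2 is precisely where the failure occurs. For $D_n$ with $n$ odd the group $\genj$ already equals the full group $G_{D_n}$, and the Frobenius algebra $\ch_{D_n,\genj}$ turns out to be isomorphic not to the Milnor ring $\milnor_{D_n}$ but to $\milnor_{A_{2n-3}}\cong\milnor_{D_n^T}$ where $D_n^T=x^{n-1}y+y^2$; so the genus-zero theory you build does \emph{not} match the Saito $D_n$ Frobenius manifold, and the resulting potential is a tau-function of the $A_{2n-3}$ hierarchy, not the $D_n$ one. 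You write ``the self-duality of $ADE$ singularities ... is what guarantees that the choice $G=\genj$ produces exactly the Saito Frobenius manifold,'' but that self-duality fails for $D_n$ odd --- it is exactly the Berglund--H\"ubsch transpose $D_n\leftrightarrow D_n^T$ that intervenes, and the correct statement routes the $D_n$ hierarchy through the dual polynomial, not through $D_n$ itself with $\genj$. Any proposal that does not split off $D_n$ odd (and $D_4$, which the paper also cannot handle) is proving something untrue.

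A secondary, but real, difference: for the higher-genus reconstruction you invoke semisimplicity together with Givental quantization and Teleman's classification. The paper explicitly sets this aside as unnecessary in the $ADE$ setting and instead uses a Faber--Shadrin--Zvonkine-style $g$-reduction, exploiting $\hat c_W\le 1$: a degree count shows classes $\beta$ with $\deg\beta<g$ integrate to zero, while $\deg\beta\ge g$ can be pushed to the boundary, so the whole ancestor potential reduces to genus zero (plus genus one when $\hat c_W=1$). This is both more elementary and avoids the delicate issue the paper raises about whether Givental's descendant potential admits a convergent power-series expansion at $t=0$; the paper sidesteps it by working with ancestor potentials and proving polynomiality of the ancestor correlators. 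Your proposal leaves this calibration/expansion issue implicit (you gesture at it as ``fixing the normalization'') but does not resolve it, and Teleman alone does not resolve it either. Finally, to actually pin down the genus-zero Frobenius manifold you would need not just the three-point but also a short explicit list of basic four-point correlators (the WDVV reconstruction terminates at four points because the primitive classes have large enough degree when $\hat c_W<1$); these must be computed on both sides (orbifold GRR on the A-side, Noumi--Yamada flat coordinates on the B-side) and matched after rescaling the primitive form --- a nontrivial bookkeeping step your sketch compresses away.
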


In the $A_n$ case, this conjecture is often referred as \emph{the Generalized Witten conjecture}, as compared to the original
 Witten conjecture proved by Kontsevich \cite{K}. As mentioned earlier, the conjecture for the $A_n$-case has
 been established recently by Faber, Shadrin, and Zvonkine \cite{FSZ}. The original Witten conjecture also inspired
 a great deal of activity related to Gromov-Witten theory of more general spaces. Those cases are $2$-Toda for $\CP^1$ by Okounkov-Pandharipande \cite{OP} and the Virasoro constraints for toric manifolds by Givental \cite{Gi4}, Riemann surfaces by Okounkov-Pandharipande \cite{OP1}. In some sense, the ADE-integrable hierarchy conjecture is analogous to these lines of research but where the targets are singularities.

 The main application of our theory is the resolution of the ADE-integrable hierarchy conjecture, as manifested by the following two theorems.
\begin{thm}
The total potential functions of the singularities $D_n$ with even $n\ge 6$, and $E_6$, $E_7$, and $E_8$,  with the group $\genj $  are $\tau$-functions of the corresponding
Kac-Wakimoto/Drinfeld-Sokolov hierarchies.
\end{thm}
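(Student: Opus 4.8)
The plan is to reduce the assertion, via a genus-zero comparison and an all-genus reconstruction, to the theorem of Givental and Milanov identifying the total descendent potential of a simple singularity with a tau-function of the attached Kac--Wakimoto (equivalently, by Hollowood--Miramontes and Frenkel--Givental--Milanov, Drinfeld--Sokolov) hierarchy. Concretely I would (i) show that the Frobenius manifold underlying the FJRW theory of $(W,\genj)$ is isomorphic, as a \emph{conformal} Frobenius manifold, to Saito's Landau--Ginzburg Frobenius manifold on the miniversal deformation of $W$, for $W$ equal to $D_n$ ($n$ even), $E_6$, $E_7$, or $E_8$; (ii) promote this to an identity $\D_{W,\genj}=\D^{\mathrm{Saito}}_{W}$ of total potential functions using semisimplicity and a reconstruction theorem; and (iii) quote that $\D^{\mathrm{Saito}}_{W}$ is a tau-function of the corresponding hierarchy.

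For step (i) I would first compute the state space $\ch_{W,\genj}=\bigoplus_{\gamma\in\genj}\ch_{\gamma,\genj}$ --- its dimension, its bigrading, and the pairing --- from the index and degree formulas. The role of the group is decisive in the $D_n$ family: one checks that $\genj=G_W$ precisely when $n$ is odd, and then $\dim\ch_{W,\genj}=2n-3$, which cannot match the rank-$n$ $D_n$ hierarchy (indeed that theory is of type $A_{2n-3}$); this degeneration is exactly the source of the hypothesis ``$n$ even,'' for which instead $\genj$ has index two in $G_W$ and $\dim\ch_{W,\genj}=n$. Next I would identify the Frobenius algebra, i.e.\ the genus-zero three-point correlators $\langle\tau_0(\alpha),\tau_0(\beta),\tau_0(\gamma)\rangle_0^{W,\genj}$, with the Milnor ring of $W$ equipped with its residue pairing. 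For the narrow sectors this reduces, by concavity of the associated line bundles and the index formula, to a computation of Euler classes; for the broad (Ramond) sectors occurring for $D_n$ one must instead use the virtual cycle built from the moduli of solutions of the Witten equation in the companion paper \cite{FJR-ip-WEVFC}, since here the Witten equation has genuine nonzero solutions and contributes. Finally, having matched the algebra, the pairing, the identity, and the Euler (homogeneity) vector field, I would invoke WDVV together with the reconstruction results of this paper --- which recover all genus-zero correlators from finitely many primary three- and four-point correlators --- and compare those finitely many four-point numbers on the two sides (they are classically tabulated for ADE Saito structures) to conclude the isomorphism of conformal Frobenius manifolds.

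For step (ii), the Milnor ring of a simple singularity is semisimple at a generic point of its miniversal deformation, so both Frobenius manifolds are semisimple and conformal with the same Euler field. By the Givental--Teleman classification of semisimple cohomological field theories (or, equivalently for these ADE cases, by the g-reduction and reconstruction theorems proved in this paper), a homogeneous semisimple cohomological field theory is determined by its genus-zero part; applying this on both sides and feeding in step (i) gives $\D_{W,\genj}=\D^{\mathrm{Saito}}_{W}$, after passing between ancestor and descendent potentials by Givental's formula. Step (iii) is then the theorem of Givental and Milanov (for $E_6,E_7,E_8$ and for $D_n$) that the total descendent potential of a simple singularity is a tau-function of the associated Kac--Wakimoto hierarchy, together with the equivalence of the Kac--Wakimoto and Drinfeld--Sokolov formulations; chaining (i)--(iii) proves the theorem.

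I expect the main obstacle to be step (i) for $D_n$. In contrast to the $A$ and $E$ cases, the state space of $(D_n,\genj)$ carries a nontrivial broad sector attached to a Ramond marked point, so the three- and four-point correlators are not accessible by algebro-geometric (concavity/Euler-class) means alone; one must control the solutions of the Witten equation and the virtual class they produce, which is the technical core of \cite{FJR-ip-WEVFC}, and then evaluate the resulting integrals. A secondary, pervasive difficulty is ensuring that the genus-zero isomorphism respects not just the Frobenius algebra but the full conformal structure: both the Givental--Teleman reconstruction and the Givental--Milanov identification are sensitive to the Euler vector field, so the gradings (governed by the weights $q_i$ and the central charge $\chat_W$) must be matched exactly, not merely up to scale.
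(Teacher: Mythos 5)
Your overall plan mirrors the paper's argument: establish a genus-zero Frobenius-manifold isomorphism between the FJRW theory and Saito's theory, lift it to all genera by a reconstruction theorem exploiting $\chat<1$, and then invoke Frenkel--Givental--Milanov. But a few specifics are off in ways that matter.

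First, you treat the Ramond sector of $D_n$ as the main technical obstacle for step (i) and assume that the correlators it feeds into can only be reached through the analytic virtual class of \cite{FJR-ip-WEVFC}. In fact the paper's reconstruction theorem (Theorem~\ref{thm:FourPtReduction}) reduces the genus-zero theory of $D_{n+1}$ ($n$ odd, $n>3$, i.e. $D_m$ with $m\ge 6$ even) to three-point correlators plus the single basic four-point correlator $\corf{}{X,X,X^{n-1},X^{n-2}}$, whose insertions all live in Neveu-Schwarz sectors ($X^{n-1}$ and $X^{n-2}$ are multiples of $\bone_{n-1}$ and $\bone_{n-3}$). That correlator is concave and is evaluated with the orbifold-GRR formula of Theorem~\ref{thm:ConcaveCodimOne}. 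The few Ramond three-point correlators that appear are never computed directly; they are pinned down up to sign by the Index-Zero and Composition axioms, and this is enough to construct the Frobenius-algebra isomorphism. The Ramond sector genuinely blocks the computation only for $D_4$, where the basic four-point correlator $\corf{}{X,X,Y,X^2}$ has a Ramond insertion --- and this is precisely why $D_4$ is excluded from the theorem's statement.

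Second, matching the Frobenius algebras and pairings does not automatically match the four-point correlators; the A-model and B-model basic four-point functions in $E_6,E_7,E_8$ and $D_{n+1}$ agree only after rescaling Saito's primitive form by a suitable constant $c$ (e.g. $c=(-1)^{-8/9}\cdot 9$ for $E_7$), which is the source of the ``linear change of variables'' in the theorem. Your plan should make explicit that the Euler field and pairing are matched only up to this one-parameter rescaling freedom, which is itself harmless for the hierarchy (it rescales $\hbar$). Third, you propose to use the Givental--Teleman classification, whereas the paper's actual mechanism is the Faber--Shadrin--Zvonkine $g$-reduction (Lemmas~\ref{lm:degr-smal} and~\ref{lm:degr-larg}); the paper explicitly flags Teleman's result only as an untapped alternative. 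Finally, there is a subtlety you skip: the Givental formal descendent potential is defined via the ancestor potential at a semisimple point $t\neq 0$, and its power-series expansion at $t=0$ is not known to exist in the literature; the paper sidesteps this by proving (Lemma~\ref{lm:ances-poly}) that the ADE ancestor correlators are polynomials, and then comparing ancestor potentials rather than descendent ones.
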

    We expect the conjecture for $D_4$ to be true as well. However, our calculational tools are not strong enough to prove it at
    this moment. We hope to come back to it at another occasion.

Surprisingly, the Witten conjecture for  $D_n$ with $n$ odd is false.  Note that in the case of $n$ even, the subgroup $\genj$ has index two in the maximal group $G_{D_{n}}$ of diagonal symmetries, but in the case that $n$ is odd, $\genj$ is equal to $G_{D_{n}}$.
In this paper we prove
    \begin{thm}
    \item[(1)] For all $n>4$ the total potential function of the $D_n$-singularity  with
    the maximal diagonal symmetry group $G_{D_{n}}$ is a $\tau$-function of the
    $A_{2n-3}$-Kac-Wakimoto/Drinfeld-Sokolov hierarchies.
    \item[(2)] For all $n> 4$ the total potential function of $W=x^{n-1}y+y^2$ ($n\geq 4$)
    with the maximal diagonal symmetry group is a $\tau$-function of the $D_n$-Kac-Wakimoto/Drinfeld-Sokolov hierarchy.
    \end{thm}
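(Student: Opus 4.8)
The plan is to prove both parts in three steps. \emph{First}, I would identify the genus-zero part of our theory for the relevant pair $(W,G)$ --- the Frobenius manifold on $\ch_{W,G}$ recorded by the genus-zero correlators and the grading --- with the Saito--Givental Frobenius manifold of an ADE singularity. \emph{Second}, I would observe that this is a conformal, generically semisimple Frobenius manifold, so that Teleman's reconstruction theorem for semisimple cohomological field theories (which establishes Givental's conjecture on the structure of such theories) identifies the whole cohomological field theory, and hence the total potential $\D_{W,G}$, with the Saito--Givental total descendant potential of that ADE singularity. \emph{Third}, I would invoke the theorem of Givental in the $A$-case and of Frenkel--Givental--Milanov in the $D$- and $E$-cases, to the effect that this Saito--Givental potential is a $\tau$-function of the corresponding Kac--Wakimoto/Drinfeld--Sokolov hierarchy. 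Which ADE singularity appears is dictated by Berglund--H\"ubsch mirror symmetry: the transpose of $D_n=x^{n-1}+xy^2$ is $x^{n-1}y+y^2$, which becomes the $A_{2n-3}$ singularity $x^{2n-2}$ after completing the square and suspending, while the transpose of $x^{n-1}y+y^2$ is $D_n$ itself.

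For (1), the first step goes as follows. The group $G_{D_n}$ has order $2(n-1)$, and its only non-identity element with nonzero fixed locus is $\diag(1,-1)$; thus $2n-4$ sectors are narrow and one-dimensional, and one further dimension comes from the $G_{D_n}$-invariants in the untwisted sector and the broad (Ramond) sector of $\diag(1,-1)$, giving $\dim\ch_{D_n,G_{D_n}}=2n-3=\dim\C[x]/(x^{2n-3})$. Computing the grading, the residue pairing, and the three-point correlators --- most of which vanish by the degree selection rule, the surviving ones being evaluated by concavity --- yields an isomorphism of graded Frobenius algebras onto the $A_{2n-3}$ Milnor ring. The WDVV equations together with the grading then determine the whole genus-zero prepotential from finitely many correlators; computing the remaining four-point invariants --- exactly the ones that detect the broad sector --- via the index (Riemann--Roch) formula for the virtual cycle of \cite{FJR-ip-WEVFC} shows that the resulting Frobenius manifold is the $A_{2n-3}$ one. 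Alternatively, one may appeal to rigidity of the ADE flat structure: in this range the Saito Frobenius manifold is the unique conformal one with the given grading and Frobenius algebra.

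Part (2) is parallel. Here the maximal diagonal symmetry group of $W=x^{n-1}y+y^2$ is generated by $J$, so $G_W=\langle J\rangle$, and the transpose $W^T$ is $D_n$; a dimension count gives $\dim\ch_{W,G_W}=n=\mu(D_n)$. The same analysis identifies the graded state space and residue pairing of $(W,G_W)$ with those of the $D_n$ Milnor ring $\C[x,y]/((n-1)x^{n-2}+y^2,\,2xy)$ up to the Jacobian twist, matches three-point functions, and recovers the full genus-zero potential from a handful of explicitly computed four-point correlators via WDVV (or from rigidity). The theorem of Frenkel--Givental--Milanov together with the second step then shows that $\D_{W,G_W}$ is a $\tau$-function of the $D_n$-Kac--Wakimoto/Drinfeld--Sokolov hierarchy. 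In particular this settles $n=4$, which the direct study of $D_4$ with $\langle J\rangle$ could not reach.

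The hardest part is the first step, and within it the passage from the Frobenius \emph{algebra} --- essentially combinatorial, read off from the sectors, the grading and the residue pairing --- to the full Frobenius \emph{manifold}. One must control enough genus-zero four-point correlators in the presence of the broad/Ramond sector of $\diag(1,-1)$, precisely the sector whose contribution to the virtual cycle is hardest to compute: either these invariants are pinned down by the index formula together with WDVV, or one must establish rigidity of the ADE flat structure sharply enough to bypass them. A secondary, more bookkeeping, difficulty is to track the dual group and the Jacobian/volume-form character carefully enough that the mirror identifications become isometries of graded Frobenius algebras on the nose, and to check that the suspension identifying $x^{n-1}y+y^2$ with $x^{2n-2}$ respects the calibration used in Givental's formula, so that the final comparison is one of total descendant potentials and not merely of Frobenius manifolds.
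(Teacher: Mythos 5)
Your three-step outline --- identify the genus-zero Frobenius manifold of the A-model with an ADE Saito--Givental Frobenius manifold, reconstruct higher genus from genus zero, and invoke Frenkel--Givental--Milanov --- is exactly the structure of the paper's argument, and your Berglund--H\"ubsch bookkeeping (the transpose of $D_n=x^{n-1}+xy^2$ is $x^{n-1}y+y^2\cong A_{2n-3}$, and vice versa) matches Theorem~\ref{thm:mirror}. There are, however, two places where what you propose differs from what the paper does, one of which is a genuine alternative and one of which is a misdiagnosis of where the difficulty lies.

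For higher-genus reconstruction you invoke Teleman's classification of semisimple cohomological field theories; the paper instead proves a direct reconstruction theorem (Theorem~\ref{thm:FJR-reduction}) via the Faber--Shadrin--Zvonkine $g$-reduction. This works because $\hat c_W<1$ for ADE singularities: any correlator with $\psi$-degree less than $g$ vanishes by a degree estimate, while correlators with higher $\psi$-degree can be pushed to the boundary by $g$-reduction. The paper explicitly records Teleman's result as an available alternative route, but chooses the FSZ argument as being elementary and, at the time of writing, on firmer footing. One should also note that the paper addresses a technical subtlety your sketch elides: Givental's descendant potential is a priori defined by expanding at a semisimple point $t\neq 0$, and equality of ancestor potentials is established first (using Lemma~\ref{lm:ances-poly} to show the ancestor correlators are polynomial, hence well-defined on the whole Frobenius manifold), from which equality of descendant potentials follows.

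Your identification of the ``hardest part'' is, for these two cases, backwards. You worry that the four-point correlators surviving the WDVV reduction are ``exactly the ones that detect the broad sector'' and therefore require controlling the Ramond contribution to the virtual cycle. In fact the paper's Theorem~\ref{thm:FourPtReduction} shows the opposite: for $D_{n}$ with the maximal group and for $x^{n-1}y+y^2$ with its maximal group, the WDVV reduction (the Reconstruction Lemma) leaves a single basic four-point correlator all of whose insertions are narrow (Neveu--Schwarz), and the corresponding $W$-structure is concave, so the class is computed cleanly by the orbifold Grothendieck--Riemann--Roch formula of Theorem~\ref{thm:ConcaveCodimOne} without ever touching the broad sector. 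This is precisely the structural reason why these two cases go through and why the paper has to exclude $D_4$ with $\langle J\rangle$ from its integrable-hierarchy theorem: there the WDVV reduction leaves a basic four-point correlator with broad insertions, which the axioms do not compute. Your alternative of ``rigidity of the ADE flat structure'' is not used anywhere in the paper and would require a uniqueness theorem not cited there; the argument that is actually made is the explicit GRR computation plus a rescaling of the primitive form on the B-side to match the normalization.
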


 The above two theorems realize the $ADE$-hierarchies completely in our
theory.   Moreover, it illustrates the important role that the group of symmetries plays in our constructions: when the symmetry group is $G_{D_{n}}$ we have the $A_{2n-3}$-hierarchy, but when the symmetry group is $\genj$, and when $\genj$ is a proper subgroup of $G_{D_n}$, we have the $D_n$-hierarchy.

     Readers may wonder about the singularity $W=x^{n-1}y+y^2$ (which is isomorphic to $A_{2n-3}$). Its appearance reveals a deep connection between integrable hierarchies and mirror symmetry (see more in Section~\ref{sec:six}).

Although the simple singularities are the only singularities with
central charge $\chat_W<1$, there are many more examples of
singularities. It would be an extremely interesting problem to
find other integrable hierarchies corresponding to singularities
with $\chat_W\ge 1$.

Witten's second conjecture is the following ADE self-mirror
conjecture which interchanges the A-model with the B-model.
   \begin{conj}[ADE Self-Mirror Conjecture]
If $W$ is a simple singularity, then for the symmetry group
$\genj$, generated by the exponential grading operator,
 the  ring $\ch_{W,\genj}$ is isomorphic to the Milnor ring of
 $W$.
\end{conj}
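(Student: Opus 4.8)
\medskip
\noindent\emph{Proof proposal.}\quad The plan is to verify the isomorphism one singularity at a time for $W=A_n,D_n,E_6,E_7,E_8$, since the statement rests on features peculiar to the simple singularities. For each $W$ I would (i) compute the state space $\ch_{W,\genj}=\bigoplus_k\ch_{J^k,\genj}$ together with its grading and pairing; (ii) compute the Milnor ring $\milnor_W=\C[x_1,\dots,x_N]/\Jac(W)$ as a graded ring with its residue pairing; and (iii) exhibit an explicit degree-preserving isomorphism between the two that carries the residue pairing to the FJRW pairing and the Milnor product to the Frobenius product $\star$ defined by $\langle\alpha\star\beta,\gamma\rangle=\langle\tau_0(\alpha),\tau_0(\beta),\tau_0(\gamma)\rangle_0^{W,\genj}$. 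Step (ii) and the linear part of (iii) are monomial bookkeeping; the substance is computing $\star$ and matching it with the structure constants of $\milnor_W$.

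First I would carry out step (i). For each $J^k$ one decides whether $\C^N_{J^k}=\{0\}$ (a \emph{narrow} sector, contributing a canonical line $\C e_{J^k}$) or a coordinate subspace (a \emph{broad} sector, contributing the $\genj$-invariants of $H^{mid}(\C^N_{J^k},W_{J^k}^{\infty},\C)\cong\milnor_{W_{J^k}}\cdot d\x_{J^k}$, with $J$ acting by its geometric action, and in particular $\ch_{J^k,\genj}=0$ whenever $W_{J^k}\equiv 0$). A charge computation then shows: for $A_n$ the identity sector is killed by taking $\genj$-invariants and $J,J^2,\dots,J^n$ are narrow; for $E_6,E_7,E_8$, where $\genj$ already equals $\aut(W)$, the broad sectors contribute at most a small invariant piece of the identity sector (dimension $0$ for $E_6$ and $E_8$, dimension $1$ for $E_7$); and for $D_n$ with $n$ even, where $|\genj|=n-1$, only the identity sector is broad and its invariant part is two-dimensional, the remaining $n-2$ sectors being narrow. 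In each of these cases one finds $\dim\ch_{W,\genj}=\mu_W=\dim\milnor_W$, matches the narrow generators and broad classes with the monomial basis of $\milnor_W$ by comparing weighted degrees (sending the unit sector $J\mapsto 1$), and checks that the two pairings correspond. For $D_n$ with $n$ odd, however, where $|\genj|=2(n-1)$, the same count gives $\dim\ch_{D_n,\genj}=2n-3\neq n$: the conjecture as literally stated fails there, and this is precisely the case in which the genuine mirror symmetry of the Landau--Ginzburg model replaces the naive self-duality, with $\ch_{D_n,\genj}$ instead matching the Milnor ring of the transpose $W^{\mathrm T}=x^{n-1}y+y^2\cong A_{2n-3}$. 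So the cases that remain — and in which the statement holds — are $A_n$, $E_6$, $E_7$, $E_8$, and $D_n$ with $n$ even.

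Next I would compute the ring structure, which is the real work. For $A_n$ the theory is the $A_{r-1}$ theory of $r$-spin curves with $r=n+1$, whose genus-zero three-point invariants are already known and are exactly the structure constants of $\C[x]/(x^n)\cong\milnor_{A_n}$. For $D_n$ with $n$ even and for $E_6,E_7,E_8$ I would determine all nonzero genus-zero three-point correlators from the axioms of \cite{FJR-ip-WEVFC}: the dimension (selection-rule) axiom restricts which triples can pair nontrivially; the concavity and index-zero axioms evaluate a small collection of ``seed'' correlators directly from the virtual fundamental cycle, including the ones carrying a broad insertion in the $D_n$ case; and the composition (WDVV) equations, together with the known pairing, propagate those seeds to the remaining three-point functions. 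Comparing the resulting multiplication table against the classical presentation of $\milnor_W$ by generators and relations then completes each case.

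The main obstacle is this last step for $D_n$ (even) and for $E_6,E_7,E_8$. Two points need real input rather than bookkeeping: first, evaluating the three-point correlators carrying the broad insertion of the $D_n$ identity sector directly from the virtual cycle via the index-zero axiom, since these are not forced by the selection rules; and second, handling those correlators whose value is not forced by the grading, where one must invoke WDVV with a carefully chosen set of seed values while keeping precise track of how the chosen basis vectors of $\ch_{W,\genj}$ are scaled relative to the monomial basis of $\milnor_W$. Once this finite list of correlators is in hand, identifying $\ch_{W,\genj}$ with $\milnor_W$ as Frobenius algebras is mechanical; the only other delicate point — that the statement must be restricted away from $D_n$ with $n$ odd — already surfaces in step (i).
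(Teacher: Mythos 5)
Your proposal follows essentially the same route as the paper: a case-by-case computation of the state space, sorting narrow from broad sectors, checking that dimensions and weighted degrees line up with the monomial basis of $\milnor_W$, computing a short list of seed three-point correlators via the concavity and index-zero axioms, using the composition axiom to extract the remaining correlator carrying the broad insertion, and then fixing scalings so the pairings match. The only substantive shortcut the paper takes that you don't mention is invoking the Sums-of-Singularities axiom to reduce $E_6$ and $E_8$ to tensor products $A_2\otimes A_3$ and $A_2\otimes A_4$, which lets those cases be deduced from the $A_n$ result without a separate correlator computation.
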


The second main theorem of this paper is the following.

\begin{thm}\label{thm:mirror} { } \
\begin{description}
    \item[(1)] Except for $D_n$ with $n$ odd, the
    ring $\ch_{W,\genj}$ of any simple (ADE) singularity $W$ with group
    $\genj $ is isomorphic to the Milnor ring $\milnor_W$ of the same singularity.
    \item[(2)] The
    ring $\ch_{D_n,G_{D_n}}$  of $D_n$ with the maximal diagonal
    symmetry group $G_{D_n}$ is isomorphic to the Milnor ring $\milnor_{A_{2n-3}}$ of $W = x^{n-1}y+y^2$.
    \item[(3)] The
  ring $\ch_{W,G_W}$ of $W=x^{n-1}y+y^2$ ($n\geq 4$)
    with the maximal diagonal symmetry group $G_W$ is isomorphic to the    Milnor ring $\milnor_{D_n}$ of $D_n$.
    \end{description}
\end{thm}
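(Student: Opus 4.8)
The plan is to prove each of the three isomorphisms in two stages: first identify $\ch_{W,G}$ with the target Milnor ring as a graded vector space, and then check that the quantum product $\star$ agrees with the Jacobian-ring multiplication in a suitable basis (together with the residue pairing, so that the two Frobenius algebras match). We will use only the structure coming from the genus-zero three-point correlators, exploiting the axioms of the cohomological field theory established earlier in the paper --- in particular the grading/selection rule, the pairing axiom, the unit axiom, the concavity axiom, the composition (splitting) axiom, and the ``sum of singularities'' axiom --- together with the WDVV relations they imply, so that the bulk of the structure constants are forced and only finitely many correlators need to be computed directly. A comparison of Hilbert series will then promote a surjective graded ring homomorphism to an isomorphism.

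\emph{Identification of the state spaces.} For each $\gamma\in G$ I would write down the fixed locus $\C^N_{\gamma}$ and the restricted polynomial $W_{\gamma}$. A sector with $\C^N_{\gamma}=\{0\}$ is \emph{narrow}: it contributes a one-dimensional subspace $\ch_{\gamma,G}$ placed in the degree prescribed by the age of $\gamma$. A sector with $\dim\C^N_{\gamma}>0$ is \emph{broad}: then $\ch_{\gamma,G}$ is the $G$-invariant part of the middle relative cohomology of $(\C^N_{\gamma},(\Re W_{\gamma})^{-1}(M,\infty))$, which is a subquotient of the Milnor ring of $W_{\gamma}$ and whose invariants are read off from the characters of the diagonal $G$-action. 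Carrying this out for $A_n$, $E_6$ and $E_8$, one finds every surviving sector is narrow, the dimension is $\mu_W$, and the graded dimensions are those of $\milnor_W$; for $E_7$ and for $D_n$ with $n$ even and group $\genj$ there is in addition a single low-dimensional broad sector, whose invariant cohomology one computes explicitly and which again matches the graded dimensions of $\milnor_{W}$. For $D_n$ with the full group $G_{D_n}$, and for $W=x^{n-1}y+y^2$ with $G_W$, the broad sectors are larger; their invariant relative cohomology works out to have exactly the graded dimensions of $\milnor_{A_{2n-3}}$, respectively $\milnor_{D_n}$. This numerical coincidence is the shadow of the Berglund--H\"ubsch mirror construction and is the first place one sees why the Milnor ring of a \emph{different} singularity appears.

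\emph{The ring structure.} By the grading axiom, $\langle\tau_0(\alpha),\tau_0(\beta),\tau_0(\gamma)\rangle_0^{W,G}$ vanishes unless the degrees of $\alpha,\beta,\gamma$ sum to the top degree; since $\ch_{\gamma,G}$ is one-dimensional for narrow $\gamma$, this forces every product of narrow classes to be a scalar multiple of one basis vector, exactly as in the Milnor ring, where the product of two monomials is a monomial or zero. The nonzero scalars are values of genus-zero three-point correlators; when the relevant orbifold line bundles are concave, the concavity axiom computes these by a finite-dimensional obstruction-bundle calculation over the (zero-dimensional) moduli space lying over $\MM_{0,3}$, and for $A_{r-1}$ they reproduce the known $r$-spin structure constants of \cite{JKV1}. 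The remaining constants, and the rescaling needed to pass to a monomial basis, are pinned down by the unit axiom and associativity/WDVV, which here reduces to evaluating only a handful of four-point correlators, again via the concavity input and the composition axiom. Concretely, I would choose generators of $\ch_{W,G}$ in the degrees of the variables $x_i$, compute their pairwise products, verify that the Jacobian relations $\partial_i W\equiv 0$ hold in $\ch_{W,G}$, and conclude by the Hilbert-series comparison; this proves (1) (for $E_7$ and $D_n$ with $n$ even one must also check the products involving the broad class). For (2) and (3) the new ingredient is the contribution of broad (Ramond) insertions: using the ``sum of singularities'' axiom to split the polynomial $x^{n-1}y+y^2$, the composition axiom to degenerate to boundary strata, and, where unavoidable, the explicit virtual cycle constructed in \cite{FJR-ip-WEVFC}, one again obtains a finite list of structure constants, which assemble into the Jacobian-ring multiplication of $\milnor_{A_{2n-3}}$ (for $D_n$ with $G_{D_n}$), respectively $\milnor_{D_n}$ (for $x^{n-1}y+y^2$ with $G_W$), after normalizing generators.

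The main obstacle is the broad, Ramond sector. In the purely narrow situation the ring is forced by degree counting and a classical $r$-spin computation, but the broad three-point correlators --- which control the ``$-5$''-type structure constants appearing already in $\milnor_{D_6}$, and all of the structure in parts (2) and (3) --- require the genuinely new analysis of the Witten equation with Ramond marked points and the virtual cycle of \cite{FJR-ip-WEVFC}. Getting these numbers correct, and in particular recognizing that they add up to the multiplication table of the Milnor ring of another singularity rather than of $W$ itself, is the technical heart of the mirror statement. A secondary but nontrivial difficulty is the bookkeeping: matching the grading on $\ch_{W,G}$ (built from the ages of the elements of $G$) with the weighted-degree grading on the Jacobian ring, and choosing compatible bases on the two sides so that the comparison is manifestly a ring homomorphism and not merely an abstract isomorphism of graded Frobenius algebras.
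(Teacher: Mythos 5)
Your overall strategy --- identify the graded state space via Wall's theorem with $G$-invariant Milnor-ring pieces, compute the genus-zero three-point structure constants from the CohFT axioms (selection rule, unit/pairing, concavity, composition, sums of singularities), assemble a graded ring homomorphism from $\C[X,Y]$, check the Jacobian relations hold, compare dimensions, and rescale generators to match the residue pairing --- is essentially the route the paper takes. But there are two points worth flagging.

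First, a genuine error: you propose to ``use the sum of singularities axiom to split the polynomial $x^{n-1}y+y^2$,'' and implicitly also $D_n=x^{n-1}+xy^2$, for parts (2) and (3). This axiom requires $W$ to decompose as $W_1+W_2$ in \emph{disjoint} sets of variables, which neither $x^{n-1}y+y^2$ nor $x^{n-1}+xy^2$ does (the variable $y$, resp.\ $x$, appears in both monomials). The paper does invoke the sums-of-singularities axiom, but only for $E_6=x^3+y^4$ and $E_8=x^3+y^5$ --- the genuinely decomposable cases, which reduce immediately to tensor products of $A_k$-theories. Parts (2) and (3) are handled by direct computation of the correlators, not by a splitting.

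Second, a mischaracterization of where the Ramond sector enters. You are right that the broad (Ramond) correlators are the delicate content, but you suggest that their values must come from a genuinely new analytic computation of the Witten equation with Ramond insertions. In the paper's proof, none of the Ramond three-point correlators are computed directly from the Ramond virtual cycle. Instead: (i) correlators of the form $\langle v,v',\unit\rangle$ reduce to the pairing; and (ii) the remaining Ramond correlator (e.g., $\langle y^2\bone_0,\bone_5,\bone_5\rangle$ for $E_7$) is extracted --- up to a sign that is later absorbed into the choice of generators --- by applying the Composition Axiom to a four-point correlator whose insertions are \emph{all} Neveu-Schwarz, and whose value is computed by the elementary Index-Zero axiom (degree of the Witten map $(3\bar x^2+\bar y^3,\,2\bar x\bar y)$, for instance). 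So the heavy Ramond analysis of \cite{FJR-ip-WEVFC} underlies the axioms themselves, but the mirror computation of Section~5 only ever evaluates virtual cycles in the narrow/concave regime; the Ramond structure constants are forced indirectly. You should rephrase this part of the argument accordingly, and drop the appeal to the sums-of-singularities axiom in (2) and (3).
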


    The readers may note the similarities between the statements of the above mirror symmetry theorem and our integrable hierarchies theorems. In fact, the mirror symmetry theorem is the first
    step towards  the proof of integrable hierarchies theorems.

Of course we cannot expect that most singularities will be
self-mirror, but we can hope for mirror symmetry beyond just the
simple singularities.  Since the initial draft of this paper, much progress
has been made 
\cite{FJJS, Krawitz, Pr} for invertible singularities. An invertible singularity has the property that the number of monomials is equal to the
number of variables. This is a large class of quasi-homogeneous singularities.

In general, it is a very difficult problem to compute
Gromov-Witten invariants of compact Calabi-Yau manifolds. While there are
many results for low genus cases \cite{Gi3,LLY, Z}, there are only 
a very few compact examples \cite{MP,OP1} where one knows how to
compute Gromov-Witten invariants in all genera by either
mathematical or physical methods (for some recent advances see
\cite{HQK}).

Note that a Calabi-Yau hypersurface  of weighted
projective space defines a quasi-homogenenous singularity and
hence an LG-theory. This type of singularity has $\sum_i q_i=1$.
 In the early `90s, Martinec-Vafa-Warner-Witten proposed a
famous conjecture \cite{Mar}, \cite{VW}, \cite{Wi4} connecting
these two points of view.

\begin{conj}[Landau-Ginzburg/Calabi-Yau Correspondence] The LG-theory of a generic quasi-homogeneous singularity
    $W/\langle J \rangle$ and the corresponding Calabi-Yau theory are isomorphic
  in a certain sense.
\end{conj}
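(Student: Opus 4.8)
The phrase ``isomorphic in a certain sense'' should be read as follows. When $\sum_i q_i = 1$, equivalently $\chat_W = N-2$, the zero locus $X_W = \{W=0\}$ in the weighted projective space $\CP(n_1,\dots,n_N)$ is a Gorenstein Calabi--Yau orbifold carrying a residual action of $G_W/\genj$; write $\X$ for the resulting Calabi--Yau orbifold quotient. The claim is that the all-genus total potential $\D_{W,G_W}$ of the Landau--Ginzburg orbifold $W/\aut(W)$ and the Gromov--Witten total descendant potential $\D_{\X}$ are two expansions of one global object --- around the ``Landau--Ginzburg point'' and the ``large-radius point'' of a common quantum $D$-module --- related by analytic continuation in the Kähler parameter together with a $z$-linear symplectic transformation and its Givental quantization. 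The plan follows the three steps of a Givental-style proof, as carried out by Chiodo--Ruan in the quintic case: (i) match state spaces and pairings; (ii) prove a genus-zero mirror theorem identifying the two Lagrangian cones after analytic continuation; (iii) propagate to all genera using semisimplicity and the Givental--Teleman reconstruction.

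\emph{Step 1 (state spaces).} I would identify $\ch_{W,G_W} = \bigoplus_{\gamma\in G_W}\ch_{\gamma,G_W}$ with the Chen--Ruan cohomology $H^*_{CR}(\X)$ as a graded vector space with its pairing. On the sector indexed by $\gamma\in G_W$ the fixed locus $\C^N_\gamma$ carries the nondegenerate quasi-homogeneous polynomial $W_\gamma = W|_{\C^N_\gamma}$, and by a theorem of Steenbrink the invariant relative cohomology $H^{mid}(\C^N_\gamma,(\Re W_\gamma)^{-1}(M,\infty),\C)^{G_W}$ computes an eigenpiece of the vanishing cohomology of the Milnor fibre of $W_\gamma$, hence a graded piece of the primitive cohomology of the hypersurface cut out by $W_\gamma$ in the corresponding weighted projective space --- which is precisely the contribution of the $\gamma$-twisted sector of $\X$. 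Summing over $\gamma$, with the age shift on the Chen--Ruan side matched to the charge shift on the Landau--Ginzburg side, gives $H^*_{CR}(\X) \cong \ch_{W,G_W}$, and the residue pairing on the Milnor ring corresponds to Poincaré duality. This step is Hodge-theoretic and I expect it to be routine, modulo bookkeeping of the weighted/orbifold structure and the well-formedness hypotheses on $X_W$.

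\emph{Step 2 (genus zero).} I would compute both $I$-functions explicitly. On the Gromov--Witten side, the orbifold quantum Lefschetz principle applied to $X_W \hookrightarrow \CP(n_1,\dots,n_N)$ produces a hypergeometric $I^{GW}(q,z)$ on the Gromov--Witten Lagrangian cone, with $q$ the Novikov variable. On the Landau--Ginzburg side I would compute $I^{FJRW}(u,z)$ from the geometry of the moduli of $W$-orbicurves: in the narrow (Neveu--Schwarz) sectors the theory reduces to a twisted theory over $\MM_{g,n}$ governed by Witten's top Chern class, whose cone is a hypergeometric modification determined by the charges $q_i$; in the broad (Ramond) sectors one must feed in the virtual cycle constructed in this paper and in \cite{FJR-ip-WEVFC}. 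Both $I$-functions solve the same GKZ/Picard--Fuchs system attached to $W$; I would analytically continue $I^{GW}$ from $q\to\infty$ to the Landau--Ginzburg point $q\to 0$ and match it with $I^{FJRW}$ up to the mirror map and a $z$-linear symplectic transformation $U$, and then identify $U$ with the $\widehat{\Gamma}$-integral-structure avatar (in the sense of Chiodo--Iritani--Ruan) of Orlov's equivalence between $D^b(X_W)$ and the category of graded $G_W$-equivariant matrix factorizations of $W$, so that $U$ is pinned down rather than merely asserted to exist.

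\emph{Step 3 (higher genus) and the main obstacle.} Deforming to a generic point of the global Frobenius manifold, where both the quantum cohomology of $\X$ and the FJRW Frobenius manifold of $(W,G_W)$ become semisimple, the Givental--Teleman classification expresses each total descendant potential as the quantization of its $R$-matrix acting on a product of Witten--Kontsevich $\tau$-functions, with the $R$-matrix determined by the analytically continued quantum (respectively Saito) connection. The genus-zero identification of Step 2 then forces $\D_{W,G_W} = \widehat{U}\,\D_{\X}$ after applying the mirror map, which is the precise form of the asserted isomorphism. The decisive difficulty --- and the reason the analysis of the Witten equation in the presence of Ramond marked points developed in this paper is a prerequisite --- is Step 2 for general $W$: obtaining enough explicit control of the FJRW virtual cycle in the broad sectors to compute $I^{FJRW}$. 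The remaining obstacles are proving convergence of the FJRW $J$-function so that analytic continuation is meaningful, establishing generic semisimplicity of the global family, and making rigorous the identification of $U$ through the derived equivalence; and, exactly as in the quintic case, when $W/\aut(W)$ fails to be literally semisimple one must argue at a nearby semisimple point and control the limit.
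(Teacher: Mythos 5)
The paper does not prove this statement. It appears only as a conjecture, and the authors say explicitly that ``the precise mathematical statement of the above conjecture is still lacking at this moment,'' deferring any treatment to future work. There is therefore no proof in the paper against which to compare your argument; what you have written is a research program, not a proof of anything the paper establishes.

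As a program, your three steps trace the Chiodo--Ruan strategy (later carried out for the quintic), and Step 1 can indeed be made precise. But the gaps are not merely technical, and two of them are fatal at the stated level of generality. In Step 2, the computation of the FJRW genus-zero cone for a generic quasi-homogeneous $W$ is blocked exactly where you flag it: in the broad (Ramond) sectors the virtual cycle is not an Euler class of a pushforward, no concavity or index-zero axiom applies, and no general formula for $I^{FJRW}$ is known; ``feed in the virtual cycle constructed in this paper'' is a placeholder for the hardest unsolved part of the problem, and likewise the identification of the symplectic transformation $U$ with the $\widehat{\Gamma}$-avatar of the Orlov equivalence is itself conjectural. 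In Step 3, convergence of the FJRW $J$-function off the formal neighborhood of the Landau--Ginzburg point and generic semisimplicity of the global family are unproven hypotheses, without which the Givental--Teleman reconstruction cannot be invoked. So your proposal correctly identifies the shape a proof should take, but none of its three steps is established for a generic $W$, which is consistent with the paper's own assessment that even the precise formulation of the correspondence remains open.
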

This is certainly one of the most important conjectures in the
subject. The importance of the conjecture comes from the physical
indication that the LG/singularity-theory is much easier to
compute than the Calabi-Yau geometry.  The precise mathematical
statement of the above conjecture is still lacking at this moment (see \cite{ChR} also).
We hope to come back to it on another occasion.

We conclude by noting that it would be a very interesting problem to explore how to extend our results to a setting like that treated by Guffin and Sharpe in \cite{GS1,GS2}.   They have considered twisted Landau-Ginzburg models without coupling to topological gravity, but over more general orbifolds; whereas our model couples to topological gravity, but we work exclusively with orbifold vector bundles.

\subsection{Organization of the paper}

A complete construction of our theory will be carried out in a
series of papers.  In this paper, we give a complete description
of the algebro-geometric aspects of our theory. The information
missing is the analytic construction of the moduli space of
solutions of the Witten equation and its virtual fundamental
cycle, which is done in a separate paper \cite{FJR-WEVFC}.
Here, we summarize the main properties or axioms of the cycle and
their consequences.  The main application is the proof of Witten's
self-mirror conjecture and integrable hierarchies conjecture for
ADE-singularities.

The paper is organized as follows. In Section 2, we will set up
the theory of $W$-structures. This is the background data for the
Witten equation and a generalization of the well-known theory of
$r$-spin curves. The analog of quantum cohomology groups and the state space of
the theory will be described in Section 3. In Section 4, we
formulate a list of axioms of our theory. The proof of Witten's
mirror symmetry conjecture is in Section 5. The proof of his
integrable hierarchies conjecture is in Section 6.

\subsection{Acknowledgments}

The third author would like to express his special thanks to E.
Witten for explaining to him his equation in 2002 and for his
support over these years.  Thanks also go to K. Hori and A. Klemm for many stimulating discussions about Landau-Ginzburg models.

The last two authors would like to thank R. Kaufmann for explaining his
work, for many helpful discussions, and for sharing a common
interest and support on this subject for these years. We also thank Marc Krawitz for showing us the Berglund-H\"ubsch mirror construction, Eric Sharpe for explaining to us some aspects of his work in \cite{GS1,GS2}, and Alessandro Chiodo for his insights.  The second
author would also like to thank T. Kimura for helpful discussions
and insights, and the Institut Mittag-Leffler, for providing a
stimulating environment for research.

The first author would like to
thank K.~C.~Chang, Weiyue Ding, and J.~Jost for their long-term
encouragement and support, and especially he wants to thank Weiyue
Ding for fruitful discussions and warm help for many years. He
also thanks Bohui Chen for many useful suggestions and comments.
Partial work was done when the first author visited MPI in Leipzig,
MSRI in Berkeley and the University of Wisconsin-Madison
respectively; he appreciates their hospitality.

The first and second authors thank H.~Tracy Hall for many helpful discussions and for his ideas and insights related to Proposition~\ref{prop:HTH-Laurent} and Equation~(\ref{eq:HTH-roots}).

Finally, the third
author would like to thank the University of Wisconsin-Madison,
where the much of the current work has taken place, for warm
support and fond memories.

\section{$W$-curves and their moduli}\label{sec:moduli}

\subsection{$W$-structures on orbicurves}
\subsubsection{Orbicurves and line bundles}

Recall that an orbicurve $\cC$\glossary{CC@$\cC$ & An orbicurve} with
marked points $p_1, \dots, p_k$ is a (possibly nodal) Riemann surface
$C$\glossary{CCC@$C$ & The coarse (underlying) curve of the orbicurve
  $\cC$} with orbifold structure at each $p_i$ and each node.  That is
to say, for each marked point $p_i$ there is a local group $G_{p_i}$
and (since we are working over $\C$) a canonical isomorphism $G_{p_i}
\cong \Z/m_i$ for some positive integer $m_i$.  A neighborhood of
$p_i$ is uniformized by the branched covering map $z \rTo z^{m_i}$.
For each node $p$ there is again a local group $G_p \cong \Z/n_j$
whose action is complementary on the two different branches.  That is
to say, a neighborhood of a nodal point (viewed as a neighborhood of
the origin of $\{z w=0\}\subset \C^2$) is uniformized by a branched
covering map $(z,w)\rTo (z^{n_j}, w^{n_j})$, with $n_j\geq 1$, and
with group action $e^{2 \pi i /n_j}(z,w)=(e^{2 \pi i /n_j}z, e^{-2\pi
  i/n_j}w)$.

\begin{df}
We will call the orbicurve $\cC$ \emph{smooth} if the underlying curve
$C$ is smooth, and we will call the orbicurve \emph{nodal} if the
underlying curve $C$ is nodal.
\end{df}
Note that this definition agrees with that of
algebraic geometers for smooth Deligne-Mumford stacks, but it differs from that of many topologists (e.g., \cite{CR1}) since orbicurves with non-trivial orbifold structure at a point will still be called smooth when the underlying curve is smooth.

We denote by $\varrho:\cC \rTo C$\glossary{rho@$\varrho$ & The
  projection to coarse (underlying) space, given by forgetting the
  orbifold structure} the natural projection to the underlying
(coarse, or non-orbifold) Riemann surface $C$. If $\LL$ is a line
bundle on $C$, it can be uniquely lifted to an orbifold line bundle
$\varrho^*{\LL}$ over $\cC$. When there is no danger of confusion, we
use the same symbol $\LL$ to denote its lifting.

\begin{df}\label{df:K-log}
Let $K_C$\glossary{K@$K$ & The canonical (relative dualizing) bundle}
be the canonical bundle of $C$.  We define the \emph{log-canonical
  bundle of $C$} to be the line bundle
 $$K_{C,\log} := K \otimes \O(p_1) \otimes \dots \otimes
  \O(p_k),\glossary{Klog@$K_{\log}$ & The log-canonical bundle}$$ where
  $\O(p)$ is the holomorphic line bundle of degree one whose sections
  may have a simple pole at $p$.  This bundle $K_{C,\log}$ can be
  thought of as the canonical bundle of the punctured Riemann surface
  $C-\{p_1, \dots, p_k\}$.

The \emph{log-canonical bundle of $\cC$} is defined to be the pullback
to $\cC$ of the log-canonical bundle of $C$:
\begin{equation}\label{eq:Klog}
K_{\cC,\log} := \varrho^* K_{C,\log}.
\end{equation}
\end{df}

Near a marked point $p$ of $C$ with local coordinate $x$, the bundle
$K_{C,\log}$ is locally generated by the meromorphic one-form $dx/x$.
If the local coordinate near $p$ on $\cC$ is $z$, with $z^m=x$, then
the lift $K_{\cC,\log} :=\varrho^*(K_{C,\log})$ is still locally
generated by $m\, dz/z = dx/x$.  When there is no risk of confusion,
we will denote both $K_{C,\log}$ and $K_{\cC,\log}$ by $K_{\log}$.  Near
a node with coordinates $z$ and $w$ satisfying $zw=0$, both $K$
and $K_{\log}$ are locally generated by the one-form $dz/z = -dw/w$.

Note that although $\varrho^* K_{C,\log} =
  K_{\cC,\log},$ the usual canonical bundle does not pull back to itself:
\begin{equation}\varrho^* K_{C}  = K_{\cC} \otimes \O(-\sum_{i=1}^k (m_i-1)p_i)
\neq K_{\cC},\end{equation} where $m_i$ is the order of the local group  at $p_i$.  This can be seen from the fact that when $x=z^m$ we have \begin{equation}\label{eq:CoarseFineForms}dx = m z^{m-1}
  dz.\end{equation}

\subsubsection{Pushforward to the underlying curve}
\label{rem:desingularize}
If $\LL$ is an orbifold line bundle on a smooth orbicurve $\cC$, then
the sheaf of locally invariant sections of $\LL$ is locally free of
rank one, and hence dual to a unique line bundle $|\LL|$ on $\cC$. We
also denote $|\LL|$ by $\varrho_* \LL$, and it is called the
``desingularization" of $\LL$\glossary{Lbar@$"|\LL"|$ & The
  desingularization of the line bundle $\LL$} in \cite[Prop
4.1.2]{CR1}. It can be constructed explicitly as follows.

We keep the local trivialization at non-orbifold points, and change it
at each orbifold point $p$.  If $\LL$ has a local chart $\Delta \times
\C$ with coordinates $(z, s)$, and if the generator $1\in \Z/m \cong G_p$ acts locally
on $\LL$ by $(z,s) \mapsto (\exp(2\pi i/m) z, \exp(2\pi i v /m) s)$,
then we use the $\Z/m$-equivariant map $\Psi: (\Delta-\{0\})\times \C
\rTo \Delta \times \C$ given by
\begin{equation}(z, s)\rTo (z^m, z^{-v}s),
\label{eq:desing-triv}
\end{equation}
where $\Z/m$ acts trivially on the second $\Delta\times \C$.
Since $\Z/m$ acts trivially, this gives a line bundle over $C$, which is $|\LL|$.

If the orbicurve $\cC$ is nodal, then the pushforward $\varrho_*\LL$
of a line bundle $\LL$ may not be a line bundle on $C$.  In fact, if
the local group $G_p$ at a node acts non-trivially on $\LL$, then the
invariant sections of $\LL$ form a rank-one torsion-free sheaf on $C$
(see \cite{AJ}).  However, we may take the normalizations $\tcC$ and
$\widetilde{C}$ to get (possibly disconnected) smooth curves, and the
pushforward of $\LL$ from $\tcC$ will give a line bundle on
$\widetilde{C}$.  Thus $|\LL|$ is a line bundle away from the nodes of
$C$, but its fiber at a node is two-dimensional; that is, there is
(usually) no gluing condition on $|\LL|$ at the nodal points.  The
situation is slightly more subtle than this (see \cite{AJ}), but for
our purposes, it will be enough to consider the pushforward $|\LL|$ as
a line bundle on the normalization $\widetilde{C}$ where the local
group acts trivially on $\LL$.

It is also important to understand more about the sections of the
pushforward $\varrho_*\LL$. Suppose that $s$ is a section of $|\LL|$
having local representative $g(u)$.  Then $(z, z^v g(z^m))$ is a
local section of $\LL$. Therefore, we obtain a section
$\varrho^*(s)\in \Omega^0(\LL)$ which equals $s$ away from orbifold
points under the identification given by
Equation~\ref{eq:desing-triv}. It is clear that if $s$ is holomorphic,
so is $\varrho^*(s)$. If we start from an analytic section of $\LL$,
we can reverse the above process to obtain a section of $|\LL|$. In
particular, $\LL$ and $|\LL|$ have isomorphic spaces of holomorphic
sections: $$\varrho^*: H^0(C,|\LL|) \irightarrow H^0(\cC,\LL). $$ In
the same way, there is a map $\varrho^*: \Omega^{0,1}(|\LL|)\rTo
\Omega^{0,1}(\LL)$, where $\Omega^{0,1}(\LL)$ is the space of orbifold
$(0,1)$-forms with values in $\LL$. Suppose that $g(u)d\bar{u}$ is a
local representative of a section of $t\in \Omega^{0,1}(|\LL|)$. Then
$\varrho^*(t)$ has a local representative $z^v g(z^m) m \bar{z}^{m-1}
d\bar{z}$.  Moreover, $\varrho$ induces an isomorphism
$$\varrho^* :H^1(C,|\LL|)\irightarrow H^1(\cC,\LL).$$

\begin{ex}
The pushforward $|K_{\cC}|$ of the log-canonical bundle of any
orbicurve $\cC$ is again the log-canonical bundle of $C$, because at a
point $p$ with local group $G_p \cong \Z/m$ the one-form $m\, dz/z =
dx/x$ is invariant under the local group action.

Similarly, the pushforward $|K_{\cC}|$ of the canonical bundle of
$\cC$ is just the canonical bundle of $C$:
\begin{equation}\label{eq:KbarIsK}
|K_{\cC}| = \varrho_*K_{\cC} = K_C,
\end{equation}
 because the local group $\Z/m$ acts on the one-form $dz$ by $\exp(2
 \pi i /m) dz$, and the invariant holomorphic one-forms are precisely
 those generated by $mz^{m-1}dz = dx$.
\end{ex}

\subsubsection{Quasi-homogeneous polynomials and their Abelian automorphisms}

\begin{df}\label{df:qhomPoly}
A \emph{quasi-homogeneous} (or \emph{weighted homogeneous})
\emph{polynomial} $W \in \mathbb{C} [x_1, \dots, x_N]$\glossary{N@$N$
  & The number of variables in the quasi-homogeneous polynomial $W$
  and the number of line bundles in the $W$-structure}\glossary{W@$W$
  & A quasi-homogeneous polynomial} is a polynomial for which there
exist positive rational numbers $q_1, \dots, q_N \in \Q^{>0}$, such
that for any $\lambda \in \mathbb{C}^*$
\begin{equation}\label{eq:qhomocharge}
W(\lambda^{q_1}x_1, \dots, \lambda^{q_N}x_N) =\lambda W(x_1,
\dots, x_N).
\end{equation}
 We will call $q_j$ the \emph{weight}\glossary{qj@$q_j$ & The weight (charge)
   of the variable $x_j$}\glossary{qy@$q_y$ & The weight of the
   variable $y$} of $x_j$.
   We define
$d$ and $n_i$ for $i\in \{1,\dots,N\}$ to be the unique positive
integers such that $(q_1,\dots,q_N) = (n_1/d, \dots, n_N/d)$ with
$\gcd(d,n_1,\dots,n_N) =1$.

\end{df}

Throughout this paper we will need a certain nondegeneracy condition on $W$.
\begin{df}\label{df:nondegenerate} We call $W$ \emph{nondegenerate}
if \begin{enumerate}
\item\label{it:nondegen-unique}
$W$ contains no monomial of the form $x_ix_j$, for $i\neq j$ and
\item The hypersurface defined by $W$ in weighted
projective space is non-singular, or, equivalently, the affine
hypersurface defined by $W$ has an isolated singularity at the
origin.
  \end{enumerate}
    \end{df}
The following proposition was pointed out to us by N.~Priddis and follows from \cite[Thm 3.7(b)]{HK}.
\begin{prop}
If $W$ is a non-degenerate, quasi-homogeneous polynomial, then the weights
   $q_i$ are bounded by $q_i\leq \frac{1}{2}$ and are unique.
   \end{prop}
   
From now on, when we speak of a quasi-homogeneous polynomial $W$,
we will assume it to be nondegenerate.

\begin{df}
Write the polynomial $W = \sum^s_{j=1}W_j$
as a sum of monomials
$W_j = c_j \prod_{\ell=1}^N x_\ell^{b_{j\ell}}$, with $b_{j\ell} \in \Z^{\ge 0}$,
and with $c_j \neq 0$.  Define the $s\times N$ matrix
\begin{equation}\label{eq:B}
B_W := (b_{j \ell }),
\end{equation}
and let $B_W=VTQ$ be the \emph{Smith normal form} of $B_W$ \cite[\S12 Thm 4.3]{Art}.  That is, $V$ is an $s\times s$ invertible integer matrix and $Q$ is an $N\times N$ invertible integer matrix. The matrix $T = (t_{j \ell })$ is an  $s\times N$ integer matrix with $t_{j\ell} = 0$ unless $\ell=j$, and $t_{\ell, \ell}$ divides $t_{\ell+1, \ell+1}$ for each $\ell \in \{1,\dots,N-1\}$.
\end{df}

\begin{lm}\label{lm:group}
If $W$ is nondegenerate, then the group $$G_W:=\{(\alpha_1,
\dots, \alpha_N) \in (\mathbb{C}^*)^N \, | \ W (\alpha_1 x_1,
\dots, \alpha_N x_N)=W(x_1, \dots, x_N)\}$$\glossary{G@$G$ & The group of diagonal symmetries of $W$, usually the full group, but sometimes a subgroup}\glossary{GW@$G_W$ &The maximal group of diagonal symmetries of $W$}
 of diagonal symmetries
of $W$ is finite.
     \end{lm}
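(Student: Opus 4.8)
The plan is to show that the diagonal symmetry group $G_W$ is a closed algebraic subgroup of $(\C^*)^N$ that is also $0$-dimensional, hence finite. The key input is nondegeneracy, specifically that $W$ has an isolated singularity at the origin, which is equivalent (by the classical Euler-type criterion for quasi-homogeneous polynomials) to the statement that the partial derivatives $\partial W/\partial x_1, \dots, \partial W/\partial x_N$ form a regular sequence, so that the Jacobian ring $\C[x_1,\dots,x_N]/(\partial_1 W, \dots, \partial_N W)$ is finite-dimensional over $\C$. Equivalently, the common zero locus of the $\partial_i W$ in $\C^N$ is just the origin.

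\smallskip

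First I would observe that $G_W$ is the subgroup of the diagonal torus $(\C^*)^N$ cut out by the equations $\alpha^{B_W \cdot (\text{row})} = 1$ coming from requiring each monomial $W_j$ to be fixed; concretely, writing $W_j = c_j \prod_\ell x_\ell^{b_{j\ell}}$, an element $(\alpha_1,\dots,\alpha_N)$ lies in $G_W$ if and only if $\prod_\ell \alpha_\ell^{b_{j\ell}} = 1$ for every $j$. Thus $G_W$ is a closed subgroup of $(\C^*)^N$, hence a diagonalizable algebraic group, and it is finite if and only if it is $0$-dimensional, i.e.\ if and only if its identity component is trivial. The identity component, being a subtorus, has Lie algebra equal to the solution space in $\C^N$ of the linear system $\sum_\ell b_{j\ell} \xi_\ell = 0$ for all $j$; equivalently, the kernel of the matrix $B_W$ acting on $\C^N$ from the right. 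So it suffices to prove that $B_W$ has rank $N$, i.e.\ that the only vector $\xi \in \C^N$ with $B_W \xi = 0$ is $\xi = 0$.

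\smallskip

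Now suppose for contradiction that $\xi = (\xi_1,\dots,\xi_N) \ne 0$ satisfies $\sum_\ell b_{j\ell}\xi_\ell = 0$ for all $j$. Consider the one-parameter family of diagonal changes of coordinates, or more usefully the associated derivation: the vector field $D = \sum_\ell \xi_\ell x_\ell \partial/\partial x_\ell$ kills each monomial $W_j$, hence $D(W) = 0$. Combined with the quasi-homogeneity relation $\sum_\ell q_\ell x_\ell \partial_\ell W = W$ (the Euler relation, obtained by differentiating \eqref{eq:qhomocharge} in $\lambda$ at $\lambda=1$), I would now localize to derive a contradiction with the isolated-singularity hypothesis: the existence of a nonzero such $\xi$ means that on the hypersurface $\{W=c\}$ for generic $c$ the Euler vector field and $D$ are two independent "radial-type" symmetries, forcing the singular locus of $W$ to contain a positive-dimensional torus orbit through a generic point, contradicting that the singularity is isolated at the origin. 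Equivalently, and more cleanly: a nonzero $\xi$ in $\ker B_W$ exactly says the weights $q_\ell$ are \emph{not} uniquely determined (one can add any small multiple of $\xi$ to $(q_1,\dots,q_N)$ and still have $W$ quasi-homogeneous of degree $1$), which directly contradicts condition~\eqref{it:nondegen-unique} of Definition~\ref{df:nondegenerate}. So in fact only the uniqueness-of-weights half of nondegeneracy is needed.

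\smallskip

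The main obstacle is making precise the passage from "$\ker B_W \ne 0$" to a genuine violation of nondegeneracy, and deciding which formulation of nondegeneracy to invoke. The cleanest route uses uniqueness of weights: if $B_W \xi = 0$ with $\xi \ne 0$, then since $(q_1,\dots,q_N)$ satisfies $B_W (q_1,\dots,q_N)^{\mathsf T} = (1,\dots,1)^{\mathsf T}$ (each monomial has total weighted degree $1$), the vector $(q_1,\dots,q_N) + t\xi$ satisfies the same system for every $t \in \C$, and for $|t|$ small it still has all positive entries, giving a distinct valid system of weights — contradicting Definition~\ref{df:nondegenerate}\eqref{it:nondegen-unique}. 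Hence $\ker B_W = 0$, the identity component of $G_W$ is trivial, and since $G_W$ is a closed (hence algebraic, hence Noetherian) subgroup of $(\C^*)^N$ with trivial identity component, it has only finitely many components, so $G_W$ is finite. I would double-check the one subtle point that $(q_1,\dots,q_N)+t\xi$ genuinely still makes $W$ quasi-homogeneous, which is immediate since $D(W_j)=0$ for all $j$ means adding $t\xi$ to the weight vector changes the weighted degree of $W_j$ by $t\sum_\ell b_{j\ell}\xi_\ell = 0$.
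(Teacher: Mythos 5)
Your proposal is correct, and it takes a genuinely different route from the paper in both halves of the argument.

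On the key input, the paper simply asserts that ``uniqueness of the weights is equivalent to $B_W$ having rank $N$'' and moves on; you actually supply the needed direction of that equivalence, and cleanly: if $\xi\in\ker B_W$ is nonzero, then $(q_1,\dots,q_N)+t\xi$ is another positive weight vector for small $t$, contradicting uniqueness. One small point worth writing down: since $B_W$ has integer entries, you should take $\xi$ rational (the rational kernel is nonzero iff the real kernel is), so that $(q_1,\dots,q_N)+t\xi$ lies in $\Q^{>0}$ for small rational $t$ as the definition requires. That is a one-line fix.

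On the finiteness step the approaches diverge more substantially. The paper argues elementarily: writing $\alpha_j=\exp(u_j+v_j i)$, the relations $B(\mathbf u+\mathbf v i)\equiv\mathbf 0\pmod{2\pi i}$ together with invertibility of $B$ force $\mathbf u=0$, so $G_W\subset U(1)^N$, and then one counts the finitely many solutions of the lattice congruence $B\mathbf v\equiv 0\pmod{2\pi}$. You instead note that $G_W$ is a closed subgroup scheme of the torus $(\C^*)^N$ cut out by monomial equations, identify the Lie algebra of its identity component with $\ker B_W$, conclude the identity component is trivial, and invoke finiteness of the component group of an algebraic group. This is more conceptual and shorter once the relevant facts about diagonalizable groups are granted; the paper's computation is more self-contained. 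The intermediate paragraph of your write-up flirting with the isolated-singularity formulation and a ``torus orbit in the singular locus'' argument is a detour you correctly abandon --- it would require more care to make precise and is not needed, since the uniqueness-of-weights clause of nondegeneracy already does the job. Both the paper and the final form of your argument use only that clause, not the isolated-singularity clause.
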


\begin{proof}
The uniqueness of the weights $q_i$ is equivalent to saying that
the matrix  $B_W$ has rank $N$.  We may as well assume that $B_W$ is invertible.  Now write $\gamma = (\alpha_1, \dots, \alpha_N) \in
\grp$, as $\alpha_j = \exp(u_j + v_j i)$ for $u_j\in \R$ uniquely
determined, and $v_j \in \R$ determined up to integral multiple of
$2\pi i$. The equation $W(\alpha_1x_1, \dots, \alpha_N x_N) =
W(x_1, \dots, x_N)$ can be written as $B_W (\mathbf{u}+\mathbf{v}i)
\equiv \mathbf{0} \pmod{2\pi i}$, where
$\mathbf{u}+\mathbf{v}i=(u_1+v_1 i,\dots,u_N+v_N i)$ and
$\mathbf{0}$ is the zero vector.  Invertibility of $B_W$ shows that
$u_\ell = 0$ for all $\ell$.  Thus $\grp$ is a subgroup of $U(1)^N$, and a
straightforward argument shows that the number of solutions
(modulo $2\pi i$) to the equation $B_W (\mathbf{v} i) \equiv
\mathbf{0} \pmod{2\pi i}$ is also finite.
\end{proof}

\begin{df}\label{def:J}\label{def:theta}
We write each element $\gamma \in G_W$  (uniquely) as $$\gamma =
( \exp(2 \pi i \Theta_1^{\gamma}), \dots, \exp(2 \pi i
\Theta_N^{\gamma})),$$\glossary{Theta@$\Theta^{\gamma}_i$ & The $\log$ of the action of $\gamma$ on $x_i$}
 with $\Theta_i^{\gamma} \in [0, 1) \cap
\Q$.

There is a special element $J$\glossary{J@$J$ & The exponential grading element} of the group $G_W$ which is
defined to be $$J:=(\exp(2\pi i q_1), \dots, \exp(2\pi i q_N)),$$
where the $q_i$ are the weights defined in
Definition~\ref{df:nondegenerate}.  Since $q_i\neq 0$ for all $i$,
we have $\Theta_i^{J} = q_i$.  By definition, the order of the element $J$  is $d$.

The element $J$ will play an important role in the remainder of the paper.
\end{df}

For any $\gamma\in G_W$, let
    $\C^{N_{\gamma}}:=\left(\C^N\right)^\gamma$ be the set of fixed points
    of $\gamma$ in $\C^N$, let $N_{\gamma}$
    denote its complex dimension, and let
    $W_\ga:=\restr{W}{\C^{N_\gamma}}$ be the quasi-homogeneous singularity restricted to the fixed point
    locus of $\ga$.  The polynomial $W_{\gamma}$ defines a quasi-homogeneous
    singularity of its own in $\C_\gamma^{N_\ga}$, and $W_{\gamma}$ has its own Abelian
    automorphism group. However, we prefer to think of the original group $G_W$
    acting on $\C^{N_{\gamma}}$.  Note that $G_W$ preserves the subspace $\C^{N_\ga}\subseteq \C^N$.

\begin{lm}
If $W$ is a non-degenerate, quasi-homogeneous polynomial, then for any $\ga\in G_W$, the polynomial $W_\ga$ has no non-trivial critical points.  Therefore, $W_{\gamma}$ is itself a non-degenerate, quasi-homogeneous polynomial in the variables fixed by $\ga$.

\end{lm}
\begin{proof}
Let $\mfm \subset \C[x_1,\dots,x_N]$ be the ideal generated by the variables not fixed by $\ga$, and write $W$ as
$W = W_\ga + W_{moved}$, where $W_{moved}\in \mfm$.
In fact, we have $W_{moved}\in \mfm^2$  because if any monomial in $W_{moved}$ does not lie in $\mfm^2$, it can be written as $x_mM$, where $M$ is a monomial fixed by $\ga$.  However, $\ga\in G_W$ acts diagonally, and it must fix $W$, and hence it must fix every monomial of $W$, including $x_m M$.  Since it fixes $M$ and $x_mM$, it must also fix $x_m$---a contradiction.  This shows that $W_{moved}\in \mfm^2$.

Now we can show that there are no non-trivial critical points of $W_\ga$.    For simplicity, re-order the variables so that $x_1,\dots,x_\ell$ are the fixed variables, and $x_{\ell+1},\dots,x_N$ are the remaining variables.
If there were a non-trivial critical point of $W_\ga$, say $(\alpha_1,\dots,\alpha_\ell)\in \C^{\ell}$, then the point $(\alpha_1,\dots,\alpha_\ell,0,\dots,0) \in \C^N$ would be a non-trivial critical point of $W$.  To see this, note that for any $i\in \{1,\dots,N\}$ we have  $$\restr{\frac{\partial W_{moved}}{\partial x_i}}{ (\alpha_1,\dots,\alpha_\ell,0,\dots,0)} = 0,$$ since $W_{moved}\in \mfm^2$.  This gives
$$
\restr{\frac{\partial W}{\partial x_i}}{ (\alpha_1,\dots,\alpha_\ell,0,\dots,0)} = \restr{\frac{\partial W_\ga}{\partial x_i}}{ (\alpha_1,\dots,\alpha_\ell)} + \restr{\frac{\partial W_{moved}}{\partial x_i}}{ (\alpha_1,\dots,\alpha_\ell,0,\dots,0)}\\
= 0,
$$
which shows that $(\alpha_1,\dots,\alpha_\ell,0,\dots,0)$ is a non-trivial critical point of $W$---a contradiction.
\end{proof}

\subsubsection{$W$-structures on an orbicurve}\label{sec:W-struct}

A $W$-structure on an orbicurve $\cC$ is essentially a choice of $N$ line bundles $\LL_1,\dots,\LL_N$ so that for each monomial $W_j = x_1^{b_{j,1}}\cdots x_N^{b_{j,N}}$ we have an isomorphism of line bundles
$$\varphi_j:\LL_1^{\otimes b_{j,1}}\cdots \LL_N^{\otimes b_{j,N}} \rTo K_{\log}.$$
However, the isomorphisms $\varphi_j$ need to be compatible, in the sense that at any point $p$ there exists a trivialization $\restr{\LL_i}{p} \cong \C$ for every $i$ and  $\restr{K_{\log}}{p}\cong\C \cdot dz/z$ such that for all $j\in\{1,\dots,s\}$ we have $\varphi_j(1,\dots,1) = 1\cdot dz/z \in \C$.  If $s=N$ we can choose such trivializations for any choice of maps $\{\varphi_j\}$, but if $s>N$ then the choices of $\{\varphi_j\}$ need to be related. To do this we use the Smith normal form to give us a sort of minimal generating set of isomorphisms which will determine all the maps $\{\varphi_j\}$.

\begin{df}\label{df:W-structure}
For any nondegenerate, quasi-homogeneous polynomial $W \in
\C[x_1, \dots, x_N]$, with matrix of exponents $B_W=(b_{\ell j})$ and Smith normal form $B_W=VTQ$, let $A:=(a_{j \ell}):=V^{-1}B=TQ$, and let
$u_\ell$ be the sum of the entries in the $\ell$th row of $V^{-1}$ (i.e., the $\ell$th term in the product $V^{-1} (1,1,\dots,1)^T$).

For any $\ell\in \{1,\dots,N\}$ denote by $A_{\ell}(\LL_1,\dots,\LL_N)$ the tensor product $$A_{\ell}(\LL_1,\dots,\LL_N):=\LL_1^{\otimes a_{\ell 1}} \otimes\cdots \otimes \LL_N^{\otimes a_{\ell N}}$$

We define a \emph{$W$-structure} on an
orbicurve $\cC$ to be the data of an $N$-tuple $(\LL_1, \dots,
\LL_N)$ of orbifold line bundles on $\cC$ and
 isomorphisms $$\vpt_\ell:A_{\ell}(\LL_1,\dots,\LL_N)  \irightarrow K_{\cC,\log}^{u_\ell}$$ for every  $\ell \in \{1,\dots,N\}$.

Note that for each point $p \in \cC$,  an orbifold line bundle
$\LL$ on $\cC$ induces a representation $G_p \rTo \aut(\LL)
$.  Moreover, a $W$-structure on $\cC$ will induce a
representation $r_{p}: G_p \rTo   U(1)^N$.
For all our $W$-structures we require that this representation
$r_p$ be faithful at every point.
\end{df}
The next two propositions follow immediately from the definitions.
\begin{prop}\label{prop:SmithUnique}
The Smith normal form is not necessarily unique, but for any two choices of Smith normal form $B=VTQ = V'T'Q'$ a $W$-structure $(\LL_1,\dots,\LL_N,\vpt_1,\dots,\vpt_N)$  with respect to $VTQ$ induces a canonical $W$-structure $(\LL_1,\dots,\LL_N,\vpt'_1,\dots,\vpt'_N)$ with respect to $V'T'Q'$, where the isomorphism $\vpt'_i$ is given by $$\vpt'_i = \vpt_1^{z_{i1}}\otimes\cdots\otimes\vpt_N^{z_{iN}},$$ and where $Z = (z_{ij}) := (V')^{-1}V$.
\end{prop}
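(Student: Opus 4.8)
The plan is to check directly, using only elementary linear algebra over $\Z$, that the displayed formula for $\vpt'_i$ defines a $W$-structure with respect to the factorization $B = V'T'Q'$. First I would observe that $Z = (V')^{-1}V$ has integer entries: since $V'$ is an invertible integer matrix we have $\det V' = \pm 1$, so $(V')^{-1}$ is integral, hence so is the product $(V')^{-1}V$. Consequently each $\vpt_k^{\otimes z_{ik}}$ makes sense as an isomorphism of orbifold line bundles on $\cC$ — reading a negative exponent through the inverse (dual) isomorphism and the zero exponent as the identity of $\O$ — so that $\vpt'_i := \vpt_1^{\otimes z_{i1}}\otimes\cdots\otimes\vpt_N^{\otimes z_{iN}}$ is a well-defined isomorphism between tensor powers of the $\LL_j$ and of $K_{\cC,\log}$.

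Next I would identify its source and target. With $A = V^{-1}B = TQ$ and $A' = (V')^{-1}B = T'Q'$ we have $ZA = (V')^{-1}VV^{-1}B = A'$, and, writing $u = V^{-1}(1,\dots,1)^{T}$ and $u' = (V')^{-1}(1,\dots,1)^{T}$, also $Zu = u'$. Two small remarks make the truncation to $k\in\{1,\dots,N\}$ in the formula harmless: the rows of $T$, hence of $A = TQ$, beyond the $N$th are zero, so $a'_{ij} = \sum_{k=1}^N z_{ik}a_{kj}$; and quasi-homogeneity gives $B(q_1,\dots,q_N)^{T} = (1,\dots,1)^{T}$, so for $k > N$ the entry $u_k$ equals the product of the $k$th row of $V^{-1}$ (which annihilates $B$ on the left, since that row of $V^{-1}B = A$ vanishes) with $B(q_1,\dots,q_N)^{T}$, hence $u_k = 0$ and $u'_i = \sum_{k=1}^N z_{ik}u_k$. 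Therefore, reordering tensor factors by the canonical associativity and commutativity isomorphisms, $A'_i(\LL_1,\dots,\LL_N)$ is canonically isomorphic to $\bigotimes_{k=1}^N A_k(\LL_1,\dots,\LL_N)^{\otimes z_{ik}}$ and $K_{\cC,\log}^{u'_i} = \bigotimes_{k=1}^N (K_{\cC,\log}^{u_k})^{\otimes z_{ik}}$; composing $\bigotimes_k \vpt_k^{\otimes z_{ik}}$ with these identifications yields exactly an isomorphism $\vpt'_i : A'_i(\LL_1,\dots,\LL_N)\irightarrow K_{\cC,\log}^{u'_i}$, as required by Definition~\ref{df:W-structure} for $V'T'Q'$. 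The faithfulness hypothesis on the representations $r_p\colon G_p \to U(1)^N$ is automatic, since those depend only on $(\LL_1,\dots,\LL_N)$, which is unchanged.

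I do not anticipate a genuine obstacle: the content is purely bookkeeping, and the only points needing care are the conventions for negative tensor powers, the implicit use of the canonical symmetry isomorphisms for tensor products of line bundles, and the two vanishing observations above — which is why this can be recorded as following immediately from the definitions. As a consistency check one can also verify compatibility with composition of such changes of factorization: passing from $VTQ$ through $V'T'Q'$ to a third Smith normal form $V''T''Q''$ agrees with passing directly, because $(V'')^{-1}V = ((V'')^{-1}V')((V')^{-1}V)$; this is what justifies calling the induced $W$-structure \emph{canonical}.
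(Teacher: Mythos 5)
Your verification is correct, and it fleshes out exactly what the paper asserts when it says the proposition ``follows immediately from the definitions'' (the paper gives no further proof). The two bookkeeping points that actually carry the argument are the ones you isolate: that $Z=(V')^{-1}V$ is integral (so negative tensor powers are legitimate), and that the truncation of the $s\times s$ identity $ZA=A'$, $Zu=u'$ to indices $k\le N$ is harmless because rows $N+1,\dots,s$ of $A=TQ$ vanish and $u=Aq$ forces $u_k=0$ there as well. Your closing remark on functoriality under a third factorization is not needed for the statement but is a reasonable gloss on the word ``canonical.''
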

\begin{prop}\label{prop:SmithWStruct}
For each $j\in\{1,\dots,s\}$ the maps $\{\vpt_\ell \}$ induce  an isomorphism
\begin{equation}\label{eq:SmithWStruct}
\begin{split}
\varphi_j:= &
\vpt_1^{v_{j 1}}\otimes\cdots \otimes\vpt_N^{v_{j N}}: W_j(\LL_1,\dots,\LL_N) \\
&
=
\LL^{\otimes b_{j 1}}_1 \otimes \cdots
\otimes \LL^{\otimes b_{j,N}}_N =\LL_1^{\sum_\ell v_{j \ell} a_{\ell 1}}\otimes \cdots \LL_N^{\sum_\ell v_{j \ell} a_{\ell N}} \rTo K_{\cC,\log},\end{split}
\end{equation}
where $V=(v_{j\ell})$.

Moreover, if $B$ is square (and hence invertible), then a choice of isomorphisms $\varphi_j: \LL^{\otimes b_{j 1}}_1 \otimes \cdots
\otimes \LL^{\otimes b_{j,N}}_N \rTo K_{\cC,\log}$ for every $j\in \{1,\dots,N\}$ is equivalent to a choice of isomorphisms $\vpt_\ell:\LL_1^{\otimes a_{\ell 1}} \otimes\cdots \otimes \LL_N^{\otimes a_{\ell N}} \rTo K_{\cC,\log}^{u_\ell}$ for every  $\ell \in \{1,\dots,N\}$.

Finally, the induced maps $\varphi_j: \LL_1^{b_{j,1}}\otimes \cdots\otimes \LL_N^{b_{j,N}} \rTo K_{\cC,\log}$ are independent of the choice of Smith normal form $VTQ$
\end{prop}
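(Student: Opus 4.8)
The plan is to verify the three assertions of Proposition~\ref{prop:SmithWStruct} by direct bookkeeping with the Smith normal form, leaning on Proposition~\ref{prop:SmithUnique} for the independence statement. First I would establish the chain of identities among exponent matrices: since $A = V^{-1}B_W = TQ$, we have $B_W = VA$, so the $j$th row of $B_W$ is $\sum_\ell v_{j\ell}(a_{\ell 1},\dots,a_{\ell N})$; this gives $b_{j,i} = \sum_\ell v_{j\ell}a_{\ell i}$, which is exactly the exponent identity appearing in~(\ref{eq:SmithWStruct}). Tensoring the isomorphisms $\vpt_\ell: A_\ell(\LL_1,\dots,\LL_N)\to K_{\cC,\log}^{u_\ell}$ with multiplicities $v_{j\ell}$ then yields an isomorphism from $\bigotimes_\ell A_\ell(\LL)^{\otimes v_{j\ell}} = \bigotimes_i \LL_i^{\otimes\sum_\ell v_{j\ell}a_{\ell i}} = \bigotimes_i \LL_i^{\otimes b_{j,i}} = W_j(\LL_1,\dots,\LL_N)$ to $K_{\cC,\log}^{\sum_\ell v_{j\ell}u_\ell}$. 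The one genuine computation here is that $\sum_\ell v_{j\ell}u_\ell = 1$ for every $j$: since $u_\ell$ is the $\ell$th entry of $V^{-1}(1,\dots,1)^T$, the vector $(u_1,\dots,u_N)^T = V^{-1}\mathbf 1$, so $V(u_1,\dots,u_N)^T = \mathbf 1$, i.e. $\sum_\ell v_{j\ell}u_\ell = 1$. (This uses that $B_W$ is square precisely when $s = N$; in the non-square case one must instead check that $V^{-1}$ applied to the all-ones vector of length $s$ gives a length-$N$ vector whose $V$-image is all-ones on those coordinates — I would phrase the argument so it covers both cases, noting that $u_\ell$ is defined as the row sum of $V^{-1}$ regardless.) This proves the first assertion.

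For the second assertion I would argue that when $B$ is square and invertible, $V$ is an invertible integer matrix, so the assignment $\{\vpt_\ell\}\mapsto\{\varphi_j\}$ via~(\ref{eq:SmithWStruct}) has an explicit inverse: given $\{\varphi_j\}$, set $\vpt_\ell := \bigotimes_j \varphi_j^{\otimes (v^{-1})_{\ell j}}$ where $V^{-1} = ((v^{-1})_{\ell j})$. One checks that this lands in the correct line bundle, $A_\ell(\LL_1,\dots,\LL_N)\to K_{\cC,\log}^{u_\ell}$, by the same exponent-matrix manipulation run in reverse (using $A = V^{-1}B$ and that the $\ell$th row sum of $V^{-1}$ is $u_\ell$), and that the two assignments are mutually inverse because $VV^{-1} = I$ makes the tensor-power exponents compose correctly. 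The compatibility condition on trivializations at each point $p$ translates under these operations into the same condition, so the bijection respects the definition of $W$-structure.

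For the final assertion — independence of $\{\varphi_j\}$ from the choice of Smith normal form — I would invoke Proposition~\ref{prop:SmithUnique}: given two Smith normal forms $B = VTQ = V'T'Q'$, the induced $W$-structures are related by $\vpt'_i = \bigotimes_k \vpt_k^{\otimes z_{ik}}$ with $Z = (V')^{-1}V$. Substituting into the formula $\varphi'_j = \bigotimes_i (\vpt'_i)^{\otimes v'_{ji}}$ and using $V' Z = V$ (so $\sum_i v'_{ji}z_{ik} = v_{jk}$) collapses the double tensor product back to $\bigotimes_k \vpt_k^{\otimes v_{jk}} = \varphi_j$. The main obstacle, and the only place real care is needed, is the non-square case $s > N$: there $V$ is $s\times s$ but $A = V^{-1}B_W$ only has $N$ meaningful rows (the rest of $T$ being zero), so one must be careful that the "row sum of $V^{-1}$" defining $u_\ell$ and the exponent identities still work out when the index ranges differ; I would handle this by observing that $T$ kills all but the first $N$ coordinates, so only the top $N\times s$ block of $V^{-1}$ enters every formula, and restrict attention there throughout. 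Everything else is linear algebra over $\Z$ combined with the elementary fact that tensoring isomorphisms of line bundles is associative and commutative and that $(\LL^{\otimes a})^{\otimes b} = \LL^{\otimes ab}$.
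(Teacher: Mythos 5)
The paper gives no proof of this proposition (it asserts it ``follows immediately from the definitions''), so there is nothing to compare against; the question is whether your sketch is complete. Your row-exponent bookkeeping and the explicit inverse in the square case are fine, but your handling of the non-square case $s > N$ leaves a real gap. The source computation is unproblematic: $A$ has its last $s-N$ rows zero, so $b_{ji} = \sum_{\ell=1}^s v_{j\ell}a_{\ell i}$ automatically collapses to the sum over $\ell \le N$. The target computation, however, needs $\sum_{\ell=1}^N v_{j\ell}u_\ell = 1$, while $Vu = \mathbf 1$ only gives $\sum_{\ell=1}^s v_{j\ell}u_\ell = 1$. Your parenthetical fix --- that ``$V^{-1}$ applied to the all-ones vector of length $s$ gives a length-$N$ vector whose $V$-image is all-ones'' --- is not well formed ($V^{-1}\mathbf 1$ has length $s$, not $N$), and the claim that ``only the top $N\times s$ block of $V^{-1}$ enters every formula'' is precisely the assertion that needs proof. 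What is actually missing is the identity $u = Aq$, where $q=(q_1,\dots,q_N)^T$ is the weight vector: quasi-homogeneity of degree one says $Bq = \mathbf 1$, so $u = V^{-1}\mathbf 1 = V^{-1}Bq = Aq$, and since the last $s-N$ rows of $A = TQ$ vanish, $u_\ell = 0$ for $\ell > N$. (The paper itself uses $Aq=u$ later, in the forgetting-tails computation.)

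The same gap recurs, slightly disguised, in your argument for the third assertion. Setting $Z = (V')^{-1}V$, you reduce to $\sum_{i=1}^N v'_{ji}z_{ik} = v_{jk}$ for $k\le N$, but $V'Z = V$ only gives the full sum over $i=1,\dots,s$; you still need the tail $\sum_{i>N}v'_{ji}z_{ik}$ to vanish. This does hold, because $z_{ik}=0$ for $i>N$, $k\le N$: from $ZA = (V')^{-1}B = A'$ and the vanishing of the last $s-N$ rows of $A$ and $A'$, one gets $\sum_{k\le N}z_{ik}a_{km}=0$ for $i>N$ and all $m$, and the top $N\times N$ block of $A$ is invertible. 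Once these two linear-algebra facts are supplied, the rest of your argument --- the exponent identity $b_{ji}=\sum_\ell v_{j\ell}a_{\ell i}$, the inverse $\vpt_\ell = \bigotimes_j \varphi_j^{\otimes (v^{-1})_{\ell j}}$ when $B$ is square, and the appeal to Proposition~\ref{prop:SmithUnique} --- goes through as you describe.
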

For the rest of this paper, we will assume that a choice of Smith normal form $B_W=VTQ$ has been fixed for each $W$.  
\begin{df}\label{df:W-isom}
Given any two $W$-structures $\fL:=(\LL_1, \dots, \LL_N, \vpt_1,
\dots, \vpt_N)$\glossary{Lfrak@$\fL$ & A $W$-structure} and $\fL':=(\LL'_1, \dots, \LL'_N, \vpt'_1,
\dots, \vpt'_N)$ on $\cC$, any set of morphisms
$\xi_j:\LL_j \rTo \LL'_j$ of orbifold line bundles for
$j\in\{1, \dots, N\}$ will induce a morphism
$$\Xi_l:\LL_1^{a_{\ell 1}} \otimes\cdots \otimes \LL_N^{a_{\ell N}} \rTo{}{}  {\LL'}_1^{a_{\ell 1}} \otimes\cdots \otimes {\LL'}_N^{a_{\ell N}}$$
 for every $l
\in \{1,\dots, s\}$.

An \emph{isomorphism of $W$-structures} $\Upsilon:\fL\rTo \fL'$ on $\cC$ is defined to be a collection of isomorphisms $\xi_j:\LL_j \rTo \LL'_j$ such that for every $\ell\in \{1,\dots,N\}$ we have $\vpt_\ell = \vpt'_\ell \circ  \Xi_\ell$.
\end{df}

It will be important later to know that different choices of maps $\{\vpt_j\}$ all give isomorphic $W$-structures.
\begin{prop}
For a given orbicurve $\cC$,  any two $W$-structures $\fL_1:=(\LL_1,\dots,\LL_N,\vpt_1,\dots,\vpt_N)$ and $\fL_2:=(\LL_1,\dots,\LL_N,\vpt'_1,\dots,\vpt'_N)$ on $\cC$ which have identical bundles $\LL_1,\dots,\LL_N$ 
are isomorphic.
\end{prop}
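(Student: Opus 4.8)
The plan is to compare the two $W$-structures directly, monomial by monomial, exploiting the fact that they share the same underlying line bundles $\LL_1,\dots,\LL_N$. Since the bundles agree, each $\vpt_\ell$ and each $\vpt'_\ell$ is an isomorphism with the \emph{same} source $A_\ell(\LL_1,\dots,\LL_N)$ and the \emph{same} target $K_{\cC,\log}^{u_\ell}$. Hence $\vpt'_\ell \circ \vpt_\ell^{-1}$ is an automorphism of $K_{\cC,\log}^{u_\ell}$, which on a connected orbicurve is multiplication by a nonzero scalar $c_\ell \in \C^*$ (for a nodal or disconnected curve one gets one such scalar on each component, and the argument below applies componentwise). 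So the problem reduces to finding isomorphisms $\xi_j:\LL_j \rTo \LL_j$, i.e.\ scalars $\mu_j \in \C^*$, such that the induced map $\Xi_\ell$ on $A_\ell(\LL_1,\dots,\LL_N)$ is multiplication by $\prod_j \mu_j^{a_{\ell j}}$, and such that $\vpt_\ell = \vpt'_\ell \circ \Xi_\ell$, i.e.
\begin{equation}\label{eq:scalar-match}
\prod_{j=1}^N \mu_j^{a_{\ell j}} = c_\ell^{-1} \quad \text{for every } \ell \in \{1,\dots,N\}.
\end{equation}

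The key step is then to solve the system \eqref{eq:scalar-match}. Writing $\mu_j = \exp(2\pi i w_j)$ and $c_\ell^{-1} = \exp(2\pi i \kappa_\ell)$, the system becomes $A \mathbf{w} \equiv \bvkappa \pmod{\Z^N}$, where $A = (a_{\ell j}) = TQ$ is the integer matrix from Definition~\ref{df:W-structure}. Because $W$ is nondegenerate, $B_W$ has rank $N$ (as noted in the proof of Lemma~\ref{lm:group}), and hence $A = V^{-1}B_W$ also has rank $N$; in particular $A$ is invertible over $\Q$, so $\mathbf{w} := A^{-1}\bvkappa \in \Q^N$ solves the system exactly over $\R/\Z$. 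This produces the required scalars $\mu_j$, hence the isomorphisms $\xi_j$, hence an isomorphism $\Upsilon = (\xi_1,\dots,\xi_N)$ of $W$-structures by Definition~\ref{df:W-isom}.

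The main obstacle — really the only point requiring care — is bookkeeping rather than mathematics: one must check that the $\xi_j$ defined as the scalars $\mu_j$ do induce precisely the map $\Xi_\ell$ given by the scalar $\prod_j \mu_j^{a_{\ell j}}$ (this is immediate from the definition of $\Xi_\ell$ as a tensor product of powers of the $\xi_j$, but the sign/exponent conventions must match Definition~\ref{df:W-isom}), and that on a nodal or disconnected $\cC$ the scalars can be chosen consistently. For the nodal case, note that the compatibility/faithfulness conditions in Definition~\ref{df:W-structure} constrain the local representations $r_p$ but not the scalars $\mu_j$, which may be chosen independently on each irreducible component; since we only need \emph{an} isomorphism and not one satisfying extra gluing constraints, solving \eqref{eq:scalar-match} componentwise suffices. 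Thus I would organize the proof as: (i) reduce to scalars $c_\ell$ via $\vpt'_\ell\circ\vpt_\ell^{-1}\in\operatorname{Aut}(K^{u_\ell}_{\cC,\log})=\C^*$; (ii) invert $A$ over $\Q$ using nondegeneracy to obtain $\mu_j\in\C^*$; (iii) verify $\Xi_\ell$ is scalar multiplication by $\prod_j\mu_j^{a_{\ell j}}=c_\ell^{-1}$ and conclude $\vpt_\ell=\vpt'_\ell\circ\Xi_\ell$.
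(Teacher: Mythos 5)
Your argument is essentially the paper's: reduce to solving $\prod_j\mu_j^{a_{\ell j}} = c_\ell^{-1}$ for $\ell\in\{1,\dots,N\}$ and invert using the nondegeneracy of $W$. Two small notational slips worth correcting: when $s>N$ the matrix $A=TQ$ is $s\times N$, so the object you actually invert is the nonsingular top $N\times N$ block $C$ of $A$ (the remaining rows of $T$, hence of $A$, are zero); and the exponents $\mathbf{w}=C^{-1}\bvkappa$ lie in $\C^N$ rather than $\Q^N$, since each $c_\ell$ is an arbitrary element of $\C^*$, not a root of unity.
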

\begin{proof}
For each $j\in\{1,\dots,N\}$ the composition $\vpt^{-1}_j\circ \vpt'_j$ is an automorphism of $K_{\log}^{u_j}$ and hence defined by an element $\exp(\alpha_j) \in \C^*$.

Since $B:=B_W$ is of maximal rank,  the product $TQ$ in the Smith normal form decomposition of B consists of a non-singular $N\times N$ block $C$ on top, with all remaining rows identically equal to zero.
$$ V^{-1}B = TQ =
\left(\begin{array}{c}
C\\
-\\
\boldsymbol{0}\\
\end{array}\right).
$$
Let $(\beta_1,\dots,\beta_N)^T := C^{-1}(\alpha_1,\dots,\alpha^N)^T \in \Q^{N}$. For every $\ell\in\{1,\dots,N\}$ the collection of automorphisms  $\{\exp(\beta_j):\LL_j\rTo\LL_j\}$ induces the automorphism $\exp(\sum_{i=1}^N a_{\ell i} \beta_i) = \exp(\alpha_\ell)$ on $\LL_1^{\otimes a_{\ell 1}} \otimes\cdots \otimes \LL_N^{\otimes a_{\ell N}}$ and hence takes $\vpt_\ell$ to $\exp(\alpha_\ell)\vpt_\ell = \vpt'_\ell$. Thus the collection $\{\exp(\beta_j)\}$ induces an isomorphism of $W$-structures $\fL_1 \irightarrow \fL_2$.
\end{proof}

\begin{ex}
In the case where $W=x^r$ (the $A_{r-1}$ singularity), a $W$-structure is an \emph{$r$-spin structure} (see \cite{AJ}).
\end{ex}

\begin{df} For each orbifold marked point $p_i$ we will denote the  image $r_{p_i}(1)$ of the canonical generator $1\in \Z/m_i \cong G_{p_i}$ in $U(1)^N$ by
$$\ga_i:=\ga_{p_i} :=r_{p_i}(1) = (\exp(2\pi i \Theta^\ga_1), \dots, \exp(2\pi i \Theta^\ga_N)).$$
\end{df}

The choices of orbifold structure for the line bundles in the $W$-structure is severely restricted by $W$.
\begin{lm}
 Let $(\LL_1, \dots, \LL_N, \vpt_1,\dots,\vpt_N)$ be a
$W$-structure on an orbicurve $\cC$  at an orbifold point $p \in
\cC$.  The faithful representation $r_p: G_p \rTo U(1)^N$ factors through $G_W$, so $\ga_i \in G_W$ for all $i\in \{1,\dots,k\}$.
\end{lm}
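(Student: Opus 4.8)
The plan is to work locally near the orbifold point $p$ and track the action of the local group $G_p \cong \Z/m$ on each ingredient of the $W$-structure. Writing $z$ for the local coordinate on $\cC$ at $p$, the canonical generator $1 \in \Z/m$ acts on $z$ by $z \mapsto \exp(2\pi i/m)\, z$. Since each $\LL_j$ is an orbifold line bundle, in a local trivialization the generator acts on the fiber coordinate $s_j$ by $s_j \mapsto \exp(2\pi i v_j/m)\, s_j$ for some $v_j \in \Z/m\Z$, and by definition $\ga_j = r_p(1)$ records exactly the tuple $(\exp(2\pi i v_j/m))_j = (\exp(2\pi i \Theta^\ga_j))_j$. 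On the other side, $K_{\cC,\log}$ is locally generated by $dz/z$, which is \emph{invariant} under the local group action (as noted after Definition~\ref{df:K-log}: the one-form $m\,dz/z = dx/x$ descends to the coarse curve), and hence so is every tensor power $K_{\cC,\log}^{u_\ell}$.

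The key step is then to apply the isomorphisms $\vpt_\ell: A_\ell(\LL_1,\dots,\LL_N) \irightarrow K_{\cC,\log}^{u_\ell}$, or more conveniently the induced isomorphisms $\varphi_j: \LL_1^{\otimes b_{j1}} \otimes\cdots\otimes \LL_N^{\otimes b_{jN}} \irightarrow K_{\cC,\log}$ of Proposition~\ref{prop:SmithWStruct}. Because $\varphi_j$ is an isomorphism of \emph{orbifold} line bundles, it is $G_p$-equivariant; the group acts on the left-hand side by the character $\exp\!\big(2\pi i \sum_{\ell} b_{j\ell} v_\ell / m\big)$ and on the right-hand side trivially, so $\sum_\ell b_{j\ell} v_\ell \equiv 0 \pmod m$ for every $j \in \{1,\dots,s\}$. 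In terms of the tuple $\ga_p = (\exp(2\pi i \Theta^\ga_1),\dots,\exp(2\pi i \Theta^\ga_N))$ this says precisely $\prod_\ell (\exp(2\pi i \Theta^\ga_\ell))^{b_{j\ell}} = 1$ for all monomials $W_j$ of $W$, i.e. $W_j(\ga_p \cdot x) = W_j(x)$ for each $j$, and therefore $W(\ga_p \cdot x) = W(x)$. Hence $\ga_p \in G_W$ by the definition of $G_W$ in Lemma~\ref{lm:group}, and since the representation $r_p$ sends the generator $1$ into $G_W$ and $G_W$ is a group, all of $r_p$ factors through $G_W$; applying this at each marked point $p_i$ gives $\ga_i \in G_W$ for all $i$.

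The one point requiring a little care — really the only obstacle — is justifying that it suffices to check equivariance of the $\varphi_j$ rather than of the $\vpt_\ell$ directly: when $s > N$ the maps $\varphi_j$ are defined through the Smith normal form via $\varphi_j = \vpt_1^{v_{j1}} \otimes \cdots \otimes \vpt_N^{v_{jN}}$, so one should note that each $\vpt_\ell$ is $G_p$-equivariant (being an isomorphism of orbifold line bundles), that tensor products and integer powers of equivariant maps are equivariant, and hence that every $\varphi_j$ is equivariant. Equivalently, one can argue directly from the $\vpt_\ell$: equivariance of $\vpt_\ell$ gives $\sum_i a_{\ell i} v_i \equiv 0 \pmod m$ (using that $u_\ell \cdot dz/z$ spans $K_{\cC,\log}^{u_\ell}$ and is invariant), i.e. $A\mathbf{v} \equiv \mathbf 0 \pmod m$ with $A = V^{-1}B$; multiplying by the integer matrix $V$ recovers $B\mathbf{v} \equiv \mathbf 0 \pmod m$, which is the condition $\ga_p \in G_W$. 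Either route is routine once the invariance of $K_{\cC,\log}$ under the local action is in hand.
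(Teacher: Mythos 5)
Your proof is correct and follows essentially the same route as the paper's: both observe that $K_{\cC,\log}$ is invariant under the local group action and that the $W$-structure isomorphisms force the action on each $\LL_1^{\otimes b_{j1}}\otimes\cdots\otimes\LL_N^{\otimes b_{jN}}$ to be trivial, giving $\sum_\ell b_{j\ell}\Theta^\ga_\ell\in\Z$ for every monomial $W_j$ and hence $\ga_p\in G_W$. The extra care you take in checking that this reasoning is legitimate when $s>N$ (either by passing from the $\vpt_\ell$ to the induced $\varphi_j$, or by multiplying $A\mathbf{v}\equiv\mathbf{0}$ by $V$ to recover $B\mathbf{v}\equiv\mathbf{0}$) is a worthwhile remark that the paper's terse proof leaves implicit.
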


\begin{proof}
Recall that for each $j \in \{1,\dots, s\}$, the bundle $W_j(\LL_1, \dots, \LL_N)= \LL^{\otimes b_{1,j}}_1 \otimes \dots
\otimes \LL^{\otimes b_{N,j}}_N$ is isomorphic to $K_{\log}$, and so the local group acts trivially on it.  However, the generator $\ga_p \in G_p$ acts on  $W_j(\LL_1, \dots, \LL_N) $ as
$\exp(2\pi i \sum_i b_{ij}\Theta_i^\gamma)$. Therefore $\sum_i b_{ij}\Theta_i^\gamma\in \Z$, and $\gamma$ fixes $W_j$.
\end{proof}

\begin{df}
A marked point $p$ of a $W$-curve is called \emph{narrow} if the fixed point locus $\fix(\gamma) \subseteq \C^N$ is just $\{0\}$.  The point $p$  is called \emph{broad} otherwise.
\end{df}

\begin{rem}\label{rem:principal-gamma}
Note that for any given orbicurve $\cC$, any two $W$-structures on
$\cC$ differ by line bundles $\N_1, \dots, \N_N$ with isomorphisms
$\xi_j:\N_1^{\otimes a_{\ell 1}} \otimes\cdots \otimes \N_N^{\otimes a_{\ell N}} \irightarrow \O_{\cC}$.  The set of
such tuples $(\N_1, \dots, \N_N, \xi_1, \dots, \xi_s)$, up to
isomorphism, is a group under tensor product, and is isomorphic to
the (finite) cohomology group $H^1(\cC, \grp)$.  Thus the set of
$W$-structures on $\cC$ is an $H^1(\cC, G_W)$-torsor.
\end{rem}

An automorphism of a $W$-curve $\fL$ induces an automorphism of the
 orbicurve $\cC$ and underlying (coarse) curve $C$. It is easy to see that the group of automorphisms  $\fL$ which fix the underlying (coarse) curve $C$ consists of
all elements in the group $G_W$, acting by multiplication of the
fibers of $\LL_1,\dots, \LL_N$.  This gives the exact sequence
$$1\rTo \aut_{C}(\fL) = G_W \rTo \aut(\fL)\rTo \aut(C).$$
More generally, if the stable curve $\cC$ has irreducible components $\cC_l$ for $l\in\{1,\dots,t\}$ and nodes $\nu\in E$, we denote by $\fL_i$
the restriction to $\cC_i$ of the $W$-structure.
To describe the automorphisms of the $W$-structure in this case, it will be convenient to choose an orientation on the edges of the dual graph of $C$. This amounts to choosing, for each node $\nu \in E$, one of the components passing through $\nu$ to be designated as $C_{\nu_+}$.  The other component passing through $\nu$ is designated $C_{\nu_-}$.  If the same irreducible component $C_i$ passes through $\nu$ twice, then that component will be designated both $C_{\nu_+}$ and $C_{\nu_-}$. The final result will be independent of these choices.

Let $G_\nu$ denote the local group at the node $\nu$.  Any element $g\in \aut_{C_i}(\fL_i)$ induces (by restriction) elements $g_{\nu_+}$ and $g_{\nu_-}$ in $G_\nu$. We define $\delta:\prod_i \aut_{C_i}(\fL_i) \rTo \prod_\nu G_\nu$ to be the homomorphism defined as $(\delta(g))_\nu = g_{\nu_+}g_{\nu_-}^{-1}$.  We have an exact sequence
\begin{equation}\label{eq:AutExactSeq}
1 \rTo \aut_{C}\fL \rTo \prod_i \aut_{C_i}(\fL_i) \rTo \prod_{\nu\in E} G_\nu.
\end{equation}

\begin{ex}
Consider a $W$-curve with two irreducible components $\cC_1$ and $\cC_2$ with marked points $\{p_i| i \in I_1 \}\cup \{q_+\} \subset \cC_1$ and $\{p_i| i \in I_2 \}\cup \{q_-\} \subset \cC_2$, such that the components meet at a single node $q = q_{+}=q_{-}$ and such that $I_1 \sqcup I_2 = \{1,\dots,k\}$.  Denote the local group at $q_{\pm}$ by $\lgr{\pm}$.  Note that $\ga_{-} = \ga_{+}^{-1}$.
In this case we have
\begin{equation}\label{eq:TreeAut}
\aut_{C} (\fL) = G_W \times_{G/\langle\gamma_+\rangle} G_W,
\end{equation}
where $G_W\times_{G_W/\lgr{+}} G_W$\glossary{GxG@$G_W\times_{G_W/\lgr{}}G_W$ & The group of pairs $(g_1,g_2)\in G^2_W$ with $g_1=g_2\in G_W/\lgr{+}$}
 denotes the group of pairs $(g_1,g_2)$ such that the images of $g_1$ and $g_2$ are equal in $G_W/\lgr{+}$.
\end{ex}
\begin{ex}
If $\cC$ consists of a single (possibly nodal) irreducible component, then we have
\begin{equation}\label{eq:loopAut}
\aut_{C} (\fL) = G_W.
\end{equation}
\end{ex}

\subsubsection{Pushforward of $W$-structures}

We need to understand the behavior of $W$-structures when forgetting the orbifold structure at marked points, that is, when they are pushed down to the underlying (coarse) curve.

Consider, as an initial example, the case of $W=x^r$, so that a $W$-structure consists of a line bundle $\LL^r\cong K_{\log}$.  Near an orbifold point $p$ with local coordinate $z$ the canonical generator $1\in\Z/m \cong G_p$ of the local group $G_p$ acts on $\LL$ by $(z,s) \mapsto (\exp(2 \pi i/m) z, \exp(2\pi i (v/m)) s)$ for some $v \in \{0, \dots, m-1\}$.   Since $K_{\log}$ is invariant under the local action of $G_p$, we must have $rv=\ell m$ for some  $\ell\in \{0,\dots, r-1\}$, and $\frac{v}{m}=\frac{\ell}{r}$.  Denote the (invariant) local coordinate on the underlying curve $C$ by $u=z^m$.  Any section in $\sigma\in\Omega^0(|\LL|)$ must locally be of the form $\sigma=g(u)z^{v}s$, in order to be $\Z/m$-invariant.  So
$\sigma^r$ has local representative $z^{r v
} g^r(u)\frac{dz}{z}=u^{\ell}g^r(u) \frac{du}{mu}$. Hence, $\sigma^r\in
\Omega^0(K_{\log} \otimes \O ((-\ell) p)$, and thus when $\ell \neq 0$, we have $\sigma^r\in \Omega^0(K)$.

\begin{rem}\label{rem:zeros-of-powers}
More generally, if $\LL^r \cong K_{\log}$ on a \emph{smooth} orbicurve with
action of the local group on $L$ defined by $\ell_i >0$ (as above) at
each marked point $p_i$, then we have $$(\varrho_* \LL)^r = |\LL|^r = K_{\log} \otimes \left(\bigotimes_i \O((-\ell_i) p_i)\right) = K_{\log} \otimes \left(\bigotimes_i \O((-r(v/m)) p_i)\right).$$ \end{rem}

\begin{prop}\label{prop:pushforward-W}
Let $(\LL_1, \dots, \LL_N, \vpt_1,\dots,\vpt_N)$ be a
$W$-structure on an orbicurve $\cC$ which is smooth (the
underlying curve $C$ is nonsingular) at an orbifold point $p \in
\cC$.  Suppose also that the local group $G_p\cong \Z/m$ of $p$
acts on $\LL_j$ by
$$\gamma =(\exp (2 \pi i\Theta_1^\gamma),\dots,\exp (2 \pi
i \Theta_N^\gamma));$$
that is,
$\exp(2 \pi i/m)(z,w_j)=(\exp(2 \pi i/m)z, \exp(2
\pi i \Theta_j^\gamma) w_j)$ with $1> \Theta_j^\gamma \ge0$.

Let $\cCb$ denote the orbicurve obtained from $\cC$ by making the orbifold structure at $p$ trivial (but retaining the orbifold structure at all other points).  Let $\varrhob:\cC \rTo \cCb$ be the obvious induced morphism, and let $\varrhob_*(\LL)$ denote the pushforward to $\cCb$ of an orbifold line bundle $\LL$ on $\cC$.

For any isomorphism $\psi:\LL_1^{e_1}\otimes\cdots\otimes\LL_N^{e_N}\rTo K_{\log}$ we have an induced isomorphism on the pushforward
\begin{equation}\label{eq:pushf}
\varrhob_*(\LL_1)^{e_1}\otimes\cdots\otimes\varrhob_*(\LL_N)^{e_N}\rTo  K_{\cCb, \log}\otimes
\O\left(-\sum^N_{j=1}e_j\Theta_j^\gamma
p\right).
\end{equation}

If $\cC$ is a smooth orbicurve (i.e., $C$ is a smooth curve), let
$\gamma_\ell$ define the action of the local group $G_{p_\ell}$ near $p_\ell$. For any isomorphism $\psi:\LL_1^{e_1}\otimes\cdots\otimes\LL_N^{e_N}\rTo K_{\log}$ we have a (global) induced isomorphism
$$|\psi|:|\LL_1|^{e_1}\otimes\cdots\otimes|\LL_N|^{e_N}\rTo  K_{\cCb, \log}\otimes
\O\left(-\sum_{\ell=1}^k\sum^N_{j=1}e_j\Theta_j^{\gamma_\ell}
p_\ell\right).$$

In particular, for every monomial $W_i$, the isomorphism of Equation~(\ref{eq:SmithWStruct}) induces an isomorphism
\begin{equation}
\label{eq:desing-bundles}
W_i(|\LL_1|,\dots, |\LL_N|)\cong K_{C, \log}\otimes
\O\left(-\sum_{\ell=1}^k\sum^N_{j=1}b_{ij}\Theta_j^{\gamma_\ell}
p_{\ell}\right)
\end{equation}
\end{prop}

\begin{proof}
Equation~(\ref{eq:pushf})  is a
straightforward generalization of the argument given above when $W=x^r$, the description of $\gamma$ as
$\gamma=(\exp(2 \pi i \Theta_1^{\gamma},\dots,\exp(2 \pi i
\Theta_N^{\gamma})))$, and the description of $|\LL_j|$ in terms of
the action of the local group $G_p$ given above.
\end{proof}

\subsection{Moduli of stable $W$-orbicurves}

\begin{df}\label{df:stable-W-curve}
A pair $\frkc=(\cC,\fL)$\glossary{Cfrak@$\frkc$ & A $W$-curve $(\cC,\fL)$} consisting of  an orbicurve $\cC$ with $k$ marked points and with
$W$-structure $\fL$
is called a \emph{stable
$W$-orbicurve}
if the underlying curve $C$ is a stable curve, and if
for each point $p$ of $\cC$ the representation $r_{p}: G_p
\rTo G_W$ is faithful.
\end{df}
\begin{df}
A \emph{genus-$g$, stable $W$-orbicurve with $k$ marked points over a
base $T$} is given by a flat family of genus-$g$, $k$-pointed
orbicurves $\cC \rTo T$ with (gerbe) markings $\cS_i \subset \cC$
and sections $\sigma_i: T \rTo \cS_i$, and the data $(\LL_1,\dots,\LL_N,\vpt_1,\dots,\vpt_N)$.  The sections $\sigma_i$ are required to
induce isomorphisms between
$T$ and the coarse moduli of $\cS_i$ for $i\in\{1,\dots,k\}$.  The $\LL_i$ are  orbifold line bundles on $\cC$.  And the $\vpt_j : A_j(\LL_1,\dots, \LL_N) \irightarrow
K_{\cC/T, log}^{u_j}:=(K_{\cC/T}(\sum \cS_i))^{u_j}$ are isomorphisms to the
$u_j$-fold tensor power of the relative log-canonical bundle
which, together with the $\LL_i$, induce a
$W$-structure on every fiber $\cC_t$.
\end{df}

\begin{df}
A \emph{morphism of stable $W$-orbicurves} $(\cC/T, \cS_1, \dots,
\cS_k,\LL_1,\dots,\LL_N,
\vpt_1,\dots,\vpt_N)$
and $(\cC'/T', \cS'_1, \dots, \cS'_k, \LL'_1,\dots, \LL'_N, \vpt'_1,
\dots, \vpt'_s)$
 is a tuple of morphisms $(\tau,\mu,
\alpha_1,\dots, \alpha_N)$ such that the pair $(\tau,\mu)$ forms
a morphism of pointed orbicurves:
$$\begin{diagram}
\cC&\rTo^{\mu}&\cC'\\
\dTo & & \dTo\\
T &\rTo^{\tau}&T'\\
\end{diagram}$$
and the $\alpha_j :\LL_j \irightarrow \mu^* \LL'_j$ are isomorphisms of line bundles which form an isomorphism of $W$-structures on $\cC$ (see Definition~\ref{df:W-isom}).
\end{df}

\begin{df}
For a given choice of non-degenerate $W$, we denote the stack of stable $W$-orbicurves by
$\W_{g,k}(W)$\glossary{Mgkw@$\W_{g,k}$ & The stack of $k$-pointed, genus-g $W$-curves}.  If the choice of $W$ is either clear or is unimportant, we simply write $\W_{g,k}$.
\end{df}
\begin{rem}
This definition depends on the choice of Smith normal form $B=VTQ$, but by Proposition~\ref{prop:SmithUnique} any other choice of Smith normal form for the same polynomial $W$ will give a canonically isomorphic stack.  
\end{rem}

Forgetting the $W$-structure and the orbifold structure gives a morphism $$\st:\W_{g,k
} \rTo \MM_{g,k}.\glossary{st@$\st$ & The stabilization morphism, which forgets the $W$-structure}$$  The morphism $\st$ plays a role similar to that played by the {stabilization} morphism of stable maps.  It is quasi-finite by Remark~\ref{rem:principal-gamma}.

\begin{thm}
For any nondegenerate, quasi-homogeneous polynomial $W$, the stack $\W_{g,k}$ is a smooth, compact orbifold (Deligne-Mumford stack) with projective coarse moduli.  In particular, the morphism $st:\W_{g,k} \rTo \MM_{g,k}$ is flat, proper and quasi-finite (but not representable).  \end{thm}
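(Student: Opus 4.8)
The plan is to realize $\W_{g,k}$ as a finite \'etale cover (up to the finite gerbe coming from $G_W$) of a stack of pointed orbicurves, following the blueprint used to construct the moduli of $r$-spin curves in \cite{AJ,J1,J2}, and then to deduce all four assertions from the corresponding properties of that cover together with one genuine valuative-criterion argument at the nodes.

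First I would set up the ambient moduli problem and identify $\W_{g,k}$ inside it. Let $\mathfrak{Tw}_{g,k}$ be the stack of $k$-pointed genus-$g$ orbicurves whose local groups at marked points and nodes are cyclic of order dividing $|G_W|$; by the theory of twisted stable curves this is a smooth Deligne--Mumford stack of dimension $3g-3+k$, and forgetting the orbifold structure gives a quasi-finite morphism $\mathfrak{Tw}_{g,k}\rTo\MM_{g,k}$. Over the universal orbicurve $\cC\rTo\mathfrak{Tw}_{g,k}$ the relative Picard stack $\mathfrak{Pic}:=\mathfrak{Pic}_{\cC/\mathfrak{Tw}_{g,k}}$ is a smooth algebraic group stack over $\mathfrak{Tw}_{g,k}$, since deformations of a line bundle on a (nodal orbi)curve are governed by $H^1$ while $H^2$ vanishes (cf.\ \cite{CR1}). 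Using the fixed Smith normal form $B=VTQ$ and the matrix $A=(a_{\ell j})$ of Definition~\ref{df:W-structure}, whose top $N\times N$ block is invertible over $\Q$ because $B$ has rank $N$, one obtains a homomorphism of group stacks over $\mathfrak{Tw}_{g,k}$,
$$\mathfrak{Pic}^{\times N}\rTo \mathfrak{Pic}^{\times N},\qquad (\LL_1,\dots,\LL_N)\longmapsto \bigl(A_\ell(\LL_1,\dots,\LL_N)\otimes K_{\log}^{-u_\ell}\bigr)_{\ell=1}^{N},$$
and the stack of $W$-structures (before imposing faithfulness) is the fiber of this homomorphism over the identity section. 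Restricted to each connected component this homomorphism is, fiber by fiber, the isogeny of $\Pic^0$ of the fiber curve (taken $N$ times) induced by the integer matrix $A$; since $\det A\neq 0$ this is an isogeny of a semiabelian variety, and over $\C$ every such isogeny is \'etale, so the stack of $W$-structures is finite \'etale onto the open-and-closed substack of $\mathfrak{Tw}_{g,k}$ where the relevant numerical obstruction vanishes, with fibers the $H^1(\cC,G_W)$-torsors of Remark~\ref{rem:principal-gamma}. Imposing faithfulness of each $r_{p_i}$ amounts to requiring $\operatorname{ord}(\gamma_{p_i})=m_i$, and both $m_i$ and (because $G_W$ is finite) $\gamma_{p_i}$ are locally constant in families, so this selects a union of connected components. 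Hence $\W_{g,k}$ is a union of connected components of a finite \'etale cover of the smooth Deligne--Mumford stack $\mathfrak{Tw}_{g,k}$; it is therefore itself a smooth Deligne--Mumford stack of dimension $3g-3+k$, and the composite $\st:\W_{g,k}\rTo\mathfrak{Tw}_{g,k}\rTo\MM_{g,k}$ is quasi-finite.

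The remaining point, and the one I expect to be the main obstacle, is compactness. I would verify the valuative criterion of properness for $\W_{g,k}$ directly: given a DVR $R$ with fraction field $K$ and a stable $W$-orbicurve $(\cC_K,\fL_K)$, first extend the underlying stable curve over $\operatorname{Spec}R$ using properness of $\MM_{g,k}$; then, after a finite base change, put on each new node of the special fiber the orbifold structure whose order is dictated by the monodromy element $\gamma\in G_W$ of $\fL_K$ around that node, and check that over this orbicurve the line bundles $\LL_{i,K}$ and the isomorphisms $\vpt_{\ell,K}$ extend, uniquely up to the finite automorphism group, using Proposition~\ref{prop:pushforward-W} to control how the $\vpt_\ell$ transform when the orbifold order is changed. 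Separatedness then follows from the uniqueness of this limit up to finite automorphisms. This extension statement is a polynomial-by-polynomial generalization of the corresponding facts for higher spin curves in \cite{AJ,J2}, and I would reduce to those by treating the $N$ line bundles row-by-row through the Smith normal form.

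Granting compactness, the rest is formal. As a proper Deligne--Mumford stack, $\W_{g,k}$ has a coarse moduli space $W_{g,k}$, which is a proper algebraic space by Keel--Mori; the induced morphism $W_{g,k}\rTo\overline{M}_{g,k}$ is proper and quasi-finite, hence finite by Zariski's Main Theorem, so $W_{g,k}$ is finite over a projective scheme and is therefore itself a projective scheme. The morphism $\st$ is proper (both $\W_{g,k}$ and $\MM_{g,k}$ are proper over the point and $\MM_{g,k}$ is separated), quasi-finite (by the previous step), and flat by miracle flatness, since $\W_{g,k}$ and $\MM_{g,k}$ are regular of the same dimension $3g-3+k$ and all fibers of $\st$ are zero-dimensional. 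Finally, $\st$ fails to be representable because, by the exact sequence $1\rTo\aut_C(\fL)=G_W\rTo\aut(\fL)\rTo\aut(C)$, a generic $W$-curve carries the nontrivial group $G_W$ (which always contains $J$) as automorphisms fixing the coarse curve, so $\st$ has nontrivial relative automorphism groups.
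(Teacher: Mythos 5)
Your proposal is correct in spirit but takes a genuinely different route from the paper.  The paper's proof encodes a $W$-structure as a representable morphism $\mathfrak{L}:\cC\rTo(\B\C^*)^N$ fitting into a commutative triangle with $\delta=\Delta\circ K_{\log}$ and $\Phi_W$, builds the \'etale gerbe $C_{g,k,W}=C_{g,k}\times_{(\B\C^*)^N}(\B\C^*)^N$ over the universal coarse curve, and identifies $\W_{g,k}$ with a closed substack of the Abramovich--Vistoli stack $\K_{g,k}(C_{g,k,W}/\MM_{g,k},F)$ of balanced twisted stable maps of fiber class $F$; properness and projectivity of the coarse space then come for free from \cite[\S8.3]{AV}, and smoothness from the triviality of $\mathbb{L}_{\Phi_W}$.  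You instead fiber $\W_{g,k}$ over the stack of twisted curves via the relative Picard stack and the isogeny induced by the matrix $A$; this gives smoothness and the Deligne--Mumford property cleanly, and your deductions of flatness (miracle flatness, using equidimensionality and regularity) and of non-representability (the sequence $1\rTo G_W\rTo\aut(\fL)\rTo\aut(C)$) are actually more explicit than what the paper writes.  The trade-off is that you must prove properness by hand, and that step is only sketched: you would need to carry out the extension of the $\LL_i$ and $\vpt_\ell$ across a nodal degeneration after a finite base change of the DVR and show uniqueness of the filled-in family up to automorphism.  Your suggested reduction---diagonalizing via the Smith normal form so that each $\vpt_\ell$ involves a single tensor power, reducing to the spin-curve case of \cite{AJ,J2}---is the right idea and should work, but it is precisely the content the paper outsources to the properness theorem for twisted stable maps, and it deserves a full argument rather than a pointer (in particular Proposition~\ref{prop:pushforward-W} by itself only controls degrees, not the existence of the limiting isomorphisms).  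One more point to tighten: what you call a ``finite \'etale cover'' of $\mathfrak{Tw}_{g,k}$ is really a finite \'etale cover composed with a gerbe banded by $G_W$ (the Picard \emph{stack} carries an extra $\B\C^*$, and the kernel of $A$ on $(\B\C^*)^N$ is $\B G_W$); this is still finite \'etale as a morphism of Deligne--Mumford stacks over $\C$, but the phrasing should make the gerbe visible since it is exactly what makes $\st$ non-representable.
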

\begin{proof}
Denote the classifying stack of $\C^*$ by $\B\C^*$. For each orbicurve $\cC$ the line bundle $K_{\log}$ corresponds to a $1$-morphism $$\cC \rTo \B\C^*,$$ and composing with the diagonal embedding $\Delta:\B\C^* \rTo (\B\C^*)^N$, we have
\begin{equation}
\delta:={\Delta \circ K_{\log}}:\cC \rTo (\B\C^*)^N.
\end{equation}
Furthermore, each isomorphism $\vpt_i$ induces a $1$-morphism $(\B\C^*)^N \rTo \B\C^*$, and together they yield a morphism \begin{equation}
\Phi_W: (\B\C^*)^N \rTo (\B\C^*)^N.
\end{equation}
It is easy to see that the data of a $W$-structure on $\cC$ is equivalent to the data of a representable $1$-morphism $$\mathfrak{L}:\cC \rTo (\B\C^*)^N,$$ which makes the diagram
\begin{diagram}
        &                               & (\B\C^*)^N\\
        & \ruTo^{\mathfrak{L}}  & \dTo_{\Phi_W} \\
\cC & \rTo^{\delta} & (\B\C^*)^N
\end{diagram}
commute.

As in \cite[\S1.5]{AJ} we let $C_{g,k} \rTo \MM_{g,k}$ denote the universal curve, and we consider the stack $$C_{g,k,W}:=C_{g,k} \mathbin{\mathop{\times}\limits_{(\B\C^*)^N}} (\B\C^*)^N,$$ where the fiber product is taken with respect to $\delta$ on the left and $\Phi_W$ on the right.  The stack
$C_{g,k,W}$ is an \'etale gerbe over $C_{g,k}$ banded by $G_W$.  In particular, it is a Deligne-Mumford stack.

Any $W$-curve $(\cC/S, p_1,\dots,p_k,\LL_1,\dots,\LL_N,\vpt_1,\dots,\vpt_s)$ induces a representable map $\cC \rTo C_{g,k,W}$ which is a balanced twisted stable map. The homology
class of the image of the coarse curve $C$ is the class $F$ of a fiber of the
universal curve $C_{g,k}\rTo \MM_{g,k}$. The family of coarse curves $C \rTo S$
gives rise
to a morphism $S \rTo \MM_{g,k}$ and we have an isomorphism $C
\cong S \times_{\MM_{g,k}}C_{g,k}$. We thus have a base-preserving
functor from the stack
$\W_{g,k}$ of $W$-curves to
the stack $\K_{g,k}(C_{g,k,W}/
\MM_{g,k}, F)$ of balanced, $k$-pointed twisted stable maps
of genus $g$ and class $F$ into $C_{g,n,W}$ relative to the
base stack $\MM_{g,k}$ (see \cite[\S8.3]{AV}).
The image lies in the closed substack where
the markings of $C$ line up over the markings of $C_{g,n}$.  It is easy to see that the resulting functor  is an equivalence.  Thus $\W_{g,k}$  is
a proper Deligne-Mumford stack admitting a projective coarse moduli space.

Smoothness of the stack $\W_{g,k}$ follows, as in the $A_n$ case (see \cite[Prop 2.1.1]{AJ}), from the fact that the relative cotangent complex $\mathbb{L}_{\Phi_W}$ of $\Phi_W:(\B\C^*)^N \rTo (\B\C^*)^N$ is trivial.  That means that the deformations and obstructions of a $W$-curve are identical to those of the underlying orbicurves, but these are known to be unobstructed (see \cite[\S2.1]{AJ}).
\end{proof}

\subsubsection{Decomposition of $\W_{g,k}$ into components}
The orbifold structure, and the image $\ga_i = r_{p_i}(1)$ of the canonical generator $1\in\Z/m_i\cong G_{p_i}$ at each marked point $p_i$ is locally constant, and hence are constant for each component of $\W_{g,k}$. Therefore, we can use these decorations to
decompose the moduli space into components.

\begin{df}\label{df:type}
For any choice  $\bgamma:=(\gamma_1, \dots, \gamma_k) \in \grp^k$ we define $\W_{g,k}(\bgamma)\subseteq \W_{g,k}$ to be the open and closed substack with orbifold
decoration $\bgamma$.\glossary{MMgkwgamma@$\W_{g,k}(\bgamma)$ & The stack of stable $W$-curves of genus $g$, with $k$ marked points and type $\bgamma$.}
We call $\bgamma$ the \emph{type} of any $W$-orbicurve in
$\W_{g,k}(\bgamma)$.
\end{df}
We have the decomposition
$$\W_{g,k}=\sum_{\bgamma} \W_{g,k}(\bgamma).$$

Note that by applying the degree map to Equation~(\ref{eq:desing-bundles}) we gain an important selection rule.
 \begin{prop}
A necessary and sufficient condition for $\W_{g,k}(\bgamma)$ to be non-empty is
\begin{equation}\label{eq:deg-sel-rule}
q_j(2g - 2 + k)-\sum^k_{l=1}\Theta_j^{\gamma_l} \in \Z.
\end{equation}
\end{prop}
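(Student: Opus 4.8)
The plan is to deduce this directly from the line-bundle isomorphism~(\ref{eq:desing-bundles}) by taking degrees. First I would recall that for a smooth orbicurve $\cC$ of genus $g$ with $k$ marked points, the coarse curve $C$ has genus $g$ and the log-canonical bundle $K_{C,\log} = K_C \otimes \O(p_1+\dots+p_k)$ has degree $2g-2+k$. The $W$-structure on $\cC$ provides, for each monomial $W_i = x_1^{b_{i1}}\cdots x_N^{b_{iN}}$, an isomorphism $\varphi_i : \LL_1^{b_{i1}}\otimes\cdots\otimes\LL_N^{b_{iN}} \irightarrow K_{\cC,\log}$ on $\cC$, and Proposition~\ref{prop:pushforward-W} (Equation~(\ref{eq:desing-bundles})) pushes this forward to an honest isomorphism of line bundles on the coarse curve:
\begin{equation*}
W_i(|\LL_1|,\dots,|\LL_N|) \cong K_{C,\log}\otimes \O\Bigl(-\sum_{\ell=1}^k\sum_{j=1}^N b_{ij}\Theta_j^{\gamma_\ell} p_\ell\Bigr).
\end{equation*}
Taking degrees of both sides gives the integer equation $\sum_j b_{ij}\deg|\LL_j| = (2g-2+k) - \sum_{\ell}\sum_j b_{ij}\Theta_j^{\gamma_\ell}$, i.e., $\sum_j b_{ij}\bigl(\deg|\LL_j| + \sum_\ell \Theta_j^{\gamma_\ell}\bigr) = 2g-2+k$ for every $i\in\{1,\dots,s\}$.

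Next I would package these $s$ scalar equations into a single matrix equation $B_W \bigl(\deg|\LL_\bullet| + \sum_\ell \Theta_\bullet^{\gamma_\ell}\bigr) = (2g-2+k)\mathbf{1}$, where $\mathbf{1}=(1,\dots,1)^T$, and compare with the defining relation of the weights. Quasi-homogeneity~(\ref{eq:qhomocharge}) says precisely that $\sum_j b_{ij} q_j = 1$ for every $i$, i.e., $B_W (q_1,\dots,q_N)^T = \mathbf{1}$. Since $W$ is nondegenerate, $B_W$ has rank $N$ (as used in the proof of Lemma~\ref{lm:group}), so it is injective as a map $\Q^N \to \Q^s$; hence the solution to $B_W \mathbf{v} = (2g-2+k)\mathbf{1}$ is unique and must equal $(2g-2+k)(q_1,\dots,q_N)^T$. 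Therefore $\deg|\LL_j| + \sum_{\ell=1}^k \Theta_j^{\gamma_\ell} = q_j(2g-2+k)$ for each $j$, and since $\deg|\LL_j|\in\Z$, this forces the necessary condition $q_j(2g-2+k) - \sum_\ell \Theta_j^{\gamma_\ell}\in\Z$.

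For sufficiency, I would argue in reverse: given $\bgamma=(\gamma_1,\dots,\gamma_k)\in G_W^k$ satisfying~(\ref{eq:deg-sel-rule}), I first pick any smooth $k$-pointed genus-$g$ curve $C$ and construct an orbicurve $\cC$ by adjoining $\Z/m_\ell$-orbifold structure at $p_\ell$ (with $m_\ell$ the order of $\gamma_\ell$). Then I need to produce orbifold line bundles $\LL_1,\dots,\LL_N$ on $\cC$ with the prescribed local monodromy $\Theta_j^{\gamma_\ell}$ at each $p_\ell$ and with $A_\ell(\LL_\bullet)\cong K_{\cC,\log}^{u_\ell}$. The condition~(\ref{eq:deg-sel-rule}) guarantees that the integer $\deg|\LL_j| := q_j(2g-2+k) - \sum_\ell \Theta_j^{\gamma_\ell}$ is well-defined, so one can choose $|\LL_j|$ to be any line bundle on $C$ of that degree; pulling back and twisting by the appropriate fractional divisor $\sum_\ell \Theta_j^{\gamma_\ell} p_\ell$ (in the orbifold sense, using Remark~\ref{rem:zeros-of-powers}-type computations) produces $\LL_j$ on $\cC$ with the right monodromy and degree. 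A degree count via Proposition~\ref{prop:pushforward-W} then shows $A_\ell(\LL_\bullet)$ and $K_{\cC,\log}^{u_\ell}$ have the same degree and the same (trivial) monodromy, hence differ by a line bundle pulled back from $\Pic^0(C)$, which is divisible; after adjusting the $|\LL_j|$ within their degree class one obtains the isomorphisms $\vpt_\ell$, giving a genuine point of $\W_{g,k}(\bgamma)$. The main obstacle is this sufficiency direction: the necessity is a one-line degree computation, but realizing a $W$-structure requires checking that the $N$ bundles can be chosen compatibly so that all $N$ Smith-normal-form isomorphisms $\vpt_\ell$ exist simultaneously, which amounts to showing a certain map of Picard groups (or equivalently the obstruction in $H^1(\cC,G_W)$) vanishes — this is where the rank-$N$ hypothesis on $B_W$ and the divisibility of $\Pic^0$ do the real work.
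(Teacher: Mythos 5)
Your proof is correct and follows essentially the same route as the paper: necessity by pushing the $W$-structure isomorphisms to the coarse curve via Proposition~\ref{prop:pushforward-W}, taking degrees, and using that $\operatorname{rank}B_W = N$ forces $\deg|\LL_j| = q_j(2g-2+k) - \sum_\ell\Theta_j^{\gamma_\ell}$; sufficiency by choosing arbitrary line bundles of those degrees, twisting to get the prescribed local monodromy, and correcting the discrepancy in $\Pic^0(C)$ by solving a full-rank integer linear system in a divisible group. The paper just makes the last step explicit (setting up $Y_1^{a_{i1}}\otimes\cdots\otimes Y_N^{a_{iN}} = X_i$ and solving via divisibility of $\Pic^0(C)$), which is exactly the obstruction you flag as doing ``the real work.''
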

\begin{proof}
Although the degree of an orbifold bundle on $\cC$ may be a rational number, the degree of the pushforward $\varrho_*\LL_j=|\LL_j|$ on the underlying curve $C$  must be an integer, so for all $i\in\{1,\dots,s\}$ the following equations must hold for integral values of $\deg(|\LL_j|)$:
\begin{equation}
\sum_{j=1}^N {b_{ij}}\deg(|\LL_j|) = 2g - 2 + k
-\sum^k_{l=1}\sum^N_{j=1}b_{ij}\Theta_j^{\gamma_l}.
\end{equation}
Moreover, because $W$ is nondegenerate, the weights $q_j$ are
uniquely determined by the requirement that they satisfy the equations $\sum_{j=1}^N {b_{ij}}q_j =
1$ for all
$i \in \{1,\dots,s\}$,  so we find that for every $j\in\{1,\dots,N\}$ we have
\begin{equation}\label{eq:sel-rule}
\deg(|\LL_j|) = \left(q_j(2g - 2 + k)
-\sum^k_{l=1}\Theta_j^{\gamma_l} \right)\in \Z.
\end{equation}
Conversely, if the degree condition~(\ref{eq:deg-sel-rule}) holds, then for any smooth curve $C$ (not orbifolded) we may choose line bundles $E_1,\dots,E_N$ on $C$ with $\deg(E_j) = q_j(2g - 2 + k)-\sum^k_{l=1}\Theta_j^{\gamma_l}$ for each $l$.  If we take $A=(a_{ij}) = V^{-1}B$ and $u = (u_i) = V^{-1}(1,\dots,1)^T$ as in Definition~\ref{df:W-structure}, then
for each $i\in \{1,\dots, s\}$ we have a line bundle $$X_i:=E_1^{a_{i,1}}\otimes\cdots \otimes E_N^{a_{i,N}} \otimes K_{C,\log}^{-u_i}\otimes
\O\left(\sum^k_{l=1}\sum^N_{j=1}a_{ij}\Theta_j^{\gamma_l}p_l\right)$$ 
and $\deg(X_i)$ satisfies
 $$
\left(\begin{array}{c}
\deg(X_1)\\
\vdots \\
\deg(X_N)
\end{array}\right) =A 
\left(\begin{array}{c}
q_1\\
\vdots\\
q_N
\end{array} \right)(2g - 2 + k)-\sum^k_{l=1}A 
\left(\begin{array}{c}
\Theta_1^{\ga_l}\\
\vdots\\
\Theta_N^{\ga_l}
\end{array}
\right)
- V^{-1}
\left(\begin{array}{c}
1\\
\vdots \\
1
\end{array}\right)
 (2g-2+k) + \sum^k_{l=1}A\left(\begin{array}{c}
\Theta_1^{\ga_l}\\
\vdots\\
\Theta_N^{\ga_l}
\end{array}
\right)
=
\left(\begin{array}{c}
0\\
\vdots\\
0
\end{array}\right)  
$$
on $C$.  Since the Jacobian $\Pic^0(C)$ of any smooth curve $C$ is a divisible group, and since the matrix $A$ is of rank $N$, there is at least one solution $(Y_1,\dots,Y_N)\in \Pic^0(C)^N$ to the system of equations
\begin{eqnarray*}
Y_1^{a_{1,1}}\otimes\cdots\otimes Y_N^{a_{1,N}} &= X_1\\
\vdots &= \vdots\\
Y_1^{a_{N,1}}\otimes\cdots\otimes Y_N^{a_{N,N}} &= X_N\\
\end{eqnarray*}
This means that the (un-orbifolded) line bundles $L_j:=Y^{-1}_jE_j$ satisfy $L_1^{a_{i,1}}\otimes\cdots\otimes L_N^{a_{i,N}} \cong K_{C,\log}^{u_i}\otimes
\O\left(-\sum^k_{l=1}\sum^N_{j=1}a_{ij}\Theta_j^{\gamma_l}p_l\right)$
for each $i\in\{1,\dots,N\}$.

Now we may construct an orbicurve $\cC$ on $C$ with local group at $p_l$ generated by $\ga_l$ for each $l\in\{1,\dots, k\}$, and we can construct the desired orbifold line bundles $\LL_j$ on $\cC$ from $L_j$ by inverting the map described in Subsection~\ref{rem:desingularize} at each marked point.  It is easy to see that these line bundles form a $W$-structure on $\cC$, and therefore $\W_{g,k}(\bgamma)$ is not empty.
\end{proof}

\begin{ex}

For three-pointed, genus-zero $W$-curves, the choice of oribfold
line bundles $\LL_1,\dots,\LL_N$ providing the $W$-structure is
unique, if it exists at all. Hence, if the selection rule is satisfied, $\W_{0,3}(\bgamma)$ is isomorphic to
$\BG$.
\end{ex}

\subsubsection{Dual Graphs}

We must generalize the concept
of a decorated dual graph, given for $r$-spin curves in \cite{JKV1},
to the case of a general $W$-orbicurve.

\begin{df}
Let $\Gamma$ \glossary{Gamma@$\Gamma$ & A dual graph (possibly decorated)}
be a dual graph of a stable curve $(\cC,p_1,
\dots,p_k )$ as in \cite{JKV1}. A half-edge of a graph $\Gamma$ is
either a tail or one of the two ends of a ``real'' edge of
$\Gamma$.

Let $V(\Gamma)$\glossary{VGamma@$V(\Gamma)$ & The set of vertices in $\Gamma$} be the set of vertices of $\Gamma$, let
$T(\Gamma)$\glossary{TGamma@$T(\Gamma)$ & The set of tails in $\Gamma$} denote the tails of $\Gamma$, and let $E(\Gamma)$\glossary{EGamma@$E(\Gamma)$ & The set of edges in $\Gamma$} be
the set of ``real'' edges.  For each $\nu \in V(\Gamma)$  let
$g_\nu$ be the (geometric) genus of the component of $\cC$
corresponding to $\nu$, let $T(\nu)$ denote the set of all
half-edges of $\Gamma$ at the vertex $\nu$, and let $k_\nu$ be the
number of elements of $T(\nu)$.
\end{df}

\begin{df}
Let $\Gamma$ be a dual graph. The \emph{genus} of $\Gamma$ is
defined as
$$
g(\Gamma)=\dim H^1(\Gamma)+\sum_{\nu\in V(\Gamma)}g_\nu.
$$

A graph $\Gamma$ is called \emph{stable} if $2g_\nu+k_\nu \ge 3$
for every $\nu\in V(\Gamma)$.
\end{df}

\begin{df}
A \emph{$G_W$-decorated stable graph} is a stable graph $\Gamma$
with a decoration of each tail $\tau \in T(\Gamma)$ by a choice of
$\gamma_{\tau}\in \grp$.

It is often useful to decorate all the half-edges---not just
the tails.  In that case, we will require that for any edge $e \in
E(\Gamma)$ consisting of two half edges $\tau_+$ and $\tau_-$, the
corresponding decorations $\gamma_+$ and $\gamma_-$ satisfy
\begin{equation}
\gamma_- =(\gamma_+)^{-1},
\end{equation}
and we call such a graph a \emph{fully $G_W$-decorated stable graph}.
\end{df}

\begin{df}
Given a $W$-curve $\frkc:=(\cC,p_1, \dots,p_k, \LL_1,\dots, \LL_N,
\vpt_1,\dots,\vpt_N)$, the underlying (coarse) curve $C$ defines a dual
graph $\Gamma$. Each half-edge $\tau$ of $\Gamma$ corresponds to
an orbifold point $p_\tau$ of the normalization of $\cC$, and thus
has a corresponding choice of $\gamma_{\tau}\in \grp$, as given in
Proposition~\ref{prop:pushforward-W}.

We define the \emph{fully $G_W$-decorated dual graph of $\frkc$} to be the
graph $\Gamma$ where each half-edge $\tau$ is decorated with the
group element $\gamma_{\tau}$.
\end{df}

\begin{rem}
If a fully $G_W$-decorated graph $\Gamma$ is to correspond to an
actual $W$-orbicurve, the selection rules of
Equation~(\ref{eq:sel-rule}) must be satisfied on every vertex of
$\Gamma$; namely, for each $\nu\in V(\Gamma)$ and for each $j\in
\{1,\dots,N\}$ the degree of $|\LL_j|$ on the component of the
underlying curve associated to $\nu$ must be integral:
\begin{equation}\label{eq:graph-sel-rule}
\deg(|\LL_j|_{\nu}) = \left( q_j(2g_{\nu} - 2 + k_{\nu})
-\left(\sum_{\tau \in T(\nu)} \Theta_j^{\gamma_{\tau}}\right)
\right) \in \Z.
\end{equation}
\end{rem}

\begin{df}
For any $G_W$-decorated stable $W$-graph $\Gamma$, we define
$\W(\Gamma)$\glossary{MGammaW@$\W(\Gamma)$ & The closure of the substack of stable $W$-curves with dual graph $\Gamma$}
 to be the closure in $\W_{g,k}$ of the stack of stable $W$-curves with
$G_W$-decorated dual graph equal to $\Gamma$.

\end{df}

\begin{rem}
Note that no deformation of a nodal orbicurve will deform a node
with one orbifold structure to a node with a different orbifold
structure---the only possibility for change is to smooth the node
away.  This means that if $\Gamma$ is $G_W$-decorated only on the
tails and not on its edges, then the space $\W(\Gamma)$ is a
disjoint union of closed subspaces $\W(\tGamma)$ where the
$\tGamma$ run through all the choices of fully $G_W$-decorated
graphs obtained by decorating all edges of $\Gamma$ with elements of
$G_W$.

When a graph is a tree with only two vertices and one (separating) edge, then the rules of Equation~(\ref{eq:graph-sel-rule}) imply that the  decorations on the tails uniquely determine the decoration in the edge:  each $\Theta_i$ for the edge is completely determined by the integrality condition.

However, if the graph is a loop, with only one vertex and one edge,  then the rules of Equation~(\ref{eq:graph-sel-rule})  provide no restriction on the decoration $\gamma_+$ at the node.
\end{rem}

Let the genus of $\Gamma$ be $g=g(\Gamma)$, let the number of
tails of $\Gamma$  be $k$, and let the ordered $k$-tuple of  the
decorations associated to those tails be $\bgamma:=(\gamma_1,
\dots,\gamma_k)$.  In this case it is clear that
$\W(\Gamma) \subseteq \W_{g,k}(\bgamma)$
is a closed substack.

\subsubsection{Morphisms}

We have already discussed the morphism $\st:\W_{g,k} \rTo \MM_{g,k}$.  In this subsection we define several other important morphisms.

\paragraph{\underline{Forgetting tails}}
  If $\bgamma =
(\gamma_1,\dots,J,\dots,\gamma_k)$ is such that $\gamma_i=J$
for some $i \in \{1,\dots,k\}$ (that is,  $\Theta_l^{\gamma_i} =
q_l$ for every $l\in\{1,\dots,N\}$), and if $\bgamma' =
(\gamma_1,\dots,\hat{\gamma_i},\dots,\gamma_k)\in \grp^{k-1}$ is
the $k-1$-tuple obtained by omitting the $i$th component of
$\bgamma$, then the \emph{forgetting tails morphism} $$\vartheta:
\W_{g,k}(\bgamma) \rTo
\W_{g,k-1}(\bgamma')\glossary{theta@$\vartheta$ & The
forgetting tails morphism for unrigidified $W$-curves}$$ is
obtained by forgetting the orbifold structure at the point $p_i$.

We describe the morphism more explicitly as follows.  Let $\cCbb$
denote the orbicurve obtained by forgetting the marked point $p_i$
and its orbifold structure, but leaving the rest of the marked
points of the orbicurve $\cC$ unchanged.  Let $\varrhob:\cC\rTo \cCbb$
be the obvious morphism. By Proposition~\ref{prop:pushforward-W}, the pushforwards $\varrhob_*(\LL_j)$ for
$j\in\{1,\dots,N\}$ satisfy
\begin{align*}
(\varrhob_*(\LL_1))^{a_{j,1}}\otimes\cdots\otimes(\varrhob_*(\LL_N))^{a_{j,N}} & \irightarrow K_{\cC,\log}^{u_j}
\otimes \O((-\sum^N_{\ell=1}a_{j,\ell}\Theta_j^{J})p_i)\\
& = K_{\cC,\log}^{u_j}
\otimes \O(-(\sum_{j=1}^N  a_{j\ell}q_\ell) p_i)\\
& =
K_{\cC,\log}^{u_j}
\otimes \O( -u_j p_i) =K_{\cCbb,\log},
\end{align*}
 since $\sum_{j=1}^N  a_{j\ell}\Theta_j^J =  \sum_{j=1}^N  a_{j\ell}q_\ell = u_j$ (because $Aq = V^{-1}B q  = V^{-1}(1,\dots,1)^T = u$). We denote the induced isomorphisms
by $${\vpt'}_j:
(\varrhob_*(\LL_1))^{a_{j,1}}\otimes\cdots\otimes(\varrhob_*(\LL_N))^{a_{j,N}} \irightarrow
K_{\cCbb,\log}^{u_j}$$

The tuple $(\cCbb,p_1, \dots, \hat{p_i}, \dots,
p_k,\varrhob_*(\LL_1),\dots,\varrhob_*(\LL_N),
{\vpt'}_1, \dots, {\vpt'}_N
)$ is a $W$-orbicurve of
type $\bgamma'$.  This procedure induces the desired morphism
$$\vartheta: \W_{g,k}(\bgamma) \rTo
\W_{g,k-1}(\bgamma').$$

Note that the essential property of
$\gamma_i$ that allows the forgetting tails morphism to exist is the fact that
$\sum_{j=1}^N a_{lj}\Theta_j^{\gamma_i} = u_l$ for every $l \in
\{1,\dots,s\}$.  Since the weights $q_j$ are uniquely determined
by this property (since $B$ and $A$ are of rank $N$), this means that a
marked point $p_i$ may not be forgotten unless $\gamma_i=J \in
\grp$.

\paragraph{\underline{Gluing and cutting}}

    Gluing two marked points on a stable curve or on a pair of stable curves defines a Riemann surface with a node. This procedure defines two well-known morphisms
\begin{equation}\label{eq:glueTreeCurves}
\rho_{tree}: \MM_{g_1, k_1+1}\times \MM_{g_2,
    k_2+1}\rTo \MM_{g_1+g_2, k_1+k_2},
\end{equation}\glossary{rhotree@$\rho_{tree}$ & The gluing-trees morphism for stable curves}
\begin{equation}\label{eq:glueLoopCurves}
\rho_{loop}: \MM_{g-1, k+2}\rTo \MM_{g, k}.
\end{equation}\glossary{rholoop@$\rho_{loop}$ & The gluing-loops morphism for stable curves}
More generally, if $\Gamma$ is a  dual graph, then we can cut an edge to form
$\hGamma$, \glossary{Gammah @ $\hGamma$ & The dual graph obtained from $\Gamma$ by cutting the edges}
and there is a gluing map
$$\rho: \MM(\hGamma) \rTo \MM(\Gamma) \subseteq \MM,$$ where $\MM(\Gamma)$ \glossary{MGamma@$\MM(\Gamma)$ & The closure of the substack of stable curves with dual graph $\Gamma$}
denotes the closure in $\MM_{g,k}$ of the locus of stable curves with dual graph $\Gamma$.

 Unfortunately, there is no direct lift of $\rho$ to the moduli stack of $W$-curves because there is no canonical way to glue the fibers of the line bundles $\LL_i$ on the two points that map to a node.  In fact, if anything, the morphism goes the other way; that is, restricting a $W$-structure on a nodal (i.e., glued) curve to the normalization (i.e., cutting) of that curve will induce a $W$-structure on the normalization.  Unfortunately, this does not induce a morphism from $\W(\Gamma)$ to $\W(\hGamma)$ because for many curves the normalization of the curve does not have a well-defined choice of a marking (section) for the two points that map to the node.

Nevertheless, we can use this restriction property to create a pair of morphisms that will serve our purposes just as well as a gluing morphism would.  To do this, we first consider the fiber product $$F:=\MM(\hGamma)\times_{\MM(\Gamma)} \W(\Gamma).$$ \glossary{F @$F$ &The fibered product $F:=\MM(\hGamma)\times_{\MM(\Gamma)} \W(\Gamma)$}
$F$ is the stack of triples
$(\tilde{\cC},(\cC,\fL),\beta)$, where $\tilde{\cC}$ is a pointed
stable orbicurve with dual graph $\hGamma$, and ${\cC}$ is a pointed
stable orbicurve with dual graph $\Gamma$; also, $\fL$ is a
$W$-structure on $\cC$, and  $\beta:\rho[\tilde{\cC}]
\rTo \cC$ is an isomorphism of the glued curve $\rho[\tilde{\cC}]$
with the orbicurve $\cC$.

Instead of a lifted gluing (or cutting) map, we will use the following pair of maps:
$$\W(\hGamma)\lTo^{q}
F\rTo^{pr_2}\W(\Gamma),$$
    where the morphism $q$ \glossary{q @$q$ & The morphism $F \rTo^q \W(\hGamma)$ obtained by restricting $W$-structures.}
    simply takes the triple to the $W$-curve
$(\tilde{\cC},\beta^*(\fL))$ by pulling back the $W$-structure to
$\tilde{\cC}$.  This is well-defined because the fiber product has well-defined choices of sections of $\tilde{\cC}$ mapping to the node of $\cC$.

Alternatively, we could also describe a gluing process in terms of an additional structure that we call \emph{rigidification}. Let $p$ be a
    marked point.   Let $j_p: \B G_p \rTo \cC$ be the corresponding gerbe section of $\cC$.
        A \emph{rigidification at $p$} is an isomorphism
    $$\psi: j^*_{p}(\LL_1\oplus \cdots
    \oplus\LL_N){\rTo}  [\C^N/G_{p}]$$
 such that for every $\ell\in\{1,\dots,N\}$ the following diagram commutes:
\begin{equation}\label{eq:rigid}
\begin{diagram}
j_{p}^*\left(\bigoplus_{m=1}^N\LL_m\right)  & \rTo^{\psi} & [\C^N/G_{p}]\\
\dTo^{\vpt_\ell\circ A_\ell}  &              & \dTo^{A_\ell}\\
j_{p}^*(K_{\log}^{u_\ell})  & \rTo^{res^{u_\ell}} & \C
\end{diagram}
\end{equation}
 where the map $\res^{u_\ell}$ takes $({dz}/{z})^{u_\ell}$ to $1$.  Note that the two terms in the bottom of the diagram have trivial orbifold structure. Since each monomial $W_\ell$ of $W$ is fixed by $G_W$, we also have that each monomial $A_\ell$ is fixed by $G_W$ and hence by $G_{p}$.  This means that the vertical maps are both well defined.

   One can define the equivalence class of $W$-structures with rigidification in an
    obvious fashion. The notion of rigidification is
    also important for constructing the perturbed Witten equation, but we will not use it in any essential way in this paper.

A more geometric way to understand the rigidification is follows.
Suppose the fiber of the $W$-structure at the marked point is
$\left[(\LL_1\oplus \LL_2\oplus \cdots\oplus \LL_N)/G_p\right]$. The rigidification can be
thought as a $G_p$-equivariant map $\psi: \bigoplus_i \LL_i\rTo \C^N$
commuting with the $W$-structure.  For any element $g\in G_p$, the rigidification
$g\psi$ is considered to be an equivalent rigidification.  The choice of  $\psi$ is equivalent to a choice of basis $e_i\in
\LL_i$ such that $A_j(e_1, \dots, e_N)=(dz/z)^{u_j}$ and the basis $g(e_1), \dots
g(e_N)$ is considered to be an equivalent choice. In particular, if $\LL_{i_1}, \dots,\LL_{i_m}$ are the line bundles fixed by $G_p$ (we call the corresponding variables $x_{i_j}$ the \emph{broad variables}) then in each equivalance class of rigidifications, the basis elements $e_{i_1},\dots,e_{i_m}$ for the subspace $\left.\bigoplus_{j=1}^m \LL_{i_j}\right|_p$ are unique, but the basis elements for the terms not fixed by $G_p$ (the \emph{narrow variables}) are only unique up to the action of $G_p$.

It is clear that the group $G_W/G_p$ acts transitively on the set of rigidifications within a single orbit. Let $\W^{rig_p}(\Gamma)$ \glossary{MMrigp @ $\W^{rig_p}(\Gamma)$ & The stack of $W$-curves with dual graph $\Gamma$ and rigidification at $p$}
be the closure of the substack  of equivalence classes of $W$-curves with dual graph $\Gamma$ and
a rigidification at $p$. The group $G_W/G_p$ acts on $\W^{rig_p}(\Gamma)$
by interchanging the rigidifications. The stack $\W^{rig_p}(\Gamma)$ is a principal $G_W/G_p$-bundle over $\W(\Gamma)$ and
$$\left[\W^{rig_p}(\Gamma)/(G_W/G_p)\right]=\W(\Gamma).$$

    Now we describe the gluing. To simplify notation, we ignore the orbifold
    structures at other marked points and denote the type of the marked points $p_+, p_-$ being glued by $\gamma_+,
    \gamma_-$. Recall that the resulting orbicurve must be balanced, which means that  $\gamma_-=\gamma^{-1}_+$.
Let
    $$\psi_{\pm}: j^*_{p_{\pm}}(\LL_1\oplus \cdots
    \oplus\LL_N){\rTo}  [\C^N/G_{p_{\pm}}]$$
be the rigidifications. However, the residues at $p_+, p_-$
    are opposite to each other. The obvious  identification will
    not preserve the rigidifications. Here, we fix once and for all an
    isomorphism
    $$I: \C^N\rTo \C^N$$
    such that $W(I(x))=-W(x)$. $I$ can be explicitly constructed
    as follows. Suppose that $q_i=n_i/d$ for common denominator
    $d$. Choose $\xi^d=-1$, and set $ I(x_1, \dots,  x_N)=(\xi^{n_1}x_1, \dots, \xi^{n_N}x_N)$.
If $I'$ is another choice, then $I^{-1}I'\in \genj  \le G_W$.
    Furthermore, $I^2\in \genj  \le G_W$ as well. The identification by $I$
    induces a $W$-structure on the nodal orbifold Riemann surface with a
    rigidification at the nodal point. Forgetting the rigidification at the node yields the lifted gluing morphisms
\begin{equation}\label{eq:glueTreeW}
\widetilde{\rho}_{tree,\gamma}:\W^{rig}_{g_1, k_1+1}(\gamma)\times
\W^{rig}_{g_2,
    k_2+1}(\gamma^{-1})\rTo \W_{g_1+g_2, k_1+k_2},
\end{equation}\glossary{rhotildetree@$\widetilde{\rho}_{tree}$ & The gluing-trees morphism for rigidified $W$-curves}
\begin{equation}\label{eq:glueLoopW}
\widetilde{\rho}_{loop,\gamma}: \W^{rig}_{g, k+2}(\gamma,
\gamma^{-1})\rTo \W_{g+1, k},
\end{equation}\glossary{rhotildeloop@$\widetilde{\rho}_{loop}$ & The gluing-loops morphism for rigidified $W$-curves}
where $\tilde{\rho}$ is defined by gluing the rigifications at the
extra tails and forgetting the rigidification at the node.

\paragraph{\underline{Degree of $\st$}}

There are various subtle factors in our theory arising from the orbifold degrees of the maps.   These factors can be a
major source of confusion.  The degree of the stabilization
morphism $\st_{\bga}: \W_{g,k}(\bga)\rTo \MM_{g,k}$ is especially important in this paper.

As described in Remark~\ref{rem:principal-gamma},
for a given choice of $\bga\in G_W^k$, the set of all $W$-structures of type $\bga$ on a given orbicurve $\cC$ with underlying curve $C$ is either empty or is an $H^1(C,G_W)$-torsor; therefore, $H^1(C, G_W)$ acts on the non-empty $\W_{g,k}(\bga)$ and the coarse quotient is
$\MM_{g,k}$. One might think that
$\deg(\st_{\bga})=|H^1(C, G_W)|$, but further examination shows that this is
not case because $\MM_{g,k}$ is not isomorphic to
$\left[\W_{g,k}/H^1(C,G_W)\right]$ as a stack. This is particularly evident
because $\W_{g,k}$ has a nontrivial isotropy group at each
point, while the generic point of $\M_{g,k}$ has no isotropy group. The key point is that the automorphism group of any $W$-structure over a fixed, smooth orbicurve $\cC$ is all of $G_W$. Therefore, we have
\begin{equation}
\label{eq:deg-st-gen}
\deg(\st_{\bga}) =|G_W|^{2g-1}.
\end{equation}
Since there are $|G_W|^{k-1}$ choices of $\bga$ that produce a non-empty $W_{g,n} (\bgamma)$, this shows that the total degree of  $\st:\W_{g,k} \rTo \MM_{g,k}$ is $|G_W|^{2g-2+k}$.

For any decorated graph $\Gamma$, we also have a stabilization map $\st_{\Gamma}:\W(\Gamma) \rTo \MM(\Gamma)$\glossary{stGamma @ $\st_\Gamma$ & The stabilization map restricted to $\W(\Gamma)$}, but the degree of $\st_\Gamma$ is not the same as that of $\st$.  For example, if $\Gamma$ is a graph with two vertices and one (separating) edge labeled with the element $\gamma_+$, then the number of $W$-structures over a generic point of $\MM(\Gamma)$ is still $|H^1(C,G_W)|=|G_W|^{2g}$, but, by Equation~(\ref{eq:TreeAut}), the automorphism group of a generic point of $\W(\Gamma)$  is
$G_W\times_{G_W/\langle\gamma_+\rangle}G_W$.

For a tree, the selection rules  uniquely determine the choice of $\gamma_+$; therefore, we have the following.
\begin{prop}\label{prp:ramif-tree}
For a tree $\Gamma$ with two vertices and one edge, with tails decorated with $\bga:=(\ga_1,\dots,\ga_k) \in G_W^k$ and edge decorated with $\ga_+$ 
the map $\st_{\bga}$ is ramified along $\W(\Gamma)$, and
\begin{equation}
\label{eq:deg-st-tree}
\deg(\st_{\bga}) = |\lgr{+}|\deg(\st_{\Gamma}).
\end{equation}
\end{prop}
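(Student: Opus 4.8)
The plan is to derive the identity by computing $\deg(\st_\Gamma)$ by hand over a generic point of $\MM(\Gamma)$ and comparing with the value $\deg(\st)=|G_W|^{2g-1}$ recorded in Equation~(\ref{eq:deg-st-gen}). Recall that $\st$ is finite and flat, and that $\st_\Gamma$, being the restriction of the proper quasi-finite morphism $\st$ to the proper substack $\W(\Gamma)$ composed with the inclusion of $\MM(\Gamma)$, is finite onto $\MM(\Gamma)$; hence each of these morphisms has a well-defined degree, which over a generic point of the target (where the target has trivial automorphisms and the morphism is unramified) equals the number of isomorphism classes in the fibre, each counted with weight $1/|\aut|$.

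First I would examine the fibre of $\st_\Gamma$ over a generic point $[C]$ of $\MM(\Gamma)$. Such a curve $C$ has two smooth components meeting at one separating node, so its arithmetic genus is $g$, and the set of $W$-structures on the corresponding orbicurve is a torsor under $H^1(C,G_W)\cong G_W^{2g}$ by Remark~\ref{rem:principal-gamma}; because for a tree the selection rules force the edge decoration to be the prescribed $\gamma$, all $|G_W|^{2g}$ of these $W$-structures lie over $\W(\Gamma)$, and for generic $C$ they yield $|G_W|^{2g}$ pairwise non-isomorphic points of $\W(\Gamma)$. By Equation~(\ref{eq:TreeAut}) each such point has automorphism group $G_W\times_{G_W/\lgr{}}G_W$ (the subgroup of $G_W\times G_W$ on which the two projections agree modulo $\lgr{}$), of order $|G_W|\cdot|\lgr{}|$. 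Moreover $\st_\Gamma$ is unramified over the generic point of $\MM(\Gamma)$: inside $\W(\Gamma)$ the node is never smoothed, and forgetting the $W$-structure and the locally constant orbifold structures at the marked points introduces no ramification. Therefore $\deg(\st_\Gamma)=|G_W|^{2g}/(|G_W|\cdot|\lgr{}|)=|G_W|^{2g-1}/|\lgr{}|$, and combining with $\deg(\st)=|G_W|^{2g-1}$ gives $\deg(\st)=|\lgr{}|\deg(\st_\Gamma)$.

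It remains to see that this numerical gap reflects genuine ramification of $\st$ along $\W(\Gamma)$. Transverse to $\W(\Gamma)$ in $\W_{g,k}$ the relevant coordinate is the parameter $s$ smoothing the orbifold node, whose local group is $\Z/m$ with $m=|\lgr{}|$ by faithfulness of $r_p$; transverse to $\MM(\Gamma)$ in $\MM_{g,k}$ the relevant coordinate is the parameter smoothing the corresponding node of the coarse curve. Writing the family of orbicurves near the node as $[\{zw=s\}/(\Z/m)]$ with $\Z/m$ acting by $(z,w)\mapsto(\zeta z,\zeta^{-1}w)$ and passing to the coarse curve, whose invariant coordinates are $z^m$ and $w^m$, the node is smoothed with parameter $s^m$. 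Hence $\st$ pulls the coarse smoothing parameter back to $s^m$ and is ramified of index $m=|\lgr{}|$ along $\W(\Gamma)$, which simultaneously re-derives the degree identity and proves the ramification claim.

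I expect the step needing the most care to be the degree bookkeeping over $\MM(\Gamma)$: one must be sure that the generic fibre of $\st_\Gamma$ is reduced and consists of exactly $|G_W|^{2g}$ pairwise non-isomorphic $W$-curves (so that the automorphism-group weighting of Equation~(\ref{eq:TreeAut}) applies as stated), and that $\st_\Gamma$ itself acquires no ramification, so that all ramification of the full morphism $\st$ is concentrated in the single node-smoothing direction.
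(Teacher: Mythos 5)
Your proof reconstructs exactly the argument the paper carries implicitly in the paragraphs preceding the proposition: count $|G_W|^{2g}$ isomorphism classes of $W$-structures over a generic $C$ with dual graph $\Gamma$, weight by the automorphism group $G_W\times_{G_W/\lgr{}}G_W$ of order $|G_W|\cdot|\lgr{}|$ from Equation~(\ref{eq:TreeAut}), and compare with $\deg(\st)=|G_W|^{2g-1}$ from Equation~(\ref{eq:deg-st-gen}). Your final paragraph, exhibiting the local model $[\{zw=s\}/(\Z/m)]$ whose coarse smoothing parameter is $s^m$ with $m=|\lgr{}|$ by faithfulness of $r_p$, is a welcome addition that the paper does not spell out; it gives a direct verification of the ramification claim rather than merely inferring it from the degree mismatch. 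Your worry about the generic fibre being a reduced torsor of pairwise non-isomorphic $W$-structures is already addressed by Remark~\ref{rem:principal-gamma}, which asserts precisely that the set of $W$-structures (up to isomorphism) on a fixed $\cC$ is an $H^1(C,G_W)$-torsor, so no further argument is needed there.
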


If $\Gamma$ is a loop with one vertex and one (non-separating) edge, such that the edge is labeled with the element $\gamma$, then we have the following proposition.

\begin{prop}\label{prp:ramif-loop} Let $\bga := (\ga_1,\dots,\ga_k)\in G_W^k$ be chosen so that $\W_{g,k}(\bga)$ is non-empty.  For the loop $\Gamma$ with a single vertex and a single edge decorated with $\gamma_+$ and tails decorated with $\bga$, the stack $\W(\Gamma)$ is non-empty.  Moreover, the morphism $\st_{\bga}$ is ramified along $\st_{\Gamma}$ and
\begin{equation}\label{eq:deg-st-loop}\deg(\st_{\Gamma}) =
\frac{|G_W|^{2g-2}}{|\lgr{+}|}.
\end{equation}
\end{prop}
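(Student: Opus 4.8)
The plan is to split the argument into two essentially independent parts --- non-emptiness of $\W(\Gamma)$ and the degree formula for $\st_\Gamma$ --- after which the ramification assertion drops out by comparison with Equation~(\ref{eq:deg-st-gen}).

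For non-emptiness I would argue as follows. Write $\nu$ for the single vertex of $\Gamma$; it has genus $g-1$, and the half-edges at $\nu$ are the $k$ tails (decorated $\gamma_1,\dots,\gamma_k$) together with the two half-edges of the loop (decorated $\gamma$ and $\gamma^{-1}$). Since $\Theta_j^{\gamma}+\Theta_j^{\gamma^{-1}}\in\{0,1\}\subseteq\Z$ for every $j$, the integrality condition~(\ref{eq:graph-sel-rule}) at $\nu$ reduces to $q_j(2g-2+k)-\sum_{l=1}^{k}\Theta_j^{\gamma_l}\in\Z$, which is exactly the selection rule~(\ref{eq:deg-sel-rule}) for $\W_{g,k}(\bgamma)$; in particular it imposes no constraint on $\gamma$. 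The same computation shows that $\W_{g-1,k+2}(\gamma_1,\dots,\gamma_k,\gamma,\gamma^{-1})$ is non-empty, and feeding a point of it into the rigidified gluing map $\widetilde{\rho}_{loop,\gamma}$ of Equation~(\ref{eq:glueLoopW}) --- glue the two new tails by the fixed isomorphism $I$ and forget the rigidification at the resulting node --- produces a stable $W$-orbicurve with fully decorated dual graph $\Gamma$. Hence $\W(\Gamma)\neq\emptyset$.

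For the degree, I would fix a generic point $[C]\in\MM(\Gamma)$, i.e.\ an irreducible one-nodal stable curve of geometric genus $g-1$, which for generic $C$ has no nontrivial automorphisms, so that $\deg(\st_\Gamma)=\sum_{[\fL]}|\aut(\fL)|^{-1}$ over the fibre $\st_\Gamma^{-1}([C])$; by Equation~(\ref{eq:loopAut}) every such $\fL$ has $\aut(\fL)=G_W$, so $\deg(\st_\Gamma)=M/|G_W|$ where $M$ is the number of isomorphism classes of $W$-structures of type $\gamma$ over $[C]$. To compute $M$ I would pass to the normalization $\tcC$, a smooth genus-$(g-1)$ orbicurve with two extra orbifold points $p_\pm$ of monodromy $\gamma,\gamma^{-1}$: a type-$\gamma$ $W$-structure on $\cC$ amounts to a $W$-structure on $\tcC$ of the corresponding type together with a gluing of the fibres $\bigoplus_j \LL_j|_{p_+}$ and $\bigoplus_j \LL_j|_{p_-}$ compatible with the maps $\vpt_\ell$ and the residue conventions (the latter via $I$). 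By the generic-fibre count behind Equation~(\ref{eq:deg-st-gen}) in genus $g-1$, the number of $W$-structures on $\tcC$ of the given type is $|G_W|^{2(g-1)}$. The admissible gluings form a torsor under the kernel of $A\colon(\C^*)^N\to(\C^*)^N$, which equals $G_W$ (since $A$ and the exponent matrix $B$ differ by the integer-invertible matrices of the Smith normal form, cf.\ Lemma~\ref{lm:group}); but two gluings differing by the action of the cyclic local group $\langle\gamma\rangle$ at the node yield the same $W$-curve on $\cC$, while --- in contrast to the two-vertex tree case of Proposition~\ref{prp:ramif-tree}, where the component automorphism groups $G_W\times G_W$ act transitively on the gluings --- the automorphism group $G_W$ of the $W$-structure on the connected curve $\tcC$ acts on every fibre by a single diagonal element and hence acts trivially on gluings. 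Thus the gluings modulo identifications number $|G_W/\langle\gamma\rangle|=|G_W|/|\langle\gamma\rangle|$, giving $M=|G_W|^{2(g-1)}\cdot|G_W|/|\langle\gamma\rangle|$ and $\deg(\st_\Gamma)=M/|G_W|=|G_W|^{2g-2}/|\langle\gamma\rangle|$. Finally, $\deg(\st)=|G_W|^{2g-1}$ by Equation~(\ref{eq:deg-st-gen}) strictly exceeds $\deg(\st_\Gamma)$, so $\st$ is ramified along $\W(\Gamma)$.

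I expect the main obstacle to be the bookkeeping in the computation of $M$: one must verify precisely that the admissible gluings form a single $G_W$-torsor, that a gluing is only well-defined modulo the node's cyclic local group $\langle\gamma\rangle$, and --- the crucial point --- that the diagonal automorphisms of the $W$-structure on the connected normalization act trivially on the set of gluings. This is exactly the feature that separates the loop from the tree, and it is the source of the extra factor $|\langle\gamma\rangle|^{-1}$. Some additional care is needed at small $g$, where a generic point of $\MM(\Gamma)$ may carry nontrivial automorphisms and the degree count should be carried out with residual gerbes.
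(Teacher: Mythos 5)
Your proposal is correct but takes a genuinely different route from the paper. The paper's argument for the degree formula is a \emph{degeneration} argument: it fixes a one-parameter family in which a smooth $W$-curve degenerates to a nodal one with dual graph $\Gamma$, uses the $H^1(C,G_W)$-torsor structure on $W$-structures, and chooses a basis of $H^1(C,G_W)$ adapted to the vanishing cycle $\alpha$ and a transverse cycle $\beta$ to show that exactly those torsor elements $(1,\varepsilon_2,\dots,\varepsilon_{2g})$ with $\varepsilon_2\in\lgr{}$ degenerate to a fixed limiting $W$-structure; combined with the fact that the automorphism group is $G_W$ on both sides, this gives ramification index $|\lgr{}|$ and hence the stated degree, and varying the first coordinate $\varepsilon_1$ sweeps out every decoration $\gamma$, giving non-emptiness. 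Your argument instead does a \emph{direct count over a generic point of $\MM(\Gamma)$} by passing to the normalization, exactly as one would for Proposition~\ref{prp:ramif-tree}, and isolates the structural difference between the loop and the tree: on a disconnected normalization the product of component automorphism groups $G_W\times G_W$ acts transitively on gluings, while on a connected normalization the single diagonal $G_W$ acts trivially, leaving the residual factor $|\lgr{}|^{-1}$ coming from the ghost identifications at the node. Both arguments are valid and give the same answer; yours has the virtue of making the loop/tree dichotomy completely transparent and of parallelling Proposition~\ref{prp:ramif-tree}, at the cost of having to be careful with the gluing bookkeeping. (Your non-emptiness step via the selection rule for $\W_{g-1,k+2}(\bgamma,\gamma,\gamma^{-1})$ and the gluing morphism $\widetilde\rho_{loop,\gamma}$ is clean and also differs from the paper's.)

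The one place where your write-up genuinely leaves something to be filled in is the assertion that ``two gluings differing by the action of $\lgr{}$ at the node yield the same $W$-curve.'' This is true, but the mechanism should be named: the isomorphism between the two glued $W$-curves is a \emph{ghost automorphism} of the underlying twisted curve at the node (an automorphism of $\cC$ over $C$ that is trivial on the normalization and acts on the node by an element of $\lgr{}$), which twists the gluing datum precisely by the $\lgr{}$-action. Since the $W$-structure is required to be representable at the node, $\lgr{}$ acts \emph{freely} on the torsor of gluings, so none of these ghost automorphisms survives as an automorphism of the glued $W$-curve --- consistent with Equation~(\ref{eq:loopAut}) giving $\aut_{C}(\fL)=G_W$ with no extra $\lgr{}$ factor, which is what makes your final division $\deg(\st_\Gamma)=M/|G_W|$ correct. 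Spelling this out closes the gap you yourself flagged.
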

\begin{proof}
First, we claim that the number of $W$-structures over a generic point of $\MM_{g,k}$ which degenerate to a given $W$-structure over
$\Gamma$ is $|\lgr{}|$.  To see this, note that for any $W$-structure $\fL$ on a smooth orbicurve $\cC$ with underlying curve $C$, all other $W$-structures on  $\cC$ differ from it by an element of $H^1(C,G_W)$.  Consider any fixed $1$-parameter family of $W$-curves such that the $W$-curve $(\cC,\fL)$ degenerates to a $W$-curve $(\cC',\fL')$ with dual graph $\Gamma$, corresponding to the contraction of a cycle $\alpha\in H_1(C,\Z)$.  In this case, we may choose a basis of $H^1(C,G_W)$ such that the first basis element is dual to $\alpha$ and the second basis element is dual to a cycle $\beta$ such that $\alpha\cdot \beta = 1$, and $\beta\cdot \sigma = 0$ for any remaining basis element $\sigma$ of $H_1(C,G_W)$.

In this case, the $W$-structure obtained by multiplying $\fL$ by an element of the form $(1, \varepsilon_2,\dots,\varepsilon_{2g})\in H^1(C,G_W)$  will again degenerate (over the same family of stable underlying curves) to $\fL'$ if and only if $\varepsilon_2 \in \lgr{}$.

Second, by Equation~(\ref{eq:loopAut}), the automorphism groups for both smooth $W$-curves and these degenerate $W$-curves are isomorphic to $G_W$.  This, combined with the previous degeneration count, proves that the ramification is $|\lgr{}|$.

More generally, the pair $\left(\cC,\fL\cdot(1, \varepsilon_2,\dots,\varepsilon_{2g})\right)$ will always degenerate to a $W$-curve with dual graph $\Gamma$, and $\left(\cC,\fL\cdot(\varepsilon_1, \varepsilon_2,\dots,\varepsilon_{2g})\right)$ for $(\varepsilon_1, \varepsilon_2,\dots,\varepsilon_{2g})\in H^1(C,G_W)$ will degenerate to a $W$-curve with dual graph labeled by $\gamma\varepsilon_1$ instead of by $\gamma$.  Thus the moduli $\W(\Gamma)$ is non-empty for every choice of decoration $\gamma\in G_W$ of the edge of $\Gamma$.
\end{proof}

\subsection{Admissible groups $G$ and $\W_{g,k,G}$}\label{sec:changeGroup}

 The constructions of this paper depend quite heavily on the group of diagonal symmetries $\grp$ of the singularity $W$.  It is useful to generalize these constructions to the case of a subgroup $G$ of $\grp$.   First, the isomorphism $I$ is only well-defined up to an element
 of $\langle J\rangle$.   Therefore, we will always require that
$J \in G.$
The problem is that it is not \emph{a priori} obvious that the stack of $W$-curves with markings only coming from a subgroup $G$ is a proper stack. Namely, the
orbifold structure at nodes may not be in $G$.

\label{prop:HTH-Laurent}
However, we note\footnote{We are grateful to H.~Tracy Hall for suggesting this approach to us.} that for any Laurent polynomial $Z=\sum_j\prod_{i=1}^N x_i^{a_{ij}}$ of weighted total degree $1$, with $a_{ij} \in \Z$ for all $i$ and $j$, the diagonal symmetry group $G_{\tW}$ of $\tW:=W+Z$ is clearly a subgroup of $\grp$ containing $\genj $, and the stack ${\W}_{g,k}(\tW)$ of $\tW$-curves is a proper substack of $\W_{g,k}(W).$

\begin{prop}\label{prop:smaller-moduli}
For every quasi-homogeneous Laurent polynomial $\tW=W+Z$, where $Z$ has no monomials in common with $W$, there is a natural morphism $\inc:{\W}_{g,k}({W +Z}) \rTo \W_{g,k}(W)$
from the stack ${\W}_{g,k}(\tW)$ to an open and closed substack of $\W_{g,k}(W)$.  Moreover, this morphism is finite of degree equal to the index of $G_{\tW}$ in $G_W$.
\end{prop}

\begin{proof}
It suffices to consider the case of $\tW = W+M$, where  $M = \prod_{i=1}^Nx_i^{\beta_i}$ is a single monomial of degree $1$ (i.e., $\sum_{i=1}^N \beta_i q_i = 1$), distinct from the monomials $W_j = c_j \prod_{l=1}^s x_i^{b_{l,j}}$ of $W$.

The morphism $\inc$ is simply the functor which forgets the additional conditions arising from the monomial in $M$.  

Given a $W$-structure $(\LL_1,\dots,\LL_N, \phi_1,\dots,\phi_s)$ on an orbicurve $\cC$, we can produce $s$ choices of a $d$-th root of $\O$---one for each monomial of the original polynomial $W$---as follows.  For each $j \in \{1,\dots,s\}$ let
\begin{equation}
\N_j:=\bigotimes_{i=1}^N\LL_i^{\otimes (b_{i,j}-\beta_i)}.
\end{equation}
Using the fact that $B(q_1,\dots,q_N)^T = (1,\dots,1)^T$ and $\beta \cdot (q_1,\dots,q_N) = 1$ we see that $\N_j^{\otimes d} \cong \O$, where $d$ is defined (as in Definition~\ref{df:qhomPoly}) to be the smallest positive integer such that $(dq_1,\dots,dq_N) \in \Z$.
This gives $s$ morphisms
\begin{equation}
\W_{g,k}(W) \rTo^{\Phi_{j}} \J_{g,k,d},
\end{equation}
where $\J_{g,k,d}:=\{(\cC,p_1,\dots, p_k,\LL,\psi:\LL^d \rTo \O_{\cC})  )$ denotes the stack of $k$-pointed, genus-$g$ orbicurves with a $d$-th root $\N$ of the trivial bundle.   It is easy to see that the stack $\J_{g,k,d}$ has a connected component $\J_{g,k,d}^{0}$ corresponding to the trivial $d$-th root of $\O$.  The inverse image $\W_{g,k}^0(W) := \Phi_j^{-1}(\J_{g,k,d}^{0})$ of the trivial component for each $j\in\{1,\dots,s\}$ is independent of $j$, is open and closed, and is the image of the forgetful morphism $\inc$:
$$\W_{g,k}(\tW) \rOnto^{\inc} \W_{g,k}^0(W) \subseteq \W_{g,k}(W).  $$

The objects of the stack $\W_{g,k}(\tW)$ are $\tW$-curves $(\cC, \LL_1,\dots,\LL_N, \phi_1,\dots,\phi_{s+1})$, where $\phi_{s+1}$ is an isomorphism $\phi_{s+1} :M(\LL_1,\dots,\LL_N) \rTo K_{\log}$, whereas the objects of the stack $\W_{g,k}^0(W)$ are $W$-curves $(\cC, \LL_1,\dots,\LL_N, \phi_1,\dots,\phi_{s})$ such that there exists some isomorphism $\psi:M(\LL_1,\dots,\LL_N) \rTo K_{\log}$ which is compatible with the isomorphisms $\phi_i$ of the $W$-structure.  Any $W$-curve with such a $\psi$ is isomorphic to the image of some $\tW$-curve, but since an automorphism of a $W$-curve in $\W_{g,k}^0(W)$ need not fix the isomorphism $\psi$, the automorphism group of a generic $W$-curve in $\W_{g,k}^0(W)$ is $G_W$, while the automorphism group of a generic $\tW$-curve is $G_{\tW}$.
\end{proof}

\begin{df}
We say that a subgroup $G \le G_W$ is \emph{admissible} or is an \emph{admissible group of Abelian symmetries of $W$} if there exists a Laurent polynomial $Z$, quasi-homogeneous with the same weights $q_i$ as $W$, but with no monomials in common with $W$, such that $G = G_{W+Z}$.
\end{df}

\begin{df}
Suppose that $G$ is admissible. We define  the stack $\W_{g,k,G}(W):=\W_{g,k}(\tW)$ for any $\tW=W+Z$ with $G_{\tW}=G$.
\end{df}

The most important consequence of
Proposition~\ref{prop:smaller-moduli} is that we may restrict (pull back) the
virtual cycle $\left[\W_{g,k}(W)\right]^{vir}$ to the substack
$\W_{g,k,G}(W)$ (see Subsection~\ref{subsec:axioms}).

\begin{rem}
An admissible group $G$ may have more than one $Z$ such that
$G=G_{W+Z}$.  One can show (see \cite{ChR}) that $\W_{g,k,G}:=\W_{g,k}(W+Z)$  is independent of $Z$ and depends only on $G$.
\end{rem}

It is immediate that every admissible group contains $J$. Marc Krawitz \cite[Prop 3.4]{Krawitz}  has proved the converse.  For the reader's convenience we repeat his proof here.
\begin{prop}[Krawitz]\label{prop:admissible}
For any non-degenerate $W \in \C[x_1,\dots,x_N]$, any group
of diagonal symmetries of $W$ containing $J$ is admissible.
\end{prop}
\begin{proof}
The subring of $G$-invariants in $A:=\C[x_1,\dots,x_N]$ is finitely generated by monomials. Let $Z$ be the sum of all $G$-invariant
monomials in $A$ not divisible by monomials in $W$.  

We claim that $G$ is the maximal diagonal symmetry group of $W+Z$.  If it were not, there would be a diagonal symmetry group $H$, with $G \le H$ and $A^G   \subseteq A^H$.  The actions of $G$ and $H$ on $A$ extend to actions on the fraction field $E:=\C(x_1,\dots,x_N)$.  Since the action is diagonal, it is easy to show that this implies that the fraction field of $A^G$ equals $E^G$ and the fraction field of  $A^H$ equals $E^H$.  Since $A^G = A^H$, we have $E^G = E^H$.  Since $G$ and $H$ are finite, we have, by \cite[Cor 3.5]{milne}, that $$G = \aut(E / E^G)  = \aut(E / E^H) = H.$$   Therefore $G$ is the maximal symmetry group of $W+Z$.

Now, since $J$ preserves each of the constituent monomials of $Z$, each of these monomials has integral quasi-homogeneous degree. We may correct each of these monomials by a (negative)
power of any monomial in W to ensure that each of the monomials has quasi-homogeneous degree equal to $1$, and since we are correcting by $G$-invariants not dividing the monomials of $Z$, we do not change the maximal symmetry group of $W + Z$.
\end{proof}

\subsection{The tautological ring of $\W_{g,k}$}

    A major topic in Gromov-Witten theory is the tautological ring of $\MM_{g,k}$.
    The stack $\W_{g,k}$ is similar to $\MM_{g,k}$ in many ways, and we can
    readily generalize the notion of the tautological ring to $\W_{g,k}$. We expect
    that the study of the tautological ring of $\W_{g,k}$ will be  important to the calculation of our invariants. It is not unreasonable to conjecture that the virtual cycle constructed
    in the next section is, in fact, tautological.

Throughout this section, we will refer to the following diagram.
\begin{equation}
\begin{diagram}
\cC_{g,k} &\rTo^{\varrho} &C_{g,k}  \\
\uTo^{\sigma_i}\dTo_\pi  & \ldTo_{\varpi}\\
 \W_{g,k}\\
\end{diagram}
\end{equation}
Here, $\cC_{g,k} \rTo^{\pi} \W_{g,k}$ is the  universal orbicurve, and $\varpi:C_{g,k} \rTo \W_{g,k}$ \glossary{pivar @ $\varpi$ & The universal stable curve} is the universal underlying stable curve. The map $\sigma_i$ \glossary{sigmai @ $\sigma_i$ & The $i$th section of the universal orbicurve} is the $i$th section of $\pi$, and we denote by $\bar{\sigma}_i$ \glossary{sigmaibar @ $\bar{\sigma}_i$ & The $i$th section of the universal stable curve $\varpi$} the $i$th section of $\varpi$.  The map $\varrho$ forgets the local orbifold structure and takes a point to its counterpart in $C_{g,k}$.  On $\cC_{g,k}$ we also have the universal $W$-structure $\bigoplus\LL_i$ and the line bundles $K_{\cC,\log}$ and $K_{\cC}$.

    \subsubsection{\bf $\psi$-classes: } As in the case of the moduli of stable maps, we denote by $\psit_i$ the first Chern class of the $\cC$-cotangent line bundle on $\W_{g,k}$. That is,
\begin{equation}\psit_i := c_1 (\sigma_i^* (K_{\cC})).\end{equation}
\glossary{psi @$\psi_i$ & The descendant class}    We note that since $C_{g,k}$ is the pullback of the universal stable curve from $\MM_{g,k}$, replacing the $\cC$-cotangent bundle by the $C$-cotangent bundle would give the pullback of the usual $\psi$-class, which we also denote by $\psi$:
\begin{equation}
\psi_i:=c_1(\bar{\sigma}_i^* (K_{C})) = \st^*(\psi_i).
\end{equation}

These classes are related as follows.
\begin{prop}\label{prp:tautpsi}
If the orbifold structure along the marking $\sigma_i$ is of type $\gamma_i$, with $|\lgr{i}| = m_i$, then we have the relation
\begin{equation}\label{eq:tautpsi}
m_i \psit_i=\st^*{\psi}_i.
\end{equation}
\end{prop}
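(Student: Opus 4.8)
The plan is to compare the two line bundles $\sigma_i^*K_{\cC}$ and $\bar\sigma_i^*K_C$ on $\W_{g,k}$ directly, using the behaviour of the canonical bundle under the coarsening map $\varrho$ near the $i$-th section (this is the straightforward generalization of the corresponding relation for $r$-spin curves). First I would note that $\bar\sigma_i=\varrho\circ\sigma_i$, so that $\st^*\psi_i=c_1(\bar\sigma_i^*K_C)=c_1\bigl(\sigma_i^*\varrho^*K_C\bigr)$.

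Next I would use that, on the universal orbicurve $\cC_{g,k}$, where $K_{\cC}$ and $K_C$ denote the relative dualizing sheaves, the identity recorded just before Equation~(\ref{eq:CoarseFineForms}) holds in its relative form $\varrho^*K_C=K_{\cC}\otimes\O\bigl(-\sum_{j=1}^{k}(m_j-1)p_j\bigr)$, this being the global counterpart of $dx=mz^{m-1}dz$. Pulling this back along $\sigma_i$ and using that $\sigma_i$ is disjoint from the marking divisors $\cS_j=p_j$ for $j\neq i$, so that $\sigma_i^*\O_{\cC}(p_j)$ is trivial in that case, one obtains
\[
\st^*\psi_i=\psit_i-(m_i-1)\,c_1\!\bigl(\sigma_i^*\O_{\cC}(p_i)\bigr).
\]

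The remaining input, and the step I expect to require the most care, is the adjunction identity $\sigma_i^*\O_{\cC}(-p_i)\cong\sigma_i^*K_{\cC}$, i.e.\ that the conormal bundle of the $i$-th (gerbe) section equals the relative cotangent line of $\pi$ restricted to that section. I would prove this just as in the classical case: apply adjunction to the smooth codimension-one closed substack $\cS_i\hookrightarrow\cC_{g,k}$ over $\W_{g,k}$, noting that $\cS_i\rTo\W_{g,k}$ is a $\Z/m_i$-gerbe and hence has trivial relative dualizing sheaf, so that the restriction of $K_{\cC}\otimes\O_{\cC}(p_i)$ to $\cS_i$ is trivial; then pull back along the trivializing section $\W_{g,k}\rTo\cS_i$. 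Substituting $c_1\bigl(\sigma_i^*\O_{\cC}(p_i)\bigr)=-\psit_i$ into the displayed equation gives $\st^*\psi_i=\psit_i+(m_i-1)\psit_i=m_i\psit_i$, which is the assertion.

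The subtlety to watch — and the precise source of the factor $m_i$ — is the difference between the orbifold marking $p_i=\cS_i$, which occurs with multiplicity one as a divisor on $\cC_{g,k}$, and the pullback $\varrho^*(\bar p_i)$ of the coarse marked point, which equals $m_i\,p_i$ (the divisor-theoretic reflection of $x=z^{m_i}$); blurring these two would lose exactly that factor. Once one works on Deligne-Mumford stacks and uses that a gerbe over a base has trivial relative dualizing sheaf, the adjunction step is otherwise routine.
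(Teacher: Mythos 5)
Your argument is correct and follows essentially the same route as the paper: both pull back the identity $\varrho^*K_C = K_\cC\otimes\O(-\sum_j(m_j-1)p_j)$ along $\sigma_i$ and then identify $\sigma_i^*\O(-p_i)$ with $\sigma_i^*K_\cC$. The only cosmetic difference is that you establish this last identification via adjunction for $\cS_i\hookrightarrow\cC_{g,k}$ plus triviality of the dualizing sheaf of the gerbe $\cS_i\to\W_{g,k}$, whereas the paper invokes the residue map to give $\sigma_i^*K_{\log}=\O$ directly — but the residue trivialization of $K_{\log}$ along a section is exactly the adjunction isomorphism, so these are the same observation in two dialects.
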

\begin{proof}
Let $D_i$ denote the image of the section $\sigma_i$ in $\cC_{g,k}$.  Note that since $\bar{{\sigma}}_i = \varrho\circ\sigma_i$, then by
Equation~(\ref{eq:CoarseFineForms}), we have
\begin{equation}
\bar{\sigma}^*_i K_{C_{g,k}} = \sigma_i^*\left(\varrho^*K_{C_{g,k}}\right)  =
\sigma_i^*\left(K_{\cC_{g,k}}\otimes \O(-(m_i-1)D_i)\right),
\end{equation}
and the residue map shows that
\begin{equation}\label{eq:sigmaklog}
\sigma^*K_{\log} = \O,
\end{equation}
 hence
\begin{equation}
\sigma_i^*(\O(-D_i)) =\sigma_i^*(K_\cC) ,
\end{equation}
which gives the relation~(\ref{eq:tautpsi}).
\end{proof}

\subsubsection{\bf $\psi_{ij}$-classes: } It seems natural to use the $W$-structure to try to define the following tautological classes:
    $$\psi_{ij}:=c_1(\sigma_i^*(\LL_j)).$$  However, these are all zero. To see this, note that for every monomial $W_\ell = \prod x_j^{b_{\ell,j}}$ and for every $i\in\{1,\dots,k\}$, we have, by the definition of the $W$-structure and by Equation~(\ref{eq:sigmaklog}),
    \begin{equation}\label{eq:tautpsiij}
    \sum_{j=1}^N b_{\ell,j} \psi_{ij} = 0.
    \end{equation}
Coupled with the nondegeneracy condition (Definition~\ref{df:nondegenerate}) on $W$, this implies that every $\psi_{ij}$ is torsion in $H^*(\W_{g,k},\Z)$ and thus vanishes in $H^*(\W_{g,k},\Q)$.

\subsubsection{\bf $\kappa$-classes: }

The traditional definition of the $\kappa$-classes on $\MM_{g,k}$ is
$$\kappa_a:=\varpi_*(c_1(K_{C,\log})^{a+1}).$$
We will define the analogue of these classes for $W$-curves as follows:
$$\kappat_a:=\pi_*(c_1(K_{\cC,\log})^{a+1}).$$
\glossary{kappa @$\kappat_a$ & The $a$th orbifold kappa class}
Note that since $K_{\cC,\log} = \varrho^*K_{C,\log}$, and since $\deg(\varrho)=1$, we have
\begin{equation}
\kappat_a=\pi_*(c_1(K_{\cC,\log})^{a+1})
 = \varpi_* \varrho_*\varrho^* (c_1(K_{\cC,\log})^{a+1}) =  \kappa_a
\end{equation}

\subsubsection{\bf $\mu$-classes:}
 The Hodge classes $\lambda_i$ for the usual stack of stable curves are defined to be the Chern classes of the K-theoretic pushforward $R\varpi_* K_C$.  We could also work on the the universal orbicurve $\cC_{g,k} \rTo^{\pi} \MM_{g,k}$, but $\varrho$ is finite, so by Equation~(\ref{eq:KbarIsK}) we have
 $$R\pi_*K_{\cC} = R\varpi_*(\varrho_*K_{\cC}) = R\varpi_* K_C.$$
Therefore,  the two definitions of lambda classes agree.  Moreover, it is known that the $\lambda$-classes can be expressed in terms of $\kappa$-classes, so they need not be included in the definition of the tautological ring.

A more interesting Hodge-like variant comes from pushing down the $W$-structure bundles $\LL_j$.  We also find it more convenient to work with the components of the Chern \emph{character} rather than the Chern classes.  We define $\mu$-classes to be the components of the Chern {character} of the $W$-structure line bundles:
$$\mu_{ij}:=Ch_i(R\pi_* \LL_j).$$
\glossary{muij @$\mu_{ij}$ & The mu classes $Ch_i(R\pi_* \LL_j)$}
By the orbifold Grothendieck-Riemann-Roch theorem, these can be expressed in terms of the kappa, psi, and boundary classes (See, for example, the proof of Theorem~\ref{thm:ConcaveCodimOne}).

\subsubsection{Tautological ring of $\W_{g,k}$:}

    \begin{df}
We define the \emph{tautological ring of $\W_{g,k}$} to be the
subring of $H^*(\W_{g,k}, \Q)$ generated by $\psit_i$, $\kappat_a$,
and the obvious boundary classes.
         \end{df}

We would like to propose the following conjecture.

\begin{conj}[Tautological virtual cycle conjecture] The virtual
     cycle (constructed in the next section) is tautological, in
     the sense that its Poincar\'e dual lies in the tensor product of
   the  tautological ring of $\W_{g,k}$ and relative cohomology.
\end{conj}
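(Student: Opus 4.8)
The plan is to reduce the conjecture to a Grothendieck--Riemann--Roch computation, using the axioms of the virtual cycle (established in \cite{FJR-ip-WEVFC}) to handle everything not directly governed by that computation. The cycle lives in relative cohomology precisely because the state space at a marked point is a relative cohomology group, so "tautological'' here should mean: the ordinary-cohomology factor of the Poincar\'e dual, on each component $\W_{g,k}(\bgamma)$, lies in the subring generated by $\psit_i$, $\kappat_a$ and boundary classes, while the state-space factor is built from the (topological) pairing and the distinguished classes in each sector.

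First I would treat the \emph{concave} case: those components $\W_{g,k}(\bgamma)$ on which the generic $W$-curve has $R^0\pi_*\LL_j = 0$ for every $j$ --- this holds, for instance, on three-pointed genus-zero components where $\W_{0,3}(\bgamma)\cong\BG$, and more generally for narrow insertions in low genus. There the virtual cycle is, up to an explicit sign, the top Chern class of $\left(R^1\pi_*\bigoplus_j\LL_j\right)^\vee$. By the orbifold Grothendieck--Riemann--Roch theorem (in the form used to express the $\mu$-classes $\mu_{ij}=Ch_i(R\pi_*\LL_j)$ in terms of $\psit$, $\kappat$ and boundary classes), every Chern class of $R^1\pi_*\LL_j$ is a universal polynomial in those tautological generators. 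Hence in the concave case the Poincar\'e dual of the virtual cycle is tautological, with the relative-cohomology factor being the unit in each narrow sector.

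Next I would induct on the dimension of $\W_{g,k}(\bgamma)$ using the gluing (composition) axiom. The restriction of the virtual cycle to a boundary divisor $\W(\Gamma)$ is, up to the multiplicity factors computed in Propositions~\ref{prp:ramif-tree} and~\ref{prp:ramif-loop}, the image under the finite morphism $q\colon F\rTo\W(\hGamma)$ and the projection $pr_2\colon F\rTo\W(\Gamma)$ of a tensor product of virtual cycles on the normalized pieces, contracted against the pairing on the state space at the glued node. The morphisms $\st$, $q$, $pr_2$ and the boundary inclusions are all the $W$-curve analogues of the standard gluing maps (cf. Equations~(\ref{eq:glueTreeW})--(\ref{eq:glueLoopW})) and send tautological classes to tautological classes, and the state-space pairing is purely topological; so if each vertex cycle is tautological, its contribution to the boundary is tautological. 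Since every $W$-curve degenerates to one all of whose components are concave three-pointed genus-zero curves, the concave base case combined with this inductive step controls the restriction of the virtual cycle to every boundary stratum; the remaining interior contribution is then again handled by Grothendieck--Riemann--Roch applied to $R\pi_*\bigoplus_j\LL_j$.

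The main obstacle is twofold. First, the virtual cycle of this paper is defined analytically, via the moduli of solutions of the Witten equation, and is not \emph{a priori} an algebraic cycle; to run any Grothendieck--Riemann--Roch argument one must first identify it with an algebraically-defined class (a suitable generalization of Witten's top Chern class, or a cosection-localized virtual class) and prove the comparison. Second, the Ramond sector: when some $\gamma_i$ has nontrivial fixed locus $\C^N_{\gamma_i}$, the relative-cohomology factor of the cycle is genuinely nontrivial and the Euler-class description fails, so one needs a separate analysis --- presumably bootstrapping from $\W_{0,3}(\bgamma)\cong\BG$ and the explicit structure of $\ch_{\gamma,G}$ --- to rule out the appearance of non-tautological classes along the broad insertions. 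I expect this Ramond/relative-cohomology step, together with the analytic-to-algebraic comparison, to be where the real work lies.
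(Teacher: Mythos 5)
This statement is a \emph{conjecture}, not a theorem: the paper explicitly introduces it with ``We would like to propose the following conjecture'' and offers no proof of it anywhere. So there is no ``paper's own proof'' for your proposal to be compared against, and your sketch should be judged as a plan of attack on an open problem rather than as a reconstruction of an existing argument.

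As a plan, it correctly identifies the obvious ingredients --- the topological Euler class axiom, oGRR (which the paper does use, in Theorem~\ref{thm:ConcaveCodimOne}, to express Chern characters of $R\pi_*\LL_j$ in $\kappat$, $\psit$, and boundary classes), the composition axiom, and the two genuinely hard points (the analytic-to-algebraic comparison and the Ramond/broad sector). But there is a third gap that your induction quietly papers over. The degeneration and composition axioms control only the \emph{restriction} of $[\W_{g,k}(\bgamma)]^{vir}$ to boundary strata; knowing those restrictions are tautological does not determine the class on the open part of the moduli, and your fallback ``the remaining interior contribution is then again handled by Grothendieck--Riemann--Roch'' only works when the virtual cycle is the Euler class of a single obstruction bundle (the concave case). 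In the general Neveu--Schwarz case the cycle is a \emph{localized} topological Euler class of a two-term complex $\bigoplus E^0_i \to \bigoplus E^1_i$ with the anti-holomorphic Witten map $\wit=\bigoplus(d_i+\bar W_i)$ as section; such a class depends on the section and is not a polynomial in the Chern characters $\cht(R\pi_*\LL_j)$, so oGRR alone does not produce it. Until one either proves this localized class is tautological or finds an algebraic model (e.g.\ Polishchuk--Vaintrob style, or a cosection-localized virtual cycle) and argues tautology there, the induction does not close. In short: your outline is a reasonable research program, and you flag two of the three main obstructions yourself, but it is not a proof, and the paper does not claim one either.
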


\section{The State Space Associated to a Singularity}
\label{sec:QcohGroup}
Ordinary Gromov-Witten invariants take their inputs from the cohomology of a symplectic manifold---the state space. In this section, we describe the analogue of that state space for singularity theory. As mentioned above, however, our theory depends heavily on the choice of symmetry group $G$ and not just on the singularity $W$. In this sense, it should be thought of as an orbifold singularity or orbifold Landau-Ginzburg theory of $W/G$.

We have mirror symmetry in mind when we develop our theory.  Some of the choices,
such as degree shifting number, are partially motivated by a
physics paper by Intriligator-Vafa \cite{IV} and a mathematical
paper by Kaufmann \cite{Ka1} where they studied orbifolded B-model Chiral
rings. The third author's previous work on Chen-Ruan orbifold
cohomology also plays an important role in our understanding.

\subsection{Lefschetz thimble}\label{subsec:pairing}

Suppose that a quasi-homogenous polynomial $W: \C^N\rTo \C$
defines a nondegenerate singularity at zero and that for each $i\in\{1,\dots,N\}$ the weight of the variable $x_i$ is $q_i$.  An important classical invariant of the singularity is the \emph{local algebra}, also known as the \emph{Chiral ring} or the \emph{Milnor ring}:
\begin{equation}
\milnor_W:=\C[x_1, \dots, x_N]/\Jac(W), \glossary{Q@$\milnor_W$ &
The local algebra $\C[x_1, \dots, x_N]/\Jac(W)$, also called the
Milnor ring or chiral algebra, of $W$}
\end{equation}
 where $\Jac(W)$\glossary{jacw@$\Jac(W)$ & The Jacobian ideal $(\frac{\partial W}{\partial
x_1}, \dots, \frac{\partial W}{\partial x_N})$ of $W$}  is the
Jacobian ideal, generated by partial derivatives
$$\Jac(W):=\left(\frac{\partial W}{\partial
x_1}, \dots, \frac{\partial W}{\partial x_N}\right).$$
 Let's review some of the basic facts about the local algebra. It
is clear that the local algebra is generated by monomials. The
degree of a monomial allows us to make the local algebra into a
graded algebra. There is a unique highest-degree element
$\det\left(\frac{\partial^2 W}{\partial x_i\partial x_j}\right)$
with degree \begin{equation}\hat{c}_W=\sum_i (1-2q_i).
\glossary{chat@$\chat$ & The central charge of the singularity
$W$}
\end{equation} The
degree $\hat{c}_W$ is called the \emph{central charge} and is a
fundamental invariant of $W$.

The singularities with $\hat{c}_W<1$ are called \emph{simple
singularities} and have been completely classified into the famous
ADE-sequence. Quasi-homogeneous singularities with
$\hat{c}_W=N-2$ correspond to Calabi-Yau hypersurfaces in weighted
projective space. Here, the singularity/LG-theory makes contact
with Calabi-Yau geometry. There are many examples with fractional
value $\chat_W> 1$. These can be viewed as ``fractional
dimension Calabi-Yau manifolds."

The dimension of the local algebra is given by the formula
$$\mu=\prod_i\left(\frac{1}{q_i}-1\right).\glossary{mu@$\mu$ & The dimension of the local algebra $\milnor$}$$
From the modern point of view, the local algebra is considered to be
part of the B-model theory of singularities. The A-model theory considers the
relative cohomology $H^N(\C^N, W^{\infty}, \C)$ where $W^{\infty}=(\Re W)^{-1}(M, \infty)$
for $M>\!\!>0$. Similarly, let $W^{-\infty}=(\Re W)^{-1}(-\infty, -M)$ for $M>\!\!>0$.
The above space is the dual space of the relative homology $H_N(\C^N, W^{\infty}, \Z)$. The latter
is often referred as the space of \emph{Lefschetz thimbles}.

There is a natural pairing
$$\langle\ ,  \rangle: H_N(\C^N, W^{-\infty}, \Z)\otimes H_N(\C^N, W^{\infty}, \Z)\rightarrow \Z$$
defined by intersecting the relative homology cycles. This pairing is a perfect pairing for the following reason.
Consider a family of perturbations $W_{\lambda}$ such that $W_{\lambda}$ is a holomorphic Morse function
for $\lambda\neq 0$. We can construct a basis of $H_N(\C^N, W^{\pm \infty}, \Z)$ by choosing a system of
\emph{virtually horizonal paths}.  A system of virtually horizonal paths
$u^{\pm}_i: [0,\pm \infty)\rTo \C$ emitting   from critical values $z_i$ has the properties
\begin{description}
 \item[(i)] $u^{\pm}_i$
has no self-intersection,  \item[(ii)] $u^{\pm}_i$ is horizonal for large $t$ and extends to $\pm \infty$.
 \item[(iii)] the paths
$u^{\pm}_1, \dots, u^{\pm}_{\mu}$ are ordered by their imaginary values for large $t$.
\end{description}
For each $u^{\pm}_i$, we can associate a Lefschetz thimble $\Delta^{\pm}_i\in H_N(\C^N, W^{\pm \infty}, \Z)$ as follows.
The neighborhood of the critical point of $z_i$
contains a local vanishing cycle. Using the homotopy lifting property, we can transport the local vanishing cycle
along $u^{\pm}_i$ to $\pm \infty$. Define $\Delta^{\pm}_i$ as the union of the vanishing cycles along the corresponding path $u^{\pm}_i$.
The cycles $\Delta^{\pm}_i$ define a basis of $H^N(\C^N, W^{\pm\infty}, \Z)$, and it is clear that
$$\Delta^+_i\cap \Delta^-_j=\delta_{ij}.$$
Hence, the pairing is perfect for $\lambda\neq 0$.

On the other hand, the complex relative homology
$H_N(\C^N, W^{\pm\infty}_{\lambda}, \C)$ defines a vector bundle over the space of $\lambda$'s. The integral homology classes
define a so-called Gauss-Manin connection. The intersection pairing is clearly preserved by the Gauss-Manin connection; hence,
it is also perfect at $\lambda=0$.

We wish to define a pairing on $H^N(\C^N,  W^{\infty}_{\lambda}, \C)$ alone.
As we have done in the last section, write $q_i = n_i/d$ for a common denominator $d$, and choose $\xi$ such that $\xi^d=-1$. Multiplication by the diagonal matrix $(\xi^{n_1}, \dots,
    \xi^{n_N})$ defines a map
    $I : \C^N\rTo \C^N$
    sending $W^{\infty}\rTo W^{-\infty}$. Hence, it induces an isomorphism
    $$I_*: H_N(\C^N, W^{\infty}, \C)\rTo H_N(\C^N, W^{-\infty},
    \C).$$
    \begin{df}
    We define a pairing on $H_N(\C^N, W^{\infty}, \Z)$ by
    $$\langle\Delta_i, \Delta_j\rangle=\langle \Delta_i, I_*(\Delta_j)\rangle .$$
    It induces a pairing (still denoted by $\langle \ , \ \rangle$) on the dual space  $H^N(\C^N, W^{\infty}, \C)$ which
    is equivalent to the residue pairing on the Milnor ring (see Subsection~\ref{subsec:WallsThm}).
As noted earlier, changing the choice of $\xi$ will change the isomorphism $I$ by an element of the group $\genj $, and $I^2 \in \genj $.
Therefore, the pairing is independent of the choice of $I$ on the
invariant subspace $H_N(\C^N,W^{\infty}, \Z)^{\genj }$ or on
$H_N(\C^N,W^{\infty}, \Z)^G$ for any admissible group $G$.
    \end{df}

\subsection{Orbifolding and state space}

    Now we shall ``orbifold'' the previous construction. Suppose that $G$ is an admissible subgroup.
   For each $\gamma \in G$, $W_{\gamma}$ is again nondegenerate.  

\begin{df}
We define the \emph{$\gamma$-twisted sector} $\ch_{\gamma}$ of the
state space to be the $G$-invariant part of the middle-dimensional
relative cohomology for $W_{\gamma}$;  that is,
\begin{equation}\label{eq:defChGamma}
\ch_\gamma:=
H^{N_\gamma}(\C_\gamma^{N_{\gamma}},W_\gamma^{\infty},\C)^{G}\glossary{Hgamma@$\ch_{\gamma}$
& The $G$-invariant middle-dimensional relative cohomology for
$W_{\gamma}$}.
\end{equation}
The central
charge of the singularity $W_\gamma$ is denoted $\chat_\gamma$:
\begin{equation}\label{eq:defchat}
\chat_\gamma:=\sum_{i:\Theta^\gamma_i = 0}
(1-2q_i)\glossary{chatgamma@$\chat_{\gamma}$ & The central charge
of
  the singularity $W_\gamma$}.
\end{equation}
\end{df}

As in Chen-Ruan orbifold cohomology theory, it is important to shift the
degree.
\begin{df} \label{def:iota}
Suppose that $\gamma=(e^{2\pi
    i \Theta^\gamma_1}, \dots, e^{2\pi i\Theta^\gamma_{N}})$ for rational numbers
    $0\leq \Theta^\gamma_i<1$.

    We define the \emph{degree shifting number}
\begin{align}\glossary{iotagamma@$\iota_{\gamma}$ & The degree-shifting number for $\ch_\gamma$}
 \iota_{\gamma}&=\sum_i (\Theta^{\gamma}_i
 -q_i) \label{eq:degshift-thetaq}\\
 &=\frac{\hat{c}_W-N_\gamma}{2}+\sum_{i:\Theta^{\gamma}_i\neq 0}
 (\Theta^{\gamma}_i-1/2)\label{eq:degshift-chat}\\
 &=\frac{\hat{c}_{\gamma}-N_\gamma}{2} +
 \sum_{i:\Theta^{\gamma}_i\neq0}(\Theta^{\gamma}_i-q_i).\label{eq:degshift-chatgamma}
\end{align}
For a class $\alpha \in \ch_{\gamma}$, we define
\begin{equation}\label{eq:Wdeg}
\deg_W(\alpha)=\deg(\alpha)+ 2\iota_{\gamma}.
\glossary{degw@$\deg_W(\alpha)$ & The orbifold degree
$\deg(\alpha) + 2\iota_{\gamma}$ of the (homogeneous) class
$\alpha \in \ch_{\gamma}$}
\end{equation}
      \end{df}
\begin{prop}\label{prp:iotaRel}
For any $\gamma \in G_W$ we have
\begin{equation}\label{eq:iotaRel}
\iota_{\gamma}+\iota_{\gamma^{-1}} = \hat{c}_W -N_\gamma,
\end{equation}
and for any $\alpha \in \ch_\gamma$ and $\beta\in
\ch_{\gamma^{-1}}$ we have
\begin{equation}\label{eq:degRel}
\deg_W(\alpha)+\deg_W(\beta) = 2\hat{c}_W.
\end{equation}
\end{prop}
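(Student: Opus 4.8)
The plan is to establish \eqref{eq:iotaRel} first by a direct computation, and then to deduce \eqref{eq:degRel} from \eqref{eq:iotaRel} together with the fact that the pairing of Subsection~\ref{subsec:pairing} on $\ch_\gamma\times\ch_{\gamma^{-1}}$ is homogeneous of (unshifted) degree $2N_\gamma$. For \eqref{eq:iotaRel}, write $\gamma=(e^{2\pi i\Theta_1^\gamma},\dots,e^{2\pi i\Theta_N^\gamma})$ with $0\le\Theta_i^\gamma<1$. Then $\gamma^{-1}$ has components $\Theta_i^{\gamma^{-1}}=1-\Theta_i^\gamma$ for those $i$ with $\Theta_i^\gamma\neq 0$, and $\Theta_i^{\gamma^{-1}}=0$ for those $i$ with $\Theta_i^\gamma=0$; in particular $\{i:\Theta_i^\gamma=0\}=\{i:\Theta_i^{\gamma^{-1}}=0\}$ has size $N_\gamma$, since $\C^N_\gamma=\C^N_{\gamma^{-1}}$ is cut out by the vanishing of the moved coordinates. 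Summing the defining formula \eqref{eq:degshift-thetaq} over $i$ gives
\[
\iota_\gamma+\iota_{\gamma^{-1}}=\sum_{i:\Theta_i^\gamma=0}(-2q_i)+\sum_{i:\Theta_i^\gamma\neq0}\bigl[(\Theta_i^\gamma-q_i)+(1-\Theta_i^\gamma-q_i)\bigr]=(N-N_\gamma)-2\sum_{i=1}^N q_i,
\]
and since $\hat{c}_W=\sum_i(1-2q_i)=N-2\sum_i q_i$, the right-hand side is exactly $\hat{c}_W-N_\gamma$, which proves \eqref{eq:iotaRel}.

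For \eqref{eq:degRel}, note that by definition $\deg_W(\alpha)+\deg_W(\beta)=\deg(\alpha)+\deg(\beta)+2(\iota_\gamma+\iota_{\gamma^{-1}})$, which by the part just proved equals $\deg(\alpha)+\deg(\beta)+2\hat{c}_W-2N_\gamma$. So it suffices to show $\deg(\alpha)+\deg(\beta)=2N_\gamma$ for homogeneous $\alpha\in\ch_\gamma$, $\beta\in\ch_{\gamma^{-1}}$ that pair nontrivially. Here one uses $\C^N_\gamma=\C^N_{\gamma^{-1}}$ and $W_\gamma=W_{\gamma^{-1}}$, so both classes lie in the $G$-invariant part of the single space $H^{N_\gamma}(\C^{N_\gamma}_\gamma,W_\gamma^\infty,\C)$, on which the pairing is nondegenerate. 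Under the geometric-section identification of this space with the graded module $\milnor_{W_\gamma}\cdot dx_{i_1}\wedge\cdots\wedge dx_{i_{N_\gamma}}$ (the $i_j$ indexing the fixed variables), the pairing becomes the residue pairing (cf.\ Subsection~\ref{subsec:WallsThm}), which is nonzero only between monomials whose Milnor-ring degrees sum to $\hat{c}_\gamma=\sum_j(1-2q_{i_j})$. Tracking the grading: a monomial form $x^{\balpha}\,dx_{i_1}\wedge\cdots\wedge dx_{i_{N_\gamma}}$ has $\deg=2\sum_j(\alpha_j+1)q_{i_j}$, so for a nontrivially paired pair one gets $\deg(\alpha)+\deg(\beta)=2\hat{c}_\gamma+4\sum_j q_{i_j}=2N_\gamma$. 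Adding $2\hat{c}_W-2N_\gamma$ yields \eqref{eq:degRel}. (For Neveu–Schwarz $\gamma$ this is the trivial identity $0=2N_\gamma=0$.)

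I expect the only delicate point to be the grading normalization on the relative cohomology and the verification that the pairing has total degree exactly $2N_\gamma$ — i.e.\ matching the weight grading on $H^{N_\gamma}(\C^{N_\gamma}_\gamma,W_\gamma^\infty,\C)$ with the grading on $\milnor_{W_\gamma}$ via the geometric section, and using Poincar\'e–Lefschetz duality for the pair $(\C^{N_\gamma}_\gamma,W_\gamma^\infty)$ (a real $2N_\gamma$-dimensional manifold with boundary). Everything else is bookkeeping with \eqref{eq:degshift-thetaq}, and the identity \eqref{eq:degshift-chat} provides a convenient cross-check of the computation in the first paragraph.
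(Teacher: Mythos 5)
Your proof of Equation~(\ref{eq:iotaRel}) is correct. You work directly from Equation~(\ref{eq:degshift-thetaq}) and sum index-by-index; the paper instead reads the cancellation off Equation~(\ref{eq:degshift-chat}), where $\sum_{\Theta_i^\gamma\neq 0}(\Theta_i^\gamma-\tfrac12)$ visibly cancels against its $\gamma^{-1}$-counterpart. These are equivalent routes, and your cross-check against Equation~(\ref{eq:degshift-chat}) is a sensible instinct.

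The derivation of Equation~(\ref{eq:degRel}) contains a genuine misstep. The ``$\deg$'' in the definition $\deg_W(\alpha)=\deg(\alpha)+2\iota_\gamma$ is the \emph{cohomological} degree. Since $\ch_\gamma=H^{N_\gamma}(\C^{N_\gamma}_\gamma,W_\gamma^\infty,\C)^G$ is by definition concentrated in the single degree $N_\gamma$, every class $\alpha\in\ch_\gamma$ has $\deg(\alpha)=N_\gamma$. Combined with $N_\gamma=N_{\gamma^{-1}}$ and Equation~(\ref{eq:iotaRel}) this gives Equation~(\ref{eq:degRel}) in one line, with no hypothesis on $\alpha,\beta$ at all. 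Your second paragraph instead tracks a weight (Hodge-type) grading via the formula $\deg(x^{\balpha}\omega)=2\sum_j(\alpha_j+1)q_{i_j}$; that is the \emph{other} grading on $\ch_\gamma$ (the internal Hodge grading mentioned in the remark following this proposition), not the $\deg$ that enters $\deg_W$. If that grading were meant, $\deg_W$ would vary within a sector, contradicting the paper's explicit tables (e.g.\ the $E_7$ computation, where $\deg_W(x^iy^j\bone_k)$ depends only on $k$). Moreover, your argument only treats pairs $\alpha,\beta$ that pair nontrivially under the residue pairing, so it proves a weaker, conditional statement than the unconditional claim of Equation~(\ref{eq:degRel}). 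It is true that both gradings give $2N_\gamma$ on nontrivially paired classes, but that is a coincidence you do not need: once $\deg$ is recognized as the cohomological degree, the identification with $\milnor_{W_\gamma}\omega_\gamma$, the residue-pairing analysis, and the Poincar\'e--Lefschetz duality you invoke all drop out.
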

\begin{proof}
The first relation (Equation~(\ref{eq:iotaRel})) follows
immediately from Equation~(\ref{eq:degshift-chat}) and from the
fact that if $\Theta_i^{\gamma} \neq 0$ then
$\Theta^{\gamma^{-1}}_i = 1-\Theta^{\gamma}_i$, and otherwise
$\Theta_i^{\gamma^{-1}} = \Theta_i^{\gamma}=0$.

The second relation (Equation~(\ref{eq:degRel})) follows from the
first relation and from the fact that every class in $\ch_\gamma$
has degree $N_\gamma$.
\end{proof}
    \begin{rem}
       $H^N(\C^N, W^{\infty}, \C)$ also carries an internal Hodge
       grading due to its mixed Hodge structure. This defines a
       bi-grading for $\ch_{\gamma}$.
       \end{rem}

    \begin{df}
    The    \emph{state space} (or \emph{quantum cohomology group}) of the singularity
    $W/G$ is defined as
    \begin{equation}
    \ch_{W}=\bigoplus_{\gamma\in G}\ch_{\gamma}.\end{equation}
        \end{df}

\begin{df} The $J$-sector $\ch_{J}$ is always one-dimensional,   and the constant function $1$ defines a generator
 $\bone_{1}:=1 \in \ch_{J}$\glossary{1@$\unit$ & The constant function $1$ in $\ch_{J}$ and the unit of the ring $(\ch_{W,G},\star)$}
      of degree $0$. This
     element is the unit in the ring $\ch_W$, and because of this, we often denote it by $\unit$ instead of $\bone_1$.
\end{df}

\begin{df}
For any $\gamma\in G$, we say that the $\gamma$-sector is \emph{narrow} if the fixed point locus is trivial (i.e., $N_\gamma = 0$).  If the fixed point locus is non-trivial, we say that the $\gamma$-sector is \emph{broad}.
\end{df}

    Since $\gamma$ and $\gamma^{-1}$ have the same fixed point set,
   there is an obvious isomorphism
   $$\varepsilon: \ch_{\gamma}\rTo \ch_{\gamma^{-1}}.$$
    We define a pairing on $\ch_{W}$ as the direct sum of the pairings
   $$\langle \, , \rangle _{\gamma}:\ch_{\gamma}\otimes
    \ch_{\gamma^{-1}}\rTo \C$$ by $\langle f,g\rangle
    _{\gamma}=\langle f, \varepsilon^* g\rangle ,$ where the second
    pairing is the pairing of the space of relative cohomology.  The
    above pairing is obviously symmetric and non-degenerate.

Now the pairing on $\ch_{W}$ is defined as the direct sum of the
pairings $\langle \, , \rangle_{\gamma}$.

    \begin{lm} The above
    pairing preserves $\deg_W$. Namely, if $\ch_{\ga}^a$ denotes the elements $\varkappa \in \ch_\ga$ with $\deg_W(\varkappa) = a$, then $\langle \, , \rangle$ gives a pairing of $\ch^a_W$ with $\ch^{2 \chat-a}_W:$
    $$\ch^{a}_W\otimes \ch^{2\hat{c_W}-a}_W\rTo \C.$$
    \end{lm}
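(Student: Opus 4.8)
The plan is to reduce the statement to Proposition~\ref{prp:iotaRel} together with the observation that $\deg_W$ is constant on each twisted sector. First I would record that, by its very definition, the pairing on $\ch_W$ is ``block-diagonal'' with respect to the decomposition $\ch_W = \bigoplus_{\gamma\in G}\ch_\gamma$: the summand $\langle\ ,\ \rangle_\gamma$ pairs $\ch_\gamma$ with $\ch_{\gamma^{-1}}$, while $\ch_\gamma\otimes\ch_\delta$ is sent to $0$ whenever $\delta\neq\gamma^{-1}$. So it suffices to analyze the pairing one sector at a time.

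Next I would observe that every class in $\ch_\gamma = H^{N_\gamma}(\C^{N_\gamma}_\gamma, W_\gamma^\infty, \C)^{G}$ has unshifted cohomological degree $\deg(\alpha) = N_\gamma$, because $\ch_\gamma$ is by construction concentrated in the middle cohomological degree. Hence $\deg_W$ takes the single value $N_\gamma + 2\iota_\gamma$ on all of $\ch_\gamma$, and therefore
\[
\ch^a_W = \bigoplus_{\gamma\,:\,N_\gamma + 2\iota_\gamma = a}\ch_\gamma;
\]
in other words the $\deg_W$-grading on $\ch_W$ is obtained simply by regrading whole sectors.

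The conclusion is then immediate. If $\ch_\gamma\subseteq\ch^a_W$, then by Equation~(\ref{eq:degRel}) of Proposition~\ref{prp:iotaRel} the value of $\deg_W$ on $\ch_{\gamma^{-1}}$ equals $2\hat{c}_W - a$, so $\ch_{\gamma^{-1}}\subseteq\ch^{2\hat{c}_W-a}_W$. Thus $\langle\ ,\ \rangle_\gamma$ pairs the summand $\ch_\gamma$ of $\ch^a_W$ with the summand $\ch_{\gamma^{-1}}$ of $\ch^{2\hat{c}_W-a}_W$, and summing over all $\gamma$ with $N_\gamma + 2\iota_\gamma = a$ yields the asserted pairing $\ch^a_W\otimes\ch^{2\hat{c}_W-a}_W \to \C$; non-degeneracy is inherited from the non-degeneracy of each $\langle\ ,\ \rangle_\gamma$ already noted above. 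There is essentially no obstacle beyond bookkeeping here, since Proposition~\ref{prp:iotaRel} is already available; the one point worth stating carefully is precisely that each $\ch_\gamma$ sits in a single cohomological degree, which is true by the definition of $\ch_\gamma$ as middle-dimensional relative cohomology.
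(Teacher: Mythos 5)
Your proposal is correct and matches the paper's approach: the paper's proof is the single line ``This is a direct consequence of Proposition~\ref{prp:iotaRel},'' and your argument is exactly the unwinding of that consequence, using the block-diagonal structure of the pairing together with Equation~(\ref{eq:degRel}). The only difference is that you have spelled out the bookkeeping that the paper leaves implicit.
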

    \begin{proof}
        This is a direct consequence of Proposition~\ref{prp:iotaRel}.
         \end{proof}

    \begin{rem}
        The lemma indicates that one can view $W/G$ as an object of
        complex dimension $\hat{c}_W$. Under the shift, $\ch_{J}$
        has degree $0$. On the other hand, the untwisted sector has
        degree $\hat{c}_W$, and the sector $\ch_{J^{-1}}$ has degree
        $2\hat{c}_W$.
    \end{rem}

    \begin{rem}
        In the usual orbifold theory, the unit comes from the
        untwisted sector. In our case, the unit element is from
        $\ch_{J}$. In this sense, our theory is quite different
        from usual orbifold theory and instead corresponds to the
        so-called \emph{$(a,c)$-ring} in physics.
        \end{rem}

\section{Virtual Cycles and Axioms}\label{sec:axioms}

In this section, we will discuss the main properties of the virtual
cycles $\left[\W_{g,k}(W; \bgamma)\right]^{vir}$. These are the key
ingredients in the definition of our invariants. We formulate the main
properties of the virtual cycle as axioms similar to those of the
virtual fundamental cycle of stable maps \cite{CR1} and generalizing
the axioms of $r$-spin curves listed in \cite[\S4.1]{JKV1}.

In the special case of the $A_{r-1}$ singularity, an algebraic virtual
class satisfying the axioms of \cite[\S4.1]{JKV1} has been constructed
for the twisted sectors (often called narrow sectors) by
Polishchuk and Vaintrob \cite{P,PV}.  A similar class has been
constructed by Chiodo in K-theory \cite{Ch1,Ch2}, and an analytic
class has been proposed by T.~Mochizuki \cite{Mo}---modeled after
Witten's original sketch.

\subsection{$\left[\W(\Gamma)\right]^{vir}$ and its axioms}\label{subsec:axioms}

\subsubsection{Review of the construction}

The construction of the virtual cycle $\left[\W_{g,k}(W)\right]^{vir}$ is
highly nontrivial.   The details of the construction and the proof of the axioms are presented in \cite{FJR-WEVFC}, but we will outline the main ideas here and then focus the rest of this paper on the consequences of the axioms.

The heart of our construction is the analytic
problem of solving the moduli problem for the Witten equation.
The Witten equation is a first order elliptic PDE of the form
       $$\bar{\partial} S_i+\overline{\frac{\partial W}{\bar{\partial}
    s_i}}=0,$$ where $S_i$ is a $C^{\infty}$-section of $\LL_i$.

Our goal is to construct a virtual cycle of the moduli space of
 solutions of the Witten equation.
Let's briefly outline the construction. Let $\W_{g,k}(\gamma_1,
\dots, \gamma_k)$ be the moduli space of $W$-structures decorated
with the orbifold structure defined by $\gamma_i$ at the $i$-th marked point. It can be considered
as the background data to set up the Witten equation.

To make this construction requires that we leave the algebraic world and enter the world of differential geometry and
analysis. The stack (orbifold) $\W_{g,k}(\gamma_1,
\dots, \gamma_k)$ has a geometric structure similar to $\MM_{g,k}$, including a stratification described by dual graphs and something like the gluing structure
at a node. Our starting point is to give a differential geometric structure of $\W_{g,k}(\gamma_1,
\dots, \gamma_k)$. This can be done in a fashion similar to that for $\MM_{g,k}$ \cite[\S2.2]{FJR-WEVFC}.
The variable in the Witten equation is a smooth section of the $W$-structure $\oplus_i L_i$, while the target
of the Witten equation is the space of its $(0,1)$-forms. Formally, the Witten equation can be phrased as
a Fredholm section of a Banach bundle over a fiber-wise Banach manifold. 
Unfortunately, it is rather difficult to solve the
Witten equation due to the fact that the singularity of $W$ has high multiplicity. It
is much easier to solve a perturbed equation of the form $W+W_0$, where
$W_0$ is a linear perturbation term such that
$W_{\gamma}+W_{0\gamma}$ is a holomorphic Morse function for every
$\gamma$. Here  $W_{\gamma}$ and $W_{0\gamma}$ are the restrictions of
$W$, and  $W_0$, respectively, to the fixed point set $\C^{N_{\gamma}}$. The background data for
the perturbed Witten equation is naturally  the
moduli space (stack) of \emph{rigidified} $W$-structures $\W^{rig}_{g,k}(\gamma_1,
\dots, \gamma_k)$.

 The
crucial part of the analysis is to show that a solution of the
Witten equation converges to a critical point of
$W_{\gamma_i}+W_{0\gamma_i}$. This enables us to construct a
moduli space (stack)
$
\W^s_{g,k}(\kappa_{j_1}, \dots, \kappa_{j_k})
$
of solutions of the perturbed Witten equation converging to the
critical point $\kappa_i$ at the marked point $x_i$.  We call
$W_0$ \emph{strongly regular} if (i) $W_{\gamma_i}+W_{0\gamma_i}$
is holomorphic Morse; (ii) the critical values of
$W_{\gamma_i}+W_{0\gamma_i}$ have distinct imaginary parts. The
first important result is

 \begin{thm}
 If $W_0$ is strongly regular, then $\W^s_{g,k}(\kappa_{j_1}, \dots, \kappa_{j_k})$
 is compact and has a virtual fundamental cycle $[\W^{s}_{g,k}(\kappa_{j_1}, \dots, \kappa_{j_k})]^{vir}$
 of  degree
$$
2\left((c_W-3)(1-g)+k -\sum_i \iota_{\gamma_i}\right)-\sum_i N_{\gamma_i}.
$$
Here, $\iota_{\gamma_i}$ is the degree-shifting  number defined
previously.
\end{thm}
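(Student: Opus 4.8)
The plan is to realize the perturbed Witten equation as the zero locus of a Fredholm section of a Banach bundle over the (already compact) moduli of rigidified $W$-structures, and then to run the standard program for elliptic moduli problems of this type: a priori estimates give compactness, the Fredholm index gives the dimension, and a finite-dimensional (Kuranishi-type) reduction gives the virtual cycle. For the analytic setup, over $\W^{rig}_{g,k}(\gamma_1,\dots,\gamma_k)$ one forms the bundle whose fibre over a rigidified $W$-curve $(\cC,\fL)$ is a weighted Sobolev completion $W^{1,p}_{\delta}$, for small $\delta>0$, of the space of smooth sections $S=(S_1,\dots,S_N)$ of $\bigoplus_i\LL_i$, where the weight forces each $S_i$ to decay exponentially to the chosen nondegenerate critical point $\kappa_{j_\ell}$ of $W_{\gamma_\ell}+W_{0\gamma_\ell}$ near the marked point $p_\ell$. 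Then $\W^s_{g,k}(\kappa_{j_1},\dots,\kappa_{j_k})$ is exactly the zero set of the Witten section $S\mapsto\bar\partial S+\overline{\partial W/\partial S}$, valued in the matching $L^p_{\delta}$-completion of $\bigoplus_i\LL_i\otimes\overline{K_{\cC}}$. The strong-regularity hypothesis (holomorphic Morse, with distinct imaginary parts of the critical values) is precisely what makes these weighted spaces the right choice and guarantees that every solution has a well-defined asymptotic limit at each puncture.

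For compactness I would argue in three stages. First, an a priori $C^0$ bound on any solution; this is the analytic core, and it uses the quasi-homogeneity of $W$ via a maximum principle / energy estimate generalizing Witten's observation in the $A_{r-1}$ case, after which the polynomial growth of $\partial W/\partial s_i$ and elliptic bootstrapping upgrade it to uniform $C^\infty_{\mathrm{loc}}$ bounds. Second, uniform exponential decay near the marked points: since each $W_{\gamma_\ell}+W_{0\gamma_\ell}$ is holomorphic Morse, a solution approaching $\kappa_{j_\ell}$ decays exponentially, which is exactly what gives control in the $\delta$-weighted norms. Third, compactness as the underlying $W$-curve degenerates, obtained by combining the properness of $\W_{g,k}$ established above with uniform control of solutions on long necks; there is no separate rescaling (``bubbling'') phenomenon to exclude here, since the Witten equation is not scale invariant. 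Together these give sequential compactness of $\W^s_{g,k}(\kappa_{j_1},\dots,\kappa_{j_k})$.

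For the dimension and the cycle, the linearization of the Witten section is a real-linear Cauchy--Riemann type operator that agrees with $\bigoplus_i\bar\partial_{\LL_i}$ up to a compact zeroth-order term, so its real index is twice the complex index of $\bigoplus_i\bar\partial_{\LL_i}$. Orbifold Riemann--Roch, applied with $\deg|\LL_i|=q_i(2g-2+k)-\sum_\ell\Theta_i^{\gamma_\ell}$ and the identities $\iota_{\gamma}=\sum_i(\Theta_i^{\gamma}-q_i)$ and $\chat_W=N-2\sum_i q_i$, gives this complex index as $\chat_W(1-g)-\sum_\ell\iota_{\gamma_\ell}$. Thus the solution space over a fixed curve has real dimension $2(\chat_W(1-g)-\sum_\ell\iota_{\gamma_\ell})$; the weighted (thimble) condition at each Ramond puncture cuts this down by a further real $N_{\gamma_\ell}$; and varying the rigidified $W$-curve adds $2(3g-3+k)$; the total is exactly $2((\chat_W-3)(1-g)+k-\sum_\ell\iota_{\gamma_\ell})-\sum_\ell N_{\gamma_\ell}$. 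Since $\W^s_{g,k}(\kappa_{j_1},\dots,\kappa_{j_k})$ is compact and the Witten section is Fredholm, a finite-dimensional reduction (equivalently, a virtual-neighborhood or obstruction-bundle construction) produces a rational fundamental cycle $[\W^s_{g,k}(\kappa_{j_1},\dots,\kappa_{j_k})]^{vir}$ of this degree, independent of the auxiliary choices.

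The genuinely difficult step is entirely in the compactness --- the a priori $C^0$ estimate for solutions of the Witten equation and the neck analysis of degenerating families --- together with the transversality needed for the Kuranishi reduction; this is what is carried out in detail in \cite{FJR-ip-WEVFC}, and the outline above only records the structure of the argument.
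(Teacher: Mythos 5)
Your proposal is consistent with the strategy the paper itself outlines for this theorem: the paper does not prove it here but explicitly defers the analytic construction (weighted Sobolev setup with exponential decay to critical points, a priori estimates, Fredholm/Kuranishi reduction over $\W^{rig}_{g,k}$) to the companion paper \cite{FJR-ip-WEVFC}, and your sketch follows that same architecture. Your index computation, $\sum_i \operatorname{ind}(\LL_i)=\chat_W(1-g)-\sum_\ell\iota_{\gamma_\ell}$ followed by cutting by $N_{\gamma_\ell}$ at each Ramond puncture and adding $2(3g-3+k)$, reproduces exactly the paper's stated degree formula and is correct.
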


It turns out to be  convenient to map the above
virtual cycle into $H_*(\W^{rig}_{g,k}, \Q)$, even though
       it is not an element of the latter in any way.

The state space of the theory (or rather its dual) enters in a surprising new way, as we now describe.   It
       turns out that the above virtual cycle \emph{does} depends on the
       perturbation. It will change whenever $W_0$ fails to be
       strongly regular. We observe that for a strongly regular perturbation we can construct a
       canonical system of \emph{horizontal} paths $u^{\pm}_i$'s and the associated Lefschetz thimble $\Delta^{\pm}_i$.
       When we perturb $W_0$ crossing the ``wall" (where the imaginary
       parts of critical values happen to be the same), we arrive at another canonical system of paths and its
        Lefschetz thimble $\Delta'^{\pm}_i$. The relation between $\Delta^{\pm}_i$ and $\Delta'^{\pm}_i$
        is determined by the well-known \emph{Picard-Lefschetz formula}.   The ``wall crossing formula" for virtual fundamental cycles
        can be summarized in the
       following \emph{quantum Picard-Lefschetz theorem}.  For a more precise statement of this theorem, see \cite[\S6.1, esp. Thm 6.1.6]{FJR-WEVFC}.
       \begin{thm}
       When $W_0$ varies, $[\W^{s}_{g,k}(\kappa_{j_1}, \dots, \kappa_{j_k})]^{vir}$
       transforms in the same way as the Lefschetz thimble
       $\Delta^-_{j_i}$ attached to the critical point $\kappa_{j_i}$.
       \end{thm}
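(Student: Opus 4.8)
The plan is to reduce the statement to a one-parameter wall-crossing analysis, and then to identify, by a compactness-and-gluing argument for the perturbed Witten equation, the jump of the virtual cycle across a wall with the vanishing-cycle intersection numbers that govern the classical Picard--Lefschetz formula.

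First I would fix all of the remaining data (the orbicurve, the $W$-structure, the rigidification, and hence the stack $\W^{rig}_{g,k}$ into which the cycle is pushed) and vary only the linear perturbation along a generic real-analytic path $W_0^t$, $t\in[-1,1]$, chosen so that $W_0^t$ is strongly regular for $t\neq 0$ and so that at $t=0$ exactly two critical values $z_a(0)$ and $z_b(0)$ of $W_\gamma+W_{0\gamma}^t$, for a single sector $\gamma$, have equal imaginary part, with all other pairs of critical values keeping distinct imaginary parts and with the crossing transverse in $t$. Since $[\W^{s}_{g,k}(\kappa_{j_1},\dots,\kappa_{j_k})]^{vir}$ is locally constant in $t$ away from $t=0$ by the cobordism-invariance of the construction in \cite{FJR-ip-WEVFC}, it suffices to compute the jump at $t=0$; the general wall-crossing law then follows by iterating over a chain of such elementary transpositions, one slot at a time.

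Next I would analyze the degeneration of the total moduli space $\bigcup_t \W^{s}_{g,k,t}(\kappa_{j_1},\dots,\kappa_{j_k})$ as $t\to 0$. By the asymptotic analysis behind the first theorem above, every solution converges at each marked point to a critical point of $W_\gamma+W_{0\gamma}$ with exponential rate; the only new source of noncompactness at the wall is the formation of a long neck near a marked point carrying $\kappa_a$, on which a rescaled subsequence converges to a finite-energy soliton of the Witten equation on the infinite cylinder running from $\kappa_b$ to $\kappa_a$. The key classical input is that such solitons are exactly the building blocks of the Lefschetz thimble: for a holomorphic Morse function, the signed count of rigid cylinder solutions from $\kappa_b$ to $\kappa_a$ equals the intersection number $\Delta^{-}_{a}\cap\Delta^{-}_{b}$ of the corresponding vanishing cycles. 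The truncated principal piece of such a broken configuration is a solution lying in $\W^{s}_{g,k,0}$ but with the limit at the $i$-th marked point changed from $\kappa_a$ to $\kappa_b$. Running the gluing theorem in reverse, namely gluing a rigid $\kappa_b\!\to\!\kappa_a$ soliton onto a principal solution with limit $\kappa_b$, identifies a neighborhood of this boundary stratum and shows that $\bigcup_t \W^{s}_{g,k,t}(\cdots)$ is, in the virtual sense, a cobordism whose boundary is $\W^{s}_{g,k,-1}(\cdots)\sqcup\W^{s}_{g,k,1}(\cdots)$ together with $(\Delta^{-}_{a}\cap\Delta^{-}_{b})$ copies of $\W^{s}_{g,k,0}(\dots,\kappa_b,\dots)$. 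Cobordism invariance of the virtual cycle then gives
$$[\W^{s}_{g,k}(\dots,\kappa_a,\dots)]^{vir}_{t=1}=[\W^{s}_{g,k}(\dots,\kappa_a,\dots)]^{vir}_{t=-1}\pm(\Delta^{-}_{a}\cap\Delta^{-}_{b})\,[\W^{s}_{g,k}(\dots,\kappa_b,\dots)]^{vir},$$
which is precisely the Picard--Lefschetz transformation undergone by $\Delta^{-}_{j_i}$ as the path $u^-_{j_i}$ is carried past $z_b$; performing this in each slot proves the theorem.

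The hard part will be the analytic package underpinning the cobordism statement: uniform exponential decay of solutions toward their critical-point limits, a removable-singularity and compactness statement for the soliton that appears in the neck, a gluing theorem compatible with the \emph{virtual} (rather than transversally cut-out) structure, and, most delicately, a bookkeeping of orientations that reproduces the exact signs of the Picard--Lefschetz formula while ruling out any bubbling at the wall other than the soliton degeneration. These ingredients are exactly what is established in \cite{FJR-ip-WEVFC}; the contribution of the present argument is to organize them into the wall-crossing cobordism above.
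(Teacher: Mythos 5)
The paper does not actually contain a proof of this theorem. It is stated together with a short heuristic (the strongly regular perturbation determines a canonical ordered system of horizontal paths, and crossing a wall reorders them and hence transforms the thimbles by the classical Picard--Lefschetz rule) and the rigorous proof is explicitly deferred to the companion paper \cite{FJR-ip-WEVFC}, along with the construction of the virtual cycle itself, the compactness theorem, and the axioms. So there is no proof in the present document against which to score your argument line by line; the most one can say is whether your outline is compatible with what the paper indicates is going on.

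At that level your outline is the right one: reduce to a single transverse crossing of two critical values in a single sector, identify the new noncompactness at the wall as a broken configuration with a finite-energy Witten soliton on an infinite cylinder joining the two critical points, interpret the signed soliton count as an intersection number of vanishing cycles, and recover the Picard--Lefschetz jump by gluing and cobordism invariance. Two cautions, though. First, the phrase ``$\Delta^{-}_a\cap\Delta^{-}_b$'' is not literally the pairing the paper uses: the perfect pairing in Section 3 is between $H_N(\C^N,W^{-\infty})$ and $H_N(\C^N,W^{+\infty})$, i.e.\ between $\Delta^-$'s and $\Delta^+$'s, so the multiplicity has to be written in terms of the intersection of vanishing cycles in the Milnor fiber (equivalently $\langle\Delta^-_a,\Delta^+_b\rangle$ after suitable normalization), not a naive intersection of two $\Delta^-$'s. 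Second, and more importantly, everything you correctly flag as ``the hard part'' --- exponential convergence at marked points, removable singularities for the neck soliton, a gluing theorem compatible with the virtual rather than transverse structure, sign conventions reproducing the exact Picard--Lefschetz coefficients, and ruling out bubbling other than the expected neck degeneration --- is precisely the content of \cite{FJR-ip-WEVFC}, so the proposal as written is an organizing skeleton and does not, in this paper's own terms, constitute an independent proof.
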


       The $\Delta^+_i$'s transform in the opposite way as $\Delta^-_i$'s. It is well-known that
       the ``diagonal class" $\sum_i \Delta^-_i\otimes \Delta^+_i$ is independent of perturbation, and this suggests the following definition of an ``extended virtual class".
       To        simplify the notation, we assume that there is only one
       marked point with the orbifold decoration $\gamma$. Then,
       the wall crossing formula of $[\W^s_{g,1}(\kappa_i)]^{vir}$
       shows precisely that $\sum_j [\W^s_{g,1}(\kappa_j)]^{vir}\otimes
       \Delta^+_j$,
       viewed as a class in $H_*(\W^{rig}_{g,1}(\gamma), \Q)\otimes
       H_{N_{\gamma}}(\C^{N_{\gamma}}_{\gamma}, W^{\infty}_{\gamma},
       \Q)$, is independent of the perturbation. Now, we define
       $$[\W^s_{g,1}(\gamma)]^{vir}:=\sum_j [\W^s_{g,1}(\kappa_j)]^{vir}\otimes
       \Delta^+_j.$$
       The above definition can be generalized to multiple
       marked points in an obvious way, so that
       $$[\W^s_{g,k}(\gamma_1, \dots, \gamma_k)]^{vir}\in
       H_*(\W^{rig}_{g,k}(\gamma_1, \dots, \gamma_k), \Q)\otimes \prod_i H_{N_{\gamma_i}}(\C^{N_{\gamma_i}},
       W^{\infty}_{\gamma_i}, \Q)$$
       has  degree
       $$2\left((c_W-3)(1-g)+k -\sum_i \iota_{\gamma_i}\right).$$

 \begin{crl}
    $[\W^s_{g,k}(\gamma_1, \dots, \gamma_k)]^{vir}$ is
    independent of the perturbation $W_0$.
    \end{crl}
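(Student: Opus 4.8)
The plan is to deduce the corollary from the \emph{quantum Picard--Lefschetz theorem} stated just above, together with the elementary fact (used implicitly in the surrounding discussion) that the diagonal element attached to a pair of dual bases is independent of the bases. First I would reduce to a single marked point. The extended class is a sum of tensor products over the marked points, $[\W^s_{g,k}(\gamma_1,\dots,\gamma_k)]^{vir}=\sum_{j_1,\dots,j_k}[\W^s_{g,k}(\kappa_{j_1},\dots,\kappa_{j_k})]^{vir}\otimes\Delta^+_{j_1}\otimes\cdots\otimes\Delta^+_{j_k}$, and at a generic point of a wall only the thimbles of one sector $\gamma_i$ jump; since the cancellation below happens independently in each tensor slot, it is enough to treat $k=1$. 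So fix $\gamma$, write $N_\gamma$ for the dimension of the fixed locus and $\{\kappa_j\}$ for the finitely many critical points of $W_\gamma+W_{0\gamma}$. For strongly regular $W_0$ these are non-degenerate with critical values of distinct imaginary parts, so the canonical system of horizontal paths $u^\pm_j$ and their Lefschetz thimbles $\Delta^\pm_j\in H_{N_\gamma}(\C^{N_\gamma}_\gamma,W^{\pm\infty}_\gamma,\Z)$ are well defined, dual under the intersection pairing $\Delta^+_i\cap\Delta^-_j=\delta_{ij}$.

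Next I would set up the chamber structure. Inside the (connected) space of linear perturbations for which every $W_\gamma+W_{0\gamma}$ is holomorphic Morse, the non--strongly-regular locus is a locally finite union of real-codimension-one walls where two critical values acquire the same imaginary part. Any two strongly regular perturbations are joined by a path meeting this locus transversally and one wall-component at a time. On a subinterval of the path lying in a single chamber, the canonical paths and thimbles vary continuously and a parametrized version of the compactness theorem quoted above applies uniformly, so the moduli spaces $\W^s_{g,1}(\kappa_j)$ fit into a cobordism; hence each $[\W^s_{g,1}(\kappa_j)]^{vir}$, and therefore the extended class, is constant there. It remains only to check invariance as the path crosses one wall.

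At such a crossing, abbreviate $D_j:=[\W^s_{g,1}(\kappa_j)]^{vir}$ and $E_j:=\Delta^+_j$, and denote by a prime the corresponding objects after the crossing. The Picard--Lefschetz formula gives an integral matrix $P=(P_{jk})$ with $(\Delta^-_j)'=\sum_k P_{jk}\,\Delta^-_k$, and we may write $E_j'=\sum_l Q_{jl}E_l$. The ``well-known'' invariance of the copairing class $\sum_j\Delta^-_j\otimes\Delta^+_j$ is precisely the identity $\sum_j P_{jk}Q_{jl}=\delta_{kl}$; equivalently $Q=(P^{-1})^{T}$, which is forced because the topological intersection pairing $\Delta^+_i\cap\Delta^-_j=\delta_{ij}$ is flat for the Gauss--Manin connection. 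By the quantum Picard--Lefschetz theorem the virtual cycles obey the same transformation law as the $\Delta^-_j$, i.e.\ $D_j'=\sum_k P_{jk}D_k$. Hence
\begin{multline*}
\sum_j D_j'\otimes E_j'=\sum_{k,l}\Bigl(\sum_j P_{jk}Q_{jl}\Bigr)\,D_k\otimes E_l\\
=\sum_k D_k\otimes E_k ,
\end{multline*}
so the extended class is unchanged across the wall. Combined with constancy in chambers this proves the corollary for $k=1$, and the general case follows by running the same cancellation in each tensor factor.

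The main obstacle is the input rather than this bookkeeping: everything rests on the quantum Picard--Lefschetz theorem, i.e.\ on the assertion that the \emph{analytically} defined virtual cycle jumps across a wall by \emph{exactly} the Picard--Lefschetz matrix, with the correct signs and orientations and no spurious multiplicities. Establishing this requires a delicate degeneration and gluing analysis of solutions of the perturbed Witten equation as one critical value is dragged past another in imaginary part, and is carried out in \cite{FJR-ip-WEVFC}; once it is granted, the statement is the purely formal consequence above. A secondary, routine point to record is that a generic path between two chambers can be taken to meet the wall one component at a time and transversally, and that the relabeling of critical points along such a path is compatible with the labeling of the thimbles.
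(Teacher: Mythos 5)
Your proof follows the same approach the paper sketches in the paragraphs immediately preceding the corollary: invoke the quantum Picard--Lefschetz theorem for the transformation of the cycles, use the duality $\Delta^+_i\cap\Delta^-_j=\delta_{ij}$ to pin down the contragredient transformation of the $\Delta^+_j$, and conclude that the ``diagonal'' tensor combination is invariant (with the hard analytic input deferred to the companion paper). You have merely made explicit the matrix identity $Q=(P^{-1})^{T}$ and the chamber/wall bookkeeping that the paper leaves implicit, so this is the same argument, correctly executed.
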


Of course, $W_0$ is only part of the perturbation data. Eventually, we want to work on  the stack $\W_{g,k}$. It is known  that the map $\so: \W^{rig}_{g,k}\rightarrow \W_{g,k}$, defined by forgetting all the
rigidifications, is quasi-finite and proper, so we can define
\begin{equation}\label{eq:df-vircyc}
[\W_{g,k}(\gamma_1, \dots,
\gamma_k)]^{vir}:=\frac{(-1)^{D}}{\deg(\so)}(\so)_*[\W^s_{g,k}(\gamma_1,
\dots, \gamma_k)]^{vir},
\end{equation}
where $-D$ is the sum of the indices of the $W$-structure bundles:
\begin{equation}\label{eq:D}
D:=-\sum_{i=1}^N \ind(\LL_i) =\chat_W(g-1)+\sum_{j=1}^k \iota_{\gamma_j}.
\end{equation}

\begin{rem}
The sign $(-1)^D$ is put here to match the older definition in the $r$-spin case.
\end{rem}

The fact that the above virtual cycle is independent of the rigidification
implies that
    $$[\W_{g,k}(\gamma_1, \dots, \gamma_k)]^{vir}\in
       H_*(\W_{g,k}(\gamma_1, \dots, \gamma_k), \Q)\otimes \prod_i H_{N_{\gamma_i}}(\C^{N_{\gamma_i}},
       W^{\infty}_{\gamma_i}, \Q)^{G_W}.$$

More generally, we have the following definition.
\begin{df}
Let $\Gamma$ be a decorated stable $W$-graph (not necessarily
connected) with each tail $\tau\in T(\Gamma)$ decorated by an element
$\gamma_\tau \in \grp$.  Denote by $k:=|T(\Gamma)|$ the number of
tails of $\Gamma$.   We
define the \emph{virtual cycle}
$$\left[\W(\Gamma)\right]^{vir}  \in
H_*(\W(\Gamma),\Q)\otimes \prod_{\tau \in T(\Gamma)}
H_{N_{\gamma_{\tau}}}(\C^{N_{\gamma_{\tau}}},W^{\infty}_{\gamma_\tau},
\Q)^{G_W}
$$
as given in Equation~\ref{eq:df-vircyc}.

When $\Gamma$ has a single vertex of genus $g$, $k$ tails, and no edges
(i.e, $\Gamma$ is a corolla), we denote the virtual cycle by
$\left[\W(\bgamma)\right]^{vir}$, where $\bgamma :=(\gamma_1,
\dots, \gamma_k)$.
\end{df}

\subsubsection{The virtual cycle for admissible subgroups}

We now wish to consider the more general case of admissible subgroups. Recall that $G$ is admissible if $G=G_{\tW}$ for some $\tW=W+Z$.  One can show \cite[Rem 2.3.11]{ChR} that the stack $ \W_{g,k,G} := \W_{g,k}(\tW)$ is independent of the choice of $Z$, provided $G=G_{\tW}$.

Denote by $\inc$ and $\inc^{rig}$ the natural  morphisms of stacks
$$\inc: \W_{g,k,G} = \W_{g,k}(\tW_t)\rTo \W_{g,k}(W) \dsand \inc^{rig}: \W^{rig}_{g,k,G}= \W_{g,k}(\tW_t)\rTo \W^{rig}_{g,k}(W),$$ respectively.  And denote by $\so_G$ the restriction of $\so$ to $\W^{rig}_{g,k,G}$

\begin{df}
Define
$$[\W^{rig}_{g,k,G}(W;\bgamma)]^{vir}:= \inc^{rig,*}\left([\W^{rig}_{g,k}(W;\bgamma)]^{vir}\right)\in
       H_*(\W^{rig}_{g,k,G}(W;\gamma_1, \dots, \gamma_k), \Q)\otimes \prod_i H_{N_{\gamma_i}}(\C^{N_{\gamma_i}},
       W^{\infty}_{\gamma_i}, \Q),$$
and 
$$
[\W_{g,k,G}(W;\gamma_1, \dots,
\gamma_k)]^{vir}:=\frac{(-1)^{D}}{\deg(\so_G)}(\so_G)_*[\W^{rig}_{g,k,G}(W;\gamma_1,
\dots, \gamma_k)]^{vir},
$$
so that 
$$[\W_{g,k,G}(W;\gamma_1, \dots,
\gamma_k)]^{vir} \in 
  H_*(\W_{g,k,G}(W;\gamma_1, \dots, \gamma_k), \Q)\otimes \prod_i H_{N_{\gamma_i}}(\C^{N_{\gamma_i}},
       W^{\infty}_{\gamma_i}, \Q)^{G}.$$
\end{df}
On the other hand, for any $\tW$ with $G_{\tW} = G$ one may consider the virtual cycle
$$[\W^{rig}_{g,k}(\tW;\bgamma)]^{vir}:= \inc^{rig,*}\left([\W^{rig}_{g,k}(W;\bgamma)]^{vir}\right)\in
       H_*(\W^{rig}_{g,k}(\tW;\gamma_1, \dots, \gamma_k), \Q)\otimes \prod_i H_{N_{\gamma_i}}(\C^{N_{\gamma_i}},
       \tW^{\infty}_{\gamma_i}, \Q),$$
and the pushforward 
$$
[\W_{g,k}(\tW;\gamma_1, \dots,
\gamma_k)]^{vir}:=\frac{(-1)^{D}}{\deg(\so_G)}(\so_G)_*[\W^{rig}_{g,k,G}(\tW;\gamma_1,
\dots, \gamma_k)]^{vir},
$$
in 
$H_*(\W_{g,k}(\tW;\gamma_1, \dots, \gamma_k), \Q)\otimes \prod_i H_{N_{\gamma_i}}(\C^{N_{\gamma_i}},
       \tW^{\infty}_{\gamma_i}, \Q)^{G}$

Note that we have a canonical isomorphism of $G$-representations $$H_{N_{\gamma}}\left(\C^{N_{\gamma}},\left(\tW\right)_{\ga}^{\infty}, \Q\right) = H_{N_{\gamma}}\left(\C^{N_{\gamma}},W_{\ga}^{\infty}, \Q\right).$$  

\begin{prop}
The cycles $[\W^{rig}_{g,k}(\tW;\bgamma)]^{vir}$ and $[\W^{rig}_{g,k,G}(W;\bgamma)]^{vir}$ are equal in 
$H_*(\W^{rig}_{g,k}(\tW;\gamma_1, \dots, \gamma_k), \Q)\otimes \prod_i H_{N_{\gamma_i}}(\C^{N_{\gamma_i}},
       \tW^{\infty}_{\gamma_i}, \Q)$, and thus the 
pushforwards also agree:
$$[\W_{g,k,G}(W;\gamma_1, \dots,
\gamma_k)]^{vir} = [\W_{g,k}(\tW;\gamma_1, \dots,
\gamma_k)]^{vir}$$
\end{prop}
\begin{proof}
This follows from the deformation invariance axiom of  \cite[Thm 6.2.1(9)]{FJR-WEVFC}.  Namely, if we let $t \in [0,1]\subset \R$ be a parameter and let $\tW_t$ denote the family of quasi-homogeneous polynomials $$\tW_t := W+tZ.$$  Since $W$ is non-degenerate, so is $\tW_t$ for every $t \in [0,1]$.  The definition of the stack $\W_{g,k}(\tW_t)$ is independent of $t$, provided $t\neq0$, and for notational convenience, we also define $\W_{g,k}(\tW_0)$ to be equal to $\W_{g,k}(\tW_{t\neq0})$.  It is clear that the cycles
$[\W^{rig}_{g,k}(\tW_0;\bgamma)]^{vir}$ and $[\W^{rig}_{g,k,G}(W;\bgamma)]^{vir}$ are equal, and the deformation invariance axiom of  \cite[Thm 6.2.1(9)]{FJR-WEVFC} shows that for all $t\in[0,1]$ the cycles $[\W^{rig}_{g,k}(\tW_t;\bgamma)]^{vir}$ are all equal.
\end{proof}

The following theorem  now follows immediately from \cite[Thms 1.2.5 and 6.2.1]{FJR-WEVFC}.
\begin{thm}\label{thm:main}
For any admissible group $G$ and any $W$-graph $\Gamma$. The following axioms are satisfied for
$\left[\W(\Gamma)\right]^{vir}$:
\begin{enumerate}
\item \textbf{Dimension:}\label{ax:dimension} If $D$ is not 
an integer, 
 then
  $\left[\W(\Gamma)\right]^{vir}=0$. Otherwise, the cycle
  $\left[\W(\Gamma)\right]^{vir}$ has degree
\begin{equation}\label{eq:dimension}
6g-6+2k-2\#E(\Gamma)-2D =2\left((\chat-3)(1-g) + k -\#E(\Gamma) - \sum_{\tau\in
T(\Gamma)} \iota_{\tau}\right).
\end{equation}
So the cycle lies in $H_r(\W(\Gamma),\Q)\otimes \prod_{\tau \in
  T(\Gamma)}
H_{N_{\gamma_{\tau}}}(\C^{N_{\gamma_{\tau}}},W^{\infty}_{\gamma_\tau},
\Q),$ where
$$
r:=6g-6+2k-2\#E(\Gamma) -2D -\sum_{\tau\in T(\Gamma)}{N_{\gamma_\tau}}= 2\left((\hat{c}-3)(1-g)+k- \#E(\Gamma) -\sum_{\tau\in T(\Gamma)}\iota(\gamma_{\tau})- \sum_{\tau\in T(\Gamma)}\frac{N_{\gamma_\tau}}{2}\right).
$$

\item \label{ax:symm}\textbf{Symmetric group invariance}: There is a
  natural $S_k$-action on $\W_{g,k}$ obtained by permuting the
  tails.  This action induces an action on homology.  That is, for any
  $\sigma \in S_k$ we have:
$$\sigma_*: H_*(\W_{g,k},\Q)\otimes \prod_i
H_{N_{\gamma_{i}}}(\C^{N_{\gamma_i}}, W^{\infty}_{\gamma_i}, \Q)^{G}
\rTo{}{}  H_*(\W_{g,k},\Q)\otimes \prod_i
H_{N_{\gamma_{\sigma(i)}}}(\C^{N_{\gamma_{\sigma(i)}}},
W^{\infty}_{\gamma_{\sigma(i)}}, \Q)^{G}.$$ For any decorated graph
$\Gamma$, let $\sigma\Gamma$ denote the graph obtained by applying
$\sigma$ to the  tails of $\Gamma$.

We have
\begin{equation}\sigma_*\left[\W(\Gamma)\right]^{vir} = \left[\W(\sigma\Gamma)\right]^{vir}.\end{equation}

 \item \textbf{Degenerating connected graphs:} \label{ax:ConnGraphs} Let $\Gamma$ be a
   connected, genus-$g$, stable, decorated $W$-graph.

The cycles $\left[\W(\Gamma)\right]^{vir}$ and
$\left[\W_{g,k}(\bgamma)\right]^{vir}$ are related by
\begin{equation}
 \left[\W(\Gamma)\right]^{vir}=
\tilde{i}^*\left[\W_{g,k}(\bgamma)\right]^{vir},
\end{equation}
where $\tilde{i} : \W(\Gamma) \rTo{}{} \W_{g,k}(\bgamma)$ is the
canonical inclusion map.
\item \textbf{Disconnected graphs:} Let
$\Gamma =\coprod_{i} \Gamma_i$ be a stable, decorated $W$-graph
which is the disjoint union of connected $W$-graphs $\Gamma_i$.
The classes $\left[\W(\Gamma)\right]^{vir}$ and
$\left[\W(\Gamma_i)\right]^{vir}$ are related by
\begin{equation}
\left[\W(\Gamma)\right]^{vir}=
\left[\W(\Gamma_1)\right]^{vir} \times \cdots \times
\left[\W(\Gamma_d)\right]^{vir}.
\end{equation}
\smallskip

\item \textbf{Topological Euler class for the
narrow sector:}  Suppose that all the decorations on tails of $\Gamma$ are
  \emph{narrow}, meaning that $\C^{N_{\gamma_i}}=\{0\}$, and so we can
  omit $H_{N_{\gamma_{i}}}(\C^{N_{\gamma_i}}, W^{\infty}_{\gamma_i}, \Q)=\Q$ from our notation.

Consider the universal $W$-structure $(\LL_1,\dots,\LL_N)$ on the
universal curve $\pi:\cC \rTo{}{} \W(\Gamma)$ and the two-term
complex of sheaves
$$\pi_*(|\LL_i|){\rTo} R^1\pi_*(|\LL_i|).$$
There is a family of maps
$$W_i=\frac{\partial W}{\partial x_i}: \pi_*(\bigoplus_j |\LL_j|)
\rTo \pi_*(K\otimes |\LL_i|^*)\cong R^1\pi_*(|\LL_i|)^*.$$
The above two-term complex is quasi-isomorphic to a complex
of vector bundles \cite{PV}
$$E^0_i\rTo^{d_i} E^1_i$$
such that
$$\ker( d_i)\rTo \coker(d_i)$$
is isomorphic to the original two-term complex.
    $W_i$ is naturally extended (denoted by the same notation) to
    $$\bigoplus_i E^0_i\rTo (E^1_i)^*.$$
    Choosing an Hermitian metric on $E^1_i$ defines an
    isomorphism $\bar{E}^{1*}_i\cong E^1_i.$ Define the \emph{Witten map} to be the following
    $$\wit=\bigoplus (d_i+\bar{W}_i):
    \bigoplus_i E^0_i\rTo\bigoplus_i \bar{E}^{1*}_i\cong \bigoplus_i E^1_i.$$
 Let $\pi_j: \bigoplus_i E^j_i\rTo \MM$ be
    the projection map. The Witten map defines a proper section (also denoted $\wit$) 
    $\wit: \bigoplus_i E^0_i\rTo \pi^*_0 \bigoplus_i
    E^1_i$  
    of the bundle $\pi^*_0 \bigoplus_i
    E^1_i$ over $\bigoplus_i E^0_i$.  The above data  defines a topological Euler
    class $\eul\left(\wit: \bigoplus_i E^0_i\rTo \pi_0^* \bigoplus_i
    E^1_i\right)$. Then,
    $$[\W_{\Gamma}]^{vir}=(-1)^{D}\eul\left(\wit: \bigoplus_i E^0_i\rTo \pi^*_0 \bigoplus_i
    E^1_i\right)\cap [\MM_{\Gamma}].$$

    The above axiom implies two subcases.
\begin{enumerate}
\item{\bf Concavity}:\label{ax:convex}\footnote{This axiom was
  called \emph{convexity} in \cite{JKV1} because the original form of
  the construction outlined by Witten in the $A_{r-1}$ case involved
  the Serre dual of $\LL$, which is convex precisely when our $\LL$ is
  concave.}

    Suppose that all tails of $\Gamma$ are narrow.  If
$\pi_*\left(\bigoplus_{i=1}^t\LL_i\right)=0$, then the virtual
cycle is given by capping the top Chern class of the dual $\left(R^1 \pi_* \left(\bigoplus_{i=1}^t\LL_i\right)\right)^*$ of the pushforward
with the usual fundamental cycle of the moduli space:
\begin{equation}\begin{split}
\left[\W(\Gamma)\right]^{vir}& = c_{top}\left(\left(R^1\pi_*\bigoplus_{i=1}^t\LL_i \right)^*\right) \cap \left[\W(\Gamma)\right]\\
& = (-1)^D c_{D}\left(R^1\pi_*\bigoplus_{i=1}^t\LL_i \right)
\cap \left[\W(\Gamma)\right].
\end{split}
\end{equation}
\item   {\bf  Index zero:} \label{ax:wittenmap} Suppose that  $\dim (\W(\Gamma))=0$
and all the decorations on tails  are narrow.

If the pushforwards $\pi_* \left(\bigoplus\LL_i\right)$ and $R^1\pi_*
\left(\bigoplus \LL_i\right)$ are both vector bundles of the same
rank, then the virtual cycle is just the degree $\deg(\wit)$ of the
Witten map times the fundamental cycle:
$$\left[\W(\Gamma)\right]^{vir} = \deg(\wit)\left[\W(\Gamma)\right],$$
\end{enumerate}

\item\textbf{Composition law:}\label{ax:cutting} Given any genus-$g$
  decorated stable $W$-graph $\Gamma$ with $k$ tails, and given any
  edge $e$ of $\Gamma$, let $\hGamma$ denote the graph obtained by
  ``cutting'' the edge $e$ and replacing it with two unjoined tails
  $\tau_+$ and $\tau_-$ decorated with $\gamma_+$ and $\gamma_-$,
  respectively.

The fiber product
$$F:=\W(\hGamma)\times_{\W(\Gamma)} \W(\Gamma)$$
has morphisms
 $$ \W({\hGamma})\lTo^{q} F
\rTo^{pr_2}\W(\Gamma).$$

We have
\begin{equation}\label{eq:cutting}
\left\langle \left[\W(\hGamma)\right]^{vir}\right\rangle_{\pm}=\frac{1}{\deg(q)}q_*pr_2^*\left(\left[\W(\Gamma)\right]^{vir}\right),
\end{equation}
where $\langle  \rangle_{\pm}$ is the map from
$$H_*(\W(\hGamma))\otimes\prod_{\tau \in T(\Gamma)} H_{N_{\gamma_{\tau}}}(\C^{N_{\gamma_{\tau}}},W^{\infty}_{\gamma_\tau}, \Q)^G \otimes  H_{N_{\gamma_{+}}}(\C^{N_{\gamma_{+}}},W^{\infty}_{\gamma_+}, \Q)^G\otimes H_{N_{\gamma_{-}}}(\C^{N_{\gamma_{-}}},W^{\infty}_{\gamma_-}, \Q)^G$$ to
$$ H_*(\W(\hGamma))\otimes\prod_{\tau \in T(\Gamma)} H_{N_{\gamma_{\tau}}}(\C^{N_{\gamma_{\tau}}},W^{\infty}_{\gamma_\tau}, \Q)^G$$ obtained by contracting  the last two factors via the pairing
$$\langle\, , \rangle: H_{N_{\gamma_{+}}}(\C^{N_{\gamma_{+}}},W^{\infty}_{\gamma_+}, \Q)^G\otimes H_{N_{\gamma_{-}}}(\C^{N_{\gamma_{-}}},W^{\infty}_{\gamma_-}, \Q)^G \rTo \Q.$$
\item
\textbf{Forgetting tails:}\label{ax:tails}
\begin{enumerate}
\item
 Let $\Gamma$ have its $i$th tail decorated with $J$, where $J$
 is the exponential grading element of $G$. Further let $\Gamma'$ be
 the decorated $W$-graph obtained from $\Gamma$ by forgetting the
 $i$th tail and its decoration.  Assume that $\Gamma'$ is stable, and
 denote the forgetting tails morphism by $$\vartheta: \W(\Gamma)
 \rTo{}{} \W(\Gamma').$$ We have
\begin{equation}
\left[\W(\Gamma)\right]^{vir}
=\vartheta^*\left[\W(\Gamma')\right]^{vir}.
\end{equation}
    \item In the case of $g=0$ and $k=3$, the space
      $\W(\gamma_1, \gamma_2, J)$ is empty if
      $\gamma_1\gamma_2\neq 1$ and $\W_{0,3}(\gamma, \gamma^{-1},
      J)=\BG_W$.  We omit
      $H_{N_{J}}(\C^{N_{J}},W^{\infty}_{J}, \Q)^{G_W} = \Q$
      from the notation.  In this case, the cycle
            $$\left[\W_{0,3}(\gamma, \gamma^{-1},
        J)\right]^{vir}\in H_*(\BG_W,\Q)\otimes
      H_{N_{\gamma}}(\C^{N_{\gamma}},W^{\infty}_{\gamma}, \Q)^{G} \otimes
      H_{N_{\gamma^{-1}}}(\C^{N_{\gamma^{-1}}},W^{\infty}_{\gamma^{-1}},
      \Q)^{G}$$ is the fundamental cycle of $\BG_W$ times the Casimir
      element. Here the Casimir element is defined as follows. Choose
      a basis $\{\alpha_i\}$ of
      $H_{N_{\gamma}}(\C^{N_{\gamma}},W^{\infty}_{\gamma}, \Q)^{G},$ and a
      basis $\{ \beta_j\}$ of
      $H_{N_{\gamma^{-1}}}(\C^{N_{\gamma^{-1}}},W^{\infty}_{\gamma^{-1}},
      \Q)^{G}$. Let $\eta_{ij}=\langle \alpha_i, \beta_j\rangle $ and
      $(\eta^{ij})$ be the inverse matrix of $(\eta_{ij})$. The
      Casimir element is defined as $\sum_{ij}\alpha_i\eta^{ij}\otimes
      \beta_j.$
\end{enumerate}

\item \textbf{Sums of singularities:} \label{ax:sums} If $W_1 \in \C[z_1,\dots,z_t]$
  and $W_2 \in C[z_{t+1},\dots,z_{t+t'}]$ are two quasi-homogeneous
  polynomials with admissible groups $G_1$ and $G_2$, respectively,
  then  $G_{1} \times G_{2}$ is an admissible group of automorphisms of $W_1+W_2$ whose  state space $\ch_{W_1+W_2, G_1\times G_2}$ is 
  naturally isomorphic to the tensor product
\begin{equation}
\ch_{W_1+W_2,G_1\times G_2} = \ch_{W_1,G_1} \otimes \ch_{W_2,G_2},
\end{equation}
and the stack $\W_{g,k,G_1\times G_2}$ has a natural map to the fiber
product $$\W_{g,k,G_1\times G_2}(W_1+W_2)\rTo^{\gerbeprod}  \W_{g,k,G_1}(W_1) \times_{\MM_{g,k}}
\W_{g,k,G_2}(W_2).$$

Indeed, since any $G_1\times G_2$-decorated stable graph $\Gamma$
induces a $G_1$-decorated graph $\Gamma_1$ and
$G_2$-decorated graph $\Gamma_2$ with the same underlying graph
$\barGamma$, we have
\begin{equation}
\W(W_1+W_2,\Gamma) \rTo^{\gerbeprod} \W(W_1,\Gamma_1) \times_{\MM(\barGamma)}
\W(W_2,\Gamma_2).
\end{equation}

Composing with the natural inclusion
$$\W_{g,k,G_1}(W_1) \times_{\MM_{g,k}}
\W_{g,k,G_2}(W_2)\rInto^{\Delta} \W_{g,k,G_1}(W_1) \times \W_{g,k,G_2}(W_2),$$
and using the isomorphism of middle homology gives a
homomorphism
\begin{multline*}
\gerbeprod^*\circ\Delta^*:\left(H_*(\W_{g,k,G_1}(W_1),\Q)\otimes \prod_{i=1}^k
H_{N_{\gamma_{i,1}}}(\C^{N_{\gamma_{i,1}}},(W_1)^{\infty}_{\gamma_{i,1}},
\Q)^{G_1}\right) \otimes \left(H_*(\W_{g,k,G_2}(W_2),\Q)\otimes
\prod_{i=1}^k
H_{N_{\gamma_{i,2}}}(\C^{N_{\gamma_{i,2}}},(W_2)^{\infty}_{\gamma_{i,2}},
\Q)^{G_2}\right) \\
\rTo H_*(\W_{g,k,G_1\times G_2}(W_1+W_2),\Q)\otimes \prod_{i=1}^k
H_{N_{(\gamma_{i,1},\gamma_{i,2})}}(\C^{N_{(\gamma_{i,1},\gamma_{i,2})}},W^{\infty}_{(\gamma_{i,1},\gamma_{i,2})},
\Q)^{G_1\times G_2}.
\end{multline*}

The virtual cycle satisfies
\begin{equation}
\gerbeprod^*\circ\Delta^*\left(\left[\W_{g,k,G_1}(W_1)\right]^{vir}\otimes
\left[\W_{g,k,G_2}(W_2)\right]^{vir}\right) =
\left[\W_{g,k,G_1\times G_2}(W_1+W_2)\right]^{vir}.
\end{equation}

\item \label{ax:def-invar} \textbf{Deformation Invariance:} Let $W_t \in
\C[z_1,\dots,z_N]$ be a family of non-degenerate quasi-homogeneous
polynomials depending smoothly on a parameter $t \in [a,b] \subset \R$. 
Suppose that $G$ is the
common automorphism group of $W_t$.  The corresponding stacks $\W(\Gamma_t)$ are all naturally isomorphic.  We denote this generic stack by $\W(\Gamma)$.
The virtual cycle
$[\W(\Gamma)]^{vir}$ associated to $(W_{t}, G)$ is
independent of $t$.

\item \textbf{$G_W$-Invariance}\label{ax:GW-invar}
For any admissible $G$ and any $G$-decorated graph $\Gamma$
the homology $H_*(\W_{g,k,G}(\Gamma),\Q)$ as well as the homology groups $H_{N_{\gamma_{\tau}}}(\C^{N_{\gamma_{\tau}}},(W)^{\infty}_{\gamma_{\tau}},
\Q)^{G}$ each have a natural $G_W$-action, which induces a $G_W$ action on 
$H_*(\W_{g,k,G}(\Gamma),\Q)\otimes \prod_{\tau\in T(\Gamma)} H_{N_{\gamma_{\tau}}}(\C^{N_{\gamma_{\tau}}},(W)^{\infty}_{\gamma_{\tau}},
\Q)^{G}$.

The virtual cycle $[\W(\Gamma)]^{vir} $ is invariant under this $G_W$-action.
\end{enumerate}
\end{thm}

\begin{rem}
In the case of $A_{r-1}$ our virtual cycle can be used to construct an $r$-spin virtual class in the sense of \cite[\S4.1]{JKV1}.  The details of this construction are given in \cite{FJR-ArSpin}
\end{rem}

  \begin{rem}
   As usual, we can define Gromov-Witten type correlators by
   integrating tautological classes such as $\psi_i$ and $\mu_{ij}$
   over the $\left[\W_{g,k,G}\right]^{vir}$.

A direct consequence of the above axioms is the fact that the above
correlators defined by $\psi_i$, together with the rescaled pairing
$(\, ,)_{\gamma} :=\frac{|\langle\gamma\rangle| }{|G|}\langle\, ,
\rangle_{\gamma}$, satisfy the usual axioms of Gromov-Witten theory
(without the divisor axiom) and a modified version of the unit axiom
    $$\langle\alpha_1, \alpha_2,\bone_{J}\rangle^W_0=|\langle\gamma\rangle |\langle\alpha_1, \alpha_2\rangle $$
    for $\alpha_1\in \ch_{\gamma}$ and for $\alpha_2\in \ch_{\gamma^{-1}}.$ In this paper, we favor a slightly
    different version, which we now explain.
    \end{rem}

\subsection{Cohomological field theory}\label{sec:CohFT}
One gets a cleaner formula by pushing
$\left[\W_{g,k,G}(\bgamma)\right]^{vir}$ down to $\MM_{g,k}$.

\begin{df}

Let $\Lambda^W_{g,k} \in \hom(\ch_W^{\otimes k}, H^*(\MM_{g,k}))$ be
given for homogeneous elements $\balpha :=(\alpha_1, \dots, \alpha_k)$
with $\alpha_i \in \ch_{\gamma_i}$ by
 \begin{equation}\label{eq:defLambda}
\Lambda^{W,G}_{g,k}(\balpha) := \frac{|G|^g}{\deg(\st)}PD \,\st_*\left(\left[\W_{g,k}(W,\bgamma)\right]^{vir}
\cap \prod_{i=1}^k \alpha_i \right),
\end{equation}
and then extend linearly to general elements of $\ch_{W,G}^{\otimes k}$. Here, $PD$ is the Poincare duality map.

Let $\bone_{1}:=\unit$ be the distinguished generator of $\ch_{J}$, and let $\langle\, ,\rangle^{W,G}$ denote the pairing on the state space $\ch_{W,G}$.

\end{df}

\begin{thm}\label{thm:CohFT}

The collection $(\ch_{W,G}, \langle\, , \rangle^{W,G}, \{\Lambda^{W,G}_{g,k}\}, \bone_1)$
is a cohomological field theory with flat identity.

Moreover, if $W_1$ and $W_2$ are two singularities in distinct
variables with admissible groups $G_1$ and $G_2$, respectively, then the cohomological field theory arising from $W_1+W_2, G_1\times G_2$ is the
tensor product of the cohomological field theories arising from $W_1, G_1$
and $W_2, G_2$:
$$(\ch_{W_1+W_2, G_1\times G_2},  \{\Lambda^{W_1+W_2, G_1\times G_2}_{g,k}\}) = (\ch_{W_1, G_1}\otimes\ch_{W_2, G_2}, \{\Lambda^{W_1,G_1}_{g,k} \otimes \Lambda^{W_2,G_2}_{g,k}\}). $$

\end{thm}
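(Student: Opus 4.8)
The plan is to check directly that the tuple $(\ch_W,\langle\,,\rangle^W,\{\Lambda^W_{g,k}\},\bone_1)$ satisfies each defining axiom of a cohomological field theory with flat identity, feeding in the corresponding property of $[\W(\Gamma)]^{vir}$ from Theorem~\ref{thm:main}. The grading is built in: by the Dimension axiom~(\ref{ax:dimension}), $[\W_{g,k}(\bgamma)]^{vir}$ has a fixed even real degree, so after capping with homogeneous $\alpha_i\in\ch_{\gamma_i}$ and applying $\st_*$ and Poincar\'e duality, $\Lambda^W_{g,k}(\balpha)$ is a well-defined homogeneous class in $H^*(\MM_{g,k},\Q)$, and linear extension produces the element of $\hom(\ch_W^{\otimes k},H^*(\MM_{g,k}))$. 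Symmetric group covariance is immediate from the Symmetric group invariance axiom~(\ref{ax:symm}): $\st$ is $S_k$-equivariant, $\st_*$ and $PD$ commute with the induced action, and the normalization factor $|G|^g/\deg(\st)$ depends only on $g$; the Koszul sign for permuting odd classes is carried along automatically.

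The heart of the argument is the gluing (factorization) axiom. Fix first a two-vertex separating graph $\Gamma$ with its node decorated by $\gamma$, obtained by splitting a corolla into genera $g_1,g_2$, and let $\hGamma$ be the cut graph. Combining the Composition law~(\ref{ax:cutting}) --- which writes $\langle[\W(\hGamma)]^{vir}\rangle_\pm=\tfrac1{\deg(q)}\,q_*\,pr_2^*[\W(\Gamma)]^{vir}$ --- with the Disconnected graphs and Degenerating connected graphs axioms (identifying $[\W(\hGamma)]^{vir}$ with the exterior product of the two corolla cycles, and $[\W(\Gamma)]^{vir}$ with the restriction of the corolla cycle $[\W_{g,k}(\bgamma)]^{vir}$ along the boundary inclusion $\W(\Gamma)\hookrightarrow\W_{g,k}(\bgamma)$), and then pushing the resulting identity forward along $\st$ through the commuting square relating $\st_\Gamma$, $\st_{\hGamma}$ and the gluing map $\rho_{tree}:\MM(\hGamma)\to\MM(\Gamma)\hookrightarrow\MM$, reduces the CohFT axiom to an identity of rational numbers. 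That identity is verified using $\deg(\st)=|G|^{2g-1}$ (Equation~(\ref{eq:deg-st-gen}) and its analogue for admissible $G$), the ramification formula $\deg(\st)=|\langle\gamma\rangle|\deg(\st_\Gamma)$ (Proposition~\ref{prp:ramif-tree}), the value of $\deg(q)$, and the sum over all admissible edge decorations $\gamma\in G$, where for each $\gamma$ one pairs $\ch_\gamma$ against $\ch_{\gamma^{-1}}$. The factors $|\langle\gamma\rangle|$, $1/|G|$ and $|G|^{g_i}$ combine so that the weighted sum over $\gamma$ telescopes precisely to contraction against the co-pairing (Casimir element) of $\langle\,,\rangle^W$; this is exactly why the normalization constant in the definition of $\Lambda^W_{g,k}$ is what it is. The one-vertex loop graph is handled identically, now using Proposition~\ref{prp:ramif-loop} ($\deg(\st_\Gamma)=|G|^{2g-2}/|\langle\gamma\rangle|$) in place of Proposition~\ref{prp:ramif-tree}; here $\W(\Gamma)$ is non-empty for every $\gamma\in G$, so the full sum over decorations really does occur. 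I expect this degree-and-pairing bookkeeping to be the main obstacle, since it requires tracking several orbifold degrees simultaneously and matching them against the (rescaled) pairing.

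For the flat identity, use the Forgetting tails axiom~(\ref{ax:tails}): when the extra tail is decorated by $J$ and the reduced graph $\Gamma'$ is stable, $[\W(\Gamma)]^{vir}=\vartheta^*[\W(\Gamma')]^{vir}$, and $\vartheta:\W(\Gamma)\to\W(\Gamma')$ covers the forgetful morphism $\pi:\MM_{g,k+1}\to\MM_{g,k}$ (the universal curve). Since $\bone_1=\unit$ has degree $0$, $\ch_J$ is one-dimensional, and $\deg(\st)$ is independent of $k$, capping with $\bone_1$ and pushing down gives $\Lambda^W_{g,k+1}(\alpha_1,\dots,\alpha_k,\bone_1)=\pi^*\Lambda^W_{g,k}(\alpha_1,\dots,\alpha_k)$ with no $\psi$-corrections (a correction would be proportional to $\gamma_{k+1}$ deviating from $J$, which does not happen). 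The base case is the second part of axiom~(\ref{ax:tails}): $\W_{0,3}(\gamma,\gamma^{-1},J)=\BG_W$, $\MM_{0,3}$ is a point, $\deg(\st_{0,3})=1/|G|$, and the virtual cycle is $[\BG_W]$ times the Casimir element of the relative-cohomology pairing; evaluating against $\alpha\otimes\beta$ with the Casimir, together with $\st_*[\BG_W]=\tfrac1{|G|}[\mathrm{pt}]$ and the normalization $|G|^0/\deg(\st_{0,3})=|G|$, yields $\Lambda^W_{0,3}(\alpha,\beta,\bone_1)=\langle\alpha,\beta\rangle^W$. Non-degeneracy and grading-compatibility of $\langle\,,\rangle^W$ were established in Section~\ref{sec:QcohGroup}.

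Finally, the tensor-product statement is a repackaging of the Sums of singularities axiom~(\ref{ax:sums}): $G_{W_1+W_2}=G_{W_1}\times G_{W_2}$, so $\ch_{W_1+W_2}=\ch_{W_1}\otimes\ch_{W_2}$ with $\langle\,,\rangle^{W_1+W_2}=\langle\,,\rangle^{W_1}\otimes\langle\,,\rangle^{W_2}$ and $\bone_1^{W_1+W_2}=\bone_1^{W_1}\otimes\bone_1^{W_2}$. Since $\st_{W_1+W_2},\st_{W_1},\st_{W_2}$ share the target $\MM_{g,k}$ and $\gerbeprod:\W_{g,k}(W_1+W_2)\to\W_{g,k}(W_1)\times_{\MM_{g,k}}\W_{g,k}(W_2)$ is an \'etale gerbe, the normalization constants multiply correctly: $|G_{W_1+W_2}|^g/\deg(\st_{W_1+W_2})=|G_{W_1}\times G_{W_2}|^{1-g}=(|G_{W_1}|^{1-g})(|G_{W_2}|^{1-g})$. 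Plugging the virtual-cycle identity $\gerbeprod^*\Delta^*([\W(W_1)]^{vir}\otimes[\W(W_2)]^{vir})=[\W(W_1+W_2)]^{vir}$ into the definition of $\Lambda^W_{g,k}$ and applying the K\"unneth formula on $H^*(\MM_{g,k})$ gives $\Lambda^{W_1+W_2}_{g,k}=\Lambda^{W_1}_{g,k}\otimes\Lambda^{W_2}_{g,k}$, completing the proof.
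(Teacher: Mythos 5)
Your proposal follows the paper's proof closely: you verify each CohFT axiom (C1--C4b) by feeding in the corresponding axiom of $[\W(\Gamma)]^{vir}$, with the same degree bookkeeping via $\deg(\st)=|G|^{2g-1}$ and Propositions~\ref{prp:ramif-tree}/\ref{prp:ramif-loop}, and you obtain the tensor-product statement from the Sums of Singularities axiom. The one step you gloss over is the push-pull compatibility for non-Cartesian squares (the paper isolates this as Lemma~\ref{lm:RamifCover}); the ramification factors you track are exactly what that lemma controls, so your outline is correct, just less explicit on that point.
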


\begin{proof}
To show that the classes form a cohomological field theory, we must
show that the following properties hold (see, for example,
\cite[\S3.1]{JKV1}):
\begin{enumerate}
\item[\bf C1.] The element $\Lambda^{W,G}_{g,n}$ is invariant under the
  action of the symmetric group $S_k$.

\item[\bf C2.]
Let $g=g_1+g_2$; let $k=k_1+k_2$; and let
$$\rho_{tree}: \overline{\M}_{g_1, k_1+1}\times \overline{\M}_{g_2,
    k_2+1}\rTo \overline{\M}_{g, k}
$$
be the gluing trees morphism~(\ref{eq:glueTreeCurves}).
Then the forms
$\Lambda^{W,G}_{g,n}$ satisfy the composition property
\begin{multline} \label{eq:cfttree}
\rho_{\mathrm{tree}}^* \Lambda^{W,G}_{g_1+g_2,k}(\alpha_1,\alpha_2\,\ldots,\alpha_k)=
\\
\sum_{\mu,\nu} \Lambda^{W,G}_{g_1,k_1+1}(\alpha_{i_1},\ldots,\alpha_{i_{k_1}},\mu)\,
\eta^{\mu\nu} \otimes
\Lambda^{W,G}_{g_2,k_2+1}(\nu ,\alpha_{i_{{k_1}+1}},\ldots,\alpha_{i_{k_1+k_2}})
\end{multline}
for all $\alpha_i \in \ch_W$, where $\mu$ and $\nu$ run through a
basis of $\ch_W$, and $\eta^{\mu\nu}$ denotes the inverse of the
pairing $\langle\, , \rangle$ with respect to that basis.

\item[\bf C3.]
Let
\begin{equation}
\rho_{loop}: \overline{\M}_{g-1, k+2}\rTo \overline{\M}_{g, k}
\end{equation}
be the gluing loops morphism~(\ref{eq:glueLoopCurves}).
Then
\begin{equation}
\label{eq:cftloop}
\rho_{\mathrm{loop}}^*\,\Lambda^{W,G}_{g,k}(\alpha_1,\alpha_2,\ldots,\alpha_k)\,=\,
\sum_{\mu,\nu}\Lambda^{W,G}_{g-1,k+2}\,(\alpha_1,\alpha_2,\ldots,\alpha_n, \mu,
\nu)\,\eta^{\mu\nu},
\end{equation}
where $\alpha_i$, $\mu$, $\nu$, and $\eta$ are as in C2.

\item[\bf C4a.]
For all $\alpha_i$ in $\ch_W$ we have
\begin{equation}
  \label{eq:identity}
\Lambda^{W,G}_{g,k+1}(\alpha_1,\ldots, \alpha_k, \bone_1) =
\vartheta^*\Lambda^{W,G}_{g,k}(\alpha_1,\ldots, \alpha_k),
\end{equation}
where $\vartheta:\MM_{g,n+1} \rTo{}{} \MM_{g,n}$ is the universal curve.
\item[\bf C4b.]
\begin{equation}
\label{eq:identity2}
\int_{\MM_{0,3}}\,\Lambda^{W,G}_{0,3}(\alpha_1,\alpha_2,\bone_1) =
\langle\alpha_1,\alpha_2\rangle^W.
\end{equation}
\end{enumerate}

Axiom~C1 follows immediately from the symmetric group invariance
(axiom~\ref{ax:symm}) of the virtual cycle.

To prove Axioms~C2 and~C3 we first need a simple lemma: that the Casimir element is Poincar\`e dual to the
pairing.  This is well known, but we include it for completeness
because we use it often.
\begin{lm}
  Let $\alpha_i\in \ch_W$ be a basis. Consider the Casimir element $\sum_{ij}\eta^{ij}\alpha_i\otimes\alpha_j$ of
  its pairing.  For any $u,v\in \ch^*_W$, we have
$$\langle u,v\rangle =u\otimes v \cap \sum_{ij}\eta^{ij}\alpha_i\otimes \alpha_j.$$

\end{lm}
\begin{proof}
Let $\alpha^*_i$ be the dual basis and let $u:=\sum_i \langle u,
\alpha_i\rangle \alpha^*_i,$ and $v:=\sum_j \langle v,
\alpha_j\rangle \alpha^*_j$. Therefore,
$$\langle u,v\rangle =\sum_{ij}\langle u, \alpha_i\rangle \langle v,
\alpha_j\rangle \langle \alpha^*_i, \alpha^*_j\rangle .$$ Notice that
$\eta_{ij}=\langle \alpha_i, \alpha_j\rangle $ and $\eta^{ij}=\langle
\alpha^*_i, \alpha^*_j\rangle $. The right hand side is precisely
$u\otimes v\cap \sum_{ij}\eta^{ij}\alpha_i\otimes \alpha_j.$
\end{proof}

Let $\alpha_i \in \ch_{\gamma_i}$ and
let $\Gamma$ denote the $W$-graph of either the tree (two vertices, of
genus $g_1$ and $g_2$, respectively, with $k_1$ and $k_2$ tails,
respectively, and one separating edge) or of the loop (one vertex of
genus $g-1$ with $k$ tails and one edge) where the $i$th tail is
decorated with the group element $\gamma_i$.

Let $\hGamma$ denote the ``cut" version of the graph $\Gamma$.  Note
that the data given do not determine a decoration of the edge, so
$\Gamma$ and $\hGamma$ are really sums over all choices
$\Gamma_{\g}$ or $\hGamma_{\g}$ decorated with $\g\in G$
on the edge.

Using the notation of the composition axiom (\ref{ax:cutting}), we
have the following commutative diagram for each $\g$.
\begin{equation}\label{eq:q-hGamma}
\begin{diagram}
            &           &F_{\g}                      & \rTo^{pr_2}   &\W(\Gamma_{\g}) \\
            &\ldTo^{q_{\g}}    \\
\W(\hGamma_{\g})&&\dTo^{pr_1}     &             &  \dTo^{\st_{\Gamma_{\g}}}\\
            &\rdTo^{\st_{\hGamma_{\g}}}\\
            &           &\MM(\hGamma)           &       \rTo^{\hat{\rho}}     &\MM(\Gamma)\\
  \end{diagram}
  \end{equation}

And summing over all $\g\in G$, we have the following.
\begin{equation}\label{eq:summed-Gamma}
\begin{diagram}
 \bigcup_{\g\in G}\W(\Gamma_{\g}) & \rTo^{\itt} & \W_{g,k}(\bgamma)\\
\dTo_{\sum_{\g\in G}\st_{\Gamma_{\g}}}&  &\dTo_{\st}\\
\MM(\Gamma) & \rTo^i & \MM_{g,k}\\
  \end{diagram}
  \end{equation}
We have $\rho = i\circ \hat{\rho}$.  In the second diagram, note that the square is not Cartesian. In fact,
by Propositions~\ref{prp:ramif-tree} and~\ref{prp:ramif-loop}, it
fails to be Cartesian by a factor of $|\langle\g\rangle|$ on each term.

\begin{lm}\label{lm:pushpull}
For any $\alpha\in H_*(\W_{g,k}(\bga))$ we have the relation
\begin{equation}
i^*\st_*\alpha = \sum_{\g\in
  G}|\langle\g\rangle| (\st_{\Gamma_\g})_* \tilde{i}^*
\alpha.
\end{equation}
\end{lm}
\begin{crl}\label{crl:FundClasses}
The virtual fundamental classes pushed down to $\MM(\Ga)$ are related by 
the equality 
\begin{equation}\label{eq:sumgamma}
i^*\st_*\left[\W_{g,k}(\bgamma)\right]^{vir} = \sum_{\g\in
  G}|\langle\g\rangle| (\st_{\Gamma_\g})_* \tilde{i}^*
\left[\W_{g,k}(\bgamma)\right]^{vir}.
\end{equation}
\end{crl}
\begin{proof}[Proof of Lemma~\ref{lm:pushpull}]
The orbifold $\bigcup_{\g\in G}\W(\Gamma_{\g})$ is the inverse image $\st^{-1}(\MM(\Ga)$. 
We would like to be able to apply a push-pull/pull-push relation, but $\st$ is not transverse to $i$, so this will not work.  

Instead, we deform the map $\st$ in a small neighborhood of $\bigcup_{\g\in G}\W(\Gamma_{\g})$, and we deform in a normal direction to get a new map $\stt$ which is transverse to $i$ and so that the inverse image $\stt^{-1}(\MM(\Ga))$ lies in the normal bundle of $\itt$.  So we have the following diagram:
\begin{equation}
\begin{diagram}
&& \stt^{-1}(\MM(\Ga)) & \rTo_{\iit} & \W_{g,k}(\bga)
\\
&&\dTo_{\stt_{\Ga}} && \dTo_{\stt}\\
&&\MM(\Gamma) & \rTo_{i} & \MM_{g,k}.
\end{diagram}
\end{equation}

For any $\alpha\in H_*(\W_{g,k}(\bga))$ we have $\st_*\alpha = \stt_* \alpha$, since $\stt$ is a deformation of $\st$.  Now the standard push-pull/pull-push relation, which is a special case of the clean intersection formula \cite[Prop 3.3]{Qu}, says that  we have $$ \stt_{\Ga *}\iit^*\alpha = i^*\stt_* \alpha = i^* \st_*\alpha.$$
But since $\st^{-1}(\MM(\Ga))$ lies in the normal bundle of $\itt$, we can factor the map $\stt_{\Ga}$ as $$\stt_{\Ga} = \st_{\Ga} \circ pr,$$
where $pr$ is the projection of the normal bundle down to $\bigcup_{\g\in G}\W(\Gamma_{\g})$.  Moreover, since $\stt$ is a deformation of $\st$, we have that the pullbacks $\iit^*a$ and $pr^*\itt^*a$ are equal. 
The map $pr$ is finite when restricted to $\st^{-1}(\MM(\Ga))$, and for each $\g$, we denote by $\deg(pr_\g)$ its degree over the component $\W(\Gamma_{\g})$.  
Therefore, 
\begin{align*}
i^*\st_*\alpha &= \stt_{\Ga *} \iit^* \alpha\\
 & = \st_{\Ga *}pr_*pr^*\itt^*\alpha \\
 & = \sum_{\g \in G} \deg(pr_\g)\st_{\Ga_\g *}  \itt^* \alpha
\end{align*}
Now, it is easy to see that $\deg(pr_\g)$ is equal to the  number of non-isomorphic $W$-curves over a generic smooth curve that degenerate to a given generic nodal $W$-curve in $\W(\Gamma_{\g})$.  As described in Propositions~\ref{prp:ramif-tree} and \ref{prp:ramif-loop}, after accounting for automorphisms, this number is $|\langle \g \rangle|$. 
\end{proof}

Now, we prove Axioms~C2 and~C3.  To simplify computations, we choose a basis $B:=\{\mu_{\gamma,i}\}$ of
$\ch_W$ with each $\mu_{\gamma,i} \in \ch_{\gamma}$, and write all the
Casimir elements in terms of this basis.

In the case of Axiom C3 (the case that $\Gamma$ is a loop) we have
$$\begin{array}{ll}
\Lambda_{g-1,k+2}(\alpha_1,\dots,\alpha_k,\mu,\nu)\eta^{\mu\nu}
& = \sum_{\gamma\in G}\frac{|G|^{g-1}}{\deg(\st_{\hGamma_{\gamma}})}
PD\, (\st_{\hGamma_{\gamma}})_*\left(\left[\W(\hGamma_{\gamma})\right]^{vir}\cap \prod_{i=1}^k\alpha_i \cup \mu_{\gamma,i} \cup \nu_{\gamma,i}\right) \eta^{\mu_{\gamma,i}\nu_{\gamma,i}}
\\
& = \sum_{\gamma\in G}\frac{|G|^{g-1}}{\deg(\st_{\hGamma_{\gamma}})}
PD\, (\st_{\hGamma_{\gamma}})_*\left(\left\langle \left[\W(\hGamma_{\gamma})\right]^{vir}\right\rangle_{\pm}\cap \prod_{i=1}^k\alpha_i \right) \\
& = \sum_{\gamma\in G}\frac{|G|^{g-1}}{\deg(\st_{\hGamma_{\gamma}})\deg(q_{\gamma})}
PD\, (\st_{\hGamma_{\gamma}})_*\left((q_{\gamma})_*pr_2^*
\left[\W(\Gamma_{\gamma})\right]^{vir}\cap \prod_{i=1}^k\alpha_i \right) \\
& = \sum_{\gamma\in G}\frac{|G|^{g-1}}{\deg(pr_1)}
PD\, (pr_1)_*\left(pr_2^*
\left[\W(\Gamma_{\gamma})\right]^{vir}\cap \prod_{i=1}^k\alpha_i \right) \\
& = \sum_{\gamma\in G}\frac{|G|^{g-1}}{\deg(\st_{\Gamma_{\gamma}})}
PD\, \hat{\rho}^*(\st_{\Gamma_{\gamma}})_*\left(
\left[\W(\Gamma_{\gamma})\right]^{vir}\cap \prod_{i=1}^k\alpha_i \right) \\
& = \sum_{\gamma\in G}\frac{|G|^{g-1}}{|G|^{2g-3}|G/\lgr{}|}
PD\, \hat{\rho}^*(\st_{\Gamma_{\gamma}})_*\left(
\left[\W(\Gamma_{\gamma})\right]^{vir}\cap \prod_{i=1}^k\alpha_i \right) \\
& = \sum_{\gamma\in G}\frac{|G|^{g}|\lgr{}|}{|G|^{2g-1}}
PD\, \hat{\rho}^*(\st_{\Gamma_{\gamma}})_*\tilde{i}^*\left(
\left[\W_{g,k}(\bgamma)\right]^{vir}\cap \prod_{i=1}^k\alpha_i \right) \\
& = \frac{|G|^g}{\deg(\st)} PD\, \rho^*\st_*\left(
\left[\W_{g,k}(\bgamma)\right]^{vir}\cap \prod_{i=1}^k\alpha_i \right)\\
&= \rho^* \Lambda_{g,k}(\alpha_1,\dots,\alpha_k)
\end{array}$$
The second equality follows from the fact that the Casimir element in
cohomology is dual to the pairing in homology.  The sixth follows from
the explicit computation of $\deg(\st_{\Gamma})$ in
Proposition~\ref{prp:ramif-loop} and the seventh from the connected graphs axiom (Axiom~\ref{ax:ConnGraphs}).  The eighth equality follows from Equation~(\ref{eq:sumgamma}).

The case of Axiom~C2 is similar, but simpler, because there is only one
choice of decoration $\gamma$ for the edge of $\Gamma$.  In this case
we have

\begin{align*}
\Lambda_{g_1,k_1+1}(\alpha_{i_1},\dots,\alpha_{i_{k_1}},\mu)&\, \eta^{\mu\nu} \Lambda_{g_2,k_2+1}(\nu,\alpha_{i_{k_1+1}},\dots,\alpha_{i_{k_1+k_2}})\\
& = \frac{|G|^{g}}{\deg(\st_{\hGamma})}
PD\, (\st_{\hGamma})_*\left(\left[\W(\hGamma)\right]^{vir}\cap \prod_{i=1}^k\alpha_i \cup \mu \cup \nu\right) \eta^{\mu\nu} \\
& = \frac{|G|^{g}}{\deg(\st_{\hGamma})}
PD\, (\st_{\hGamma})_*\left(\left\langle \left[\W(\hGamma)\right]^{vir}\right\rangle_{\pm}\cap \prod_{i=1}^k\alpha_i \right) \\
& = \frac{|G|^{g}}{\deg(\st_{\hGamma})\deg(q)}
PD\, (\st_{\hGamma})_*\left((q)_*pr_2^*
\left[\W(\Gamma)\right]^{vir}\cap \prod_{i=1}^k\alpha_i \right) \\
& = \frac{|G|^{g}}{\deg(pr_1)}
PD\, (pr_1)_*\left(pr_2^*
\left[\W(\Gamma)\right]^{vir}\cap \prod_{i=1}^k\alpha_i \right) \\
& = \frac{|G|^{g}}{\deg(\st_{\Gamma})}
PD\, \rho^*(\st_{\Gamma})_*\left(
\left[\W(\Gamma)\right]^{vir}\cap \prod_{i=1}^k\alpha_i \right) \\
& = \frac{|G|^{g}}{\deg(\st)/|\lgr{}|}
PD\, \rho^*(\st_{\Gamma})_*\left(
\left[\W(\Gamma)\right]^{vir}\cap \prod_{i=1}^k\alpha_i \right) \\
& = \frac{|G|^{g}|\lgr{}|}{\deg(\st)}
PD\, \rho^*(\st_{\Gamma})_*\left(
\left[\W(\Gamma)\right]^{vir}\cap \prod_{i=1}^k\alpha_i \right) \\
&= \rho^* \Lambda_{g,k}(\alpha_1,\dots,\alpha_k).
\end{align*}

Axiom~C4a and Axiom~C4b follow immediately from the forgetting
tails axiom.
\end{proof}

\begin{df}\label{df:correlator}
   Define correlators
   $$\langle \tau_{l_1}(\alpha_1), \dots, \tau_{l_k}(\alpha_k)\rangle^{W,G}_g:=\int_{\left[\MM_{g,k}\right]}
   \Lambda^{W,G}_{g,k}(\alpha_1, \dots,
   \alpha_k)\prod_{i=1}^k {\psi}^{l_i}_i$$\glossary{<@$\langle \tau_{l_1}, \dots, \tau_{l_k}\rangle.$ & The correlators of the $W$-theory}
   \end{df}

\begin{df}
Let $\{\alpha_0,\dots,\alpha_s\}$ be a basis of the state space
$\ch_W$ such that $\alpha_0 =  \unit$, and let $\bt =
(\bt_0,\bt_1,\dots)$ with $\bt_l =
(t_l^{\alpha_0},t_l^{\alpha_1},\dots, t_l^{\alpha_s})$ be formal
variables.  Denote by $\Phi^{W,G}(\bt) \in \lambda^{-2}\C[[\bt,\lambda]]$
the (large phase space) potential \glossary{Phi @$\Phi^W(\bt)$ & The
  large phase-space potential of $W$-theory} of the theory:
$$\Phi^{W,G}(\bt):=\sum_{g\ge0} \Phi^{W,G}_g(\bt) := \sum_{g\ge 0} \lambda^{2g-2} \sum_{k}\frac{1}{k!} \sum_{l_1,\dots,l_k}\sum_{\alpha_1,\dots,\alpha_k} \langle \tau_{l_1}(\alpha_1) \cdots \tau_{l_k}(\alpha_k)\rangle^{W,G}_{g} t_{l_1}^{\alpha_1}\cdots t_{l_k}^{\alpha_k}.$$
\end{df}

Manin, in \cite[Thm III.4.3]{Ma-book} shows that a cohomological field theory in genus zero is equivalent to a formal Frobenius manifold.

\begin{crl}
The genus-zero theory defines a formal Frobenius manifold structure on $\Q[[\ch_{W,G}^*]]$ with pairing $\langle\, , \rangle^{W,G}$ and (large phase space) potential $\Phi_0^{W,G}(\bt)$.
\end{crl}

Three very important constraints are the \emph{string} and \emph{dilaton}  equations and the \emph{topological recursion relations} (See \cite[\S VI.5.2]{Ma-book} and \cite[\S5.2]{JKV1}.
\begin{thm}\label{thm:StringDilaton}
The potential $\Phi^{W,G}(\bt)$ satisfies analogues of the string and
dilaton equations and the topological recursion relations.
\end{thm}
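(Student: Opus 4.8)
The plan is to deduce the string equation, the dilaton equation, and the topological recursion relations (TRR) from the axioms of Theorem~\ref{thm:main} together with the cohomological field theory structure established in Theorem~\ref{thm:CohFT}, following the template used for $r$-spin curves in \cite[\S3]{JKV1}. The key observation is that all three statements concern the behavior of the correlators under the forgetting-tails morphism $\vartheta:\MM_{g,k+1}\rTo\MM_{g,k}$ and the comparison of $\psi$-classes on $\MM_{g,k+1}$ with their pullbacks from $\MM_{g,k}$, and that these relations are already known on $\MM_{g,k}$; the only new input needed is that our $\Lambda^W_{g,k}$ transforms compatibly under $\vartheta$, which is precisely Axiom C4a (Equation~(\ref{eq:identity})), and that it satisfies the splitting/composition axioms C2 and C3.

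First I would treat the string equation. One inserts the extra marked point decorated by $\bone_1\in\ch_J$ with $\psi$-exponent zero, so the relevant input is $\tau_0(\bone_1)$. By Axiom C4a we have $\Lambda^W_{g,k+1}(\alpha_1,\dots,\alpha_k,\bone_1)=\vartheta^*\Lambda^W_{g,k}(\alpha_1,\dots,\alpha_k)$. Then one uses the standard comparison of descendant classes under pullback along the universal curve, namely $\psi_i = \vartheta^*\psi_i + [D_i]$ on $\MM_{g,k+1}$, where $D_i$ is the divisor where the $i$th and $(k+1)$st points collide, together with the fact that $\vartheta^*\Lambda^W_{g,k}$ restricted to $D_i$ and the vanishing $\psi_i|_{D_i}$ kill all but the expected terms. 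Expanding $\prod_i\psi_i^{l_i}$ and pushing forward via the projection formula yields exactly $\sum_i \langle\tau_{l_1}(\alpha_1)\cdots\tau_{l_i-1}(\alpha_i)\cdots\tau_{l_k}(\alpha_k)\rangle^W_g$. The dilaton equation is the same argument with $\tau_1(\bone_1)$ instead of $\tau_0(\bone_1)$: here one instead uses $\vartheta_*(\psi_{k+1}\cdot\vartheta^*(\text{anything}))=(2g-2+k)(\text{anything})$ (the pushforward of $\psi_{k+1}$ along the universal curve is the Euler characteristic factor), giving the factor $2g-2+k$.

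For the topological recursion relations one works in genus zero and uses the well-known fact that on $\MM_{0,k}$ the class $\psi_1$ is equivalent to a sum of boundary divisors $\sum D_{S}$ (the divisors where points $2,3$ lie on one component and $1$ together with the rest on the other). Restricting $\Lambda^W_{0,k}$ to each such boundary divisor and applying the composition/splitting axiom C2 — which in the CohFT language says $\rho_{tree}^*\Lambda^W = \sum_{\mu\nu}\Lambda^W\otimes\eta^{\mu\nu}\otimes\Lambda^W$ — reorganizes the correlator with a lowered $\psi$-power at the first point into a sum over stable splittings of a product of two lower correlators contracted through the Casimir element of the pairing. The main obstacle is purely bookkeeping: one must verify that the modified (non-divisor) axiom satisfied by our theory does not interfere with these manipulations, i.e.\ that Axioms C1--C4b plus the dimension and symmetry axioms genuinely suffice, exactly as in the classical Gromov--Witten and $r$-spin settings. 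Since Theorem~\ref{thm:CohFT} already packages $(\ch_W,\langle\,,\rangle^W,\{\Lambda^W_{g,k}\},\bone_1)$ as a cohomological field theory with flat identity, these three relations follow formally — as they do for every CohFT — and the proof amounts to citing \cite[\S3]{JKV1} and transcribing the argument, the one thing requiring care being the correct appearance of the normalization factor $|G|^g/\deg(\st)$ in the definition of $\Lambda^W_{g,k}$, which is compatible with $\vartheta^*$ and with C2, C3 by the degree computations in Propositions~\ref{prp:ramif-tree} and~\ref{prp:ramif-loop}.
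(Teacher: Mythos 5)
Your proof is correct and follows essentially the same route as the paper: the string and dilaton equations from Axiom C4a (forgetting tails) combined with the pullback comparison $\psi_i = \vartheta^*\psi_i + [D_{i,k+1}]$ on $\MM_{g,k+1}$, and the topological recursion relations from the genus-zero boundary expression for $\psi_i$ together with the splitting axiom C2. The paper's proof is more terse but relies on exactly these three ingredients, and your added remark about the normalization factor $|G|^g/\deg(\st)$ being compatible with $\vartheta^*$ and with C2, C3 is already implicitly handled by Theorem~\ref{thm:CohFT}.
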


\begin{proof}
Let $\vartheta:\MM_{g,k+1} \rTo \MM_{g,k}$ denote the universal curve,
and let $D_{i,k+1}$ denote the class of the image of the $i$th section
in $\MM_{g,k}$.

The dilaton and string equations for $\Phi^{W,G}$ follow directly from the
forgetting tails axiom and from fact that the gravitational
descendants ${\psi}_i$ satisfy $\vartheta^*({\psi}_i) =
{\psi}_i + D_{i,k+1}$.

The topological recursion relations hold because of the relation
$${\psi}_i = \sum_{\substack{T_+ \sqcup T_- = [k]\\ k,k-1 \in T^+\\ i\in T_-}} \delta_{0;T_+}$$
on $\MM_{0,k}$, where $\delta_{0;T_+}$ is the boundary divisor in $\MM_{0,k}$ corresponding to a graph with a single edge and one vertex labeled by tails in $T_+$.
\end{proof}
For more details about these equations in the $A_n$ case, see \cite[\S5.2]{JKV1}

\section{ADE-singularities and Mirror symmetry}\label{sec:mirror}

The construction of this paper corresponds to the A-model of the
Landau-Ginzburg model. A particular invariant from our theory is
the ring $\ch_{W, G}$.  The Milnor ring, or local algebra, $\milnor_W$ of a singularity
can be considered as the B-model. One outstanding conjecture of
Witten is the self-mirror phenomenon for ADE-singularities. This conjecture states that for any simple (i.e., ADE) singularity $W$, the ring $\ch_{W,\genj}$ is isomorphic, as a Frobenius algebra, to the Milnor ring $\milnor_W$ of the same singularity.

This is the main topic of this section. More precisely, we prove the following theorem, which resolves the conjecture and serves as the
 first step toward the proof of the integrable hierarchy theorems
 in the next section.

\begin{thm}\label{thm:ADEselfdual}[Theorem\ref{thm:mirror}]  { } \
\begin{enumerate}
    \item Except for $D_n$ with $n$ odd, the ring $\ch_{W,\genj}$ of any simple (ADE) singularity $W$ with symmetry group
    $\genj $ is isomorphic, as a Frobenius algebra, to the Milnor ring $\milnor_W$ of the same singularity.

    \item The
  ring $\ch_{D_n,G_{D_n}}$  of $D_n$ with the maximal diagonal
    symmetry group $G_{D_n}$ is isomorphic, as a Frobenius algebra,  to the Milnor ring $\milnor_{x^{n-1}y+y^2} \cong \milnor_{A_{2n-3}}$.
    \item The
  ring $\ch_{W,G_W}$ of $W=x^{n-1}y+y^2$ ($n\geq 4$)
    with the maximal diagonal symmetry group is isomorphic, as a Frobenius algebra,  to the
    Milnor ring $\milnor_{D_n}$ of $D_n$.
    \end{enumerate}
\end{thm}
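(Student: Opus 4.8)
The plan is to prove Theorem~\ref{thm:ADEselfdual} case by case, computing both sides explicitly as Frobenius algebras and exhibiting an isomorphism. The three items concern different singularities ($A_n$, $D_n$, $E_6,E_7,E_8$, and $W = x^{n-1}y+y^2$) with different symmetry groups, but the strategy for each is the same: (1) describe the state space $\ch_{W,G}$ as a graded vector space by listing, for each $\gamma \in G$, the fixed locus $\C^N_\gamma$, the restricted polynomial $W_\gamma$, the middle-dimensional relative cohomology $H^{N_\gamma}(\C^{N_\gamma}_\gamma, W^\infty_\gamma,\C)^G$ (which is the Milnor ring of $W_\gamma$ with its residue pairing in the Ramond case, and one-dimensional for Neveu--Schwarz $\gamma$ with $\C^N_\gamma = 0$), together with the degree shift $\iota_\gamma$; (2) compute the three-point correlators $\langle \tau_0(\alpha),\tau_0(\beta),\tau_0(\gamma)\rangle_0^{W,G}$ using the axioms of Theorem~\ref{thm:main} — especially the dimension axiom, the concavity/index-zero axioms for genus-zero three-pointed $W$-curves (where $\W_{0,3}(\bgamma) \cong \BG$ when the selection rule holds), and the forgetting-tails axiom with the Casimir computation; and (3) match the resulting multiplication table and pairing with a presentation of the target Milnor ring $\milnor_W$ (resp. $\milnor_{A_{2n-3}}$, $\milnor_{D_n}$) as a graded Frobenius algebra, using the known monomial basis $\C[x_1,\dots,x_N]/\Jac(W)$ and residue pairing, with top degree $\chat_W$ matching $2\chat_{W,G}$ on the A-side via Proposition~\ref{prp:iotaRel}.

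First I would handle item~(1) for the $A_{n}$ case (i.e.\ $W = x^{n+1}$, $G = \genj = \Z/(n+1)$): here every nontrivial sector is Neveu--Schwarz, each $\ch_\gamma$ is one-dimensional, and the three-point functions are forced by the dimension axiom and the index-zero / Witten-map axiom, reproducing the $A_n$ Milnor ring $\C[x]/(x^n)$ — this is essentially the $r$-spin computation and serves as a warmup. Then for $D_n$ with $G = \genj$, the key new feature emphasized in the introduction is that the Ramond sector contributes: one must identify which $\gamma$ have $\C^2_\gamma \neq 0$ (the element fixing the $y$-axis or $x$-axis), compute the corresponding restricted one-variable singularity and its Milnor ring, and assemble the full state space; the parity of $n$ enters because $\genj$ has index two in $G_{D_n}$ when $n$ is even but equals $G_{D_n}$ when $n$ is odd, which is exactly why the theorem excludes $D_n$ odd. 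For the $E$ cases, $G = \genj$ and all sectors turn out to be Neveu--Schwarz (since $E_6,E_7,E_8$ are of the form $x^a + y^b$ or $x^a + xy^b$ with $\gcd$ conditions forcing trivial fixed loci for $\gamma\neq 1$), so the computation parallels the $A_n$ case but with a larger, explicitly enumerable group.

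For items~(2) and~(3) the additional ingredient is the change-of-group machinery of Section~\ref{sec:changeGroup}: $\ch_{D_n, G_{D_n}}$ is the $G_{D_n}$-invariant part of the same sectors, so one recomputes the invariants under the larger group, which cuts down the Ramond contributions and — this is the surprising combinatorial coincidence at the heart of the theorem — reorganizes the state space into something isomorphic to the Milnor ring of $A_{2n-3}$ (equivalently $x^{n-1}y+y^2$); similarly $\ch_{W,G_W}$ for $W = x^{n-1}y + y^2$ reassembles into $\milnor_{D_n}$. I would prove these by a direct dimension count and degree-by-degree matching, then verify the ring structure on generators. The main obstacle, I expect, is precisely the Ramond-sector bookkeeping in the $D_n$ cases: one must correctly identify the residue pairing on $H^{N_\gamma}(\C^{N_\gamma}_\gamma, W^\infty_\gamma,\C)^G$, get all the degree shifts $\iota_\gamma$ right (the three formulas in~\eqref{eq:degshift-thetaq}--\eqref{eq:degshift-chatgamma} must be reconciled case by case), and then show that the multiplication — which a priori could have nonzero structure constants coming from genuinely non-concave moduli where only the Witten-map/index-zero axiom applies — actually matches the Milnor ring product; verifying associativity and the Frobenius property is automatic from Theorem~\ref{thm:CohFT}, but pinning down the individual three-point numbers in the Ramond sector, and checking that the mirror-map identification of basis elements is a ring homomorphism and not merely a graded-vector-space isomorphism, is the delicate part.
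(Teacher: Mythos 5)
Your overall strategy---state-space bookkeeping per sector, then pinning down the three-point structure constants via the virtual-cycle axioms, then matching against a presentation of the target Milnor ring with a rescaling to align the pairings---is the paper's own approach, and your identification of the Ramond sector as the hard part is correct in spirit. However, there is a genuine gap in the $E$-series.

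You assert that for $E_6,E_7,E_8$ ``all sectors turn out to be Neveu--Schwarz,'' so that the computation reduces to the $A_n$ pattern. This is false, and for $E_7$ it matters. For $E_7=x^3+xy^3$ one has $q_x=1/3$, $d=9$, so $J^3$ and $J^6$ act trivially on $x$ and fix the $x$-axis (Ramond); moreover the untwisted sector $\gamma=e$ is always Ramond, and for $E_7$ it contributes a $\langle J\rangle$-invariant class $y^2\,dx\wedge dy\in \ch_{e}$. Omitting it gives a state space of dimension $6$, not $\mu_{E_7}=7$, and the purported isomorphism to $\milnor_{E_7}$ fails already as vector spaces. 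Once this class is included, the crucial correlator $\langle y^2\bone_0,\bone_5,\bone_5\rangle_0$ sits in a three-point moduli with $\deg|\LL_x|=0$, so neither concavity nor the three-point index-zero computation you invoke applies; the paper extracts this number by cutting a genus-zero four-point class (where the index-zero axiom does apply, giving a Witten-map degree of $-3$) and then using the composition axiom (Axiom~\ref{ax:cutting}) with the Casimir element. Your outline does not invoke the composition axiom for the $E$-cases at all, so this step is unreachable from your toolkit as written. The same mechanism is what you would also need for several of the Ramond correlators in the $D_{n+1}$ cases, so it is worth adding to your list of tools explicitly.

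Two smaller remarks. For $E_6=A_2\boxplus A_3$ and $E_8=A_2\boxplus A_4$ the paper bypasses any direct computation via the Sums of Singularities axiom (Axiom~\ref{ax:sums}), which reduces them immediately to the already-known $A_n$ theory; a direct computation would also work, but then you would again have to confront non-trivial Ramond sectors ($J^3,J^4,J^6,\dots$ fix coordinate axes) and verify that no invariants survive. Finally, to get an isomorphism of Frobenius algebras rather than merely graded rings, you will need the explicit rescaling of the generators (the paper's choice $\alpha^8=1/9$ for $E_7$, etc.) so that the residue pairing on $\milnor_W$ matches the A-model pairing; your outline gestures at degree-matching via Proposition~\ref{prp:iotaRel} but never introduces the free scaling parameter needed to normalize the pairing.
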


 Note that the self-mirror conjecture is not quite correct.  In particular, in the case of $D_n$ for $n$ odd, the maximal symmetry group is generated by $J$, but the ring $\ch_{W,G_W} = \ch_{W,\genj}$ is not isomorphic to $\milnor_{D_n}$.  Instead it is isomorphic to the the Milnor ring of the singularity $\tW:=x^{n-1}y+y^2$, and conversely, the ring $\ch_{W',G_{W'}}$ is isomorphic to the Milnor ring $\milnor_{D_n}$, so, in fact, the mirror of $D_n$ is $W'=x^{n-1}y+y^2$.

  This is a special case of the construction of Berglund and H\"ubsch \cite{BH} for invertible singularities.
   Specifically, consider a singularity $W$ of the form
$$W=\sum_{i=1}^N W_j \quad \text{ with }\quad
W_j = \prod_{l=1}^N x_\ell^{b_{\ell,j}},$$ and with $b_{\ell,j} \in \Z^{\ge 0}$.  As we did in the proof of Lemma~\ref{lm:group}, we form the $N\times N$ matrix
$
B := (b_{\ell j}).
$
Berglund and H\"ubsch conjectured that the mirror partner to $W$ should be the singularity corresponding to $B^T$, that is
$$W^T:=\sum_{\ell=1}^N W^T_\ell, \quad \text{  where }\quad
W^T_\ell = \prod_{j=1}^N x_j^{b_{\ell,j}}.$$

Using this construction, we find that the mirror partner to $D_{n} = x^{n-1} +xy^2$ should be the singularity $D_{n}^T=x^{n-1}y+y^2$.   This singularity is isomorphic to $A_{2n-3}$, so the Milnor ring of $W$ is isomorphic to the Milnor ring of $A_{2n-3}$.  But this isomorphism of singularities does not give an isomorphism of A-model theories.  Indeed, Theorem~\ref{thm:ADEselfdual} shows that the ring $\ch_{D^T_{n},G_{D^T_{n}}}$ of ${D^T_{n}}$ is not isomorphic to the
  ring $\ch_{A_{2n-3},G_{A_{2n-3}}}$, but rather it it isomorphic to $\milnor_{D_{n}}$.

The Berglund and H\"ubsch construction also explains the self-duality of $A_n$ and $E_{6,7,8}$. In addition, their elegant construction
opens a door to the further development of the subject of Landau-Ginzburg mirror symmetry. Since the initial post of this article in 2007,
much progress on Landau-Ginzburg mirror symmetry has been made  by Krawitz and his collaborators \cite{Krawitz, Pr}.

We note that Kaufmann \cite{Ka1, Ka2,Ka3} has made a computation for a different, algebraic construction of an ``orbifolded Landau-Ginzburg model" which gives mirror symmetry results that match the results of Theorem~\ref{thm:ADEselfdual}. In particular, in his theory, just as in ours, the $D_n$ case for $n$ odd is also not self-dual, but rather is mirror dual to $D^T_{n}$.

\subsection{Relation between $\milnor_W$ and $H^N(\C^N, W^{\infty}, \C)$}\label{subsec:WallsThm}
As we mentioned earlier, the Milnor ring $\milnor_W$ represents a B-model structure. In order to obtain the
correct action, we consider $\milnor_W \omega$, where $\omega=dx_1\wedge\cdots\wedge dx_N$. Here an element of $\milnor_W\omega$ is
 of the form $\phi \omega$, where $\phi\in\milnor_W$ and $\gamma\in G_W$ acts on both $\phi$ and $\omega$. The A-model analogy is the
relative cohomology groups $H^N(\C^N, W^{\infty}, \C)$. It was an old theorem of Wall \cite{Wa1, Wa2}
 that they are isomorphic as $G_W$-spaces. Wall's theorem could almost be viewed as a sort of mirror symmetry theorem itself.

 An
``honest" mirror symmetry theorem should exchange the A-model for one singularity with the B-model for a \emph{different} singularity.  However, it is technically convenient for us to use Wall's isomorphism to label the class of
  $H^N(\C^N, W^{\infty}, \C)$. For the A-model state space, we need to consider
  $H^N(\C^N, W^{\infty}, \C)^{\langle J \rangle}$ with the intersection pairing. It is well known that Wall's isomorphism can
  be improved to show that
  $$\left(H^N(\C^N, W^{\infty}, \C)^{\langle J \rangle}, \langle \ , \ \rangle\right) \cong \left((\milnor_W \omega)^{\langle J \rangle}, \Res\right)$$
  (see a nice treatment in \cite{Ce}). It is clear that the above isomorphism also holds for the invariants of any admissible
  group $G$. With the above isomorphism, we have the identifications:
\begin{equation}\label{eq:StSpIso} \ch_{W,G}=\bigoplus_{\gamma \in G} \left(H^{mid}(\C^{N_{\gamma}},
W_{\gamma}^{\infty}, \Q) \right)^G \cong \bigoplus_{\gamma\in
G}\left( \milnor_{W_\gamma}\omega_\gamma\right)^G,
\end{equation}
where $\omega_\ga$ is the restriction of the volume form $\omega$ to the fixed locus $\fix{\ga}$.  The space $\bigoplus_{\ga} \left(\milnor_{W_\ga}\omega_\ga\right)^G$ arises in the orbifolded Landau-Ginzburg models studied by Intriligator-Vafa and Kaufmann in \cite{IV,Ka1,Ka2,Ka3}.

For computational purposes, it is usually easier to work with the sums of Milnor rings so we will use the identification~(\ref{eq:StSpIso}) for the remainder of the paper.      However, we would like to emphasize that while $\milnor_W$
      has a natural ring structure, $H^N(\C^N, W^{\infty}, \C)$
      does not have any natural ring structure.  Moreover, the ring structure induced on the state space is \emph{not} the same as the one induced by the Milnor rings via the isomorphism~(\ref{eq:StSpIso}).
       Furthermore, $\milnor_W$ has an internal
      grading, while the degree of $H^N(\C^N, W^{\infty}, \C)$ is just $N$. Hence,
      they are very different  objects, and readers should not be confused by their similarity.

     Before we start an explicit computation, we make several additional remarks.
\begin{rem}
   One point of confusion is the notation of degree in singularity
   theory versus that of Gromov-Witten theory. Throughout the rest
   of paper, we will use $\deg_{\C}$ to denote the degree in
   singularity theory (i.e., the degree of the monomial) and $\deg_W$ to
   denote its degree as a cohomology class in Gromov-Witten or
   quantum singularity theory. We have $$\deg_W=2\deg_{\C}.$$
\end{rem}

\begin{rem}\label{rem:residuepairing}
The local algebra, or Milnor ring, $\milnor_W$  carries a natural
non-degenerate pairing defined by
$$\langle  f,g\rangle=\Res_{x=0}\frac{fg\, dx_1\wedge\cdots\wedge dx_N}{ \frac{\partial
W}{\partial x_1} \cdots \frac{\partial W}{\partial x_N}}.$$ The
pairing can be also understood as follows.
The residue $\Res(f):=\Res_{x=0}\frac{f\, dx_1\wedge\cdots\wedge dx_N}{ \frac{\partial
W}{\partial x_1} \cdots \frac{\partial W}{\partial x_N}}$ has the following properties
    \begin{description}
    \item[(1)] $\Res(f)=0$ if $\deg_{\C}(f)< \chat_W$.
    \item[(2)] $\Res\left(\frac{\partial^2 W}{\partial x_i\partial
x_j}\right)=\mu$, where $\mu := \dim_\C(\milnor_W)$ is the Milnor number.
    \end{description}
Modulo the Jacobian ideal, any polynomial $f$ can be uniquely expressed as    $f=C\left(\frac{\partial^2 W}{\partial x_i\partial
x_j}\right)+ f'$,
    with $\deg_{\C}(f')<\chat_W$. This implies that $$\Res(f)=C\mu.$$
\end{rem}

\begin{rem}
For any $G\le \aut(W)$, the action of the group $G$ on the line
bundles of the $W$-structure and on relative homology is
\emph{inverse} to the action on sheaves of sections, on relative
cohomology, on the local ring, and on germs of differential forms.
For instance, the element we have called $J$ acts on homology and on
the line bundles of the $W$-structure as $(\exp(2 \pi i q_1), \dots
,\exp(2 \pi i q_N))$, but it acts on $\milnor_W$ and on
$\milnor_W \omega$ as
$$J\cdot x_1^{m_1}\cdots x_N^{m_N}  = e^{-2 \pi i \sum_i m_i q_i}x_1^{m_1}\cdots x_N^{m_N}$$ and
$$J\cdot x_1^{m_1}\cdots x_N^{m_N} dx_1\wedge \dots \wedge dx_N = e^{-2 \pi i \sum_i (m_i+1) q_i}x_1^{m_1}\cdots x_N^{m_N} dx_1\wedge \dots \wedge dx_N .$$
\end{rem}

\subsection{Self-mirror cases}

\subsubsection{The singularity {$A_n$}}\label{sec:AnFA}

The maximal diagonal symmetry group of $A_n = x^{n+1}$ is precisely the group $\genj $.  The $\genj $-invariants of the theory in the case of $A_n$ agree with the theory of $(n+1)$-spin curves in \cite{JKV1}.  In that paper it is proved that the associated Frobenius algebra is isomorphic to the $A_n$ Milnor ring (local algebra), and the Frobenius manifold is isomorphic to the Saito Frobenius manifold for $A_n$.

\subsubsection{The exceptional singularity $E_{7}$ }\label{sec:EsevenFA}

Consider now the case of $E_7= x^3 + xy^3$.
We have $$q_x = 1/3, \text{ and } q_y = 2/9.$$
By Equation~(\ref{eq:defchat}) we get $$\chat_{E_7} =8/9.$$  Furthermore, if $\xi = \exp(2 \pi i/9)$, then $J$ acts by $(\xi^3,\xi^2)$, and 
$$\Theta_x^J = 1/3 \qquad \Theta_y^J = 2/9.$$  
It is easy to check that  the maximal symmetry group is generated by $J$:
$$G_{E_7} = \genj  \cong \Z/9\Z.$$

Denote
$$\bone_{0}:=dx\!\wedge\! dy \in \Gss{0},$$
$$\bone_{{k}}:=dx\in \Gss{k} \text{ for $k = 3,6$},$$ and
$$\bone_{{k}}:= 1 \in \Gss{k} \text{ for $3 \nmid k$.}$$
We also denote the element $\unit:=\bone_1.$

Using this notation, the $G_{E_7}$-space
$ \bigoplus_{{k} \in \Z/9\Z} \Gss{k}$ can be described as follows:
\begin{equation}
\Gss{k} = \begin{cases}
                    E_7 = \spanl \bone_{0}, x^1  \bone_{0},  x^{2}  \bone_{0}, y  \bone_{0}, y^2 \bone_{0}, xy  \bone_{0}, x^2y \bone_{0}\spanr & \text{ if  $k=0$}\\
              A_2 = \spanl \bone_{{k}}, x  \bone_{{k}}\spanr & \text{ if $k\equiv 3,6 \pmod 9$} \\
                    A_1 = \spanl \bone_{{k}} \spanr & \text{ if  $3\nmid k $}.
                                \end{cases}
\end{equation}

The $G_{E_7}$-invariant elements of this space form the state space of the $E_7$ theory  $$\ch_{E_7} = \spanl
y^2 \bone_{0}, \unit,\bone_{{2}},\bone_{{4}},\bone_{{5}},\bone_{{7}},\bone_{{8}}\spanr.$$

We now compute the genus-zero, three-point correlators for the $G_{E_7}$-invariant terms of the theory.  First, the degree shift $$\iota_{J^k} = \sum_{i=1}^N (\Theta_i^{J^k} - q_i)$$ and the $W$-degree $$\deg_W(x^iy^j\bone_k) = \deg(\x^iy^j\bone_k)+2\iota_{J^k} = N_{J^k}+2\iota_{J^k}$$ depend only on $k$.  For example, we have 
$$\iota_{J^2} =  (\Theta_x^{J^2} - q_x) + (\Theta_y^{J^2} - q_y) = (2/3 -1/3)+(4/9-2/9) = 5/9,$$
and 
$$\deg_W(\bone_2) =   \deg(\bone_2)+2\iota_{J^2} = 0 + 10/9.$$
The complete set of numbers $\iota$ and $\deg_W$ are given by the following table:

\medskip

\centerline{\begin{tabular}{r||c|c|c|c|c|c|c|c|c}
k   & $0 $&$ {1} $&${2} $&${3} $&${4} $&${5} $&${6} $&${7} $&${8}$\\
\hline
$\iota_{J^{k}}$ & $ -5/9$&$     0 $&$  5/9 $&$1/9 $&$  6/9$&$   2/9$&$  -2/9$&$   3/9$&$   8/9 $\\
\hline
$\deg_W(x^iy^je_k)$& $ 8/9$&$     0 $&$  10/9 $&$11/9 $&$  12/9$&$   4/9$&$  5/9$&$   6/9$&$   16/9 $
\end{tabular}}

\medskip

For each genus-zero, three-point correlator $\langle a\bone_{k_1},b\bone_{k_2},c\bone_{k_3}\rangle_{0}^{E_7}$, we have $g=0$, $k=3$ and we compute from Equation~(\ref{eq:D}) that
$$D = -\ind(\LL_x)-\ind(\LL_y) =\chat_W(0-1) +\sum_{j=1}^3 \iota_{J^{k_j}} = -8/9 +\sum_{j=1}^3 \iota_{J^{k_j}} $$
The dimension axiom (Equation~(\ref{eq:dimension})) states that the the correlator will vanish unless  
$$\dim_{\R}(\MM_{0,3}) = -2D -\sum_{j=1}^3 {N_{J^{k_j}}}.$$
That means the correlator will vanish unless
$$ 0 = -2\chat_{E_7}  +2\sum_{j=1}^3 \iota_{J^{k_j}} + \sum_{j=1}^3 {N_{J^{k_j}}} = -2\chat_W + \sum_{j=1}^3 \deg_W(\bone_{k_j})$$

A straightforward computation shows
this only occurs for the following correlators :
$$\langle y^2\bone_{0} , y^2\bone_{0} , \unit  \rangle_0^{E_7} , \langle y^2\bone_{0} , \bone_{{5}}, \bone_{{5}}\rangle_0^{E_7} , \langle \unit , \unit , \bone_{{8}}\rangle_0^{E_7} , \langle \unit , \bone_{{2}} , \bone_{{7}}\rangle_0^{E_7} , \langle \unit , \bone_{{4}}, \bone_{{5}}\rangle_0^{E_7} , \langle \bone_{{5}},\bone_{{ 7}}, \bone_{{7}}\rangle_0^{E_7}. $$

Now we compute when the line bundles $\LL_x$ and $\LL_y$, defining the $E_7$-structure are concave.  Since we are in genus zero, this occurs precisely when the degree of the desingularization of each line bundle (see Equation~(\ref{eq:sel-rule})) is negative:
\begin{equation}
0>\deg(|\LL_x|) = \left(q_x(2g - 2 + k)
-\sum^k_{l=1}\Theta_x^{\gamma_l} \right) = 1/3 - \sum_{l=1}^3 \Theta_x^{J^{k_l}}
\end{equation}
and
\begin{equation}\label{eq:sel-rule}
0>\deg(|\LL_y|) = \left(q_y(2g - 2 + k)
-\sum^k_{l=1}\Theta_y^{\gamma_l} \right) = 2/9 - \sum_{l=1}^3 \Theta_y^{J^{k_l}}
\end{equation}
This occurs precisely for the correlators
$$ \langle \unit , \unit , \bone_{{8}}\rangle_0^{E_7} , \langle \unit , \bone_{{2}} , \bone_{{7}}\rangle_0^{E_7} , \langle \unit , \bone_{{4}}, \bone_{{5}}\rangle_0^{E_7} , \langle \bone_{{5}},\bone_{{ 7}}, \bone_{{7}}\rangle_0^{E_7}. $$
For these concave cases the virtual cycle must be Poincar\'e dual to the top (zeroth) Chern class of the bundle $R^1\pi_* (\LL_1\oplus\LL_2) = 0$, which is $1$.  Thus these correlators are all $1$.

The correlator $\langle y^2\bone_{0} , y^2\bone_{0} \unit  \rangle_0^{E_7}$, is just the residue pairing of the element $y^2$ with itself in the $J^0$-sector $\Gss{0} = \milnor_{E_7}$.  The  Hessian $h:=\frac{\partial^2 W}{\partial x_i \partial x_j}$ of $W$ is $36x^2y - 9y^4 = -21y^4$ in $\milnor_W$, and by Remark~\ref{rem:residuepairing} we have  
$\langle 1 \bone_0 , h \bone_0 \rangle^{E_7} =  \mu_{E_7} = 7$, 
so  $$\langle y^2\bone_{0} , y^2\bone_{0}, \unit  \rangle^{E_7}  = \langle 1 \bone_0 , y^4 \bone_0 \rangle^{E_7}  =  \langle 1 \bone_0 , -h/21 \bone_0 \rangle^{E_7} = -1/3.$$

Finally, we will compute the correlator $\langle y^2\bone_{0} , \bone_{{5}}, \bone_{{5}}\rangle_0^{E_7}$ by using the Composition Law (Axiom~\ref{ax:cutting}).
The cycle
$\left[\W_{0,4}(E_7; J^5,J^5,J^5,J^5)\right]^{vir}$ corresponds to a cycle on $\W_{0,4}(E_7;J^5,J^5,J^5,J^5)$ of (real) dimension $6g-6+2k-2D= 2 = \dim_{\R}\W_{0,4}(E_7;J^5,J^5,J^5,J^5) $, and thus it is just a constant times the fundamental cycle.

In this case we can compute that the line bundles $|\LL_x|$ and $|\LL_y|$ have degrees $-2$ and $0$, respectively, and thus for each fiber (isomorphic to $\CP^1$) of the universal curve $\cC$ over $\W_{0,4}(E_7; J^5,J^5,J^5,J^5)$ we have $H^0(\CP^1,|\LL_x| \oplus |\LL_y|) = 0 \oplus \C$, and $H^1(\CP^1,|\LL_x| \oplus |\LL_y|) = \C \oplus 0$.  The Witten map from $H^0$ to $H^1$ is $(3\bar{x}^{2}+\bar{y}^3, 2\bar{x}\bar{y})$.  This map has degree $-3$,  so by the Index-Zero Axiom (Axiom~\ref{ax:wittenmap}), the cycle
$\left[\W_{0,4}(E_7; J^5,J^5,J^5,J^5)\right]^{vir}$  is $-3$ times the fundamental cycle.  Pushing down to the moduli of pointed curves (see Equation~(\ref{eq:defLambda})) gives $\Lambda_{0,4}^{E_7} (\bone_5,\bone_5,\bone_5,\bone_5) = -3$ and the pullback along the gluing map $\rho$ gives $\rho^*\Lambda^{E_7}_{0,4}(\bone_5,\bone_5,\bone_5,\bone_5) = -3$.

By the Composition Axiom, we have $$-3 = \sum_{i,j}\Lambda_{0,3}^{E_7}(\bone_{{5}},\bone_{{5}},\alpha_i)\eta^{\alpha_i \beta_j} \Lambda_{0,3}^{E_7}(\beta_j,\bone_{{5}},\bone_{{5}}).$$
But the only non-zero three-point class of the form $\Lambda_{0,3}^{E_7}( \bone_{{5}},\bone_{{5}},\alpha_i)$ is $\Lambda_{0,3}^{E_7}( \bone_{{5}},\bone_{{5}},y^2\bone_{0})$.  Thus we have
$$-3 = -3 \left(\Lambda_{0,3}^{E_7}( \bone_{{5}},\bone_{{5}},y^2\bone_{0})\right)^2 ,$$
and so 
\begin{equation}\label{eq:EsevenTPC}
\corf{E_7}{\bone_{{5}},\bone_{{5}},y^2\bone_{0}} = \int_{\MM_{0,3}} \Lambda_{0,3}^{E_7}( \bone_{{5}},\bone_{{5}},y^2\bone_{0}) = \pm 1.
\end{equation}

Now the fact that our pairing matches $\ch_\gamma$ with $\ch_{\gamma^{-1}}$ means that it pairs $\ch_{J^k}$ with $\ch_{J^{9-k}}$, and if $k\neq 0$ then the sectors $(\ch_{J^k})^{J} \cong   (\ch_{J^{9-k}})^{J}$ are one-dimensional, spanned by $\bone_k$ and the pairing gives  $\langle \bone_{{k}}, \bone_{{9-k}}\rangle^{E_7} = 1$.  We can use these correlators as the structure constants for an algebra on the invariant state space.  If we define a map $\phi_\alpha:\C[X,Y] \rTo \ch_{E_7}$ by
$$X \mapsto \alpha^3\bone_{{7}} \dsand Y\mapsto \alpha^2\bone_{{5}}$$ for any $\alpha\in \C^*$,
then we can make $\phi$ into a surjective homomorphism as follows:
\begin{eqnarray*}
1 \mapsto \unit = \bone_1 & X^2\mapsto \alpha^6\bone_{{4}} &
X Y \mapsto \alpha^5\bone_{{2}}\\
 X^2 Y \mapsto \alpha^8 \bone_{{8}} &
Y^2 \mapsto \mp 3 \alpha^4 y^2\bone_{0}
\end{eqnarray*}
Moreover, we have the relations $$\phi(X)\star \phi(Y)^2 = 0$$ and
\begin{equation}\begin{split}
\phi(Y)^3 &= \phi(Y) \star (\mp3 \alpha^4 y^2\bone_{0})  = \mp3\alpha^6\sum_{\alpha,\beta} \langle \bone_{{5}}, y^2\bone_{0},\alpha\rangle_{0}^{E_7} \eta^{\alpha \beta} \beta\\
&=-3 \alpha^6\bone_{{4}} = -3\phi(X)^2.
\end{split}
\end{equation}
So the kernel of $\phi$ contains $XY^2$ and $Y^3+3X^2$, but
$\milnor_{E_7} = \C[X,Y]/(XY^2,Y^3+3X^2)$ has the same dimension as $\ch_{E_7}$; therefore
$$ \milnor_{E_7}= \C[X,Y]/ (XY^2, 3X^2+Y^3) \rTo^{\phi_{\alpha}}   (\ch_{E_7}^G, \star) $$ is an isomorphism of graded algebras for any choice of $\alpha \in \C^*$.

We wish to choose $\alpha$ so that the isomorphism $\phi_\alpha$ also preserves the pairing.  The pairing for $\milnor_{E_7}$ has
$$ \langle 1,X^2Y\rangle_{\milnor_{E_7}} = \frac{1}9 \dsand
\langle Y^2,Y^2\rangle_{\milnor_{E_7}} = -\frac{1}3,
$$
whereas for $\ch_{E_7}$ the pairing is given by
$$
\langle 1,\bone_8\rangle_{\ch_{E_7}} = 1 \dsand \langle \mp y^2\bone_0,\mp y^2\bone_0\rangle_{\ch_{E_7}} = -3.
$$
This shows that the pairings differ by a constant factor of $9$, and since $\phi(X^2Y)$ and $\phi(Y^4)$ both have degree $8$ in $\alpha$, choosing $\alpha^8 = 1/9$ makes $\phi$ into an isomorphism of graded Frobenius algebras
$$ \milnor_{E_7} \cong   (\ch_{E_7}^G, \star).$$

\subsubsection{The exceptional singularities $E_6$ and $E_8$}

Our ring $\ch_{W,G_W}$ for both of the exceptional singularities $E_6 = x^3+y^4$ and $E_8 = x^3 + y^5$ with maximal symmetry group $G_W$ can be computed easily using the Sums of Singularities  Axiom (Axiom~\ref{ax:sums}).  In this case we have
\begin{eqnarray}
\ch_{E_6,G_{E_6}} &\cong \ch_{A_2,G_{A_2}} \otimes  \ch_{A_3,G_{A_3}} \cong \milnor_{A_2} \otimes \milnor_{A_3} \cong \milnor_{E_6}\\
\ch_{E_8,G_{E_8}} &\cong \ch_{A_2,G_{A_2}} \otimes  \ch_{A_4,G_{A_4}} \cong \milnor_{A_2} \otimes \milnor_{A_4} \cong \milnor_{E_8},
\end{eqnarray}
where the second isomorphism of each row follows from the $A_n$ case.  Note that in both cases we have $\genj=G_{W}$.

Later, when we compute the four-point correlators, it will be useful to have these isomorphisms described explicitly.

\paragraph{\underline{Explicit Isomorphism for $E_6$}}
Define $E_6:= x^3 + y^4$.  The invariants are generated by the elements
$\bone_1,\bone_2,\bone_5,\bone_7,\bone_{10},\bone_{11},$ where  $\bone_{i} := 1 \in  \Gss{i}$.

Computations similar to those done above show that
the isomorphism of graded Frobenius algebras
$\milnor_{E_6} \rTo \ch_{E_6,G_{E_6}}$ is given by
$$Y\mapsto \alpha^3\bone_5 \quad\text{ and }\quad X\mapsto\alpha^4\bone_{10},$$ with $\alpha^{10} = 1/12$.

\paragraph{\underline{Explicit Isomorphism for $E_8$}}
Define $E_8:= x^3 + y^5$.  The invariants are generated by the elements
$\bone_1,\bone_2,\bone_4,\bone_7,\bone_{8},\bone_{11},\bone_{13},\bone_{14}$ where  $\bone_{i} := 1 \in  \Gss{i}$.

Again, computations similar to those done above show that
the isomorphism of graded Frobenius algebras
$\milnor_{E_8} \rTo \ch_{E_8,G_{E_8}}$ is given by
$$Y\mapsto \alpha^3\bone_{7} \quad\text{ and }\quad X\mapsto\alpha^5\bone_{11},$$ with $\alpha^{14} = 1/15$.

\subsubsection{The singularity $D_{n+1}$ with $n$ odd and symmetry group $\langle J\rangle$}\label{sec:DnOddFA}

Consider the case of $D_{n+1}$ with $W = x^{n} + xy^2$ and with $n$ odd. The weights are $q_x = 1/n$ and $q_y = (n-1)/2n$ and the central charge is $\chat_{D_{n+1}} = (n-1)/n$.  The exponential grading operator $J$ is
$$J=(\xi^{2}, \xi^{n-1})\quad \text{ where $\xi=\exp({{2\pi i}/{2n}})$}.$$ And $J$ has order $n$ in the group  $G_{D_{n+1}} = \langle
(\xi^{2},\xi)\rangle \cong \Z/2n\Z$.

As described in Section~\ref{sec:changeGroup}, we may restrict to
the sectors that come from the subgroup $\genj $ by restricting the
virtual cycle for $D_{n+1}$ to the locus corresponding to the moduli
space for  $W'$-curves, with $W' := x^n+xy^2
+ x^{(n+1)/2}y$.  For the rest of this example we assume this
restriction has been made.  To simplify computations later, we find
it easier to take $a\in (0,n]$ instead of the more traditional range
of $[0,n)$.

Denote
$$\bone_{n}:=dx\!\wedge\! dy \in \Gss{n} = \Gss{0},$$
$$\bone_{{a}}:= 1 \in \Gss{a} \text{ for $a\neq n$,}$$
so that the $G_{D_{n+1}}$-space
$\bigoplus_{{k} \in \Z/n\Z} \Gss{k}$ can be described as
\begin{equation}
\Gss{k} = \begin{cases}
                    D_{n+1} = \langle \bone_{n}, x^1  \bone_{n},  x^{2}  \bone_{n}, \dots, x^{n-1} \bone_{n}, y \bone_{n}
 \rangle & \text{ if  $k=n$}\\
                    A_1 = \langle \bone_{{k}} \rangle & \text{ if  $k\nequiv 0\pmod{n} .$}
                                \end{cases}
\end{equation}

The $\genj $-invariant elements form our state space $$\ch_{D_{n+1}} =
\langle
x^{(n-1)/2}\bone_{n}, y\bone_{n},
\bone_1,\dots,\bone_{{(n-1)}}
\rangle.$$

To prove that $(\ch_{D_{n+1}},\star) \cong \milnor_{D_{n+1}}$, we will choose constants $\alpha,\beta \in \C$ so that the ring homomorphism $$\phi:\C[X,Y] \rTo \ch_{D_{n+1}},$$
defined by $X\mapsto \bone_3$ and $Y \mapsto \alpha (x^{\frac{n-1}{2}}\bone_n) + \beta (y\bone_n)$, induces an isomorphism from $\milnor_{D_{n+1}}$ to $\ch_{D_{n+1},{\langle J\rangle}}$.

To determine properties of the homomorphism, we must better understand the genus-zero, three-point correlators for the $\genj $-invariant terms of the theory.

The degree $\deg_W(x^iy^j\bone_{a})$ is determined only by $a$ and is given as follows:
$$\deg_W(x^iy^j\bone_{a})=\begin{cases} \frac{a-1}{n} & \text{if $a$ is odd and $a\in(0,n]$}\\
\frac{n+a-1}{n} &\text{if $a$ is even and $a\in (0,n)$}.
\end{cases}
$$

For the genus-zero, three-point correlators, denote the  relevant sectors by $J^{a_i}$ for $i \in \{1,2,3\}$.   Using the dimension axiom, we see that the virtual cycle vanishes unless $$2\chat_{D_{n+1}} = \sum_l \deg_W(\bone_{a_l}).$$
This occurs precisely when
\begin{equation}\label{eq:DnSelRule}
\sum_i a_i = 2n+1 - n E,
\end{equation}
 where $E$ denotes the number of $a_i$ which are even.   Since $0< a_i \le n$ for all $i$, we have $\sum a_i \ge 3$, so $0\le E\le 1$.

Using Equation~(\ref{eq:sel-rule}) for the degree of the bundles $|\LL_x|$ and $|\LL_y|$, we have the following two cases:
\begin{enumerate}
\item\label{caseI} If $E=1$, then $\deg(|\LL_x|) = \deg(|\LL_y|) = -1$.  In this case the concavity axiom shows that the correlator is $1$.
\item\label{caseII} If $E=0$, then at most two of the $a_i$ can be $n$.  There are three cases:
\begin{enumerate}
\item\label{caseA} If $E=0$ and none of the $a_i$ is $n$, then
$\deg(|\LL_x|) =-2$ and $ \deg(|\LL_y|) = 0$.
\item\label{caseB} If $E=0$ and exactly one $a_i$ is $n$, then
$\deg(|\LL_x|) = -1$ and $ \deg(|\LL_y|) = 0$.
\item \label{caseC}If $E=0$ and exactly  two of the $a_i$ are $n$, then
$\deg(|\LL_x|) = \deg(|\LL_y|) = 0$.
\end{enumerate}
\end{enumerate}

For Case~\ref{caseII}, first note that all correlators of the form
$$\langle v, v', \unit \rangle_{0,3}^{D_{n+1}}$$ for $v, v' \in \ch$ are simply the pairing of $v$ with $v'$ in $\ch$.  In particular,
$$\langle x^{(n-1)/2}\bone_{n}, x^{(n-1)/2}\bone_{n}, \unit\rangle_{0,3}^{D_{n+1}} =  \langle x^{(n-1)/2}\bone_{n}, x^{(n-1)/2}\bone_{n}\rangle  = \frac{1}{2n}$$
$$\langle y\bone_{n}, y\bone_{n}, \unit\rangle_{0,3}^{D_{n+1}} =  \langle y\bone_{n}, y\bone_{n}\rangle  = \frac{-1}{2},$$
and
$$\langle x^{(n-1)/2}\bone_{n}, y\bone_{n}, \unit\rangle_{0,3}^{D_{n+1}} =  \langle x^{(n-1)/2}\bone_{n}, y\bone_{n}\rangle  = 0.$$

For Case~\ref{caseA} the line bundles $|\LL_x|$ and $|\LL_y|$ have degrees $-2$ and $0$, respectively, and thus $H^0(\CP^1,|\LL_x| \oplus |\LL_y|) = 0 \oplus \C$, and $H^1(\CP^1,|\LL_x| \oplus |\LL_y|) = \C \oplus 0$, and the Witten map from $H^0$ to $H^1$ is $(n\bar{x}^{n-1}+\bar{y}^2, 2\bar{x}\bar{y})$.  This map has degree $-2$,  so, as in previous arguments, the Index-Zero Axiom (Axiom~\ref{ax:wittenmap}) shows that the correlator, is $-2$.

\paragraph{\underline{Case of $n>3$}}

If we assume that $n>3$, and letting $\mu$ and $\nu$ range through the basis $\{x^{(n-1)/2}\bone_{n}, y\bone_{n},\bone_1,\dots,\bone_{{(n-1)}} \}$, we have
\begin{align*}
\bone_3\star\bone_3
&= \sum_{\mu,\nu} \langle \bone_3,\bone_3,\mu\rangle \eta^{\mu\nu}\nu\\
&= \sum_{\nu}\langle \bone_3,\bone_3,\bone_{n-5}\rangle \eta^{\bone_{n-5}\nu}\nu\\
&= \langle \bone_3,\bone_3,\bone_{n-5}\rangle \bone_5 =\bone_5.
\end{align*}
Similar computations show that for $l < (n-1)/2$ we have
$$\bone_3^l = \bone_{2l+1}.$$
In the case of $\bone_3^{(n-1)/2}$ we have
$$
\bone_3^{(n-1)/2} = \bone_3\star\bone_3^{(n-3)/2}
= \langle\bone_3,\bone_{(n-2)},x^{(n-1)/2}\bone_{n}\rangle 2n x^{(n-1)/2}\bone_{n} +
\langle\bone_3,\bone_{(n-2)},y\bone_{n}\rangle (-2) y\bone_{n}.
$$
To simplify notation we denote $r:=\langle\bone_3,\bone_{(n-2)},x^{(n-1)/2}\bone_{n}\rangle$ and $s:=\langle\bone_3,\bone_{(n-2)},y\bone_{n}\rangle$, so that
$$\bone_3^{(n-1)/2} =  (2n r x^{(n-1)/2}\bone_{n} -2 s y\bone_{n}).$$

Note that a computation like the one done above for case~\ref{caseA} shows that the restriction of the virtual cycle $\left[\W_{0,4,D_{n+1}}(J^{3},J^{(n-2)},J^{3},J^{(n-2)})\right]^{vir}
$ to the boundary is zero-dimensional and equals $-2$.  The composition axiom applied to this class shows that
$$-2 = 2n r^2  - 2 s^2.$$

This shows that
\begin{align*}
\bone_3^{(n+1)/2} &= \bone_3\star ( (2n r x^{(n-1)/2}\bone_{n} -2 s y\bone_{n}))\\
&= 2n r^2 \bone_2 - 2 s^2 \bone_2\\
&=  -2 \bone_2.
\end{align*}
Proceeding in this manner, we find that
$$\bone_3^{l} = -2 \bone_{(2l-n+1)} \qquad \text{ if $(n+1)/2 \le l \le n-1$.}$$

We wish to choose constants $\alpha$ and $\beta$ so that the homomorphism
$$\phi:\C[X,Y] \rTo \ch_{D_{n+1},{\langle J\rangle}}, \quad 1 \mapsto \bone_1, \, X \mapsto \bone_3, \ Y \mapsto \alpha x^{(n-1)/2}\bone_{n} + \beta y\bone_{n}$$
has both $XY$ and $nX^{n-1}+Y^2$ in its kernel, but so that $\phi(Y)$ is not in the span $\langle \phi(1), \phi(X),\dots,\phi(X^{(n-1)}\rangle$.

A straightforward calculation shows that
$$\phi(Y^2) = \left(\frac{\alpha^2}{2n} - \frac{\beta^2}{2}\right)\bone_{n-1}.$$
Combining this with our previous calculations, we require
$$\frac{\alpha^2}{2n} - \frac{\beta^2}{2} = 2n.$$
Moreover, one easily computes that $\phi(XY) = (\alpha r + \beta s) \bone_2$ and so $ \alpha = -\beta s/r.$  This gives
$$ \beta = \pm 2nr, \quad \text{ and thus  }\quad
\alpha = \mp2ns.$$
With these choices of $\alpha$ and $\beta$ it is easy to check that  $\phi(Y)$ is not in the span $\langle \phi(1), \phi(X),\dots,\phi(X^{(n-1)}\rangle$.  This means that
$\phi$ is surjective and the ideal $(XY, nX^{n-1} + Y^2)$ lies in its kernel, and thus it induces the desired isomorphism of graded rings $\bar{\phi}:\milnor_{D_{n+1}} \rTo (\ch_{D_{n+1},\genj},\star)$.

As in the case of $E_7$, we wish to rescale $\bar{\phi}$ to make it also an isomorphism of Frobenius algebras.  The pairing for $\milnor_{D_{n+1}}$ is $$\langle X^{n-1},1\rangle = 1/2n \quad \text { and } \quad \langle Y^2,1\rangle = -1/2,$$ whereas the paring for $\ch_{D_{n+1},\genj}$ has
$$\langle \bone_3^{n-1},\unit\rangle = \langle -2 \bone_{n-1},\unit\rangle = -2 \quad \text { and } \quad \langle \bar{\phi}(Y^2),\unit\rangle = -n \langle \bar{\phi}(X^{n-1}),\unit\rangle = 2n.$$
Thus the pairing of $\ch_{D_{n+1},\genj}$ is a constant  $-4n$ times the pairing of $\milnor_{D_{n+1}}$.  Since both rings are graded and the pairing respects the grading, rescaling the homomorphism $\bar{\phi}$ by  an appropriate factor (namely, $\bar{\phi}(X) = \sigma^2 \bone_3$, and $\bar{\phi}(Y) = \sigma^{n-1} (\alpha x^{(n-1)/2}\bone_{n} + \beta y\bone_{n})$, with $\sigma^{2n-2} = 1/(-4n)$), shows that we can construct an isomorphism of graded Frobenius algebras
$$\milnor_{D_{n+1}}\cong (\ch_{D_{n+1},\genj},\star).$$

\paragraph{\underline{Case of $n=3$}}
In the case that $n=3$ we can determine all the correlators just by the selection rule (Equation~(\ref{eq:DnSelRule})) and the pairing.
Specifically, we have the correlators
\begin{align*}
\corf{D_4}{\bone_1,\bone_1,\bone_2} = 1 \quad & \corf{D_4}{x\bone_3,x\bone_3,\bone_1} = 1/6\\
\corf{D_4}{y\bone_3,y\bone_3,\bone_1} = -1/2 \quad & \corf{D_4}{x\bone_3,y\bone_3,\bone_1} = 0,
\end{align*}
and all other three-point correlators vanish.

It is easy to verify that the map $\phi:\C\rTo \ch_{D_4,\genj}$ taking $X\mapsto x\bone_3$ and $Y\mapsto y\bone_3$ induces an isomorphism of graded Frobenius algebras
$$\milnor_{D_4} \cong (\ch_{D_4,\genj},\star).$$

\subsection{Simple singularities which are not self-mirror}

\subsubsection{The singularity $D_{n+1}$ with its maximal Abelian symmetry group.}

In this subsection we will show that, regardless of whether $n$ is even or odd, the ring $\ch_{D_{n+1},G_{D_{n+1}}}$ with its maximal symmetry group $G_{D_{n+1}}$ is isomorphic, as a Frobenius algebra, to the Milnor ring $\milnor_{x^ny+y^2}$ of $D^T_{n+1}={x^ny+y^2}$.

Regardless of whether $n$ is even or odd, the maximal Abelian
symmetry group $G:=G_{D_{n+1}}$ of $D_{n+1}=x^n+xy^2$ is isomorphic to
$\Z/2n\Z$.  It is generated by  $\lambda := (\zeta^{-2},\zeta^{1})$, with
$\zeta=\exp(2 \pi i/2n)$.
We have
$J = \lambda^{n-1}$.  If $n$ is even, then $J$ generates the
entire group $G$, but if $n$ is odd, it generates a subgroup of
index $2$ in $G$. The case of $D_{n+1}$ with $n$ odd and with
symmetry group $\genj$ has already been treated in
Subsection~\ref{sec:DnOddFA}.

Define
$$\bone_{0}:=dx\!\wedge\! dy \in H^{mid}(\C^{N_{\lambda^0}}, W_{\lambda^{0}}^{\infty}, \Q),$$
$$\bone_{{a}}:= 1 \in H^{mid}(\C^{N_{\lambda^{a}}}, W_{\lambda^{a}}^{\infty}, \Q) \text{ for $0<a<n$ or $n<a<2n$,}.$$
After computing $G$-invariants, we find that the state space
$\ch_{D_{n+1},G}$  is spanned by the elements
$$ y\bone_0,\bone_1,\bone_2,\dots,\bone_{n-1},\bone_{n+1},\bone_{n+2},\dots,\bone_{2n-1}$$

We have
\begin{align}
\Theta_x^{\lambda^a} &=
    \begin{cases}
        a/n & \text{ if $0\le a < n$} \\
        a/n-1 &  \text{ if $n\le a < 2n$}
    \end{cases} \label{eq:DnThetax}\\
\Theta_y^{\lambda^a} &=
    \begin{cases}
        0 & \text{ if $a=0$} \\
        1-a/2n &  \text{ if $0 < a < 2n$}
    \end{cases}\label{eq:DnThetay}
\end{align}
and
\begin{equation}\label{eq:DndegW}
\deg_W(\bone_a) =
    \begin{cases}
        \frac{a-1}{n} +1 & \text{ if $0\le a < n$} \\
        \frac{a-1}{n} -1 &  \text{ if $n< a < 2n$.}
    \end{cases}
\end{equation}
For three-point correlators of the form $\langle \varkappa_1,\varkappa_2,\varkappa_3 \rangle^{D_{n+1}}_{0,3}$, with each $\varkappa_i$  in the $\lambda^{a_i}$-sector, the Dimension Axiom gives the selection rule
$$2\chat_{D_{n+1}} = \sum_{i=1}^3 \deg_W(\varkappa_i)$$
which, using Equation~(\ref{eq:DndegW}), gives
$$\sum_{i=1}^3 a_i = 2nB-n+1,$$
where $B$ is the number of $a_i$ greater than $n$.

Similarly, we compute the degree of each of the line bundles in the $D_{n+1}$-structure to be
\begin{align}
\deg(|\LL_x|) &= 1-B \notag\\
\deg(|\LL_y|) &= R+B-3,\label{eq:DnLBdeg}
\end{align}
where $R$ is the number of broad sectors $\varkappa_i \in H^{mid}(\C^{N_{\lambda^0}}, W_{\lambda^{0}}^{\infty}, \Q)$.
A straightforward case-by-case analysis of the possible choices for $B$ and $R$ shows that (up to reindexing) the only correlators that do not vanish for dimensional reasons are the following:
\begin{align*}
\langle \bone_{n+a},\bone_{n+b},\bone_{n+1-a-b}\rangle_{0,3}^{D_{n+1}} & \text{ with $0<a,b$ and $a+b\le n$}\\
\langle y\bone_0,\bone_{n+1+a},\bone_{2n-a}\rangle_{0,3}^{D_{n+1}} & \text{ with $0<a<n-1$}\\
\langle y\bone_0,y\bone_0,\bone_{n+1}\rangle_{0,3}^{D_{n+1}} &  = \eta_{y\bone_0,y\bone_0} = -\frac12.
\end{align*}
Using Equation~(\ref{eq:DnLBdeg}), we see that correlators of the first type are all concave and so are equal to $1$.  Those of the second type can be computed using the composition axiom; specifically, the Index-Zero Axiom shows that the restriction of the virtual cycle $\left[\W_{0,4}(D_{n+1};\lambda^{n+1+a},\linebreak[4] \lambda^{n+1+a},\lambda^{2n-a},\lambda^{2n-a})\right]^{vir}$ to the boundary is $-2$ times the fundamental cycle.  The Composition Axiom now shows that
$$\rho^*\left(\langle y\bone_0,\bone_{n+1+a},\bone_{2n-a}\rangle_{0,3}^{D_{n+1}}\right)^2 \eta^{y\bone_0,y\bone_0} = -2,$$
which gives
\begin{equation}\label{eq:DnBroadCor}
\langle y\bone_0,\bone_{n+1+a},\bone_{2n-a}\rangle_{0,3}^{D_{n+1}} =
\pm 1.\end{equation} Using these computations, it is now
straightforward to check that, regardless of the choice of sign in
Equation~(\ref{eq:DnBroadCor}), the map $\phi:\milnor_{x^ny+y^2}=
\C[X,Y]/(X^{n-1}Y, X^n+2Y) \rTo (\ch_{D_{n+1},G},\star)$ defined by
\begin{equation*}
X^i \mapsto \begin{cases}
\bone_{n+1+i} & \text{ for $0\le i < n-1$}\\
\mp 2y\bone_{0}& \text{ for $i=n-1$}\\
\bone_{i-n+1} & \text{ for $n\le i < 2n-1$}
\end{cases}
\dsand
Y \mapsto -\frac{X^n}{2} = -\frac{\bone_1}{2},
\end{equation*}
is an isomorphism of graded algebras.  The pairing on $\milnor_{x^ny+y^2}= \C[X,Y]/(X^{n-1}Y, X^n+2Y)$ is given by
$$\langle X^{2n-2},1\rangle^{\milnor_{D_{n+1}}} = -1/n,$$ whereas the pairing on $\ch_{D_{n+1},G_{D_{n+1}}}$ is easily seen to be given by $$\langle \phi(X^{2n-2}), \unit\rangle^{\ch_{D_{n+1}}} = \langle \bone_{n-1}, \bone_{n+1} \rangle^{\ch_{D_{n+1}}} = 1.$$
Since $\phi$ and the pairing both preserve the grading, we can rescale $\phi$ to be $\phi(X) = \alpha \bone_{n+2}$ and $\phi(Y) = -\alpha^n \bone_1/2$ with $\alpha^{2n-2} = -1/n$ to obtain an isomorphism of graded Frobenius algebras:
$$\milnor_{x^ny+y^2} \cong (\ch_{D_{n+1},G_{D_{n+1}}}, \star).$$

\subsubsection{The mirror partner $D^T_{n+1}$ of $D_{n+1}$.}\label{sec:DnDual}

The mirror partner of $D_{n+1}$ is the singularity $D^T_{n+1}:=x^ny+y^2$.  In this subsection we show that the  ring $\ch_{D^T_{n+1}}$ of $D^T_{n+1}$ with its maximal Abelian symmetry group is isomorphic, as a Frobenius algebra, to the Milnor ring $\milnor_{D_{n+1}}$.  Since we have already shown that the ring  $\ch_{D_{n+1}}$ with its maximal Abelian symmetry group is isomorphic to the Milnor ring of $D^T_{n+1}$, this will complete the proof that, at least at the level of Frobenius algebras, $D^T_{n+1}$ is indeed the mirror partner of $D_{n+1}$.

For this singularity, the weights are $q_x = 1/2n$ and $q_y = 1/2$, and the central charge is $\chat_{D^T_{n+1}} = (n-1)/n$.  If $\xi:=\exp(2 \pi i/2n)$, then the exponential grading operator is $J = (\xi, \xi^{n})$.  The element $J$ generates the maximal Abelian symmetry group $\genj  = G_W \cong \Z/2n\Z$.

Denote
$$\bone_0:=dx\!\wedge\! dy \in  \Gss{0},$$
$$\bone_{{a}}:= 1 \in \Gss{a} \text{ for $ 0 < a < 2n$.}$$
The $G_W$-invariant state space is
$\ch_W=\ch_{W,G_W}=\langle
x^{(n-1)}\bone_{0},
\bone_1,\bone_3,\bone_5,\dots,\bone_{2n-1}
\rangle.$
As always, the we have $$\deg_W(x^{n-1}\bone_0) = \chat_W = 2\frac{n-1}{2n}$$
Also, we have $\Theta_x^{J^{a}} = a/2n$ for $a \in \{0,\dots,2n-1\}$ and
$\Theta_y^{J^{a}} = a/2 \pmod{1}$, so  the degree of any element $\varkappa$ in the $J^{a}$-sector is given as follows:
$$\deg_W(\varkappa)=2\frac{a-1}{2n}  \text{if $a$ is odd and $a\in(0,2n)$}.$$

For the genus-zero, three-point correlators $\langle \varkappa_1,
\varkappa_2,\varkappa_3\rangle^W_{0,3}$ with homogeneous elements
$\varkappa_i \in \ch_{J^{a_i}}$, the Dimension Axiom gives that the
virtual cycle vanishes unless $$2\chat_{D^T_{n+1}} = \sum_l
\deg_W(\varkappa_i).$$ This occurs precisely when
\begin{equation}\label{eq:DndualSel} \sum_i a_i = 2n+1 -
nR,\end{equation}
 where $R$ denotes the number of $a_i$ which are equal to $0$, that is, the number of broad sectors.

Equation~(\ref{eq:DndualSel}) shows that $R\in \{0,1,2\}$.  And a simple computation shows that the degree of the $W$-structure line bundle $\LL_x$ is not integral if $R=1$, so we have only the two cases of $R=0$ and $R=2$.  In the case of $R=2$, Equation~(\ref{eq:DndualSel}) shows that the only non-vanishing correlator is $$\langle \bone_1, x^{n-1}\bone_0,x^{n-1}\bone_0\rangle^W_{0,3} = \eta_{x^{n-1}\bone_0,x^{n-1}\bone_0} = -\frac1n.$$
In the case that $R=0$ we have
$\deg(\LL_x) = \deg(\LL_y) = -1$, so by concavity, these correlators are all $1$.

Now define a map $\phi:\C[X,Y] \rTo \ch_{D^T_{n+1}}$ by $X^i\mapsto \bone_{2i+1}$ and $Y\mapsto nx^{n-1}\bone_0$.  It is straightforward to check that $\phi$ is a graded surjective homomorphism with kernel $(nX^{n-1}+Y^2, XY)$.  So $\phi$ defines an isomorphism of graded algebras
$$\milnor_{D_{n+1}} \cong (\ch_{D^T_{n+1}},\star).$$
The pairing on each of these algebras also respects the grading, and the two pairings differ by a constant multiple of $2n$. So rescaling the homomorphism $\phi$ by $X^i\mapsto \sigma^{2i} \bone_{2i+1}$ and $Y\mapsto n\sigma^{n-1}x^{n-1}\bone_0$ with $\sigma^{2n-2} = 1/2n$ makes an isomorphism of graded Frobenius algebras.

This shows that $D^T_{n+1}$ is indeed a mirror partner to $D_{n+1}$, and it completes the proof of Theorem~\ref{thm:mirror}.

\section{ADE-hierarchies and the Generalized Witten conjecture}\label{sec:six}

    The main motivation for
    Witten to introduce his equation was  the following conjecture:

    \begin{conj}[ADE-Integrable Hierarchy Conjecture] The total
potential
    functions of  the A, D, and E singularities with group
    $\langle J \rangle$
    are $\tau$-functions of the corresponding A, D, and E integrable hierarchies.
    \end{conj}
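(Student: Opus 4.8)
The plan is to reduce the conjecture to the genus-zero theory together with a semisimplicity reconstruction of higher genus, and then to feed the result into the known statement that the Givental total descendant potential of an $ADE$ Frobenius manifold is a $\tau$-function of the associated Drinfeld--Sokolov/Kac--Wakimoto hierarchy. The $A_n$ case is already available (Faber--Shadrin--Zvonkine), so the real content is $D$ and $E$. I should emphasize at the outset that the conjecture as literally stated is not quite right: for $D_n$ with $n$ odd one has $\genj = G_{D_n}$, and by Theorem~\ref{thm:mirror} the ring $\ch_{D_n,\genj}$ is the Milnor ring of the transpose $D_n^T = x^{n-1}y+y^2 \cong A_{2n-3}$ rather than of $D_n$; so in that case the correct statement (the one the plan will establish) is that $\D_{D_n,\genj}$ is a $\tau$-function of the $A_{2n-3}$ hierarchy, while $\D_{D_n^T,G_{D_n^T}}$ is the $D_n$ $\tau$-function.

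\textbf{Genus zero.} First I would identify the genus-zero primary potential with the flat (Saito) potential of the mirror singularity. Theorem~\ref{thm:mirror} already gives the isomorphism of Frobenius algebras; the task is to promote this to an isomorphism of Frobenius manifolds. The dimension axiom of Theorem~\ref{thm:main} kills all but finitely many three- and four-point correlators, and the remaining four-point numbers are computed from the composition law together with the concavity and index-zero axioms, exactly as in the $E_7$ and $D_{n+1}$ computations of Section~\ref{sec:mirror}. One then invokes the rigidity of conformal Frobenius manifolds of $ADE$ type: such a structure is reconstructed by WDVV from its Frobenius algebra at the origin and its Euler vector field, so the FJRW and Saito genus-zero structures must agree. ($D_4$ is the one case where the finitely many correlators computable by the axioms above do not visibly suffice to rigidify the answer, which is why it is left open.)

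\textbf{Higher genus and the hierarchy.} The relevant Frobenius manifolds are generically semisimple. By Theorem~\ref{thm:CohFT} the classes $\{\Lambda^W_{g,k}\}$ form a cohomological field theory with flat unit, and Theorem~\ref{thm:StringDilaton} supplies the string equation, dilaton equation, and genus-zero topological recursion relations. Using Givental's quantization formula, in the form made rigorous by Teleman's classification of semisimple cohomological field theories, the total descendant potential $\D_{W,\genj}$ is then uniquely determined by the genus-zero Frobenius manifold (its $R$-matrix calibration being itself determined by that manifold). Combined with the genus-zero step, this shows $\D_{W,\genj}$ equals the Givental potential of the Saito Frobenius manifold of the mirror singularity. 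Finally I would cite the theorem of Givental (for $A$ and $D$) and Frenkel--Givental--Milanov (in general) that this Givental potential is a $\tau$-function of the corresponding Drinfeld--Sokolov hierarchy, which coincides with the Kac--Wakimoto hierarchy by Hollowood--Miramontes. Chaining the steps gives the conjecture for $A_n$, $D_n$ with $n$ even, and $E_{6,7,8}$, and the corrected (transpose) statement for $D_n$ with $n$ odd.

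\textbf{Main obstacle.} The hard part is the higher-genus reconstruction: passing from ``our correlators obey the Gromov--Witten-type axioms of Theorem~\ref{thm:main}'' to ``the full potential is the Givental quantization of the genus-zero data.'' This requires semisimplicity of the (generically deformed) CohFT, the existence and uniqueness of an $R$-matrix calibration matching Saito's, and the assurance that no genus-$\ge 1$ information beyond Theorem~\ref{thm:StringDilaton} is needed --- essentially the full strength of Teleman-type uniqueness applied to our CohFT. A secondary difficulty, already apparent in Section~\ref{sec:mirror}, is purely organizational: one must everywhere read the ``$ADE$ hierarchy'' attached to $W$ through the Berglund--H\"ubsch transpose $W^T$, since literal self-mirror symmetry fails for $D_n$ with $n$ odd.
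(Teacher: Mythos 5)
Your proposal has the same overall shape as the paper's argument (genus-zero Frobenius manifold isomorphism, higher-genus reconstruction, then invoke Frenkel--Givental--Milanov), correctly flags the $D_n$ odd / Berglund--H\"ubsch transpose issue and $D_4$, but differs substantively in the higher-genus step. Where you invoke Teleman's classification of semisimple CohFTs and the full Givental quantization machinery, the paper deliberately avoids this: it uses the Faber--Shadrin--Zvonkine $g$-reduction (Theorem~\ref{thm:FJR-reduction}, via Lemmas~\ref{lm:degr-smal} and~\ref{lm:degr-larg}) to show that for $\chat<1$ the ancestor potential is already forced by the genus-zero primary potential and the CohFT axioms alone, and it mentions Teleman only in a remark as an alternative. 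The paper's route is more elementary and, crucially, does not depend on a result that at the time of writing was only an announcement; your route is cleaner conceptually but carries a heavier dependency.

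In the genus-zero step there is an internal inconsistency in your write-up. You compute the basic four-point correlators via concavity/index-zero/composition, and then in the same breath claim that an $ADE$ conformal Frobenius manifold is reconstructed by WDVV from its Frobenius algebra at the origin together with the Euler field. If that rigidity statement were available, the four-point computation would be superfluous; conversely, the fact that the paper goes to considerable trouble to compute the four-point numbers on both sides (Theorem~\ref{thm:FourPtReduction} and the explicit Noumi--Yamada computations) and to match them only after a judicious rescaling of the primitive form (e.g.\ by $(-1)^{-8/9}\cdot 9$ for $E_7$) indicates that this rigidity is not being taken for granted. The paper's actual argument is: (i) WDVV reduces all genus-zero correlators to the pairing, three-point, and a short explicit list of four-point correlators; (ii) those finitely many numbers are computed in both the A-model (via the Euler-class axiom and the orbifold GRR formula of Theorem~\ref{thm:ConcaveCodimOne}) and the B-model (via Noumi--Yamada flat coordinates); (iii) they agree once the primitive form is rescaled by an appropriate constant $c$ accompanied by the substitution $s_i\mapsto\lambda^{1-\deg_{\C}(s_i)}s_i$. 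You should either drop the rigidity claim and keep the four-point match, or make the rigidity claim precise (and in that case confront the primitive-form ambiguity explicitly, which is exactly what forces the rescaling).
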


    The $A_n$-case has been established recently by
    Faber-Shadrin-Zvonkin \cite{FSZ}.
     One of our main results
    is the resolution of Witten's integrable
    hierarchies conjecture for the $D$ and $E$ series.
    It turns out that Witten's conjecture needs a modification in
    the $D_{n}$ case for $n$ odd. This modification is extremely
    interesting because it reveals a surprising role that mirror symmetry plays in  integrable
    hierarchies.
\subsection{Overview of the Results on Integrable Hierarchies}
    Let's start from the ADE-hierarchies. As we mentioned in the introduction,  there are two
    equivalent versions of ADE-integrable hierarchies---that of Drinfeld-Sokolov
    \cite{DS} and that of Kac-Wakimoto \cite{KW}. The version directly relevant to
    us is the Kac-Wakimoto ADE-hierarchies because the following
    beautiful work of Frenkel-Givental-Milanov reduces the problem
    to an explicit problem in Gromov-Witten theory. Let's describe their
    work.

    Let $W$ be a nondegenerate quasi-homogeneous singularity and
    $\phi_i$ ($i\leq \mu$) be the monomial basis of the Milnor ring with $\phi_1=1$. Consider the miniversal deformation
    space $\C^{\mu}$ where a point $\lambda=(t_1, \dots, t_{\mu})$ parameterizes
    the polynomial $W+t_1\phi_1+t_2\phi_2\cdots+t_{\mu}\phi_{\mu}.$
    We can assign a degree to $t_i$ such that the above perturbed polynomial has the degree one, i.e.,
    $\deg(t_i)=1-\deg(\phi_i)$.
    The tangent space $T_{\lambda}$ carries an associative  multiplication $\circ$ and an Euler
    vector field $E=\sum_i \deg(t_i)\partial_{t_i}$ with the unit $e=\frac{\partial W_{\lambda}}{\partial t_1}$. It is more subtle to construct a metric. We can consider
    residue pairing
    $$\langle f, g\rangle_{\lambda}=\Res_{x=0}\frac{fg \omega}{\frac{\partial W_{\lambda}}{\partial x_1}\cdots \frac{\partial W_{\lambda}}{\partial x_N}}$$
    using a holomorphic $n$-form $\omega$.
     A deep theorem of
    Saito \cite{S} states that one can choose a \emph{primitive form $\omega$}  such that the induced metric is flat.
    Together, it defines a Frobenius manifold structure on a neighborhood of zero of $\C^{\mu}$. We should mention that
    there is no explicit formula for the primitive form in general. However, it is known that for ADE-singularities the primitive
 form can be chosen to be a constant multiple of standard volume form, i.e., $c\,dx$ for $A_n$ and $c\,dx\wedge dy$ for DE-series.

    Furthermore, one can
    define a potential
    function $\F$, playing the role of genus-zero Gromov-Witten theory with only primary fields.
    It is constructed as follows.
    We want to work in flat coordinates $s_i$ with the property that $\deg_{\C}(s_i)=\deg_{\C}(t_i)$ and $\langle s_i, s_j\rangle$ are
    constant. The flat coordinates depend on the flat connection of the metric and
    hence the primitive form. Its calculation is important and yet a
    difficult problem.
    Nevertheless, we know that the flat coordinates exist thanks
    to
    the work of Saito \cite{S}. Then,
    consider the three-point correlator
    $C_{ijk}=\langle \partial_{s_i}, \partial_{s_j}, \partial_{s_k}\rangle$ as a function near zero in $\C^{\mu}$.
    We can integrate $C_{ijk}$ to obtain $\F$. Here, we normalize $\F$ such that $\F$ has leading term of degree three. We can differentiate $\F$ by the Euler vector field.
    It has the property $L_E \F=(\hat{c}_W-3)\F$. Namely, $\F$ is homogeneous of degree $\hat{c}_W-3$. The last condition means that, in the Taylor expansion
    $$\F=\sum a(n_1, \dots, n_{\mu})\frac{s^{n_1}_1 \cdots
    s^{n_{\mu}}_{\mu}}{n_1 ! \cdots n_{\mu}!},$$ we have
    $a(n_1, \dots, n_{\mu})\neq 0$
    only when $\sum n_i-\sum n_i (1-\deg_{\C}(s_i))=\sum \deg_{\C}(s_i)=\hat{c}_W-3.$
     Note that the degree in the Frobenius manifold is different from that of the A-model. For
    example, the unit $e$ has  degree $1$ instead of zero. The A-model degree is $1$ minus the B-model degree. With this relation in
    mind,  we will treat the insertion $s_i$ with degree $1-\deg_{\C}(s_i)$. Then, the above formula is
    precisely the selection rule of quantum singularity  theory.

    It is known that the Frobenius manifold of a
    singularity is semisimple in the sense that the Frobenius
    algebra on $T_{\lambda}$ at a generic point $\lambda$ is
    semisimple. On any semisimple Frobenius manifold, Givental
    constructed a \emph{formal} Gromov-Witten potential function.
    We will only be interested in the case that the Frobenius manifold
    is the one corresponding to the miniversal deformation space
    of a quasi-homogenous singularity $W$. We denote it by
    $${\D}_{W, formal}=\exp\left(\sum_{g\geq 0}h^{2g-2}\F^g_{formal}\right).$$
    The construction of ${\D}_{W, formal}$ is complicated, but we only need its following formal properties
    \begin{description}

    \item[(1)] $\F^0_{formal}$ agrees with $\F$ for primitive fields, i.e., with no descendants.
    \item[(2)] $\F^g_{formal}$ satisfies the same selection rule as a Gromov-Witten theory with $C_1=0$ and
                 dimension $\hat{c}_W$.
                 \item[(3)] ${\D}_{W, formal}$ satisfies all the formal axioms of Gromov-Witten theory.
    \end{description}
      The first property is obvious from the construction. The second property is a consequence of the fact that
      $\D_{W, formal}$ satisfies the dilaton equation and Virasoro constraints.
     A fundamental theorem of
    Frenkel-Givental-Milanov \cite{GM,FGM} is
    \begin{thm}
    For ADE-singularities, ${\D}_{W, formal}$ is a $\tau$-function of
    the  corresponding Kac-Wakimoto ADE-hierarchies.
    \end{thm}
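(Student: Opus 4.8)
\smallskip
\noindent\emph{Proof strategy (after Givental \cite{GM} and Frenkel--Givental--Milanov \cite{FGM}).}
The plan is to reduce the assertion to the Hirota bilinear characterization of Kac--Wakimoto $\tau$-functions and then to verify that characterization for ${\D}_{W,formal}$ by means of Givental's quantization formula. Recall that for a simply-laced affine Lie algebra $\hat{\mathfrak g}$ the Kac--Wakimoto hierarchy is, in the principal realization of the basic representation $L(\Lambda_0)$, the system of Hirota bilinear equations cutting out the orbit of the highest-weight vector under the loop group; schematically such an equation has the form
\[
\sum_{a}\operatorname{Res}_{\lambda}\bigl(\Gamma_{a}(\lambda)\tau\otimes\Gamma_{a}^{\vee}(\lambda)\tau\bigr)\frac{d\lambda}{\lambda}
=\Bigl(\tfrac{\dim\mathfrak g}{\operatorname{rk}\mathfrak g}\Bigr)\tau\otimes\tau,
\]
where the $\Gamma_{a},\Gamma_{a}^{\vee}$ are the principal vertex operators attached to the root system. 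Thus I must (i) realize the ingredients of this identity in terms of the singularity, (ii) identify the vertex operators, and (iii) transport the identity from genus zero to all genera.

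\smallskip
\noindent\emph{Step 1: the semisimple Frobenius structure and Givental's formula.}
First I would invoke Saito's theorem (assumed in the text): the miniversal deformation of an ADE singularity, with its primitive form $\omega = c\,dx$ (resp.\ $c\,dx\wedge dy$), carries a conformal, semisimple Frobenius structure of charge $\hat{c}_W$, whose calibration $S(z)$ is built from Laplace transforms of period integrals of vanishing cycles, and whose canonical $R$-matrix is extracted from the stationary-phase asymptotics of $\int e^{W_\lambda/z}\,\omega$ at the $\mu$ Morse critical points of a generic deformation $W_\lambda$. By Givental's higher-genus reconstruction (applicable because, as stated in the excerpt, ${\D}_{W,formal}$ satisfies all the formal Gromov--Witten axioms together with the dilaton and Virasoro equations), one has ${\D}_{W,formal}=\widehat{S}^{-1}\,\widehat{\Psi R\,e^{U/z}}\bigl(\mathcal D_{\mathrm{KW}}^{\otimes\mu}\bigr)$, where $\mathcal D_{\mathrm{KW}}$ is the Kontsevich--Witten ($A_1$, i.e.\ KdV) $\tau$-function and $\widehat{(\,\cdot\,)}$ denotes the Weyl quantization of the corresponding symplectic transformation on Givental's loop space.

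\smallskip
\noindent\emph{Step 2: matching vertex operators --- the main obstacle.}
The crux, and the step I expect to be hardest, is to commute the quantized operators $\widehat{S}^{-1}$ and $\widehat{\Psi R\,e^{U/z}}$ past the Kac--Wakimoto vertex operators $\Gamma_{a}$, so that the bilinear identity for $\mathcal D_{\mathrm{KW}}^{\otimes\mu}$ (simply $\mu$ uncoupled copies of the KdV bilinear identity) is converted into the Kac--Wakimoto identity for ${\D}_{W,formal}$. This is exactly where the ADE combinatorics enter: for a simple singularity the Milnor lattice is the ADE root lattice, the classical monodromy is the Coxeter transformation, and, once the Frobenius structure is semisimplified, the oscillatory-integral phase functions reproduce precisely the principal vertex operators of $\hat{\mathfrak g}$. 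One must match the quadratic ``propagator'' parts of the quantized transformations with the invariant bilinear form on the Cartan subalgebra, and track carefully the sign cocycle of the lattice vertex algebra; these sign subtleties, together with the absence of a closed expression for the flat coordinates and the primitive form, are what make the $D$ and $E$ cases substantially more delicate than $A_n$ (the latter being Givental's earlier theorem, where the singularity is a plane curve and the periods are elementary).

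\smallskip
\noindent\emph{Step 3: fixing the normalization.}
Finally, once the bilinear identity holds for ${\D}_{W,formal}$, homogeneity under the Euler vector field (forced by conformality of the Saito structure) together with the string and dilaton equations pins down the normalization and forces the resulting $\tau$-function to be the distinguished element of the loop-group orbit of the vacuum that represents the Kac--Wakimoto hierarchy, up to the standard rescaling of the time variables; this completes the identification.
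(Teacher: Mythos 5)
This statement is imported into the paper as a black-box result: the text explicitly credits it to Givental--Milanov and Frenkel--Givental--Milanov (\cite{GM,FGM}) and does not reprove it. The only ``proof content'' the paper supplies is the immediately following Remark, which records the two-step structure of the cited argument: first Givental and Milanov wrote down a Hirota quadratic equation satisfied by ${\D}_{W,formal}$, and then Frenkel--Givental--Milanov identified that Hirota equation with the principal Kac--Wakimoto hierarchy of the corresponding affine Lie algebra. So there is no internal proof against which to check your proposal line by line.

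That said, your sketch does correspond reasonably well to the cited proof strategy. Two points deserve flagging. First, a logical inversion: you invoke ``Givental's higher-genus reconstruction'' to \emph{derive} the formula ${\D}_{W,formal}=\widehat{S}^{-1}\,\widehat{\Psi R\,e^{U/z}}\bigl(\mathcal D_{\mathrm{KW}}^{\otimes\mu}\bigr)$, but in the setting of the paper this formula \emph{is} the definition of ${\D}_{W,formal}$ (``On any semisimple Frobenius manifold, Givental constructed a formal Gromov--Witten potential function. $\ldots$ We denote it by ${\D}_{W,formal}$''); there is nothing to reconstruct, one simply starts from the quantization formula. Second, and more substantively, the hardest step --- your Step 2 --- is exactly the place where the two references divide the labor: \cite{GM} compute the conjugated vertex operators and the resulting phase factors in terms of period integrals of the singularity and obtain a bilinear equation intrinsically (not by aiming at a known Kac--Wakimoto form), while \cite{FGM} then prove that this bilinear equation coincides with the Kac--Wakimoto one after matching the lattice vertex-algebra data (root lattice = Milnor lattice, Coxeter element = classical monodromy, sign cocycle, normal ordering constants). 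Your sketch gestures at all of this but compresses it into one step; in particular, the identification of the ``phase factor'' arising from $\widehat{R}$-conjugation with the quadratic form on the Cartan is the genuinely delicate calculation, and a complete argument would need to carry that out rather than merely assert that the semisimplified oscillatory integrals ``reproduce precisely the principal vertex operators.'' Given that the paper itself treats the theorem as external input, this level of detail is not required here, but it is worth being precise about where the real work lies if one were to supply a proof.
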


    \begin{rem}
     Givental-Milanov first constructed a Hirota-type equation for Givental's formal total potential function.
     Later, Frenkel-Givental-Milanov proved that Givental-Milanov's Hirota equation is indeed the same as
     that of Kac-Wakimoto.
     \end{rem}

Our main theorem is
\begin{thm}\label{thm:IntHierMirror}

\

    \begin{description}
    \item[(1)] Except for $D_{n}$ with $n$ odd and $D_4$, the total potential functions of
DE-singularities with the group $\langle J \rangle $  are equal to
the corresponding Givental formal Gromov-Witten potential
functions up to a linear change of variables.
\item[(2)]
${\D}_{D_n, G_{max}}={\D}_{A_{2n-3}, formal}$, up to a linear change of
variables.
    \item[(3)] For $D^T_n=x^{n-1}y+y^2$ ($n > 4$), ${\D}_{D^T_n, G_{max}}={\D}_{D_n, formal}$,
     up to a
    linear change of variables.
    \end{description}
    \end{thm}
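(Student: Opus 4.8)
The plan is to reduce the theorem to the Givental--Frenkel--Milanov machinery quoted above, the key input being an identification of the \emph{genus-zero} part of our theory with the relevant Saito Frobenius manifold. By Theorem~\ref{thm:CohFT} the data $(\ch_{W,G},\langle\,,\rangle,\{\Lambda^{W,G}_{g,k}\},\bone_1)$ form a homogeneous cohomological field theory with flat identity. Givental's quantization of a semisimple conformal Frobenius manifold produces the total descendant potential $\D_{W',formal}$, and for a semisimple cohomological field theory that is in addition homogeneous this quantization is forced: the higher-genus potential is reconstructed from the genus-zero restriction (Givental's reconstruction, completed by Teleman). Hence it suffices to prove: (i) the genus-zero Frobenius manifold underlying $\{\Lambda^{W,G}_{g,k}\}$ is, after a linear change of coordinates, the Saito Frobenius manifold of $W'$ --- with $W'=W$ in case (1), $W'$ the singularity $A_{2n-3}$ in case (2), and $W'=D_n$ in case (3); and (ii) that Frobenius manifold is (generically) semisimple. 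The theorem of Frenkel--Givental--Milanov (\cite{GM,FGM}) then identifies $\D_{W',formal}$ with a $\tau$-function of the corresponding Kac--Wakimoto hierarchy, giving all three statements.

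For (i), I would first observe that the Frobenius-algebra isomorphisms of Theorem~\ref{thm:ADEselfdual} already match the three-point correlators at the origin together with the flat non-degenerate pairing; in case (2) this is precisely the computation $\ch_{D_{n},G_{D_{n}}}\cong\milnor_{A_{2n-3}}$, and in case (3) it is $\ch_{D^T_{n},G_{D^T_{n}}}\cong\milnor_{D_{n}}$. The Euler vector field is matched by the dimension axiom of Theorem~\ref{thm:main}: under the mirror map the grading operator $\deg_W$ on $\ch_{W,G}$ corresponds to the grading on $\milnor_{W'}$, hence to $E=\sum_i\deg(t_i)\partial_{t_i}$, and $\chat_W=\chat_{W'}$. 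It then remains to reconstruct the full genus-zero potential $\F_{0,W,G}$ from this initial data. Here one uses that for simple singularities $\chat<1$, so the selection rule forces $\F_{0,W,G}$ to be polynomial, with only finitely many nonzero Taylor coefficients once the dependence on $\bone_1$ is removed by the string equation of Theorem~\ref{thm:StringDilaton}. Those coefficients are pinned down by the WDVV equations (associativity of $\star$) from the three- and four-point primary correlators, which one computes directly by the Composition Law and the Concavity and Index-Zero Axioms, exactly as in the model computations of Section~\ref{sec:mirror}. Comparing the resulting polynomial with the (likewise polynomial) Saito potential of $W'$ yields the isomorphism of Frobenius manifolds. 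Point (ii) is then automatic, since the Saito Frobenius manifold of a simple singularity is generically semisimple.

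With (i) and (ii) established, Givental's reconstruction for semisimple theories (together with Teleman's uniqueness) gives $\D_{W,G}=\D_{W',formal}$ up to the linear change of variables on the descendant time variables $\bt$ induced by the linear isomorphism of genus-zero structures; Frenkel--Givental--Milanov then completes the proof in all three cases.

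I expect the genus-zero reconstruction of step (i) to be the main obstacle: concretely, carrying out the four-point correlator computations and verifying that the finite list of structure constants so obtained forces $\F_{0,W,G}$ to coincide with Saito's polynomial. This is exactly where $D_4$ fails in case (1): the relevant four-point correlators lie on moduli spaces whose virtual cycles are positive-dimensional and are not evaluable by the Composition Law / Euler-class techniques available here, which is why $D_4$ must be excluded. A secondary, purely bookkeeping, point requiring care is the ``linear change of variables'': one must check that it simultaneously intertwines the pairings, the Euler fields, and the flat coordinates --- equivalently, fixes the normalization of Saito's primitive form --- but this is forced once the gradings and pairings agree and the primitive form of an $ADE$ singularity is a constant multiple of the standard volume form.
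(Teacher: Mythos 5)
Your overall strategy matches the paper's: match genus-zero data (pairing, three-point structure constants, Euler field) via Theorem~\ref{thm:ADEselfdual}, reconstruct the remaining genus-zero primary correlators from finitely many explicitly computed four-point correlators using WDVV, then lift to all genera, and finally invoke Frenkel--Givental--Milanov. But your higher-genus step goes through Givental quantization together with Teleman's uniqueness theorem, whereas the paper deliberately avoids this and instead uses the Faber--Shadrin--Zvonkine ``$g$-reduction'' (Lemmas~\ref{lm:degr-smal} and~\ref{lm:degr-larg}, Theorem~\ref{thm:FJR-reduction}). The paper notes the Teleman route explicitly in a remark but prefers the more elementary argument for two reasons: (a) in the ADE range $\chat<1$, $g$-reduction already kills everything with insufficient $\psi$-degree and degenerates the rest to lower genus, so one never needs the full machinery of semisimple quantization; and (b) there is a subtlety, flagged in the paper, about whether Givental's descendant potential admits a power-series expansion at the non-semisimple point $t=0$, and the paper sidesteps this by comparing \emph{ancestor} potentials (shown to be polynomials in $t$ via Lemma~\ref{lm:ances-poly}) rather than descendant potentials at a fixed base point. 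Your proof would need to address this issue if it insists on equating descendant potentials directly; alternatively, if you argue at the level of CohFT classes $\Lambda_{g,k}$ (to which Teleman's theorem genuinely applies), you should say so explicitly.

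One substantive inaccuracy: your explanation for the exclusion of $D_4$ is that ``the relevant four-point correlators lie on moduli spaces whose virtual cycles are positive-dimensional.'' That is not the obstruction; the relevant $\left[\W_{0,4}\right]^{vir}$ is still zero-dimensional. The actual problem is that for $D_4$ the primitive generators $X,Y$ of $\milnor_{D_4}$ correspond under the mirror identification to $x\bone_3$ and $y\bone_3$, which live in the broad (Ramond) sector $\bone_3 = dx\wedge dy$. The four-point correlators that determine the Frobenius manifold therefore have Ramond insertions, so the Concavity/Index-Zero/oGRR tools (Theorem~\ref{thm:ConcaveCodimOne}), which are stated only for all-Neveu-Schwarz markings, simply do not apply. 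The expected dimension is fine; what is missing is a computational handle on the Ramond contribution.

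A smaller point: you correctly flag that the ``linear change of variables'' must intertwine pairings, Euler fields, and flat coordinates, and you observe this is governed by the normalization of the primitive form. The paper makes this precise by a two-parameter rescaling (primitive form by $c$, flat coordinates $s_i\mapsto\lambda^{1-\deg_{\C}s_i}s_i$) chosen so that $F_3$ is fixed while $F_4$ scales by $\lambda$, and then solves $c=\lambda^{-\chat_W}$, $\lambda=F^A_4/F^B_4$. That explicit bookkeeping is what actually pins down the constant, and it is worth recording in a complete proof since the four-point ratios differ by a sign for the self-mirror $DE$ cases and by $-n/(4n-5)$ in case (2).
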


    Using the theorem of Frenkel-Givental-Milanov, we obtain
    \begin{crl}
    \begin{description}

    \

\item[(1)]  Except for $D_{n}$ with $n$ odd and $D_4$, the total potential
function of DE-singularities with the group $\langle J \rangle $
is a $\tau$-function of the corresponding Kac-Wakimoto hierarchies
(and hence Drinfeld-Sokolov hierarchies).
    \item[(2)] The total potential function of all $D_n$-singularities for $n>4$ with
    the maximal diagonal symmetry group is a $\tau$-function of the $A_{2n-3}$ Kac-Wakimoto hierarchies
    (and hence Drinfeld-Sokolov hierarchies).
    \item[(3)] The total potential function of $D^T_n=x^{n-1}y+y^2$ ($n> 4$)
    with the maximal diagonal symmetry group is a $\tau$-function of  the   $D_n$ Kac-Wakimoto hierarchies (and hence Drinfeld-Sokolov hierarchies).
    \end{description}
    \end{crl}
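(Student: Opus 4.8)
The plan is to obtain Theorem~\ref{thm:IntHierMirror}, from which the final Corollary is immediate: once the total potentials are identified (up to a linear change of variables) with Givental's formal potentials $\D_{V,formal}$ for the appropriate ADE-type singularities $V$, the Frenkel--Givental--Milanov theorem quoted above identifies each $\D_{V,formal}$ with a $\tau$-function of the Kac--Wakimoto hierarchy of that type, and the Kac--Wakimoto/Hirota formalism is set up precisely in the flat coordinates related by this linear change, so the $\tau$-function property persists. Thus it suffices to prove Theorem~\ref{thm:IntHierMirror}, and the strategy is: (i) upgrade the Frobenius-algebra isomorphisms of Theorem~\ref{thm:mirror} to isomorphisms of Frobenius \emph{manifolds}; (ii) invoke semisimplicity together with Givental's (and Teleman's) reconstruction of the higher-genus potential of a semisimple cohomological field theory from its genus-zero data.

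For step (i), recall that by Theorem~\ref{thm:CohFT} the collection $(\ch_{W,G},\langle\,,\,\rangle^W,\{\Lambda^W_{g,k}\},\unit)$ is a cohomological field theory with flat identity, so its genus-zero part determines a conformal Frobenius manifold on $\ch_{W,G}$ whose Euler field carries the grading $\deg_W$ and whose classical algebra at the origin is $(\ch_{W,G},\star)$. In each case this algebra has already been identified with a Milnor ring $\milnor_V$ with its residue pairing ($V=W$ for self-mirror ADE, $V=A_{2n-3}$ for $D_n$ with $G_{max}$, $V=D_n$ for $D^T_n$ with $G_{max}$). Because all the singularities $V$ involved have $\chat_V<1$, the spectrum of the grading operator is nonresonant, so WDVV together with homogeneity recursively determines every structure constant of the Frobenius manifold from the classical product and the Euler field (Dubrovin's reconstruction). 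Hence the genus-zero potential of the $W$-theory agrees, after the linear change of coordinates identifying $\ch_{W,G}$ with the flat coordinates on the miniversal deformation of $V$ — available since for ADE the primitive form may be taken to be a constant multiple of the standard volume form — with the Saito potential $\F$ of $V$. In practice one needs only the three-point functions supplied by Theorem~\ref{thm:mirror} together with a short list of genus-zero four-point functions, computed exactly as in Section~\ref{sec:mirror} from the Composition and Index-Zero axioms (Axioms~\ref{ax:cutting} and~\ref{ax:wittenmap}), to pin the reconstruction down.

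For step (ii), the Frobenius manifold of a miniversal deformation of an isolated singularity is semisimple at a generic point, hence so is the $W$-theory Frobenius manifold after the identification of step (i). Givental's higher-genus reconstruction (and subsequently Teleman's classification of semisimple cohomological field theories) shows that the total descendant potential of a cohomological field theory with semisimple genus-zero part is uniquely determined by that Frobenius manifold via the Givental $R$-matrix action; $\D_{V,formal}$ is by construction this total descendant potential for the Saito Frobenius manifold of $V$, and by Theorems~\ref{thm:CohFT} and~\ref{thm:StringDilaton} the potential $\D_{W,G}=\exp(\sum_{g}h^{2g-2}\F_{g,W,G})$ is the total descendant potential of a cohomological field theory with the same semisimple genus-zero data. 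Uniqueness therefore forces $\D_{W,G}=\D_{V,formal}$ up to the linear change of variables of step (i). Taking $(W,G)=(\text{DE-singularity},\genj)$ gives part~(1) (the excluded cases $D_n$ with $n$ odd and $D_4$ being exactly those where the genus-zero identification either falls under part~(2) instead or is not accessible by the available computations); taking $(D_n,G_{D_n})$ with $V=A_{2n-3}$ gives part~(2); and taking $(D^T_n,G_{D^T_n})$ with $V=D_n$ gives part~(3). The Corollary then follows by Frenkel--Givental--Milanov as explained above.

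The main obstacle is step (i): upgrading a Frobenius-algebra isomorphism to a Frobenius-manifold isomorphism is not automatic, so one must either establish the needed reconstruction statement in sufficient generality or compute enough genus-zero four-point (and occasionally higher) correlators by hand to fix the Saito potential. This is where the Composition and Index-Zero axioms do the real work, and it is precisely the place where $D_4$ resists, since the available computational tools are not strong enough to complete the reconstruction there.
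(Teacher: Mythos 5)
Your proposal has the right logical skeleton: the Corollary is an immediate consequence of Theorem~\ref{thm:IntHierMirror} together with the Frenkel--Givental--Milanov theorem, and the real work lies in matching the A-model theory with the B-model Saito theory; on that score you and the paper agree. Where you genuinely diverge is in the higher-genus step. You invoke Teleman's classification of semisimple cohomological field theories, whereas the paper deliberately sidesteps Teleman (see the remark following Lemma~\ref{lm:degr-larg}) in favor of the Faber--Shadrin--Zvonkine $g$-reduction: for $\chat<1$ the dimension bound in Lemma~\ref{lm:degr-smal} forces all higher-genus correlators to degenerate to genus zero, a short and entirely elementary argument. The paper also works with ancestor potentials at $t\neq 0$ precisely because it is not established in the literature that Givental's descendant potential $\D=\hat{S}_t\A_t$ admits a power-series expansion at $t=0$; the Teleman route can be made to avoid this as well, but you should say so rather than leave it implicit. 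On the genus-zero side, your appeal to ``Dubrovin's reconstruction from nonresonance'' overstates what is automatic: the Frobenius algebra together with the Euler field do not by themselves determine the Frobenius manifold. What the paper actually proves (the Reconstruction Lemma and Theorem~\ref{thm:FourPtReduction}) is that WDVV reduces all genus-zero primary correlators to the pairing, the three-point functions, and a short explicit list of basic four-point correlators, and those four-point correlators are computed on the A-model side not by the Composition and Index-Zero axioms (those handle three-point functions with Ramond insertions) but by the concavity axiom combined with the orbifold Grothendieck--Riemann--Roch calculation of Theorem~\ref{thm:ConcaveCodimOne}. You do eventually acknowledge that four-point computations are needed, so this is an imprecision of exposition rather than a gap, but the precise mechanism is worth getting right since it is exactly where $D_4$ escapes the argument.
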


   \begin{rem}
    There is a technical issue in Givental's formal theory, as follows. For any semisimple point $t$ of Saito's Frobenius manifold,
    he defined an {\em ancestor potential} $\A_t$. From this he obtains a descendant potential function $\D=\hat{S}_t \A_t$, where
    $\hat{S}_t$ is certain quantization of a symplectic transformation $S_t$ determined by the Frobenius manifold. Then, he showed $\D$
    is independent of $t$. However, to compare with our $A$-model calculation, we need to expand $\D$ as formal power series at $t=0$.
    Although $\D$ is expected to have a power series expansion at $t=0$, we have been informed that a proof is not yet in the literature. Our strategy to avoid this problem is to
    show that (i) the A- and B-models have isomorphic Frobenius manifolds, and     (ii) in the ADE cases the ancestor functions of both models are completely determined by their respective Frobenius manifolds.
    Therefore, the A- and B-model have the same ancestor potentials and hence the same descendant potentials.
    \end{rem}
    \begin{df}
    An ancestor correlator is defined as
    $$\langle \tau_{l_1}(\alpha_1), \cdots, \tau_{l_n}(\alpha_n)\rangle^{W,G}_g(t)
    =\sum_k \langle \tau_{l_1}(\alpha_1), \cdots, \tau_{l_n}(\alpha_n), \overbrace{t, \cdots, t}^{\text{$k$ copies}}\rangle^{W,G}_g.$$
    Then, we define ancestor generating function $\F^{W,G}_g (t)$ of our theory with these correlators similarly.
    \end{df}

    Givental ancestor potential is defined for semisimple points $t\neq 0$. In the above definition, $t$ is only a formal variable. To be able to choose an actual value $t\neq 0$, we need to show that
    the ancestor correlator is convergent for that choice of $t$. This is done in the following lemma.
    \begin{lm}\label{lm:ances-poly}
    Choose a basis $T^i$ of $\ch_{W,G}$ and write $t=\sum_i t_iT^i$. For the simple (ADE) singularities, the ancestor correlator $\langle \tau_{l_1}(\alpha_1), \cdots, \tau_{l_n}(\alpha_n)\rangle^{W,G}_g(t)$ is a polynomial in the variables $t_i$. Furthermore, if $l_1=\cdots=l_n=0$, (i.e., if there are no $\psi$-classes)
    the ancestor correlator is also a polynomial in the variables $\alpha_1,\dots,\alpha_n$.
    \end{lm}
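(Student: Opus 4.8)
The plan is to prove that the ancestor correlator $\langle \tau_{l_1}(\alpha_1), \cdots, \tau_{l_n}(\alpha_n)\rangle^{W,G}_g(t)$ is a polynomial in the $t_i$ by a dimension-counting argument that shows only finitely many of the terms $\langle \tau_{l_1}(\alpha_1), \cdots, \tau_{l_n}(\alpha_n), t, \cdots, t\rangle^{W,G}_g$ (with $k$ copies of $t$) can be nonzero. First I would recall that, by the Dimension Axiom (axiom~\ref{ax:dimension}) together with Definition~\ref{df:correlator}, a genus-$g$ correlator with $m$ insertions $\tau_{a_1}(\beta_1),\dots,\tau_{a_m}(\beta_m)$ vanishes unless the $\psi$-class degrees absorb exactly the leftover dimension, i.e.
$$
\sum_{j=1}^m a_j = (\chat_W-3)(1-g) + m - \sum_{j=1}^m \iota_{\gamma_j} - \tfrac{1}{2}\sum_{j=1}^m N_{\gamma_j} - \tfrac{1}{2}\dim \MM_{g,m} \cdot(\text{correction}),
$$
more precisely that $\sum a_j$ equals a fixed affine function of $g$, $m$, and $\sum(\iota_{\gamma_j}+N_{\gamma_j}/2)$ arising from comparing $6g-6+2m$ with the degree of $\Lambda^W_{g,m}$. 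The key point is the positivity input: for a simple (ADE) singularity $\chat_W<1$, so every class $\beta$ in a twisted sector has $\deg_W(\beta) = N_\gamma + 2\iota_\gamma \geq 0$, and in fact the \emph{only} class with $\deg_W = 0$ is the unit $\unit\in\ch_J$, while every other homogeneous basis element $T^i$ has strictly positive $\deg_W$. Substituting $m = n+k$ and the $k$ copies of $t=\sum t_i T^i$, each additional insertion of a non-unit $T^i$ forces $\sum a_j$ to decrease, but the $a_j$ are non-negative integers; hence once $k$ exceeds a bound depending only on $g$, $n$, $\chat_W$ and the $\alpha_j$, every monomial in $t$ of that total degree forces a negative $\psi$-exponent somewhere and the corresponding correlator vanishes. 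Insertions of the unit component of $t$ are handled by the Forgetting Tails axiom (axiom~\ref{ax:tails} / property C4a), which removes a $\unit$-insertion at the cost of pulling back along $\vartheta:\MM_{g,m+1}\to\MM_{g,m}$ and does not introduce unbounded growth; combined with the string equation of Theorem~\ref{thm:StringDilaton} this shows the $t_0$-dependence is also polynomial. Thus the generating sum over $k$ is finite, so the ancestor correlator is a polynomial in the $t_i$.

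For the second assertion, when $l_1=\cdots=l_n=0$ there are no descendant $\psi$-classes on the distinguished insertions, so I would observe that the only $\psi$-classes appearing come from ancestor stabilization, and the Dimension Axiom is now a single linear constraint on $\sum_j \iota_{\gamma_j}$ and the $N_{\gamma_j}$. Writing each $\alpha_j = \sum_i \alpha_{j,i} T^i$ in the homogeneous basis and expanding multilinearly, the same positivity of $\deg_W$ on non-unit sectors bounds the total $\deg_W$-weight that the insertions $\alpha_j$ can carry: a correlator $\langle T^{i_1},\dots,T^{i_n}, t,\dots,t\rangle^{W,G}_g$ vanishes unless $\sum_{j} \deg_W(T^{i_j})$ plus the contribution from the $t$'s equals the fixed value $2\chat_W(g-1) + \text{(boundary/}\psi\text{ terms)}$, which bounds the number of non-unit factors among the $T^{i_j}$; hence only finitely many monomials in the $\alpha_{j,i}$ survive, and the correlator is polynomial in the $\alpha_j$ as well.

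The main obstacle I anticipate is making the bookkeeping of the Dimension Axiom fully precise in the ancestor setting: one must carefully separate the degree contributions of the descendant $\psi$-classes on the $n$ distinguished points, the $\psi$-classes implicit in the ancestor construction (pulled back from $\MM_{g,n}$ under the forgetful map that drops the $k$ extra points), and the boundary classes, and then verify that the coefficient of $k$ in the resulting linear relation has the correct sign. Concretely, each extra insertion of a non-unit $T^i$ raises $\sum_j(\iota_{\gamma_j}+N_{\gamma_j}/2) = \tfrac12\sum_j\deg_W(T^i)$ by a positive amount while only raising the available "$\psi$-budget" $6g-6+2m$ by $2$; since $\chat_W<1$ for ADE the net effect is strictly to shrink the set of allowed $\psi$-exponent tuples, which is the crucial inequality. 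A secondary subtlety is that $t$ is a priori only a formal variable, so before one can evaluate at an actual semisimple $t\neq 0$ (as needed for the comparison with Givental's ancestor potential) one needs precisely this polynomiality; I would emphasize that the argument above is purely combinatorial and uses only the axioms of Theorem~\ref{thm:main} and the vanishing $\chat_W<1$, so it applies uniformly to all the ADE cases treated in Section~\ref{sec:mirror}.
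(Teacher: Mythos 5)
Your overall strategy—use the dimension axiom to show only finitely many insertions of $t$ can contribute—is exactly the paper's. But you have identified the wrong positivity condition, and as written the argument does not close. The dimension condition for $\langle \tau_{l_1}(\alpha_1),\dots,\tau_{l_n}(\alpha_n), T_{i_1},\dots,T_{i_k}\rangle^{W,G}_g$ reads $\sum_i(2l_i+\deg_W\alpha_i)+\sum_j\deg_W T_{i_j}=2\bigl((\chat_W-3)(1-g)+n+k\bigr)$, which rearranges to $\sum_j\bigl(2-\deg_W T_{i_j}\bigr)=\text{const}$. What is needed to bound $k$ is therefore that $2-\deg_W T_i>0$ for \emph{every} basis element $T_i$, i.e.\ $\deg_W T_i<2$. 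This is the fact the paper uses and, for ADE, it follows from $\deg_W\le 2\chat_W<2$ (or by inspection in each case). Your proposed input—$\deg_W T_i>0$ for non-unit $T_i$—is true but irrelevant: plugging it into the identity above gives only a \emph{lower} bound on $k$, not an upper one, since the equation takes the form $\sum_j\deg_W T_{i_j}=2k+C$ and a positive lower bound on the summands bounds $k$ from below.

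The sign error propagates into the intermediate step: you claim that "each additional insertion of a non-unit $T^i$ forces $\sum a_j$ to decrease" and, in your "obstacle" paragraph, that the net effect of an insertion is "to shrink the set of allowed $\psi$-exponent tuples." In fact, because $\deg_W T^i<2$, each insertion raises the available dimension $6g-6+2m$ by $2$ while raising $\deg\Lambda^{W,G}_{g,m}$ by only $\deg_W T^i<2$, so the \emph{required} $\sum a_j$ grows by $1-\tfrac12\deg_W T^i>0$. The correlator vanishes not because some $\psi$-exponent would have to be negative, but because the required $\sum l_i$ eventually exceeds the fixed one. Once the correct inequality $\deg_W T_i<2$ is in hand, your appeal to the Forgetting Tails axiom/string equation to handle unit insertions is also unnecessary—$\deg_W\unit=0$ gives $2-\deg_W\unit=2>0$, so the paper's single inequality treats the unit uniformly (compare the paper's redefinition $\deg'_W T_i:=2-\deg_W T_i$). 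The second assertion (polynomiality in the $\alpha_j$ when all $l_i=0$) is the same identity applied to the $\alpha_j$-insertions, and requires the same fix.
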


    \begin{proof}
    Consider correlator $\langle \tau_1(\alpha_1), \cdots, \tau_n(\alpha_n), T_{i_1}, \cdots, T_{i_k}\rangle^{W,G}_g.$ The dimension condition is
    $$2((\hat{c}_w-3)(1-g)+n+k)=\sum_i (2l_i+\deg_W\alpha_i)+\sum_j \deg_W T_{i_j}.$$
    This implies that
    $$\sum_j (2-\deg_W T_{i_j})=\sum_i (2l_i+\deg_W\alpha_i)-2((\hat{c}_w-3)(1-g)+n).$$
    Therefore, if we redefine $\deg'_W T_{i_j}:=2-\deg_W T_{i_j}$, the ancestor correlator is homogeneous of a fixed degree. When $W$ is an ADE singularity, it is straightforward to check that  $2-\deg_W T_{i_j}>0$. Hence, it must be a polynomial. The same argument implies the second case.
    \end{proof}
    This lemma shows that we can consider $\F^{W,G}_t$ and $\A^{W,G}_t$ for a semisimple point $t\neq 0$.

The proof of the main theorem depends on four key
    ingredients. The first ingredient is reconstruction
    theorem for the ADE-theory which shows that the two ancestor potentials are both determined by their corresponding Frobenius manifolds.  The second step is to show that the Frobenius manifolds are completely determined by genus-zero, three-point correlators and certain explicit four-point correlators. The third ingredient
    is  the \emph{Topological Euler class axiom for narrow sectors} which
    enables us to compute all the three-point  and required four-point correlators. The last ingredient is the mirror symmetry of
    ADE-singularities we proved in last section.
     The required modification in  the $D_n$ case when $n$ is odd is transparent from mirror symmetry.

\subsection{Reconstruction Theorem}\

\

In this subsection, we will establish the reconstruction theorem
simultaneously for ADE-quantum singularity theory and Givental's
formal Gromov-Witten theory in the ADE case. We use the fact that:
(i) both theories satisfy the formal axioms of Gromov-Witten
theories; (ii) they both have the same selection rules; (iii) they
both have isomorphic quantum rings up to a mirror transformation.
The last fact has been established in the previous section. To
simplify the notation, we state the theorem for the quantum
singularity theory of the A-model.

We start with the higher genus reconstruction using an idea of
Faber-Shadrin-Zvonkine \cite{FSZ}.

\begin{thm}\label{thm:FJR-reduction} If $\chat< 1$, then the ancestor potential function is uniquely
determined by the genus-zero primary potential (i.e., without gravitational
descendants). If $\chat=1$, then the ancestor potential function is uniquely
determined by its genus-zero and genus-one primary potentials.
\end{thm}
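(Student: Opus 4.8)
The plan is to follow the strategy of Faber--Shadrin--Zvonkine \cite{FSZ}, which reduces all higher-genus ancestor correlators with descendants to genus-zero data (plus genus-one data when $\chat=1$), using only the formal axioms of a cohomological field theory together with the dimension constraint coming from $\chat<1$ (resp. $\chat=1$). First I would recall that, by Theorem~\ref{thm:CohFT} and Theorem~\ref{thm:StringDilaton}, the ancestor potential of the $W/G$-theory satisfies the string and dilaton equations and the genus-zero and genus-one topological recursion relations (TRR), and that the classes $\Lambda^{W,G}_{g,k}$ form a CohFT with flat unit. These are exactly the structural inputs used in \cite{FSZ}.

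The key steps, in order, are as follows. (1) \emph{Removing $\psi$-classes.} One shows that any ancestor correlator $\langle \tau_{l_1}(\alpha_1),\dots,\tau_{l_k}(\alpha_k)\rangle_g$ with some $l_i>0$ can be expressed, via the genus-$g$ TRR and the relations in the tautological ring of $\MM_{g,k}$ coming from Ionel/Getzler-type vanishing (as exploited in \cite{FSZ}), in terms of correlators with fewer or lower descendant indices and correlators on boundary strata, i.e.\ products of lower-genus or lower-$k$ correlators. The dimension axiom~(\ref{ax:dimension}) is what makes this terminate: because $\chat<1$, the $\deg_W$ of every class is bounded, so for fixed genus only finitely many correlators are nonzero and an induction on $(g,k,\sum l_i)$ closes up. (2) \emph{Removing genus.} Once all descendants are gone, one uses the fact (again from \cite{FSZ}, ultimately Mumford/Pixton-type relations, or the Getzler--Ionel vanishing of the tautological classes $\psi$ and $\kappa$ in high degree on $\MM_{g,k}$) that the pushforward of $1$ from $\MM_{g,k}$ is reconstructible from boundary contributions, expressing genus-$g$ primary correlators through genus-$(g-1)$ and lower primary correlators with additional marked points, hence eventually through genus-zero primary correlators. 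When $\chat=1$ the relevant vanishing only kicks in one genus later, so genus-one primaries must be taken as additional input; this is the precise source of the dichotomy in the statement. (3) \emph{Base case.} Genus-zero primary correlators with $\ge 4$ marked points are determined by the three-point correlators via the genus-zero TRR and the WDVV equation (associativity of $\star$), which are part of the CohFT axioms; three-point correlators are the structure constants of $(\ch_{W,G},\star)$ together with the pairing. This yields that the whole ancestor potential is determined by the genus-zero primary potential (resp.\ plus the genus-one primary potential).

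I expect the main obstacle to be verifying that the tautological relations on $\MM_{g,k}$ used in \cite{FSZ} apply verbatim here, i.e.\ that pushing the virtual cycle $[\W_{g,k}(\bgamma)]^{vir}$ down to $\MM_{g,k}$ and capping with $\psi$-classes and boundary classes produces a system closed under exactly the same recursions. Since $\Lambda^{W,G}_{g,k}$ lands in $H^*(\MM_{g,k})$ and satisfies the CohFT splitting axioms (Theorem~\ref{thm:CohFT}), the relations among $\psi,\kappa$, and boundary classes on $\MM_{g,k}$ can be pulled back and applied directly; the only care needed is bookkeeping of the Casimir/pairing factors and the degree-shift $\iota_\gamma$ in the dimension count, which is where the hypothesis $\chat<1$ versus $\chat=1$ enters. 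I would therefore organize the write-up as: (a) state the needed $\MM_{g,k}$ relations as black boxes citing \cite{FSZ}; (b) do the descendant-removal induction; (c) do the genus-reduction induction, being explicit about the $\chat=1$ shift; (d) invoke WDVV and genus-zero TRR for the base case. The genuinely new verification is just (a)--(b) in the presence of the nonzero $\iota_\gamma$; everything else is formal.
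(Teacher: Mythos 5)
Your proposal is substantially correct and follows the same Faber--Shadrin--Zvonkine route as the paper, but the paper's actual argument is leaner and worth contrasting with your three-step decomposition. The paper proves exactly one new lemma: a dimension count (using $\iota_\gamma\le\chat$ and the dimension axiom) showing that for $\chat<1$ and $g\ge1$ the pushed-forward class $\Lambda^W_{g,k}$ paired against a monomial $\beta$ in $\psi,\kappa$-classes vanishes whenever $\deg\beta<g$ (and for $\chat=1$ this holds for $g\ge2$). It then quotes the FSZ $g$-reduction lemma --- any $\psi,\kappa$-monomial of degree $\ge g$ is a boundary class --- and runs a single induction: a nonvanishing correlator has $\deg\beta\ge g$, hence reduces to boundary strata, whose composition-law factors have strictly lower genus; repeat until genus zero (or one, when $\chat=1$). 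Your steps (1) and (2) are really one and the same induction, and your step (2) as you phrase it (``reconstruct genus-$g$ primaries from boundary'') is unnecessary: once descendants are gone, the dimension-count lemma directly forces genus-$\ge1$ primary correlators to vanish when $\chat<1$, so there is nothing to reconstruct. Your step (3) (WDVV reduction to three-point data) goes beyond what this theorem asserts; the paper handles that separately in the Reconstruction Lemma and Lemma~\ref{lm:recon-4point}. Finally, the ``main obstacle'' you flag is not really one in the paper's formulation: since $\Lambda^W_{g,k}$ lives in $H^*(\MM_{g,k})$ and the CohFT splitting axioms hold (Theorem~\ref{thm:CohFT}), the tautological relations on $\MM_{g,k}$ from FSZ apply directly without any re-verification; the only genuinely new computation is the degree-shift inequality $\iota_\gamma\le\chat$ feeding the dimension count, which you did correctly identify as the crux.
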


The proof of Theorem \ref{thm:FJR-reduction} is a direct
consequence of the following two lemmas, using the
Faber-Shadrin-Zvonkine reduction technique. For this argument we always assume that $\chat\le 1$.

\begin{lm}\label{lm:degr-smal} Let $\alpha_i\in \ch_{\gamma_i,G}$ for all $i\in\{1,\dots,n\}$ and let $\beta$ be any product of $\psi$ classes.
If $\chat<1$, then the integral
$\int_{\MM_{g,n+k}}\beta\cdot\Lambda^{W}_{g,n+k}(\alpha_1,\dots,\alpha_n, T_{i_1}, \cdots, T_{i_k})$
vanishes if $\deg\beta<g$ for $g\ge 1$. If $\chat=1$, then the above
integral vanishes when $\deg\beta<g$ for $g\ge 2$.
\end{lm}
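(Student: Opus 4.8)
<br>

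The plan is to prove the vanishing by a dimension count combined with the structure of the $\psi$-classes on $\MM_{g,n+k}$, following the Faber--Shadrin--Zvonkine strategy. First I would recall that $\Lambda^W_{g,n+k}$ is obtained by pushing the virtual cycle $\left[\W_{g,n+k}(\bgamma)\right]^{vir}$ down to $\MM_{g,n+k}$, and that by the Dimension Axiom (Equation~(\ref{eq:dimension})) the class $\Lambda^W_{g,n+k}(\alpha_1,\dots,\alpha_n,T_{i_1},\dots,T_{i_k})$ lives in $H^*(\MM_{g,n+k})$ in a fixed degree determined by $\chat$, the genus, the number of marked points, and the degree-shifting numbers $\iota_{\gamma_i}$. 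The key quantitative input is that the (complex) codimension of $\Lambda^W_{g,n+k}$ in $\MM_{g,n+k}$ equals $(1-\chat)(g-1) + \sum_i \iota_{\gamma_i} + \sum_j \iota(T_{i_j})$ (up to the bookkeeping already recorded in the axiom), so the part of $H^*(\MM_{g,n+k})$ that pairs nontrivially with $\Lambda^W$ has complex dimension $3g-3+n+k$ minus that codimension.

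Next I would combine this with the observation that a product $\beta$ of $\psi$-classes of total degree $\deg\beta$ can only give a nonzero integral if $\deg\beta$ plus the codimension of $\Lambda^W$ equals $\dim\MM_{g,n+k} = 3g-3+n+k$. The heart of the argument is then to show that, when $\chat<1$ and $\deg\beta<g$, this dimensional equality forces the codimension of $\Lambda^W$ to exceed $3g-3+n+k - \deg\beta$, which in turn should follow because the degree-shifting numbers $\iota_{\gamma_i}, \iota(T_{i_j})$ are bounded below in a way that, together with the $(1-\chat)(g-1)$ term being strictly positive for $g\ge 1$, makes the virtual codimension "too large." Concretely, I would use the inequality $2-\deg_W T_{i_j} > 0$ for ADE singularities (exactly the bound exploited in the proof of Lemma~\ref{lm:ances-poly}) to control the contributions of the $T_{i_j}$ insertions, and then reduce to a statement purely about $g$ and $\deg\beta$.

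The cleanest way to organize this is probably to invoke the comparison with the moduli of stable curves: since $\Lambda^W$ is a cohomology class on $\MM_{g,n+k}$ and $\psi$-classes are pulled back along forgetful maps, the integral $\int_{\MM_{g,n+k}}\beta\cdot\Lambda^W$ is computed against a class supported in a specified codimension, and one reduces $\deg\beta < g$ to the classical fact (used by FSZ) that monomials in $\psi$-classes of degree less than $g$ vanish when restricted to, or integrated against, classes of high enough codimension coming from a cohomological field theory with $\chat\le 1$. I would therefore: (i) write down the precise dimension formula for the codimension of $\Lambda^W_{g,n+k}$; (ii) translate the condition $\deg\beta < g$ into a strict inequality among these codimensions; (iii) check the inequality using $\chat<1$ (resp.\ $\chat=1$, $g\ge 2$) and the positivity $2-\deg_W T_{i_j}>0$; and (iv) conclude that the integrand lies in a cohomological degree exceeding $\dim\MM_{g,n+k}$, hence integrates to zero.

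The main obstacle I anticipate is step~(ii)--(iii): making the inequality genuinely sharp. The subtlety is that the $\iota_{\gamma_i}$ for the \emph{fixed} insertions $\alpha_i$ are not bounded below by a positive constant (indeed $\iota_J = 0$), so the positivity has to come entirely from the $(1-\chat)(g-1)$ term and from the $T_{i_j}$ insertions; one must be careful that for $\chat = 1$ the genus-one case genuinely fails and that is why the hypothesis reads $g\ge 2$ there. Getting the edge cases right — in particular keeping track of whether the relevant cohomological degree is $<$, $=$, or $>$ the dimension of $\MM_{g,n+k}$, and how the $N_{\gamma_i}/2$ terms in the Dimension Axiom interact with the parity constraint $D\in\tfrac12\Z$ — is where the real care is needed, and I would handle it by writing everything in terms of $\deg_W$ and reducing to the single clean inequality $\deg\beta \ge g$ (resp.\ $\ge g$ for $g\ge 2$ when $\chat=1$) as a necessary condition for non-vanishing.
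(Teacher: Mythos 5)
Your overall strategy is the right one and matches the paper: reduce to a dimension count using the Dimension Axiom. For the integral $\int_{\MM_{g,n+k}}\beta\cdot\Lambda^W_{g,n+k}$ to be nonzero the degree of $\beta$ must equal the complex codimension deficit $3g-3+(n+k)-D$, and the content of the lemma is a lower bound on this number. However, several of the quantitative statements you make have the wrong sign and would break the argument if carried through. First, the codimension of $\Lambda^W$ is $D=\chat(g-1)+\sum_{\tau}\iota_{\gamma_\tau}$ (this is exactly the $D$ of the Dimension Axiom), not $(1-\chat)(g-1)+\sum\iota$. Second, the bound you need on the degree-shifting numbers is the \emph{upper} bound $\iota_{\gamma_\tau}\le\chat$, which makes each summand $1-\iota_{\gamma_\tau}\ge 1-\chat\ge 0$ in the identity $\deg\beta=(3-\chat)(g-1)+\sum_\tau(1-\iota_{\gamma_\tau})$; saying $\iota$ is ``bounded below'' points in the wrong direction. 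Third, your concluding picture — that the integrand lands in degree exceeding $\dim\MM_{g,n+k}$ — is backwards: when $\deg\beta<g$, the degree of $\beta\cdot\Lambda^W$ falls \emph{short} of $3g-3+n+k$ (since $D\le g-1+n+k$ forces $\deg\beta+D<3g-3+n+k$ for $g\ge 2$), and that is why the integral vanishes.

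With those corrections, the computation is short: $\deg\beta=(3-\chat)(g-1)+\sum_\tau(1-\iota_{\gamma_\tau})\ge(3-\chat)(g-1)+(n+k)(1-\chat)$, which is $\ge 2(g-1)\ge g$ for $g\ge 2$, and strictly positive (hence $\ge 1=g$) for $g=1$ when $\chat<1$. The edge case you flagged for $\chat=1$, $g=1$ is indeed the one that fails (the bound degenerates to $\deg\beta\ge 0$), which is why the hypotheses split as stated. You do not need the finer analysis of the $N_{\gamma_i}/2$ parity bookkeeping you worried about; the capping with the $\alpha_i$ already absorbs the middle-cohomology degrees so that the effective codimension is exactly $D$.
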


\begin{proof} The integral
$\int_{\MM_{g,n+k}}\beta\cdot\Lambda^W_{g,n+k}(\alpha_1,\dots,\alpha_n, T_{i_1}, \cdots, T_{i_k})$
does not vanish only if
$$
\deg \beta=3g-3+n+k-D-\sum_{\tau=1}^{n+k} N_{\ga_\tau}/2,
$$
where $D=\chat(g-1)+\sum_{\tau}\iota_{\gamma_\tau}$. Recall that
$\iota_{\gamma}=\sum_{i=1}^N(\Theta^{\gamma}_i-q_i)$.

Now we have the inequality:
\begin{equation}
\begin{split}
\deg\beta=(3-\chat)(g-1)+\sum_{\tau=1}^{n+k}(1-\iota_{\gamma_\tau}) &=(3-\chat)(g-1)+\sum_{\tau=1}^{n+k}(1-\chat+\chat-\iota_{\gamma_\tau}-N_{\ga_\tau}/2)\\
&\ge (3-\chat)(g-1)+(n+k)(1-\chat),
\end{split}
\end{equation}
where we used the fact (easily verified for the simple singularities AD and E) that the complex degree $\deg_\C \alpha_\ga = \iota_{\ga}+N_{\ga}/2$ of a class  $\alpha_\ga\in\ch_\ga$ always satisfies
$$\deg_\C \alpha_\ga= \iota_{\gamma} + N_{\ga}/2 \le \chat.$$
Hence if $g\ge 2$ we have
$
\deg\beta\ge g$.
If $g=1$, then
$
\deg\beta>0
$ for $\chat<1$, and
$\deg\beta\ge 0$
for $\chat=1$, where the equality holds if and only if
$\deg_\C \alpha_{\ga_\tau}=\chat$ for all $\tau$.
\end{proof}

The following lemma treats the integral for higher-degree $\psi$
classes. It was proved in \cite{FSZ}, where it was called  \emph{$g$-reduction}.
\begin{lm}{\label{lm:degr-larg}} Let $P$ be a monomial in the $\psi$ and $\kappa$-classes in
$\MM_{g,k}$ of degree at least $g$ for $g\ge 1$ or at least $1$
for $g=0$. Then the class $P$ can be represented by a linear
combination of dual graphs, each of which has at least one
edge.
\end{lm}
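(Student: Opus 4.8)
The plan is to follow the argument from Faber--Shadrin--Zvonkine \cite{FSZ}, which is an application of Getzler--Ionel vanishing together with the theory of the tautological ring of $\MM_{g,k}$. First I would recall that a monomial $P$ in $\psi$- and $\kappa$-classes is a \emph{tautological} class supported on the open stratum $\M_{g,k}\subset \MM_{g,k}$, in the sense that its natural representative lives on the interior. The key input is the Getzler--Ionel vanishing theorem (and its refinements due to Graber--Vakil and Faber--Pandharipande): any tautological class of degree $\ge g$ on $\MM_{g,k}$ (for $g\ge 1$), or of degree $\ge 1$ on $\MM_{0,k}$, that is supported on the interior $\M_{g,k}$ must vanish. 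Equivalently, such a class is, modulo the relations in the tautological ring, expressible as a pushforward from the boundary $\partial\MM_{g,k}$, hence as a linear combination of decorated boundary strata, each of which is the image of a gluing map from a product of moduli spaces of lower dimension; such a stratum is represented by a dual graph with at least one edge.

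Concretely, the steps are: (1) Express $P$ as a polynomial in the standard generators of the tautological ring, using the fact that $\kappa$-classes can be rewritten via the forgetful pushforward formulas in terms of $\psi$-classes on moduli spaces with extra marked points, so that without loss of generality $P$ is a monomial in $\psi$-classes (possibly after adding marked points and then pushing forward). (2) Observe that $\deg P \ge g$ (resp.\ $\ge 1$ when $g=0$) forces, by Getzler--Ionel vanishing, the restriction of $P$ to the interior $\M_{g,k}$ to be zero in the tautological ring; equivalently, $P$ lies in the ideal generated by boundary classes. (3) Conclude that $P$ is equal, in $H^*(\MM_{g,k},\Q)$ (indeed in the tautological ring), to a linear combination of classes of the form $(\text{gluing map})_*(\text{tautological class on the normalization})$, each of which is by definition a decorated dual graph with at least one edge. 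Since the whole statement is purely about $\MM_{g,k}$ and makes no reference to $W$-structures, nothing beyond the classical tautological-ring machinery is needed here.

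The main obstacle is simply that this is not an elementary fact: it rests on the Getzler--Ionel/Graber--Vakil vanishing theorem, whose proof is substantial and which we must invoke as a black box. The only subtlety in our setting is bookkeeping: we need the statement uniformly for all $g\ge 0$ with the degree threshold $g$ (for $g\ge1$) or $1$ (for $g=0$), and we need it at the level of the tautological ring (so that afterwards, in the application to Lemma~\ref{lm:degr-smal} and Theorem~\ref{thm:FJR-reduction}, we may pull the resulting boundary expression back through $\st$ and the gluing/cutting maps for $W$-curves using the composition and forgetting-tails axioms of Theorem~\ref{thm:main}). I would therefore state the lemma precisely as a statement in the tautological ring of $\MM_{g,k}$, cite \cite[\S\,$g$-reduction]{FSZ} together with the underlying vanishing theorem, and note that the reduction of $\kappa$-classes to $\psi$-classes is the elementary step that makes the cited result directly applicable.
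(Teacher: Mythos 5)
The paper does not give a proof of this lemma at all: it simply states that the result was established in \cite{FSZ}, where it is called \emph{$g$-reduction}. Your proposal supplies the content of that citation --- Ionel's theorem (Getzler's conjecture) that for $g\ge 1$ any monomial of degree $\ge g$ in $\psi$- and $\kappa$-classes restricts to zero on $\M_{g,k}$, together with the elementary (pre-Ionel) fact that for $g=0$ every $\psi$- and $\kappa$-class on $\MM_{0,k}$ is already a sum of boundary divisors, plus the observation that a tautological class on $\MM_{g,k}$ whose restriction to $\M_{g,k}$ vanishes lies in the ideal generated by boundary pushforwards. That is precisely the argument FSZ use, so this is the same route as the paper, spelled out.

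One thing must be fixed, however: the opening of your first paragraph has the support relation backwards and, read literally, would prove something false. You write that $P$ is ``a tautological class supported on the open stratum $\M_{g,k}$'' and then that a tautological class of degree $\ge g$ ``that is supported on the interior $\M_{g,k}$ must vanish.'' If both statements were true they would immediately give $P=0$, which is much stronger than the lemma and is simply wrong --- such monomials are in general nonzero on $\MM_{g,k}$. The $\psi$- and $\kappa$-classes are global classes on $\MM_{g,k}$, not supported on the interior in any sense; the actual content of Ionel's vanishing theorem is that high-degree monomials in them \emph{vanish after restriction} to $\M_{g,k}$, i.e.\ are supported on the \emph{boundary}. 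Your step (2) says this correctly, and that is the version the argument uses; the first paragraph should simply be replaced by the correct statement. A minor point: the intermediate reduction of $\kappa$-classes to $\psi$-classes via forgetful pushforward is unnecessary here, since Ionel's theorem (and the FSZ $g$-reduction lemma) covers mixed $\psi$-$\kappa$ monomials directly.
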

\begin{proof}[\textbf{Proof of Theorem \ref{thm:FJR-reduction}}]
Take any correlators:
$$
\langle
\tau_{d_1}(\alpha_1)\cdots\tau_{d_k}(\alpha_n), T_{i_1}, \cdots, T_{i_k}\rangle_{g,n+k}=\int_{\MM_{g,n+k}}
\psi_1^{d_1}\cdots\psi_k^{d_n}\Lambda^W_{g,n+k}(\alpha_1,\dots,\alpha_n, T_{i_1}, \cdots, T_{i_k}).
$$
The total degree of the $\psi$-classes must either match
the hypothesis of Lemma \ref{lm:degr-smal} or match the hypothesis of
Lemma \ref{lm:degr-larg}.  If the total degree is small, then it
vanishes by Lemma \ref{lm:degr-smal}; If it is large, then the
integral is changed to the integral over the boundary classes
while decreasing the degree of the total integrated $\psi$ or
$\kappa$ classes. Applying the degeneration and composition laws,
the genus of the moduli spaces involved will also decrease. It is easy to see that one can continue this process until the original integral is represented by a linear combination of integrals over moduli spaces of genus zero and genus one, without gravitational descendants.
\end{proof}
\begin{rem} There is an alternative higher-genus reconstruction,
using Teleman's recent announcement \cite{Te} of a proof of
Givental's conjecture \cite{Gi4}. However, in the ADE-case the above argument is much simpler and achieves the same goal.
\end{rem}

The above theorem implies that all the ancestor correlators are determined by genus-zero ancestor correlators without $\psi$ classes. On the $B$-model side, Givental's genus-zero generating function is equal to Saito's genus-zero generating function. Hence, it is
well defined at $t=0$. Furthermore, Lemma~\ref{lm:ances-poly} shows that both the A- and B-model genus-zero functions without descendants are polynomials
and are defined over the entire Frobenius manifold. Finally, we observe that the genus-zero ancestor generating function is determined by
the ordinary genus-zero generating function (i.e., at $t=0$). Therefore, it is enough to compare the ordinary genus zero generating
functions.

Next, we consider the reconstruction of genus-zero correlators
using WDVV.
\begin{df}\label{df:primitive}
We call a class $\gamma$ \emph{primitive} if it cannot be written as $\gamma=\gamma_1\star\gamma_2$ for $ 0< \deg_{\C}
(\gamma_i)< \deg_{\C}(\gamma)$ (or, in the case of our A-model singularity theory, $0 <\deg_W(\gamma_i)<\deg_W(\gamma)$).
\end{df}
 We have the following  lemma.
\begin{lm}[Reconstruction Lemma]
Any genus-zero $k$-point correlator of the form $\corf{}{\ga_1,\dots,\ga_{k-3},\alpha, \beta,\ve\star\phi}$ can be rewritten as
\begin{equation}\label{eq:reconstruct}
\begin{split}
\langle \ga_1, \dots,\ga_{k-3},\alpha, \beta,\ve\star\phi\rangle =
S &+ \corf{}{\ga_1, \dots,\ga_{k-3},\alpha, \ve, \beta\star\phi}\\
&+ \corf{}{\ga_1, \dots,\ga_{k-3},\alpha\star\ve, \beta,\phi}\\
&- \corf{}{\ga_1, \dots,\ga_{k-3},\alpha\star\beta,\ve,\phi},\\
\end{split}
\end{equation}
where $S$ is a linear combination of genus-zero correlators with fewer than $k$ insertions.

Moreover, all the genus-zero $k$-point correlators $\corf{}{\gamma_1,
   \dots, \gamma_k}$ are uniquely determined by the pairing, by the three-point
   correlators, and by correlators of the form $\corf{}{\alpha_1, \dots,
   \alpha_{k'-2}, \alpha_{k'-1}, \alpha_{k'}}$ for $k'\leq k$, and
   such that $\alpha_i$ primitive for all $i\leq k'-2$.
   \end{lm}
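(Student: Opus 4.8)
The plan is to prove the Reconstruction Lemma in two stages: first establish the basic rewriting formula \eqref{eq:reconstruct} as a direct consequence of the WDVV equations (Axiom C2 for the cohomological field theory $\{\Lambda^W_{g,k}\}$), and then use it inductively, together with a degree/primitivity argument, to deduce that all genus-zero correlators are determined by the pairing, the three-point correlators, and the "primitive" correlators $\corf{}{\alpha_1,\dots,\alpha_{k'-2},\alpha_{k'-1},\alpha_{k'}}$ with $\alpha_i$ primitive for $i\le k'-2$.

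For the first stage, I would apply the genus-zero WDVV relation coming from pulling back $\Lambda^W_{0,k}$ along the two boundary divisors of $\MM_{0,k}$ that separate a chosen four-tuple of marked points in the two possible ways. Concretely, insert $\ga_1,\dots,\ga_{k-3}$ together with the four special insertions $\alpha,\beta,\ve,\phi$; the two expressions for $\rho_{\mathrm{tree}}^*\Lambda_{0,k}$ obtained by grouping $\{\alpha,\ve\}$ versus $\{\alpha,\beta\}$ on one side, when integrated against the appropriate $\psi$-free classes and compared, yield an identity among $k$-point correlators with one "starred" insertion on one side and lower-point correlators (the term $S$) on the other. Here I use the associativity $\alpha\star(\ve\star\phi)=(\alpha\star\ve)\star\phi$ etc.\ implicitly through the definition $\langle\alpha\star\beta,\gamma\rangle=\langle\tau_0(\alpha),\tau_0(\beta),\tau_0(\gamma)\rangle_0$ and the splitting of the Casimir over sectors. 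This part is essentially bookkeeping with the CohFT axioms C1--C2 and the identification of $\star$ with the three-point function.

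For the second stage, I would induct on the number $k$ of insertions and, within fixed $k$, on a suitable complexity measure of the insertions (for instance the number of insertions that fail to be primitive, weighted by their $\deg_W$). Given a $k$-point correlator $\corf{}{\gamma_1,\dots,\gamma_k}$, if all of $\gamma_1,\dots,\gamma_{k-2}$ are already primitive we are in the claimed base shape and done by induction on $k$ (the three-point and pairing data handle $k\le 3$). Otherwise some $\gamma_i$ with $i\le k-2$ decomposes as $\gamma_i=\ve\star\phi$ with $0<\deg_W(\ve),\deg_W(\phi)<\deg_W(\gamma_i)$; relabel so this is the last slot and apply \eqref{eq:reconstruct}. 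Each term on the right either has strictly fewer insertions (the $S$ terms, handled by the inductive hypothesis on $k$) or has the same number of insertions but with $\gamma_i$ replaced by a product in which one factor has been absorbed into another slot, strictly decreasing the complexity measure—so induction applies. Iterating drives every correlator down to the primitive form.

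The main obstacle I anticipate is not the WDVV manipulation itself but controlling the induction so that it actually terminates: one must choose the complexity measure carefully so that moving a factor $\ve$ from slot $i$ onto slot $j$ (producing $\gamma_j\star\ve$) genuinely decreases it, even though $\gamma_j\star\ve$ may itself be non-primitive and of higher degree than $\gamma_j$. The resolution is to order insertions so that the "star-factored" slot is always among the last two (which are exempt from the primitivity requirement), and to measure complexity by, say, $\sum_{i\le k-2}\deg_W(\gamma_i)$ together with a count of non-primitive slots among the first $k-2$; the rewriting \eqref{eq:reconstruct} strictly lowers this pair in lexicographic order because each application replaces a non-primitive $\gamma_i$ in a "counted" slot by factors $\ve,\phi$ of strictly smaller degree (or shifts them into the exempt last-two slots), while the $S$-terms reduce $k$. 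One should also check the mild point that the finitely many minimal-degree classes generating $\ch_W$ are themselves primitive, so that the base of the induction on the complexity measure is exactly the asserted primitive form; this is immediate from Definition~\ref{df:primitive}.
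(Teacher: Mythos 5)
Your proposal is essentially correct and takes the same approach as the paper: both establish Equation~(\ref{eq:reconstruct}) via the WDVV relation applied to the $(k+1)$ insertions $\ga_1,\dots,\ga_{k-3},\alpha,\beta,\ve,\phi$, and both then reduce to basic correlators by iterating the formula.

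One small technical remark on the first stage: the paper isolates the desired correlator from the WDVV identity via a specific trick — choosing a basis $\{\delta_i\}$ with $\delta_0=\ve\star\phi$, so that $\corf{}{\delta_0',\ve,\phi}=1$, and observing that the single term in the WDVV sum with $J=\emptyset$ then collapses to exactly $\corf{}{\ga_1,\dots,\ga_{k-3},\alpha,\beta,\ve\star\phi}$ after the Casimir contraction. Your description ("the two expressions for $\rho_{\mathrm{tree}}^*\Lambda_{0,k}$ obtained by grouping $\{\alpha,\ve\}$ versus $\{\alpha,\beta\}$") is the same manipulation, just without naming the extraction trick.

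On the second stage, your intuition is right but the proposed lexicographic measure $\bigl(\sum_{i\le k-2}\deg_W(\gamma_i),\ \#\text{non-prim among first }k-2\bigr)$ does not actually decrease across all three non-$S$ terms of~(\ref{eq:reconstruct}). In the first term $\corf{}{\dots,\alpha,\ve,\beta\star\phi}$ slots $1,\dots,k-2$ are unchanged, so the first coordinate is flat; and in the second term $\corf{}{\dots,\alpha\star\ve,\beta,\phi}$ slot $k-2$ now carries $\alpha\star\ve$, which has degree $\deg_W(\alpha)+\deg_W(\ve)>\deg_W(\alpha)$, so the first coordinate can go up. The paper handles this by a different bookkeeping: it observes that two of the three $k$-point terms (the first and third) have the primitive $\ve$ placed in slot $k-1$, and one can then use symmetric-group invariance to swap that primitive into one of the first $k-3$ slots, strictly decreasing the number of non-primitive ``counted'' slots; the remaining (second) term has $\phi$ in slot $k$ with $\deg_W(\phi)<\deg_W(\ga_k)$, so one iterates~(\ref{eq:reconstruct}) on it until its last slot also becomes primitive. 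The genuine termination invariant is thus best phrased as: eventually place a primitive class among slots $\{k-2,k-1,k\}$ by descending on $\deg_W$ of the last slot, then swap it into the safe slots and repeat on another non-primitive — rather than a single numeric measure that decreases at every step.
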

\begin{proof}
Choose a basis $\{\delta_i\}$ such that $\delta_0=\ve\star\phi$ and let
   $\delta'_i$ be the dual basis with respect to the pairing (i.e.,
   $\langle\delta_i, \delta'_j\rangle=\delta_{ij}$). Using WDVV and the definition of the multiplication $\star$,
   we have the formula
\begin{align}
\corf{}{\ga_1,\dots,\ga_{k-3},\alpha, \beta,\ve\star\phi}
=& \corf{}{\ga_1,\dots,\ga_{k-3},\alpha,
\beta,\ve\star\phi}\corf{}{\delta'_0,\ve,\phi} \notag
\\
=&\sum_{{k-3=I\cup J}}\sum_{\ell} \corf{}{\gamma_{i\in I}, \alpha, \ve ,\delta_\ell}\corf{}
{\delta'_\ell, \phi, \beta,   \gamma_{j\in J}} \notag \\ 
&\quad -\sum_{\substack{k-3=I\cup J\\ J\neq \emptyset}}\sum_\ell \corf{}{\gamma_{i\in I}, \alpha,
\beta ,\delta_\ell}\corf{}{\delta'_\ell, \phi, \ve,  \gamma_{j\in J}}.\label{eq:fullreconst}
\end{align}
   All of the terms on the right-hand side are $k'$-point correlators with
   $k'<k$ except
\begin{align*}
&\sum_\ell\corf{}{\gamma_{i\leq k-3}, \alpha, \ve,
\delta_\ell}\corf{}{\delta'_\ell, \phi,\beta}+ \sum_\ell
\corf{}{\alpha, \ve, \delta_\ell}\corf{}{\delta'_\ell,\phi,\beta, \gamma_{j\leq k-3}}\\
&\quad- \sum_\ell
\corf{}{\alpha,\beta,\delta_\ell}\corf{}{\delta'_\ell,\phi, \ve,
\gamma_{j\leq k-3}}\\
&=\corf{}{\gamma_{j\leq k-3},  \alpha, \ve, \phi
\star\beta}+\corf{}{\alpha\star\ve, \phi,\beta,\gamma_{j\leq k-3}}-
\corf{}{\alpha \star \beta,\ve, \phi,\gamma_{j\leq k-3}},
\end{align*}
as desired.  This proves Equation~(\ref{eq:reconstruct}).

Now, suppose that $\corf{}{\ga_1,\dots,\ga_k}$ is such that $\ga_k$ is not primitive, so $\ga_k = \ve\star\phi$ with $\ve$ primitive.  Applying Equation~(\ref{eq:reconstruct}) shows that
$\corf{}{\ga_1,\dots,\ga_k}$ can be rewritten as a linear combination of correlators $S$ with fewer insertions  plus three more terms
\begin{equation*}
\begin{split}
\corf{}{\ga_1,\dots,\ga_k} = S + & \corf{}{\gamma_{j\leq k-3},  \ga_{k-2}, \ve, \ga_{k-1}\star\phi}\\
   &+ \corf{}{\gamma_{j\leq k-3}, \ga_{k-2}\star\ve, \ga_{k-1}, \phi}
    - \corf{}{\gamma_{j\leq k-3},\ga_{k-2} \star \ga_{k-1},\ve, \phi}.
\end{split}
\end{equation*}
Note that we have replaced $\gamma_{k-2}, \gamma_{k-1}, \gamma_k$  in the original correlator  by  $\gamma_{k-2}, \ve, \phi$ in the first and third terms, and by $\gamma_{k-1}, \phi, \gamma_{k-2}\star \ve$ in the second term.  So the first and third terms now have a primitive class $\ve$ where there was originally $\ga_{k-1}$. The second
   term has replaced $\ga_{k}$ by $\phi$, which has lower degree. We repeat the above argument on the second term $\corf{}{\gamma_{j\leq k-3},\gamma_{k-2}\star\ve, \gamma_{k-1},\phi}$
to show that the original correlator
$\corf{}{\ga_1,\dots,\ga_{k-3}, \gamma_{k-2}, \gamma_{k-1},
   \gamma_k}
$  can be rewritten in terms of correlators that are either shorter ($k'<k$) or which have replaced one of the three classes  $\ga_{k-2}$,$\ga_{k-1}$, or $\ga_{k}$ by a primitive class.

Now move the primitive class into the set
   $\gamma_{i\leq k-3}$. Pick another non-primitive class and
   continue the induction. In this way, we can replace all the
   insertions by primitive classes except the last two.
\end{proof}

\begin{df}
    We call a correlator a \emph{basic correlator} if it is of the form described in the previous lemma, that is, if all insertions are primitive but the last two.
\end{df}

    For a basic correlator, we still have the dimension formula
\begin{equation}\label{eq:dim}
\sum_i \deg_{\C}(a_i)=\chat+k-3.
\end{equation}
    This gives the following lemmas.
 \begin{lm}
     If $\deg_{\C}(a)\leq \chat$ for all classes $a$, and if $P$ is the maximum complex degree of any primitive class,
      then all the genus-zero correlators are uniquely determined by
   the pairing and $k$-point correlators with
  \begin{equation}\label{eq:dim-1}
  k\leq 2 +\frac{1+\chat}{1-P}\end{equation}
\end{lm}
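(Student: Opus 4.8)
The plan is to deduce the lemma directly from the Reconstruction Lemma together with the dimension constraint~(\ref{eq:dim}) for basic correlators. First I would invoke the Reconstruction Lemma: every genus-zero $k$-point correlator $\corf{}{\gamma_1,\dots,\gamma_k}$ can be rewritten, by repeated application of the WDVV identity~(\ref{eq:reconstruct}), as a universal polynomial expression in the pairing, the three-point correlators, and \emph{basic} correlators $\corf{}{a_1,\dots,a_{k'-2},a_{k'-1},a_{k'}}$ with $k'\le k$ in which $a_i$ is primitive (Definition~\ref{df:primitive}) for every $i\le k'-2$. Thus it suffices to prove that any basic correlator with $k'>2+\frac{1+\chat}{1-P}$ vanishes, since then only basic correlators with $k'\le 2+\frac{1+\chat}{1-P}$ survive.

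Next I would run the degree count. A basic correlator is nonzero only if the dimension equation~(\ref{eq:dim}) holds, i.e. $\sum_{i=1}^{k'}\deg_{\C}(a_i)=\chat+k'-3$. By hypothesis $\deg_{\C}(a)\le\chat$ for every class $a$, so the last two arguments each contribute at most $\chat$; and the first $k'-2$ arguments are primitive, hence each contributes at most $P$ by the definition of $P$. Combining, $\chat+k'-3\le (k'-2)P+2\chat$, which rearranges to $(k'-2)(1-P)\le 1+\chat$. In the ADE case $\chat<1$ and every primitive class other than the unit has complex degree strictly between $0$ and $\chat$, so $P<1$; dividing by $1-P>0$ gives $k'\le 2+\frac{1+\chat}{1-P}$. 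Hence any basic correlator with more insertions is forced to vanish.

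Finally I would assemble the conclusion: the recursion coming from the Reconstruction Lemma terminates because each use of~(\ref{eq:reconstruct}) either strictly decreases the number of insertions or replaces a non-primitive insertion by one of strictly smaller complex degree, so after finitely many steps the original correlator is expressed through the pairing, three-point correlators, and basic correlators with at most $2+\frac{1+\chat}{1-P}$ insertions (and two- and three-point correlators trivially obey $k\le 2+\frac{1+\chat}{1-P}$ since $\chat+P\ge0$), which is exactly the assertion. The only point requiring care is the bookkeeping in the Reconstruction Lemma — one must ensure the non-primitive insertion being eliminated can always be taken to be one of the last two arguments — but this is precisely what that lemma supplies, so I do not expect a genuine obstacle; the content of the present lemma is really just the elementary inequality above.
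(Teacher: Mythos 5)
Your proof is correct and follows essentially the same route as the paper's: invoke the Reconstruction Lemma to reduce any genus-zero correlator to basic correlators, then combine the dimension constraint~(\ref{eq:dim}) with the degree bounds $\deg_{\C}(a_i)\le P$ on the primitive insertions and $\deg_{\C}(a_i)\le\chat$ on the last two to obtain $\chat+k-3\le (k-2)P+2\chat$, which rearranges to the stated bound. Your observation that $1-P>0$ (so that the rearrangement into a bound on $k$ is legitimate) is a small point of rigor that the paper leaves implicit.
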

\begin{proof}
       Let $\corf{}{a_1, \dots, a_{k-2}, a_{k-1}, a_k}$ be a basic correlator,
       so $a_{i\leq k-2}$'s are primitive. Then, $\deg_{\C}(a_i)\leq P$ for $i\leq k-2$
       and $deg_{\C}( a_{k-1}), deg_{\C}( a_{k})\leq \chat$. By the dimension
       formula we have
       $$\chat+k-3\leq (k-2)P+2\chat.$$
       \end{proof}

\begin{lm}\label{lm:recon-4point} All the genus-zero correlators for
the $A_n,D_{n+1},E_6,E_7,E_8$ and $D^T_{n+1}$ singularities, in either
the A-model or the B-model, are uniquely determined by the pairing, the three-point correlators, and the four-point correlators.
\end{lm}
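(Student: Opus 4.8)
The plan is to specialize the general reconstruction machinery developed above to the six singularities listed. The key input is the previous lemma, which reduces all genus-zero correlators to \emph{basic correlators} $\corf{}{a_1,\dots,a_{k-2},a_{k-1},a_k}$ in which $a_1,\dots,a_{k-2}$ are primitive (Definition~\ref{df:primitive}); together with the inequality
\begin{equation*}
k \le 2 + \frac{1+\chat}{1-P},
\end{equation*}
where $P$ is the maximum complex degree of any primitive class and $\chat$ is the central charge of the singularity in question. So the whole proof is just a bookkeeping exercise: for each of $A_n$, $D_{n+1}$, $E_6$, $E_7$, $E_8$, and $D^T_{n+1}=x^{n-1}y+y^2$, compute $\chat$ and identify the primitive generators of the ring (using the explicit presentations of $\ch_{W,G}$ established in Section~\ref{sec:mirror}, or equivalently, by mirror symmetry, of the corresponding Milnor rings $\milnor$), read off $P$, and check that the bound forces $k\le 4$.

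First I would recall the relevant data. For $A_n$ we have $\chat = (n-1)/(n+1)<1$, and the ring is generated by a single degree-$1/(n+1)$ element, so $P = 1/(n+1)$ and the bound gives $k \le 2 + \frac{1 + (n-1)/(n+1)}{1-1/(n+1)} = 2 + \frac{2n/(n+1)}{n/(n+1)} = 4$. For $E_7$, from Subsection~\ref{sec:EsevenFA}, the ring $\ch_{E_7}$ is $\C[X,Y]/(XY^2, 3X^2+Y^3)$ with $\deg_{\C} X = 2/9$, $\deg_{\C} Y = 1/6$ (these can be read off from the degree-shift table), so $P = 2/9$ and $\chat_{E_7}=8/9$; then $k \le 2 + \frac{17/9}{7/9} = 2 + 17/7 < 5$, hence $k\le 4$. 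The analogous computations for $E_6$ ($\chat = 2/3$, $P=$ the larger of the two generator degrees) and $E_8$ ($\chat = 4/5$) proceed identically, as do those for $D_{n+1}$ (with $\chat = (n-1)/n$, generators $X$ of degree $1/n$ and $Y$ of degree $(n-1)/2n$, noting the $D^T$/mirror presentation $\C[X,Y]/(X^{n-1}Y, X^n+2Y)$ from Subsection~\ref{sec:DnDual} when the maximal group is used). In each case one simply plugs the numbers into the inequality and confirms $k \le 4$.

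The one point requiring genuine care — and what I expect to be the main obstacle — is the identification of which generators are genuinely \emph{primitive} in the sense of Definition~\ref{df:primitive}, since $P$ depends on this and the bound is tight (it must land us at exactly $k=4$ for $A_n$, and we cannot afford to overestimate $P$). A class of top degree, such as $y^2\bone_0$ for $E_7$ or $x^{n-1}\bone_0$ for $D_{n+1}$, is manifestly \emph{not} primitive since it is a product of lower-degree generators; and one must verify that the generators $X, Y$ of the presentation really cannot be factored, which follows from inspecting the grading (there is no pair of nonzero classes of strictly smaller complex degree multiplying to them, because the next degree down is often $0$, i.e.\ the unit). The cleanest way to handle this uniformly is to observe that, in every one of these rings, $P$ equals the \emph{second-smallest} complex degree appearing among nonzero homogeneous elements (the smallest being $0$, the degree of $\unit$), because any element of that degree is automatically primitive (it cannot be a product of two classes of strictly positive smaller degree) and any element of higher degree that happens to be primitive only makes $P$ larger — so we take $P$ to be the largest degree of a primitive generator and check case by case. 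Once $P$ is pinned down for each singularity, the inequality yields $k\le 4$ in all six cases, and combined with the Reconstruction Lemma this says precisely that the pairing, the three-point correlators, and the four-point correlators determine all genus-zero correlators, which is the claim.

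I should note the two edge behaviors: the $A_n$ case is the borderline one where equality $k=4$ can actually occur, so the statement "four-point correlators" is genuinely needed and not improvable; whereas for $E_8$, where $\frac{1+\chat}{1-P}$ may be comfortably below $2$, one might even get $k\le 3$, but the uniform statement "$k\le 4$" of course still holds. Finally, since the lemma is asserted for \emph{both} the A-model and the B-model, and both sides have been shown (earlier in this section and in Section~\ref{sec:mirror}) to satisfy the same formal axioms of Gromov--Witten theory, the same selection rule, and isomorphic quantum rings, the identical argument applies verbatim on the B-model side with the Milnor-ring presentation in place of $\ch_{W,G}$ — no separate work is required.
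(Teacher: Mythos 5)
Your plan is on the right track up to the point where you invoke the bound
\[
k \le 2 + \frac{1+\chat}{1-P},
\]
but you then assert that ``in each case one simply plugs the numbers into the inequality and confirms $k\le 4$,'' and that is false for two of the six families. For $D_{n+1}$ with $n$ odd (and symmetry group $\genj$) and for $D^T_{n+1}$, the primitives are $X$ and $Y$ with $\deg_\C X = 1/n$, $\deg_\C Y = (n-1)/(2n)$, so $P=(n-1)/(2n)$ once $n>3$, and $\chat=(n-1)/n$. Plugging in yields
\[
k \le 2 + \frac{2(2n-1)}{n+1},
\]
which is $\le 4$ only when $n=3$; already for $n=5$ it gives $k\le 5$, and as $n\to\infty$ the right-hand side tends to $6$, so the bound permits basic five-point correlators for all odd $n\ge 5$ and for $D^T_{n+1}$ with $n\ge 4$. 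Your proposal has no mechanism to eliminate these, so the argument does not close.

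The paper handles exactly this gap with a second, sharper reduction. It first applies the dimension formula to show that all basic five-point correlators with three primitive insertions must have the shape $\langle Y,Y,Y,\alpha,\beta\rangle_0$ (the shapes $\langle X,X,X,\cdot,\cdot\rangle$, $\langle X,X,Y,\cdot,\cdot\rangle$, $\langle X,Y,Y,\cdot,\cdot\rangle$ are ruled out because they would force $\deg_\C\alpha + \deg_\C\beta > 2\chat$, which is impossible). It then exploits the ring relations $X\star Y = 0$ and $X^n=0$ inside the Reconstruction Lemma's WDVV identity (\ref{eq:reconstruct}) to show that every surviving $\langle Y,Y,Y,X^i,X^{(3n+1)/2-i}\rangle_0$ equals a combination $S$ of shorter correlators, with the ostensibly new five-point terms on the right-hand side all vanishing because the corresponding insertions are zero in the ring. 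This ring-theoretic reduction is the heart of the lemma for the $D$-series, and it is missing from your write-up.

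Two smaller numerical slips, which happen not to change the conclusion for the $E$-series but should be corrected: $\chat_{E_6}=5/6$, not $2/3$; $\chat_{E_8}=14/15$, not $4/5$; and for $E_7$ the primitive degrees are $\deg_\C X = 1/3$ and $\deg_\C Y = 2/9$, so $P=1/3$ (not $2/9$). Your heuristic that $P$ equals the second-smallest nonzero complex degree is also not safe in general -- it works here because the rings are so small, but the correct definition of $P$ is the maximum over all primitive classes, which in $E_7$ is attained by $X$, the generator of larger degree, not the smaller one.
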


\begin{proof} Since the pairing, the three-point correlators and the selection rules in the A-model and the B-model have been shown to be mirror, it suffices to prove the conclusion in the A-model side.

Let $P$ be the maximum complex degree of any primitive class. It is
easy to obtain the data for these singularities:
$$\begin{array}{ll}
A_n: \quad P=\frac{1}{n+1},\ \chat=\frac{n-1}{n+1}.
&E_6:  P=\frac{1}{3},\ \chat=\frac{5}{6}.\\
E_7: P=\frac{1}{3},\ \chat=\frac{8}{9}.
&E_8: P=\frac{1}{3},\ \chat=\frac{14}{15}.\\
D_{n+1}(\text{$n$ even}): P=\frac{1}{n},\ \chat=\frac{n-1}{n}.
&D_{n+1} (\text{$n$ odd}): P=\frac{n-1}{2n},\ \chat=\frac{n-1}{n}.\\
D^T_{n+1}: P=\frac{n-1}{2n},\ \chat=\frac{n-1}{n}.
\end{array}$$
 By formula (\ref{eq:dim-1}), we know that:
\begin{itemize}
\item[(1)] $k\le 4$ for $A_n,E_6,E_7,E_8$ and $D_{n+1}(n$ even) singularities;
\item[(2)] $k\le 5$ for $D_{n+1} (n$ odd) and  $D^T_{n+1}$ singularities.
\end{itemize}
For the singularities $D_{n+1} (n$ odd) and  $D^T_{n+1}$, we need a
more refined estimate.

For the singularity $D_{n+1}$ ($n$ odd), we have the isomorphism
$$
(\ch_{D_{n+1},\genj },\star)\cong \milnor_{D_{n+1}}.
$$
Here $\milnor_{D_{n+1}}$ is generated by $\{1,X,\dots,X^{n-1},Y\}$
and satisfies the relations $nX^{n-1}+Y^2\equiv 0$ and $XY\equiv
0$. $X$ and $Y$ are the only primitive forms, and they have complex degrees as follows.
$$
\deg_\C X=\frac{1}{n}\;,\deg_\C Y=\frac{n-1}{2n}.
$$
The basic genus-zero, five-point correlators may have the form $\langle X,
Y, Y,\alpha, \beta\rangle_0$. By the dimension formula
(\ref{eq:dim}) for $k=5$, we have
$$
\deg_\C \alpha+\deg_\C
\beta=\chat+2-\frac{n-1}{n}-\frac{1}{n}=\frac{2n-1}{n}>\frac{2n-2}{n}=2\chat.
$$
This is impossible, since for any element $a$  we have $\deg_\C(a) \le
\chat$. Similarly we can rule out the existence of the basic 5-point
functions of the form $\langle X, X, X,\alpha, \beta\rangle_0$ and $\langle X, X,
Y,\alpha, \beta\rangle_0$. Therefore the only possible basic 5-point
functions have the form $\langle Y, Y, Y,\alpha, \beta\rangle_0$. In
this case, we have the degree formula
$$
\deg_\C \alpha+\deg_\C \beta=\frac{3n+1}{2n}.
$$
Because of the fact that $X\star Y\equiv 0$, and for dimension
reasons, $\alpha$ or $ \beta$ can't contain $Y$. So the only possible
form of the basic five-point correlators are
$$
\langle Y, Y, Y,X^i, X^{\frac{3n+1}{2}-i}\rangle_0,\;i>0.
$$
Using formula (\ref{eq:reconstruct}) with $\alpha=Y,
\beta=X^i,\ve=X$ and $\phi=X^{\frac{3n-1}{2}-i}$, we have
\begin{align*}
&\langle Y, Y, Y,X^i, X^{\frac{3n+1}{2}-i}\rangle_0\\
=&S+\langle Y, Y, Y,X, X^{\frac{3n-1}{2}}\rangle_0+\langle Y, Y,
X\star Y,X^i, X^{\frac{3n-1}{2}-i}\rangle_0\\
-&\langle Y, Y, Y\star X^i, X^{\frac{3n-1}{2}-i},X\rangle_0\\
=&S
\end{align*} This shows that
any basic, genus-zero, five-point correlators can be uniquely determined by
two-, three-, and four-point correlators.

For the $D^T_{n+1}$ singularity, we have the isomorphism
$$
(\ch_{D^T_{n+1}},\star)\cong \milnor_{D_{n+1}}=\C[X,Y]/\langle
nX^{n-1}+Y^2,XY\rangle.
$$
and the degrees for the primitive classes $X$ and $Y$
$$
\deg_\C X=\frac{1}{n}\;\deg_\C Y=\frac{n-1}{2n}.
$$
Hence the reduction from basic five-point correlators to the fewer-point
correlators is exactly the same as for the singularity $D_{n+1}$ with
$n$ odd.
\end{proof}

The Reconstruction Lemma yields more detailed
       information for the basic correlators as well.
   \begin{thm}\label{thm:FourPtReduction}

\

   \begin{enumerate}
   \item All genus-zero correlators in the $A_{n-1}$ case for both our (A-model) and the Saito (B-model) theory  are uniquely
   determined by the pairing, the three-point correlators and a single
   four-point correlator of the form $\corf{}{X,X,X^{n-2}, X^{n-2}}$, where $X$ denotes the primitive class which is the image of $x$ via the Frobenius algebra isomorphism from $\milnor_{A_n} = \C[x]/(x^{n-1})$.
   \item All genus-zero correlators in the $D_{n+1}$ case of our (A-model) theory with maximal symmetry group, and in the $D_{n+1}^T$ case of the Saito (B-model), are uniquely
   determined by the pairing, the three-point correlators, and a single four-point
   correlator of the form: $\corf{}{X, X, X^{2n-2}, X^{2n-2}}$.  Again, $X$ denotes the primitive class which is the image of $x$ via the Frobenius algebra isomorphism from $\milnor_{D_{n+1}} = \C[x,y]/(nx^{n-1}+y^2,xy)$.

   \item All genus-zero correlators in the $D^T_{n+1}$ case of our (A-model) theory, in the $D_{n+1}$ case of our theory with $n$ odd and symmetry group $\genj$, and in the $D_{n+1}$ case of the Saito (B-model) theory are uniquely
   determined by the pairing, the three-point correlators, and four-point correlators of the form $\corf{}{X, X, X^{n-1}, X^{n-2}}$ and $\corf{}{X, X, Y, X^2}$.  The second of these occurs only in the case that $n=3$.
   Here $X$ and $Y$ denote the primitive classes which are the images of $x$ and $y$, respectively, via the Frobenius algebra isomorphism from $\milnor_{D^T_{n+1}} = \C[x,y]/(x^{n-1}y,x^n+2y)$.

   \item In the $E_6$ case of our theory (A-model) with maximal symmetry group, and in the $E_6$ case of the Saito (B-model) theory, all genus-zero correlators are uniquely
   determined by the pairing, the three-point correlators, and the correlators $\corf{}{Y,Y,Y^2,XY^2}$ and   $\corf{}{X,X,XY,XY}$.
   Here $X$ and $Y$ denote the primitive classes which are the images of $x$ and $y$, respectively, via the Frobenius algebra isomorphism from $\milnor_{E_6} = \C[x,y]/(x^2,y^3)$.
   \item In the $E_7$-case of our theory (A-model) with maximal symmetry group, and in the $E_7$ case of the Saito (B-model) theory, all genus-zero correlators are uniquely
   determined by the pairing, the three-point correlators, and
   the correlators   $\corf{}{X,X, X^2,XY}$, $\corf{}{X,Y,X^2,X^2}$,
   and $\corf{}{Y,Y,XY,X^2Y}$. Here $X$ and $Y$ denote the primitive classes which are the images of $x$ and $y$, respectively, via the Frobenius algebra isomorphism from $\milnor_{E_7} = \C[x,y]/(3x^2+y^3,xy^2)$.

   \item In the $E_8$-case of our theory (A-model) with maximal symmetry group, and in the $E_8$  Saito (B-model) theory, all genus-zero correlators are uniquely
   determined by the pairing, the three-point correlators, and by the correlators
   $\corf{}{Y,Y, Y^3,XY^3}$, and  $\corf{}{X,X,X,XY^3}.$
  Here $X$ and $Y$ denote the primitive classes which are the images of $x$ and $y$, respectively, via the Frobenius algebra isomorphism from $\milnor_{E_8} = \C[x,y]/(x^2,y^4)$.

    \end{enumerate}
\end{thm}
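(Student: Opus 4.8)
The plan is to prove all six parts by one mechanism. Lemma~\ref{lm:recon-4point} already reduces every genus-zero correlator of each of these singularities to the pairing, the three-point correlators, and the four-point correlators; what remains is to identify \emph{which} four-point correlators are actually needed, and this is a purely combinatorial consequence of the Reconstruction Lemma, the dimension formula~\eqref{eq:dim}, and the explicit presentations of the Milnor rings. Since the proof of Lemma~\ref{lm:recon-4point} already shows that the pairings, the three-point correlators, and the selection rules match on the A-model and B-model sides under the mirror isomorphisms of Section~\ref{sec:mirror} (and Theorem~\ref{thm:ADEselfdual}), it suffices to carry out the enumeration once, on the A-model (quantum singularity) side, for each of $A_n$, $D_{n+1}$ with $G_W$, $D^T_{n+1}$, $D_{n+1}$ with $n$ odd and group $\genj$, $E_6$, $E_7$, and $E_8$.

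First I would record that, by the Reconstruction Lemma, every genus-zero four-point correlator equals a linear combination of the pairing, three-point correlators, and \emph{basic} four-point correlators $\langle a_1,a_2,a_3,a_4\rangle_0$ with $a_1,a_2$ primitive; and then, by one further application of Equation~\eqref{eq:reconstruct} to any basic four-point correlator whose third insertion is still non-primitive, I would push this to basic four-point correlators with $a_1,a_2,a_3$ all primitive, at the cost only of lower-point terms. The surviving unknowns are then the basic four-point correlators $\langle a_1,a_2,a_3,a_4\rangle_0$ subject to the dimension constraint $\sum_i\deg_\C a_i=\chat+k-3$ with $k=4$, with $a_1,a_2,a_3$ running over the (finitely many) primitive classes and $a_4$ over the ring; the ring relations --- in particular $xy\equiv 0$ in $\milnor_{D_{n+1}}$ and $\milnor_{D^T_{n+1}}$ --- eliminate most choices.

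In the cases where the relevant ring has a single primitive generator $X$ (namely $A_n$, and $D_{n+1}$ with its maximal group, where the state space is a truncated polynomial ring in $X$), the basic four-point correlators are of the form $\langle X,X,X^a,X^b\rangle_0$, and the dimension formula together with the non-vanishing of $X^a$ and $X^b$ forces a unique pair $(a,b)$, giving the single correlator in parts~(1) and~(2). In the two-generator cases ($D^T_{n+1}$, $D_{n+1}$ with $n$ odd and group $\genj$, and $E_6,E_7,E_8$) I would run the same bookkeeping generator-by-generator, just as in the refined degree estimate already used inside the proof of Lemma~\ref{lm:recon-4point}; this rules out all but a short list of admissible patterns, and whenever two survive with the same total degree, a final WDVV identity (again Equation~\eqref{eq:reconstruct}) together with $S_k$-invariance expresses one of them through the other and lower-point data, leaving exactly the representatives named in parts~(3)--(6). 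The appearance of a second correlator $\langle X,X,Y,X^2\rangle_0$ only when $n=3$ in part~(3) is exactly the place where $Y$ cannot be absorbed into a product of strictly-lower-degree primitive classes, so it cannot be eliminated.

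The main obstacle I expect is completeness and non-redundancy of this finite case analysis in the two-generator cases: one must be certain that no admissible basic four-point correlator has been overlooked and that the WDVV relations truly collapse the admissible set down to the stated list rather than to something slightly larger. The delicate instances are $E_6$ and $E_7$, where two independent four-point correlators genuinely survive, so one has to check that WDVV does not express either of them through the other and lower-point data; and the $D_{n+1}$-odd / $D^T_{n+1}$ / $n=3$ triple, where the degree arithmetic is tight. No input from the virtual-cycle axioms (concavity, index zero, the Topological Euler class axiom) is needed for this theorem --- those enter only afterwards, when these four-point correlators are actually evaluated.
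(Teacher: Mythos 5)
Your overall strategy---reduce via Lemma~\ref{lm:recon-4point} to basic four-point correlators, then cut down by the dimension formula and further WDVV relations---is the same as the paper's, and the bookkeeping in the single-generator cases and the ``no concavity/index-zero input needed'' remark are both correct. However, there is a genuine gap in the intermediate step.

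You claim that one further application of Equation~(\ref{eq:reconstruct}) pushes every basic four-point correlator with non-primitive third insertion to a linear combination of correlators with $a_1,a_2,a_3$ \emph{all} primitive plus lower-point data, and you then propose to enumerate only those. This is false, and it contradicts the theorem's own conclusion. First, examining Equation~(\ref{eq:reconstruct}) with $k=4$, the right-hand side contains terms of the form $\corf{}{\ga_1,\alpha\star\ve,\beta,\phi}$ and $\corf{}{\ga_1,\alpha\star\beta,\ve,\phi}$: the second slot $\alpha\star\ve$ or $\alpha\star\beta$ is a product of a primitive with something else and so is generically \emph{not} primitive, which means you have not landed in the ``three primitives'' subspace at all---often you are not even back to basic form. (Trying the obvious decomposition $X^2 = X\star X$ on $\corf{E_7}{X,X,X^2,XY}$ yields exactly this phenomenon; the terms either recombine into $\corf{}{X,X,X^2,XY}$ itself by $S_4$-invariance, or produce new non-basic correlators.) Second, and decisively, a dimension count rules out the proposed normal form. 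In the $A_{n-1}$ case ($\deg_\C X = 1/n$, $\chat = (n-2)/n$), a four-point correlator $\corf{}{X,X,X,a_4}$ would force $\deg_\C a_4 = (2n-7)/n$, which exceeds $\chat$ for all $n > 5$, so no such correlator exists---yet the theorem names $\corf{}{X,X,X^{n-2},X^{n-2}}$, whose third and fourth slots are non-primitive and whose value is genuinely needed. In the $E_7$ case, $\corf{}{X,X,X,X^2Y}$ is the unique four-point correlator with all first three slots primitive (any second $Y$ already pushes $\deg_\C a_4$ past $\chat = 8/9$), but the theorem requires three surviving correlators $\corf{}{X,X,X^2,XY}$, $\corf{}{X,Y,X^2,X^2}$, $\corf{}{Y,Y,XY,X^2Y}$, and the single WDVV relation you obtain, namely $\corf{}{X,X,X,X^2Y} = \corf{}{X,X,X^2,XY} - \corf{}{X,Y,X^2,X^2} + (\text{lower-point})$, does not let you invert to recover the latter two from the former.

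What the paper actually does is stay in the space of basic correlators with exactly the first two slots primitive, enumerate by dimension, and use the concrete ring relations (the vanishing $X\star Y=0$ in the $D$ cases, $X^3=0$ and $XY^2=0$ in $E_7$, etc.) to annihilate specific terms of the WDVV identity; this is what produces the surviving lists in parts~(3)--(6). Your single-generator argument already does the right thing (you write $\langle X,X,X^a,X^b\rangle_0$ with $a,b$ unconstrained, which silently abandons the three-primitive claim), but you would need to apply the same discipline to the two-generator cases: enumerate all basic correlators $\corf{}{\text{prim},\text{prim},\alpha,\beta}$ allowed by $\sum\deg_\C = \chat+1$, then hunt for WDVV identities in which the ring relations kill all but one new term.
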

\begin{proof}   Applying Lemma \ref{lm:recon-4point}, all genus zero correlators are
   uniquely determined by the pairing, three- or four-point correlators. Let's study
   the genus zero four-point correlators in more detail.

   In the $A_{n-1}$ case, $X$ is the only ring generator, and hence
   the only primitive class. It has $\deg_\C X = 1/(n+1)$.  A dimension count shows that the only four-point
   correlator of the form $\corf{}{X, X, \alpha, \beta}$ is $\corf{}{X, X, X^{n-2},
   X^{n-2}}$.

   A similar argument shows that in the $D_{n+1}$ A-model with the maximal symmetry group and $D^T_{n+1}$ B-model cases
  the only basic four-point correlator is  $\corf{}{X,X,X^{2n-2},X^{2n-2}}$.

In the case of the $D^T_{n+1}$ A-model, and for the $D_{n+1}$
A-model for $n$ odd with symmetry group $J$, and for the  $D_{n+1}$
B-model, the central charges are the same, $\chat=\frac{n-1}{n}$, and
all have only two primitive classes $X$ and $Y$ with the same
degrees
$$
\deg_\C X=\frac{1}{n},\;\deg_\C Y=\frac{n-1}{2n}.
$$
Hence the basic four-point correlators are the same for the three
cases. Let's consider the case $D_{n+1}$ A-model for $n$ odd with
symmetry group $J$. There are several cases for the form of the
basic four-point correlators:
\begin{itemize}
\item[Case A:] form $\corf{}{X,X,\alpha,\beta}$. The dimension
formula shows that $\deg_\C\alpha +\deg_C \beta=\frac{2n-3}{n}$. So
the only possibility is $\corf{}{X,X,X^{n-1},X^{n-2}}$.
\item[Case B:] form $\corf{}{X,Y,\alpha,\beta}$. By the dimension
formula, we have $\deg_\C\alpha +\deg_C \beta=\frac{3n-3}{2n}$.
There are two cases:
\begin{itemize}
\item[Case B1:] $\alpha,\beta$ don't contain $Y$. Then the correlator
has the form $\corf{}{X,Y,X^i,X^j}$ for $j>1$. Setting $\alpha=Y,
\beta=X^i,\ve=X^{j-1}, \phi=X$ in  formula
(\ref{eq:reconstruct}), we have
\begin{align*}
&\corf{}{X,Y,X^i,X^j}=S+\corf{}{X,Y,X^{j-1},X^{i+1}}\\
&+\corf{}{X,Y\star X^{j-1},X^i,X}-\corf{}{X,Y\star X^i,X^{j-1},X}\\
&=S+\corf{}{X,Y,X^{j-1},X^{i+1}}=\cdots=S+\corf{}{X,Y,X,X^{i_0}}.
\end{align*}
The dimension formula shows that the only four-point correlator
$\corf{}{X,Y,X,X^{i}}$ does not vanish only if $n=3$ and in
this case $i=2$.
\item[Case B2:] $\alpha,\beta$ contain $Y$. In this case,
$\corf{}{X,Y,\alpha,\beta}$ has the form $\corf{}{Y,Y,X,\beta}$
which can be included in the following Case C.
\end{itemize}

\item[Case C:] form $\corf{}{Y,Y,\alpha,\beta}$. We have the degree
formula $\deg_\C\alpha +\deg_C \beta=1$. There are two cases:
\begin{itemize}
\item [Case C1:] $\alpha,\beta$ don't contain $Y$. We have the form
$\corf{}{Y,Y,X^i,X^j}$ with $j>1$. Let $\alpha=Y, \beta=X^i,
\ve=X^{j-1},\phi=X$ in the formula (\ref{eq:reconstruct}); we obtain
\begin{align*}
&\corf{}{Y,Y,X^i,X^j}=S+\corf{}{Y,Y,X^{j-1},X^{i+1}}\\
&+\corf{}{Y,Y\star X^{j-1},X^i,X}-\corf{}{Y,Y\star X^i,X^{j-1},X}\\
&=S+\corf{}{Y,Y,X^{j-1},X^{i+1}}=\cdots=S+\corf{}{Y,Y,X,X^{n-1}}.
\end{align*}
Now
\begin{align*}
&\corf{}{Y,Y,X,X^{n-1}}=\corf{}{X,Y,Y,X^{n-1}}\\
&=S+\corf{}{X,Y,X,Y\star X^{n-2}}\\
&+\corf{}{X,X\star Y,Y,X^{n-2}}-\corf{}{X,Y^2,X^{n-2},X}\\
&=S.
\end{align*}

\item[Case C2:] $\alpha,\beta$ contain $Y$. The basic correlator has the form
$\corf{}{Y,Y,Y,X^{\frac{n+1}{2}}}$. Similarly, we have
\begin{align*}
&\corf{}{Y,Y,Y,X^{\frac{n+1}{2}}}=S+\corf{}{Y,Y,X,Y\star X^{\frac{n-1}{2}}}\\
&+\corf{}{Y,Y\star X,Y,X^{\frac{n-1}{2}}}-\corf{}{Y,Y^2,X^{\frac{n-1}{2}},X}\\
&=S.
\end{align*}
\end{itemize}
\end{itemize}
In summary, if $n>3$, then the basic four-point correlator is only
$\corf{}{X,X,X^{n-1},X^{n-2}}$; if $n=3$, then the basic four-point
correlators are $\corf{}{X,X,X^{2},X}$ and $\corf{}{X,Y,X,X^{2}}$.

   In the $E_6$ case, the primitive classes are $X, Y$. The dimension
   condition shows that the only four-point correlators with two
   primitive insertions are
   $$\corf{}{Y,Y,Y^2,XY^2},\  \corf{}{X,X,X,XY^2}, \ \corf{}{X,X,XY,XY}.$$
   Applying Equation~(\ref{eq:reconstruct}) and the fact that $X^2=0$, we can reduce
   $\corf{}{X, X,X, XY^2}$ to $\corf{}{X, X, XY, XY}$.

   In the $E_7$ case, the primitive classes are $X$ and $Y$ with $\deg_{\C}X = 1/3$, $\deg_{\C}Y = 2/9$ and and $\chat = 8/9$. The dimension
   condition shows that the only basic four-point correlators are
   $$\corf{}{X,X, X^2,XY},\ \corf{}{X,X, X,X^2Y},\ \corf{}{X,Y,X^2,X^2},\ \corf{}{X,Y,Y^2,X^2Y},\ \corf{}{Y,Y,XY,X^2Y}.$$
   We can use Equation~(\ref{eq:reconstruct}) to further reduce
   $\corf{}{X,X,X, X^2Y}$ to the remaining four, and to reduce $\corf{}{X,Y,Y^2,X^2Y} = \corf{}{Y,X,Y^2,X^2Y}$ to  $\corf{}{Y,Y, XY, X^2Y}$.

   Finally, in the $E_8$ case, a dimension count shows that the only basic four-point correlators are
   $$\corf{}{X,X,X,XY^3},\ \corf{}{X,X,XY,XY^2},\ \corf{}{Y,Y,Y^3,XY^3}.$$
Again Equation~(\ref{eq:reconstruct}) shows that $\corf{}{X,X,XY,XY^2}$ can be expressed in terms of $\corf{}{X,X,X,XY^3}$.
\end{proof}

\subsection{Computation of the basic four-point correlators in the A-model}

\subsubsection{Computing classes in complex codimension one.}

\begin{df}
Let $\bGagkW$ denote the set of all connected single-edged $W$-graphs of genus $g$ with $k$ tails decorated by elements of $\ch_W$.  Further denote by $\bGagkWcut$ the set of all $W$-graphs with no edges (possibly disconnected), but with one pair of tails labeled $+$ and $-$, respectively, such that gluing the tail $+$ to the tail $-$ gives an element of $\bGagkW$.  We furthermore require that the decorations $\ga_+$ and $\ga_-$ satisfy $\ga_+\ga_- = 1$.

Similarly, let $\bGagkW(\ga_1, \dots, \ga_k)$ and $\bGagkWcut(\ga_1, \dots, \ga_k)$ denote the subset of $\bGagkW$ and of $\bGagkW$, respectively, consisting of decorated $W$-graphs with the $i$th tail decorated by $\ga_i$ for each $i\in \{1,\dots,k\}$.

For any graph  $\Gac\in \bGagkWcut$, we denote by $\Ga \in \bGagkW$ the uniquely determined graph in $\bGagkW$ obtained by gluing the two tails $+$ and $-$.  We further denote the underlying undecorated graph by $|\Ga|$  and we denote the closure in $\MM_{g,k}$ of the locus of stable curves with dual graph $|\Ga|$ by $\MM(|\Ga|)$.  Finally, denote the Poincar\'e dual of this locus by $\left[\MM(|\Ga|)\right]\in H^*(\MM_{g,k},\C)$.
\end{df}

\begin{rem}
In genus zero the local group at an edge (or at the tails labelled $+$ and $-$) is completely determined by the local group at each of the tails.
\end{rem}

\begin{thm}\label{thm:ConcaveCodimOne}
Assume the  $W$-structure is concave (that is $\pi_*\left(\bigoplus_{i=1}^t\LL_i\right)=0$) with all marks narrow.
If the $i$th mark is labeled with group element $\gamma_i$, and if the complex codimension $D$ is $1$,
then the class $\Lambda^W_{g,k}(\bone_{\gamma_1}, \dots, \bone_{\gamma_k}) \in H^*(\MM_{g,k},\C)$ is given by the following:
\begin{multline}\label{eq:CodimOneConvex}
\Lambda^W_{g,k}(\bone_{\gamma_1}, \dots, \bone_{\gamma_k}) =
\sum_{\ell=1}^N \left[\left(\frac{q_\ell^2}{2} - \frac{q_\ell}{2} + \frac{1}{12}\right)\kappa_1 -  \sum_{i=1}^k\left(\frac{1}{12} - \frac{1}{2}\Theta_\ell^{\gamma_i}
(1-\Theta_\ell^{\gamma_i})\right) \psi_i \right.\\
\left.+ \frac12\sum_{\Gac \in \bGagkWcut({\ga_1},\dots,{\ga_k})}  \left(\frac{1}{12} - \frac{1}{2}\Theta_\ell^{\gap}
(1-\Theta_\ell^{\gap})\right) \left[\MM(|\Ga|)\right]\right]
\end{multline}
\end{thm}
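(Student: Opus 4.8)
The plan is to reduce the theorem to an orbifold Grothendieck--Riemann--Roch (GRR) computation, parallel to Mumford's computation of the Hodge classes and Chiodo's computation for $r$-spin structures. First I would use concavity: since the $W$-structure is concave and all marks are Neveu--Schwarz, the Concavity Axiom (Axiom~\ref{ax:convex} of Theorem~\ref{thm:main}) gives, on the component $\W_{g,k}(\bgamma)$,
\[
\left[\W_{g,k}(\bgamma)\right]^{vir}=(-1)^{D}c_{D}\!\Bigl(R^{1}\pi_{*}\bigoplus_{\ell=1}^{N}\LL_{\ell}\Bigr)\cap\left[\W_{g,k}(\bgamma)\right],
\]
and since $D=1$ and in the concave case $R\pi_{*}\LL_{\ell}=-R^{1}\pi_{*}\LL_{\ell}$ in $K$-theory, so that $\mu_{1\ell}=Ch_{1}(R\pi_{*}\LL_{\ell})=-c_{1}(R^{1}\pi_{*}\LL_{\ell})$, the right-hand side equals $\sum_{\ell}\mu_{1\ell}\cap\left[\W_{g,k}(\bgamma)\right]$. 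All marks being Neveu--Schwarz, the relative cohomology factors $H_{N_{\gamma_{i}}}$ are canonically $\Q$ and capping with the classes $\bone_{\gamma_{i}}$ is the identity, so the definition of $\Lambda^{W}_{g,k}$ reduces the theorem to computing $\sum_{\ell}\mu_{1\ell}$ on $\W_{g,k}$ and pushing it down to $\MM_{g,k}$.

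Next I would compute $\mu_{1\ell}=Ch_{1}(R\pi_{*}\LL_{\ell})$ on $\W_{g,k}$ by applying orbifold GRR to the universal orbicurve $\pi\colon\cC_{g,k}\to\W_{g,k}$. The inputs are: (i) the relation $c_{1}(\LL_{\ell})=q_{\ell}\,c_{1}(K_{\cC,\log})$ in $H^{2}(\cC_{g,k},\Q)$, which follows from the isomorphisms $W_{j}(\LL_{1},\dots,\LL_{N})\cong K_{\cC,\log}$ together with $\rk B_{W}=N$ and $B_{W}(q_{1},\dots,q_{N})^{T}=(1,\dots,1)^{T}$ --- the same input that forces the classes $\psi_{ij}$ to vanish rationally; (ii) the age of $\LL_{\ell}$ along the $i$th section, which is $\Theta_{\ell}^{\gamma_{i}}$, and along a node $\nu$ of type $\gamma_{\Ga}$, which is $\Theta_{\ell}^{\gamma_{\Ga}}$; and (iii) $\kappat_{1}=\kappa_{1}$ and $K_{\cC,\log}=\varrho^{*}K_{C,\log}$. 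Writing $B_{2}(x)=x^{2}-x+\tfrac16$, so that $\tfrac12B_{2}(x)=\tfrac{x^{2}}{2}-\tfrac{x}{2}+\tfrac1{12}=\tfrac1{12}-\tfrac12x(1-x)$, the degree-one part of the GRR formula reads
\[
\mu_{1\ell}=\tfrac12B_{2}(q_{\ell})\,\kappat_{1}-\sum_{i=1}^{k}\tfrac12B_{2}(\Theta_{\ell}^{\gamma_{i}})\,\psit_{i}+\sum_{\Ga\in\bGagkW(\gamma_{1},\dots,\gamma_{k})}c_{\Ga,\ell}\,\bigl[\W(\Ga)\bigr],
\]
where, in degree one, the cotangent classes at a node do not yet contribute, so $c_{\Ga,\ell}$ equals $\tfrac12B_{2}(\Theta_{\ell}^{\gamma_{\Ga}})$ up to a combinatorial factor involving $|\lgr{\Ga}|$ and the automorphisms of the nodal $W$-curve.

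Finally I would push this forward to $\MM_{g,k}$ and check that all the normalizing constants cancel. Using $m_{i}\psit_{i}=\st^{*}\psi_{i}$ (Proposition~\ref{prp:tautpsi}), $\kappat_{1}=\st^{*}\kappa_{1}$, the projection formula with $\st_{*}[\W_{g,k}(\bgamma)]=\deg(\st)[\MM_{g,k}]$, the degree computations of Propositions~\ref{prp:ramif-tree} and~\ref{prp:ramif-loop} for $\deg(\st_{\Ga})$, and the conventions of $\bGagkW$ (distinct edge-decorations counted separately, a graph identified with its edge-flip), one verifies that the overall factor $\tfrac{|G_{W}|^{g}}{\deg(\st)}$, the factors $m_{i}$, the orders $|\lgr{\Ga}|$ coming from the gluing maps $\widetilde{\rho}_{tree},\widetilde{\rho}_{loop}$, and $\deg(\st_{\Ga})$ all combine to leave coefficient $1$ on each of the $\kappa_{1}$, $\psi_{i}$, and $\bigl[\MM(|\Ga|)\bigr]$ terms; summing over $\ell$ then produces exactly Equation~(\ref{eq:CodimOneConvex}).

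The hard part will be this last bookkeeping together with the node contribution in the GRR step: carrying out orbifold GRR at this level of generality, and in particular pinning down the precise boundary coefficients $c_{\Ga,\ell}$ and showing that after pushforward to $\MM_{g,k}$ they become exactly $\tfrac12B_{2}(\Theta_{\ell}^{\gamma_{\Ga}})$. This is where the ramification of $\st$ along the $\W(\Ga)$, the automorphism groups of nodal $W$-curves, and the $\bGagkW$-conventions must be matched against each other exactly; the bulk ($\kappa_{1}$) and mark ($\psi_{i}$) terms, by contrast, transcribe the classical Mumford/Chiodo computation with essentially no new difficulty.
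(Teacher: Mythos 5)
Your proposal follows essentially the same route as the paper: invoke concavity to reduce $\Lambda^W_{g,k}$ to $\sum_\ell \mu_{1\ell}=\sum_\ell Ch_1(R\pi_*\LL_\ell)$, compute this by applying orbifold GRR to the universal orbicurve using $c_1(\LL_\ell)=q_\ell c_1(K_{\cC,\log})$ (a consequence of the nondegeneracy/maximal-rank condition on $B_W$) together with the ages $\Theta_\ell^{\gamma_i}$ and $\Theta_\ell^{\gamma_\Gamma}$ at marks and nodes, and then push forward via $m_i\psit_i=\st^*\psi_i$. The piece you flag as the hard part --- pinning down the node coefficient and showing the bookkeeping factors cancel --- is exactly where the paper does the extra work: it derives the Bernoulli-polynomial form you anticipate from the identity $\sum_{j=1}^{r-1}\frac{\zeta^{(a+1)j}}{(1-\zeta^j)^2}=\frac{1-r^2}{12}+\frac12 a(r-a)$ (Eq.~(\ref{eq:Ttwoa})), and then absorbs your ``combinatorial factor'' via the relation $\left[\W(\Ga)\right]=\st^*\left[\MM(|\Ga|)\right]/r_\Ga$, so that the residual $r_\Ga$ cancels against the $1/r_\Gamma$ coming from $\pi_*$ along the nodal sector of the inertia stack, leaving coefficient $\frac{1}{12}-\frac12\Theta_\ell^{\gamma_\Ga}(1-\Theta_\ell^{\gamma_\Ga})$ on $\left[\MM(|\Ga|)\right]$ exactly as in the theorem.
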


\begin{proof}
The proof follows from the orbifold Grothendieck-Riemann-Roch (oGRR) theorem \cite{Toen}.  After we finished this paper, we became aware of an elegant alternative treatment of this type of problem by Chiodo \cite{Ch3}. We now review the oGRR theorem in the case we are interested in, namely, orbicurves. For more details on oGRR, see \cite[Appendix A]{Tseng}.

For any $k$-pointed family of stable orbicurves $(\cC \rTo^{\pi} T, \sigma_1, \dots,\sigma_k)$ over a scheme $T$, with $W$-structure $(\LL_1,\dots,\LL_N,\phi_1,\dots,\phi_s)$, if the $W$-structure has type $\bgamma = (\ga_1,\dots,\ga_k)$ then
the inertia stack $\IC = \coprod_{g\in G} \cC_{(g)}$ consists of the following sectors:
$$\IC = \cC \sqcup \coprod_{i=1}^k\coprod_{j = 1}^{r_i-1} \cS_i(\ga_i^{j}) \sqcup \coprod_{\Ga\in\bGa_{g,k,W}(\bgamma)} \coprod_{j = 1}^{r_{\Ga}-1} \cZ_{\Ga}(\ga_{\Ga}^{j}).$$
Here $r_i$ is the order of the element $\ga_i$ and $r_\Ga$ is the order of the element $\ga_\Ga$.  Also, $\cS_i(\ga_i^{j}) := \cS_i$ is the $i$th gerbe-section of $\pi$, that is, the image of $\sigma_i$ with the orbifold structure inherited from $\cC$.  The notation $\cS_i(\ga_i^{j})$ just indicates that this is part of the $\ga_i^{j}$-sector of $\IC$.  Similarly,  $\cZ_{\Ga}(\ga_{\Ga}^{j}) := \cZ_{\Ga}$ is the locus of nodes in $\cC$ with dual graph $\Ga$ lying in the $\ga_{\Ga}^{j}$-sector.  As in the case of marks, the nodal sector $\cZ_{\Ga}$ should be given the orbifold structure it inherits from $\cC$.

Let  $\r:\IC \rTo \cC$ denote the obvious union of inclusions. Furthermore, let $\Ipi:  \IC \rTo T$ denote the composition $\Ipi = \pi \circ \r$.  And let $\rho: K(\IC) \rTo K(\IC)\otimes \C$ denote the Atiyah-Segal decomposition
$$\rho(E) = \sum_{\zeta} \zeta E_{\gamma,\zeta},$$
where for each sector $\cC_{(\ga)}$ the sum runs over eigenvalues $\zeta$ of the action of $\ga$ on $E$, and $E_{\ga,\zeta}$ denotes the eigenbundle of $E$ where $\ga$ acts as $\zeta$.

Define $$\cht = Ch \circ \rho \circ \r^* :K(\cC) \rTo H^*\left(\IC,\C\right)$$ and
$$\tdt(E):= \frac{Td((\r^*E)_1)}{Ch(\rho\circ\lambda_{-1}(\sum_{\zeta\neq 1}(\r^*E)_\zeta)^{\vee})}.$$

The oGRR theorem states that for any bundle $E$ on $\cC$ we have
\begin{equation}
\cht(R\pi_*E) =
\Ipi_*(\cht(E) \tdt(T_{\pi}))
\end{equation}
Writing this out explicitly for one of the $W$-structure bundles $\LL_\ell$ on our $W$-curve $\cC\rTo^{\pi}T$ we have
\begin{multline}\label{eq:OGRRmess}
Ch(\pi_*\LL_\ell \ominus R^1\pi_* \LL_\ell)\\ =
\pi_*(Ch(\LL_\ell)Td(T_{\pi})) + \sum_{i=1}^k\sum_{j=1}^{r_i-1}
\pi_*\left(\frac{\exp\left(2 \pi i \Theta_\ell^{\ga_i^j}
c_1(\r^*\LL_{\ell})\right)}{\left(1-\exp(2 \pi i j q_\ell
c_1(\r^*K))\right)} \right)\\
 + \frac12\sum_{\Gac\in\bGagkWcut(\bgamma)} \sum_{j = 1}^{r_{\Gac}-1}\pi_*\left(\frac{\exp\left(2\pi i
 \Theta_{\ell}^{{\ga^j_+}} c_1(\r^*\LL_{\ell})\right)}{\left(1-\exp(2\pi i j q_{\ell} c_1(\r^*K))
 \right)\left(1-\exp(-2\pi i j q_{\ell} c_1(\r^*K))\right)}\right).
\end{multline}

For our present purposes, we need only compute the codimension-one part of this sum.
Denote the first Chern class of $\LL_\ell$ on $\cC$ by $L_\ell$.
Note that because $\LL_\ell$ is part of the $W$-structure, and because the singularity is nondegenerate (so the matrix $B$ has maximal rank), we have $$L_\ell = c_1(\LL_\ell) = q_\ell K_{\log}.$$

Copying Mumford's argument given in \cite[\S5]{Mum},
one computes that the codimension-one part of the untwisted sector contribution to this sum is
$$\pi_*\left(L^2_\ell/2 - L_\ell K /2 + K^2/12 + \frac{1}{24}\sum_{\Gac\in\bGagkWcut(\bgamma)} i_{\Ga *}(1)\right),$$
where $i_\Ga$ is the inclusion into $\cC$ of the nodes corresponding to the edge of $\Ga$.

For each sector $\cS_i(\ga_i^j)$, the induced map $\pi_*$ is just $\frac{1}{r_i}\sigma_i^*$; therefore, on these sectors we have
$$\pi_*L_\ell = \frac{1}{r_i} \sigma_i^*L_\ell  = \frac{q_\ell}{r_i} \sigma_i^*K_{\log} = 0.$$
Now $\ga_i$ acts on the canonical bundle $K$ at the mark $\cS_i$ by multiplication by  $\xi_i:=\exp(2 \pi i /r_i)$, and it acts on $\LL_\ell$ at $\cS_i$ by $\exp(2 \pi i \Theta_\ell^{\ga_i}) = \xi_i^{a_i}$ for $a_i:=r\Theta_\ell^{\ga_i} \in [0,r_i) \cap \Z$.
Expanding the denominator in Equation~(\ref{eq:OGRRmess}), one sees that the codimension-one part of the contribution from the marks is
\begin{equation}
\sum_{i=1}^k \sum_{j=1}^{r_i-1} \frac{\xi_i^{(a_i+1)j} }{r_i (1-\xi_i^j)^2}
 \psit_i
\end{equation}
Similarly, letting $\xi_\Ga:=\exp(2 \pi i/r_\Ga)$ and choosing $a_\Ga:=r_\Ga\Theta_\ell^{\ga_\Ga} \in [0,r_\Ga)\cap \Z$ so that $\xi_\Ga^{a_\Ga} =\exp(2\pi i \Theta_\ell^{\ga_\Ga})$, one sees that the contribution to Equation~(\ref{eq:OGRRmess}) from the nodes is
\begin{equation}
\frac12\sum_{\Gac\in\bGagkWcut(\bgamma)} \sum_{j=1}^{r_\Gac-1} \frac{-\xi_i^{(a_\Gac+1)j} }{(1-\xi_\Gac^{j})^2} \pi_*(i_{\Ga *}(1))
\label{eq:HTH-roots}
\end{equation}

A long but elementary computation shows that for any primitive $r$th root $\zeta$ of unity and any $a\in[0,r)\cap\Z$, we have\footnote{We would like to thank H.~Tracy Hall for showing us this relation.}
\begin{equation}\label{eq:Ttwoa}
\sum_{j=1}^{r-1} \frac{\zeta^{(a+1)j} }{(1-\zeta^j)^2} = \frac{1 - r^2}{12} + \frac12 a (r-a).
\end{equation}

Using the definition $\kappa_1 = \pi_* (c_1(K_{\log}))^2 = \pi_*(c_1(K))^2 + \sum_{i=1}^k \psit_i$, together
with Equation~(\ref{eq:Ttwoa}) and the fact
that $a_i/r_i = \Theta_\ell^{\ga_i}$ and   $a_\Ga/r_\Ga = \Theta_\ell^{\ga_\Ga}$, we now have
\begin{align*}
&Ch(\pi_*\LL_\ell \ominus R^1\pi_* \LL_\ell)\\
=& \left(\frac{q_\ell^2}{2} - \frac{q_\ell}{2}   + \frac{1}{12}
\right) \kappa_1 - \sum_{i=1}^k \frac{\psit_i}{12} +
\sum_{\Ga\in\bGagkW(\bgamma)} \frac{\pi_*i_{\Ga *}(1)}{12}
-\sum_{i=1}^k \frac{r_i}{12}\left(\frac{1}{r_i^2} - 1 + 6\,
\Theta_\ell^{\ga_i}(1-\Theta_\ell^{\ga_i})\right)
 \psit_i \\
-& \frac12\sum_{\Gac\in\bGagkWcut(\bgamma)}
\frac{r_\Gac^2}{12}\left(\frac{1}{r_\Gac^2} - 1 + 6
\Theta_\ell^{{\gap}}(1-\Theta_\ell^{{\gap}})\right) \pi_*i_{\Ga
*} (1)\\
=& \left(\frac{q_\ell^2}{2} - \frac{q_\ell}{2} +
\frac{1}{12}\right)\kappa_1 -  \sum_{i=1}^k\left(\frac{1}{12} -
\frac{1}{2}\Theta_\ell^{\gamma_i} (1-\Theta_\ell^{\gamma_i})\right)
\psi_i\\
+& \frac12\sum_{\Gac \in \bGagkWcut(\bgamma)}  r_\Gac \left(\frac{1}{12} -
\frac{1}{2}\Theta_\ell^{{\gap}} (1-\Theta_\ell^{{\gap}})\right)
\left[\W(\Ga)\right],
 \end{align*}
where the last equality follows from the fact that $\psit_i = \psi_i/r_i$ and $\pi_* i_{\Ga *}(1) = \left[\W(\Ga)\right]/r_\Gac$.

Finally, in the concave case, we have $\pi_*(\LL_\ell)=0$, so
pushing down to $\MM_{g,k}$ gives
\begin{align*}
&\Lambda^W_{g,k}(\bone_{\ga_1},\dots,\bone_{\ga_k}) \\
&= \frac{1}{\deg(\st)} \sum_{\ell=1}^N \st_*c_1(-R^1\pi_* \LL_\ell) \\
&= \sum_{\ell=1}^N \left[\left(\frac{q_\ell^2}{2} - \frac{q_\ell}{2}
+ \frac{1}{12}\right)\kappa_1 - \sum_{i=1}^k\left(\frac{1}{12} -
\frac{1}{2}\Theta_\ell^{\gamma_i} (1-\Theta_\ell^{\gamma_i})\right)
\psi_i\right.\\
+&\left. \frac12\sum_{\Gac \in \bGagkWcut(\bgamma)}  \left(\frac{1}{12} -
\frac{1}{2}\Theta_\ell^{{\gap}} (1-\Theta_\ell^{{\gap}})\right)
\left[\MM(|\Ga|)\right]\right],
\end{align*}
since $\kappa_1$ and $\psi_i$ on $\W_{g,k}(W)$ are equal to the pullbacks $\st^*\kappa_1$ and $\st^*\psi_i$, respectively, and $\left[\W(\Ga)\right] =  \st^*\left[\MM(|\Ga|)\right]/r_\Ga$.
\end{proof}

\subsubsection{Four-point correlators for $E_7$}

Now we compute the genus-zero four-point correlators for $E_7$ with symmetry group $G_{E_7} = \genj$.  We will continue to use the notation of Subsection~\ref{sec:EsevenFA}.   By Theorem~\ref{thm:FourPtReduction}(5) we need only compute the following correlators to completely determine the Frobenius manifold, and thereby the entire cohomological field theory:
$$\corf{E_7}{Y,Y, XY, X^2Y}, \corf{E_7}{X, Y, X^2, X^2}, \corf{E_7}{X, X, X^2, XY}$$
We use the identification of $X, Y$ with the A-model classes from last section. To simplify the notation, we choose $\alpha=1$ instead of
$\alpha^8=\frac{1}{9}$. Later, we will re-scale the primitive form to take care of discrapency between the pairing.
These correspond to the correlators
$$\langle \bone_2 , \bone_5 , \bone_5 , \bone_8 \rangle_{0}^{E_7},\
\langle \bone_4 , \bone_4 , \bone_5 , \bone_7 \rangle_{0}^{E_7},\
\langle \bone_2 , \bone_4 , \bone_7 , \bone_7 \rangle_{0}^{E_7}.$$
These are all concave and have only narrow markings,  so we may use Theorem~\ref{thm:ConcaveCodimOne} to compute them. To apply that theorem, we need to use the fact that  $$\int_{\MM_{0,4}} \kappa_1 = \int_{\MM_{0,4}} \psi_i = \int_{\MM_{0,4}} \left[\MM(|\Ga|)\right]=1$$ for every $i\in\{1,\dots,4\}$ and every graph $\Ga\in\bGa_{0,4}$.  We also need to compute the group element $\ga_\Ga$ for each of the four-pointed, genus-zero, decorated $W$-graphs.  This is uniquely determined by the fact that the sum of the powers of $J$ on each three-point correlator must be congruent to $1 \mod 9$.  We  will work out the details in the case of  $\langle \bone_2 , \bone_4 , \bone_7 , \bone_7 \rangle_{0}^{E_7}$---the others are computed in a similar manner.

There are three graphs in $\bGa_{0,4,E_7}(J^2 , J^4 , J^7 , J^7)$; the first we will denote by $\Ga_1$ and is depicted in Figure~\ref{fig:Corr-tfss}.  There are two cut graphs $\Ga_{1,\mathrm{cut}}, \Ga'_{1,\mathrm{cut}}\in \bGa_{0,4,E_7,\mathrm{cut}}(J^2 , J^4 , J^7 , J^7)$ that glue to give the graph $\Ga_1$. These have the tail $+$ labeled with $\ga_{+} = J^{4}$ and the tail $-$ labeled with $\ga_{-} = J^{5}$, or in the second case, $\ga_{+} = J^{5}$ and $\ga_{-} = J^{4}$.  The formula gives the same result for each of these two cases, canceling the factor of $\frac12$ outside the sum for this term.

The other two graphs are both decorated as in Figure~\ref{fig:Corr-tsfs}.  We will abuse notation and denote both of them by $\Ga_2$ and simply count the contribution of $\Ga_2$ twice.  The edge of $\Ga_2$ is labeled with $\ga_{+} = J$ or $\ga_+ = J^8$, and again, the contribution to the formula from these two choices is identical and cancels the factor of $\frac12$ outside the sum.
\begin{figure}
\includegraphics{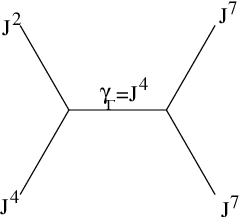}
\caption{The graph $\Ga_1 \in \bGa_{0,4,E_7}(J^2 , J^4 , J^7 , J^7)$. \label{fig:Corr-tfss}}
\end{figure}
\begin{figure}
\includegraphics{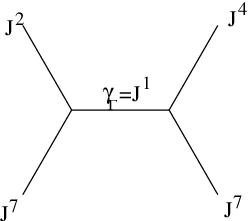}
\caption{The graph $\Ga_2$.  Two of the three graphs in $\bGa_{0,4,E_7}(J^2 , J^4 , J^7 , J^7)$ are decorated as in this figure.\label{fig:Corr-tsfs}}
\end{figure}

Now, it is easy to check that the degree of $\LL_x$ is $-1$, so $R^1\pi_*\LL_x = 0$ and this will not contribute to the correlator.  We have
\begin{align*}
&\langle X, X, X^2, XY\rangle_{0}^{E_7}= \int_{\MM_{0,4}}
\Lambda^{E_7}_{0,4}(\bone_2 , \bone_4 , \bone_7 , \bone_7)\\
&= \left(\frac{q_y^2}{2} - \frac{q_y}{2} +
\frac{1}{12}\right) -  \sum_{i=1}^4\left(\frac{1}{12} -
\frac{1}{2}\Theta_y^{\gamma_i} (1-\Theta_y^{\gamma_i})\right) +
\left(\frac{1}{12} - \frac{1}{2}\Theta_y^{\ga_{\Ga_1}}
(1-\Theta_y^{\ga_{\Ga_1}})\right)\\
 &+ 2 \left(\frac{1}{12} -
\frac{1}{2}\Theta_y^{\ga_{\Ga_2}}
(1-\Theta_y^{\ga_{\Ga_2}})\right) \\
&= \left(\frac{q_y^2}{2} - \frac{q_y}{2}\right) +
\left(  \frac{1}{2}\Theta_y^{J^{2}}
(1-\Theta_y^{J^{2}})\right)
+\left(  \frac{1}{2}\Theta_y^{J^{4}}
(1-\Theta_y^{J^{4}}\right)
+2\left(  \frac{1}{2}\Theta_y^{J^{7}}
(1-\Theta_y^{J^{7}})\right)\\
& \qquad-  \left(  \frac{1}{2}\Theta_y^{{J^{4}}}
(1-\Theta_y^{J^{4}})\right)
- 2 \left(\frac{1}{2}\Theta_y^{J^{1}}
(1-\Theta_y^{J^{1}})\right) \\
&= \left(\frac{4}{2\cdot 81} - \frac{2}{2\cdot9}\right) +  \left( \frac{1}{2}\cdot\frac49\cdot
\frac59\right)
+ \left( \frac{1}{2}\cdot\frac89
\cdot\frac19\right)
+  2 \left( \frac{1}{2}\cdot\frac59
\cdot\frac49\right)  \\
& \qquad -
 \left( \frac{1}{2}\cdot\frac89\cdot
\frac19\right)
-
2 \left( \frac{1}{2}\cdot\frac29
\cdot\frac79\right)
\\
&=\frac{1}{9}.
\end{align*}
And similar computations show that
$$\corf{E_7}{X, Y, X^2, X^2} = -\frac {1}{9},$$ 
and 
$$\corf{E_7}{Y,Y,XY,X^2 Y} = \frac13.$$

\subsubsection{Four-point correlators for $E_6$}

By Theorem~\ref{thm:FourPtReduction} we need only compute the correlators $\corf{E_6}{Y, Y, Y^2, XY^2}$
and $\corf{E_6}{X, X, XY, XY}$. Here, again, we choose $\alpha=1$.
These correspond to the correlators $\corf{E_6}{\bone_{10},\bone_{10},\bone_{7},\bone_{11}}$ and  $\corf{E_6}{\bone_{5},\bone_{5},\bone_{2},\bone_{2}}$.

The correlators in question are easily seen to be concave.
Applying Theorem~\ref{thm:ConcaveCodimOne} in a manner similar to the previous computations, we find that
$$\corf{E_6}{Y, Y, Y^2, XY^2}=\corf{E_6}{\bone_{10},\bone_{10},\bone_{7},\bone_{11}} = \frac{1}{4}$$
and
$$\corf{E_6}{X, X, XY, XY}=\corf{E_6}{\bone_{5},\bone_{5},\bone_{2},\bone_{2}} = \frac{1}{3}.$$

\subsubsection{Four-point correlators for $E_8$}

By Theorem~\ref{thm:FourPtReduction} we need only compute the correlators $\corf{}{Y, Y, Y^3, XY^3}$
and $\corf{}{X, X, X, XY^3}$ with $\alpha=1$.
These correspond to the correlators $\corf{}{\bone_{7},\bone_{7},\bone_{4},\bone_{14}}$ and  $\corf{}{\bone_{11},\bone_{11},\bone_{11},\bone_{14}}$.

The correlators in question are easily seen to be concave.
Applying Theorem~\ref{thm:ConcaveCodimOne} in a manner similar to the previous computations, we find that
$$\corf{E_8}{Y, Y, Y^3, XY^3}= \corf{E_8}{\bone_{7},\bone_{7},\bone_{4},\bone_{14}} = \frac{1}{5}$$
and
$$\corf{E_8}{X, X, X, XY^3}=\corf{E_8}{\bone_{11},\bone_{11},\bone_{11},\bone_{14}} = \frac{1}{3}.$$

\subsubsection{Four-point correlators for $D_{n+1}$ with $n$ odd and symmetry group $\genj$}

Next consider the case of $D_{n+1}$ for $n$ odd with symmetry group $\genj$.  We will use the notation of Subsection~\ref{sec:DnOddFA} but with
$\sigma=1$ instead.  By Theorem~\ref{thm:FourPtReduction} we need only compute the correlator
$$\corf{D_{n+1}}{X, X, X^{n-1}, X^{n-2}}
    =\corf{D_{n+1}}{\bone_3,\bone_3,-2\bone_3^{n-1},-2\bone_3^{n-2}}.$$
To apply Theorem~\ref{thm:ConcaveCodimOne} we need only compute the group element acting at the node over the three boundary graphs.

There are three uncut graphs in $\bGa_{0,4,D_{n+1}}(\bone_3 , \bone_3 , \bone_{n-1} , \bone_{n-3})$.   The first we will denote by $\Ga_1$ and it is depicted in Figure~\ref{fig:DCorr-ttf}.  As before, the choice of labeling the internal edge with $+$ and $-$ gives each term in the sum twice and will exactly cancel the factor of $\frac12$ in each case.  The edge of $\Ga_1$ is labeled with $\ga_{\Ga_1} = J^{-a}$ for
$a = n-5$, assuming $n>3$.
This gives
\begin{equation*}
\Theta_x^{\ga_{\Ga_1}}(1-\Theta_x^{\ga_{\Ga_1}}) = \frac{5(n-5)}{n^2}\dsand
\Theta_y^{\ga_{\Ga_1}}(1-\Theta_y^{\ga_{\Ga_1}}) = \frac{n^2-25}{4n^2}.
\end{equation*}

The other two graphs are both decorated as in Figure~\ref{fig:Dcorr-tno}.  We will abuse notation and denote both of them by $\Ga_2$ and simply count the contribution of $\Ga_2$ twice.  The edge of $\Ga_2$ is labeled with $\ga_{\Ga_2} = J^{(n-1)}$.
\begin{figure}
\includegraphics{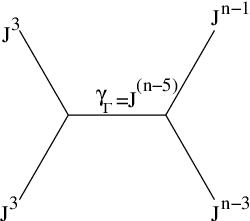}
\caption{\label{fig:DCorr-ttf} The graph $\Ga_1 \in \bGa_{0,4,D_{n+1}}(J^3 , J^3 , J^{n-1} , J^{n-3})$.}
\end{figure}
\begin{figure}
\includegraphics{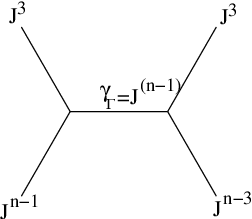}
\caption{\label{fig:Dcorr-tno} The graph $\Ga_2$.  Two of the three graphs in $\bGa_{0,4,D_{n+1}}(J^3 , J^3 , J^{n-1} , J^{n-3})$ are decorated as in this figure.}
\end{figure}
This gives
$$\Theta_x^{\ga_{\Ga_2}}(1-\Theta_x^{\ga_{\Ga_2}}) = \frac{n-1}{n^2}\dsand
\Theta_y^{\ga_{\Ga_2}}(1-\Theta_y^{\ga_{\Ga_2}}) = \frac{n^2-1}{4n^2}.
$$
Putting these into Equation~(\ref{eq:CodimOneConvex}) gives
$$\corf{D_{n+1}}{\bone_3,\bone_3,\bone_3^{n-1},\bone_3^{n-2}} = 1/n, \dsand \corf{D_{n+1}}{X, X, X^{n-1}, X^{n-2}}  = 1/n. $$

In the case of $n=3$ we have to compute the correlators $\corf{D_{4}}{X, X, X^{2}, X} = \corf{D_{4}}{x\bone_3,x\bone_3,\bone_2/6,x\bone_{3}}$ and $\corf{D_{4}}{X, X, Y,X^{2}}  = \corf{D_{4}}{x\bone_3,x\bone_3,y\bone_0, \bone_2/6}$.  Unfortunately, because of the broad sectors, we cannot use the standard tools for computing these correlators.

\subsubsection{Four-point correlators for $D_{n+1}$ with maximal symmetry group.}

By Theorem~\ref{thm:FourPtReduction} we need only compute the correlator $\corf{D_{n+1}}{X,X,X^{2n-2},X^{2n-2}}$. Here, we use the corresponding notation
 from Section~\ref{sec:mirror}
  with $\alpha=1$. This corresponds to the correlator $\corf{D_{n+1}}{\bone_{n+2},\bone_{n+2},\bone_{n-1},\bone_{n-1}}$.

By Equation~(\ref{eq:sel-rule}) we compute that the degrees of the structure bundles are
$\deg(|\LL_x|) =-2$ and $\deg(|\LL_y|) =-1$.  This shows that the correlator is concave and that $R^1\pi_*\LL_y = 0$, so the $y$ terms makes no contribution to that correlator.

To apply Theorem~\ref{thm:ConcaveCodimOne} we need to know (using Equation~(\ref{eq:DnThetax})) that
$$\Theta_x^{\lambda^{n+2}} = 2/n \quad \text{ and } \quad \Theta_x^{\lambda^{n-1}} = (n-1)/n.$$
We also need to compute the contribution of the different boundary (nodal) terms.  It is easy to check, in the same manner as we did in the case of $\corf{E_7}{\bone_2 , \bone_4 , \bone_7 , \bone_7}$,
 that there is one graph $\Ga_1$ with $\Theta_x^{\ga_{\Ga_1}} = (n-3)/n$
 and two copies of a graph $\Ga_2$ with $\Theta_x^{\ga_{\Ga_2}}= 0$.

By Theorem~\ref{thm:ConcaveCodimOne} we have
\begin{align*}
&\corf{D_{n+1}}{\bone_{n+2},\bone_{n+2},\bone_{n-1},\bone_{n-1}}=\int_{\MM_{0,4}}
 \Lambda_{0,4}^{D_{n+1}}(\bone_{n+2},\bone_{n+2},\bone_{n-1},\bone_{n-1})\\
&= \cdot  \frac12\Biggl(\Bigl(\frac{1}{n^2}
-\frac1n\Bigr)\int_{\MM_{0,4}}
\kappa_1 + \sum_{i=1}^4 \Theta_x^{\ga_i}(1-\Theta_x^{\ga_i}) \int_{\MM_{0,4}}\psi_i \\
&-\sum_{\Ga\in\bGa_{0,4,E_7}(\bone_{n+2},\bone_{n+2},\bone_{n-1},\bone_{n-1})}^4 \Theta_x^{\ga_\Ga}
(1-\Theta_x^{\ga_\Ga}) \int_{\MM_{0,4}}\left[\MM(|\Ga|)\right]\Biggr)\\
&=\frac{1}{2}\left(\frac{1}{n^2} -  \frac1n +
2\frac{2}{n}\frac{n-2}{n} +  2\frac{1}{n}\frac{n-1}{n}   -
\frac{3}{n}\frac{n-3}{n}\right) = \frac{1}{n}
\end{align*}
This gives
$$
\corf{D_{n+1}}{X,X,X^{2n-2},X^{2n-2}}= \corf{D_{n+1}}{\bone_{n+2},\bone_{n+2},\bone_{n-1},\bone_{n-1}}=\frac{1}{n}.
$$

\subsubsection{Four-point correlators for $D^T_{n+1}$}

By Theorem~\ref{thm:FourPtReduction} we need only compute the correlator $\corf{D^T_{n+1}}{X,X,X^{n-1},X^{n-2}}$. Here, we choose $\sigma=1$.  This corresponds to the correlator $\corf{D^T_{n+1}}{\bone_{3},\bone_{3},\bone_{2n-1},\bone_{2n-3}}$.

A now-familiar computation shows that the correlator in question is concave (and all markings are narrow), so we may apply Theorem~\ref{thm:ConcaveCodimOne}.  Applying that theorem in a manner similar to the previous computations, we find that $$\corf{D^T_{n+1}}{X,X,X^{n-1},X^{n-2}}=\corf{D^T_{n+1}}{\bone_{3},\bone_{3},\bone_{2n-1},\bone_{2n-3}} = \frac{1}{2n}.$$

\subsection{Computation of the basic four-point correlators in the B-model}

The primary potentials on Saito's Frobenius manifolds of the $A,D,E$
singularities have been computed by a variety of computational methods
(see \cite{DV,NY,Wi2,KTS} and etc.). However, these results are
scattered in different papers and are difficult to follow. For the reader's convenience,
we present  explicit computations of the basic four-point correlators using the Noumi-Yamada formula for the flat coordinates
 of the A, D, and E singularities \cite{No1, NY}.
Recall that the primitive forms
for the ADE-singularities are $C\,dx$ for $A_n$-case and $C\, dx\wedge dy$ for the $DE$-cases. The calculation of the
flat coordinates does not depend on the leading constant $C$, but the pairing and potential function will be re-scaled by $C$.

\subsubsection{The Noumi-Yamada formula for flat coordinates}

To write the Noumi-Yamada formula for the flat coordinates, we must first make several definitions.
\begin{df}
Let $\N$ be the following set of exponents for a monomial basis of the Milnor ring $\milnor_W$:
$$\N:=\begin{cases}
\{\nu \in \Nb\suth    0\le \nu \le n-1\} & \text{if $W=A_n$ }\\
\{(\nu_1,0) \in \Nb^2\suth     0\le \nu_1 \le n-2\} \union \{(0,1)\} & \text{if $W=D_{n}$}\\
\{(\nu_1,\nu_2) \in \Nb^2\suth     0\le \nu_1 \le 2, \, 0\le \nu_2\le 1 \}  & \text{if $W=E_6$}\\
\{(\nu_1,\nu_2) \in \Nb^2\suth     0\le \nu_1 \le 2, \, 0\le \nu_2\le 1 \}\union \{0,2\}  & \text{if $W=E_7$}\\
\{(\nu_1,\nu_2) \in \Nb^2\suth     0\le \nu_1 \le 3, \, 0\le \nu_2\le 1 \}  & \text{if $W=E_8$}
\end{cases}
$$
\end{df}

For each $\nu\in \N$ we let $\phi_\nu = \bx^\nu$ be the corresponding monomial in $\milnor_W$.  Recall that a miniversal deformation of $W$ is a family of polynomials
$W_{\lambda}=W+\sum_{\nu\in\N} t_\nu\phi_\nu$.  We want to find flat coordinates $\{s_\nu\}$ with the property $\langle s_\nu, s_\upsilon \rangle=\delta_{\nu\upsilon}$.
One can formally write $s_\nu$ in terms of power series in $t_\upsilon$. One special property of the simple singularities is that
the $s_\nu$ are always a polynomial, but this is not true for general singularities.

\begin{df}
For $W\in \C[x_1,\dots,x_N]$ quasi-homogeneous, with the weight of each variable $x_i$ equal to $q_i$, and for any $\nu\in \N$ we define the \emph{weight of $s_\nu$} to be $$\sigma_\nu:=\wt(s_\nu):=1-\sum_{i=1}^N \nu_i q_i.$$  For any $\alpha\in \Nb^\N$ we define the \emph{weight of $\alpha$} to be
$$\wt(\alpha) :=\langle \alpha,\sigma\rangle :=\sum_{\nu \in \N} \alpha_\nu \sigma_\nu$$

We also define a mapping
$\ell:\Nb^\N\to \Nb^N$ by
$$\ell(\alpha):=\sum_{\nu\in \N}\nu \alpha_\nu \in \Nb^N.$$
\end{df}

\begin{thm}(See \cite[Thm 1.1]{NY})
The formula for the flat coordinates for the simple singularities with primitive form $\bigwedge_{i=1}^N dx_i$ is as
follows:
\begin{equation}
s_\nu=t_0\delta_{\nu,0}+\sum_{\substack{\alpha\in \Nb^\N \\ \langle
\sigma,\alpha\rangle=\sigma_\nu}}c_\nu(\ell(\alpha))\frac{t^\alpha}{\alpha!},
\end{equation}
where the function $c_\nu:\Nb^N\rTo \C$ is given below.
\end{thm}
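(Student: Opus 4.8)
The plan is to verify that the series on the right-hand side satisfies the properties that characterize Saito's flat coordinates for the chosen primitive form $C\bigwedge_{i=1}^N dx_i$, and then to conclude by the uniqueness of such coordinates. Recall that a collection $\{s_\nu\}_{\nu\in\N}$ of functions on the miniversal deformation space, normalized so that the Jacobian matrix $(\partial s_\nu/\partial t_\mu)$ equals the identity at $\lambda=0$, is a flat coordinate system precisely when: (i) assigning $t_\mu$ the weight $\sigma_\mu$, each $s_\nu$ is quasi-homogeneous of weight $\sigma_\nu$; and (ii) the residue metric $\eta_{\mu\nu}:=\langle\partial_{s_\mu}W_\lambda,\partial_{s_\nu}W_\lambda\rangle_\lambda$ on $\milnor_{W_\lambda}$ is independent of $\lambda$. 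In the one situation where two of the weights $\sigma_\mu$ coincide --- namely $D_n$ with $n$ even, where $\sigma_{(0,1)}=\sigma_{((n-2)/2,\,0)}$ --- the identity normalization must be supplemented by a choice of splitting inside the degenerate graded piece, and this choice is exactly the one recorded in the definition of $c_\nu$ given below.

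First I would dispose of (i). Since $c_\nu$ depends on $\alpha$ only through $\ell(\alpha)$ and $\ell$ is additive, the summation constraint $\langle\sigma,\alpha\rangle=\sigma_\nu$ makes quasi-homogeneity of the right-hand side immediate; and the normalization reduces to the initial conditions $c_\nu(\ell(\be_\nu))=1$ together with $c_\nu(\ell(\be_\mu))=0$ whenever $\mu\neq\nu$ and $\sigma_\mu=\sigma_\nu$, both of which are built into the definition of $c_\nu$ below.

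The substance of the argument is (ii). Here I would use the fact, recalled in the text, that for an ADE singularity the primitive form may be taken to be a constant multiple of the standard volume form; by Saito's theory \cite{S} the flat coordinates then admit a description as (suitably normalized) period integrals of that form over a basis of vanishing cycles of $W_\lambda$. Expanding $W_\lambda=W+\sum_\mu t_\mu\phi_\mu$ and the relevant period as a formal power series in the $t_\mu$, each coefficient becomes an iterated residue at $\bx=0$ of a rational form built from $W$ and the monomials $\phi_\mu$, and evaluating these residues for the specific ADE polynomial (following \cite{No1,NY}) produces precisely the coefficient $c_\nu(\ell(\alpha))$. For $W=A_n=x^{n+1}$ this reduces to the classical Newton identities: the flat coordinates are, up to the normalization above, the power-sum-type coefficients attached to $W_\lambda$, and $c_\nu$ is the attendant multinomial coefficient. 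For $W=D_n$ and $W=E_6,E_7,E_8$ one exploits the near-product structure of the Jacobian ideal $\Jac(W)$ and evaluates the residues recursively in the weight $\sigma_\nu$; organizing this recursion is what produces the function $c_\nu$, and the verification of (ii) becomes a matter of matching the coefficients so obtained against the recursive definition of $c_\nu$.

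The main obstacle is the $D$ and $E$ part of step (ii): unlike $A_n$, where everything collapses to one-variable Newton identities, the exceptional and $D$-type polynomials have genuinely two-variable Jacobian ideals, so the residue computations do not telescope and the combinatorial bookkeeping is delicate --- this is what forces the somewhat intricate form of $c_\nu$ that follows. Should the direct residue evaluation become unwieldy, I would instead establish (ii) indirectly: differentiate $\eta_{\mu\nu}$ under the residue sign, use $\partial_{t_\mu}W_\lambda=\phi_\mu$ together with the chain rule expressing $\partial_{s_\mu}$ through the inverse of the Jacobian of the substitution $t\mapsto s(t)$, and thereby reduce the constancy of $\eta_{\mu\nu}$ to a closed system of identities among the $c_\nu$, which can then be checked directly against their recursive definition.
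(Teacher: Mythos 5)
The paper does not prove this theorem; it states it as a quotation of \cite[Thm~1.1]{NY} and defers the proof entirely to Noumi--Yamada (and Noumi's earlier paper \cite{No1}). So there is no proof in the paper to compare against --- only a citation.

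Your sketch correctly identifies the general strategy that the cited references use: characterize Saito's flat coordinates by quasi-homogeneity (with the usual normalization of the Jacobian at $\lambda=0$, refined to break the $D_n$, $n$ even, degeneracy $\sigma_{(0,1)}=\sigma_{((n-2)/2,0)}$) together with constancy of the residue metric, and then compute them via period integrals of the primitive form $C\,dx_1\wedge\cdots\wedge dx_N$ expanded in powers of the $t_\mu$. Your observation that (i) reduces to $c_\nu(\ell(\be_\nu))=1$, $c_\nu(\ell(\be_\mu))=0$ for $\mu\neq\nu$ of equal weight, plus additivity of $\ell$, is correct and is a clean way to package the quasi-homogeneity check.

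The genuine gap is that step (ii) is never actually carried out, and it is the entire content of the theorem. You write that the residue expansion ``produces precisely the coefficient $c_\nu(\ell(\alpha))$'' and that one should ``match the coefficients so obtained against the recursive definition of $c_\nu$,'' but no such matching is performed, and the claimed recursion for $c_\nu$ is not derived. For $A_n$ the reduction to one-variable identities is plausible and essentially classical, but for $D_n$, $E_6$, $E_7$, $E_8$ --- exactly the cases used in Section~6 of the paper --- the Pochhammer-symbol structure of $c_\nu$ (with the fractional parameters $\tfrac{\nu_1+1}{n-1}-\tfrac{\nu_2+1}{2(n-1)}$, $\tfrac{\nu_2+1}{3}-\tfrac{\nu_1+1}{9}$, etc.) comes from a nontrivial evaluation of Gauss--Manin periods at infinity that is the substance of \cite{No1,NY}. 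Asserting that the residue bookkeeping ``forces'' this form of $c_\nu$ is a restatement of the theorem, not a proof of it. The proposed fallback --- differentiating $\eta_{\mu\nu}$ under the residue sign and reducing constancy to identities among the $c_\nu$ --- would, if executed, give an independent verification, but again none of the identities are written down or checked. As it stands the proposal is a correct road map pointing at the cited literature, not a proof; the paper is at least honest about this by simply citing the result.
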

\begin{itemize}
\item[\textbf{Case $(A_n)$}]
For any $\nu \in \N = \{0,1,\cdots,n-1\}$ let $L(\nu):=\{\alpha\in \Nb\suth    \alpha\equiv\nu \mod
(n+1)\}=\{\nu+k(n+1)\suth    k\ge 0\}$.  Define $$ c_\nu(\alpha)=
\begin{cases}
    (-1)^k(\frac{\nu+1}{n+1};k) &\text{if $\alpha\in L(\nu)$}\\
        0 & \text{otherwise,}
\end{cases}
$$
where $(z;k):=\Gamma(z+k)/\Gamma(z)$ denotes the shifted factorial  function.
\item[\textbf{Case $(D_n)$}]
For any $\nu\in \N$ let
$L(\nu):= \Nb^2 \cap (\nu+\spa{ (n-1,0),(1,2)}) \linebreak[4]
=  \left\{(\nu_1+k_1(n-1)+k_2 ,\linebreak[2] \nu_2+2k_2\suth  \linebreak[3]   k_2 \ge 0 , \, k_1 \ge -(\nu_1+k_2)/(n-1) \right\}.$
Now define
$$
c_\nu(\alpha):=\begin{cases}
(-1)^{k_1+k_2}(\frac{\nu_1+1}{n-1}-\frac{\nu_2+1}{2(n-1)};k_1)(\frac{\nu_2+1}{2};k_2) & \text{if $\alpha \in L(\nu)$}\\
0 & \text{otherwise,}
\end{cases}
$$
where $(z;k):=\Gamma(z+k)/\Gamma(z)$ denotes the shifted factorial  function.

\item[\textbf{Case $(E_6)$}]
For any $\nu \in \N=\{(\nu_1,\nu_2)\suth \nu_1=0,1,2,\, \nu_2=0,1\}$ let
$L(\nu):=\{(\alpha_1,\alpha_2)\in \Nb^2\suth \alpha_1\equiv
\nu_1\mod 4, \alpha_2\equiv \nu_2\mod 3\}=\{(\nu_1+4k_1,\nu_2+3k_2)\suth k_1,k_2\ge 0\}$.
Now define
$$
c_{\nu}(\alpha):=\begin{cases}
  (-1)^{k_1+k_2}(\frac{\nu_1+1}{4};k_1)(\frac{\nu_2+1}{3};k_2) & \text{if $\alpha\in
  L(\nu)$}\\
  0 &\text{otherwise,}
\end{cases}
$$
where $(z;k):=\Gamma(z+k)/\Gamma(z)$ denotes the shifted factorial  function.

\item[\textbf{Case $(E_7)$}]
For any $\nu \in \N$ let $L(\nu):=\Nb^2 \cap \spa{(3,0),(1,3)} =\{(\nu_1+3k_1 + k_2,\nu_2+3k_2);k_2\ge 0,k_1\ge
-(\nu_2+k_1)/{3}\}$.
Now define
$$
c_\nu(\alpha):=
\begin{cases}
   (-1)^{k_1+k_2}(\frac{\nu_1+1}{3};k_1)(\frac{\nu_2+1}{3}-\frac{\nu_1+1}{9};k_2) & \text{for $\alpha\in
   L(\nu)$}\\
   0 & \text{otherwise,}
\end{cases}
$$
where $(z;k):=\Gamma(z+k)/\Gamma(z)$ denotes the shifted factorial  function.

\item[\textbf{Case $(E_8)$}]
For any $\nu\in \N =\{(\nu_1,\nu_2);\nu_1\equiv 0,1,2,3;\nu_2=0,1\}$ let $L(\nu_1,\nu_2)=\{(\alpha_1,\alpha_2)\in \Nb^2 \suth \alpha_1\equiv
\nu_1\mod 5, \alpha_2\equiv \nu_2\mod 3\} =\{(\nu_1+5k_1,\nu_2+3k_2);k_1,k_2\ge 0\}$
Now define
$$
c_\nu(\alpha):=
\begin{cases}
(-1)^{k_1+k_2}(\frac{\nu+1}{5};k_1)(\frac{\nu_2+1}{3};k_2)& \text{if $\alpha \in
L(\nu)$}\\
0 & \text{otherwise,}
\end{cases}
$$
where $(z;k):=\Gamma(z+k)/\Gamma(z)$ denotes the shifted factorial  function.
\end{itemize}

\subsubsection{Four-point correlators for $E_7$}

We start with the primitive form $dx\wedge dy = dx_1\wedge dx_2$. Assume that the deformation of $E_7$ is given by
$$
W=x_1^3+x_1x_2^3+t_1x_1^2x_2+t_3
x_1^2+t_4x_1x_2+t_5x_2^2+t_6x_1+t_7x_2+t_9.
$$
Then the flat coordinates $s$ and $t$ have the asymptotic expansion
formula (to 2nd order):
$$\begin{array}{ll}
t_1\doteq s_1   &t_3 \doteq s_3\\
t_4\doteq s_4+\frac{4}{9}s_3s_1 &t_5\doteq s_5\\
t_6\doteq s_6+\frac{1}{3}s_5s_1+\frac{5}{18}s_3^2  & t_7\doteq s_7+\frac{1}{9}s_6s_1+\frac{1}{9}s_4s_3\\
t_9\doteq s_9+\frac{2}{9}s_6s_3+\frac{1}{3}s_5s_4.
\end{array}
$$

To compute the four-point correlators, we first use the residue formula
computing the three-point correlators of the deformed chiral ring and
then take the possible first order derivatives with respect to the
flat coordinates. We have
\begin{align*}
\partial_{x_1}W=&3x_1^2+x_2^3+2s_1x_1x_2+2s_3
x_1+(s_4+\frac{4}{9}s_1s_3)x_2+s_6+\frac{1}{3}s_5s_1\\
\partial_{x_2}W=&3x_1x_2^2+s_1x_1^2+(s_4+\frac{4}{9}s_1s_3)x_1+2s_5x_2+s_7+\frac{1}{9}s_6s_1+\frac{1}{9}s_4
s_3\\
\Hess(W)=&36x_1^2x_2-9x_2^4+\text{lower order terms}\\
=&63x_1^2x_2+\text{lower order terms}\;\text{or}\;-21
x_2^4+\text{lower order terms}\\
\hat{H}_W:=&\Hess (W)/7\\
=&9x_1^2x_2+\text{lower order terms}\;\text{or}\;-3
x_2^4+\text{lower order terms}
\end{align*}

Let $C_{ijk}(s):=\Res_{s}(\frac{\partial W}{\partial
s_i},\frac{\partial W}{\partial s_j},\frac{\partial W}{\partial
s_k})$. Then
$$
C_{ijk}(s)=\left(\frac{\partial W}{\partial s_i}\frac{\partial
W}{\partial s_j}\frac{\partial W}{\partial s_k}\right)/\hat{H}_W \mod
\Jac_W.
$$
For example, $C_{991}(0)=x_1^2x_2\cdot 1\cdot 1/9x_1^2x_2=1/9$. All
the possible three-point correlators can be obtained below:
$$
\begin{matrix}
C_{991}(0)=1/9& C_{946}(0)=1/9&C_{577}(0)=-1/3\\
C_{559}(0)=-1/3& C_{667}(0)=1/9& C_{937}(0)=1/9.
\end{matrix}
$$
Now, we change primitive form from $dx_1 \wedge dx_2$ to $9dx_1 \wedge dx_2$. This rescales the pairing and entire potential function by $9$.
The cubic term of the primary potential function is
$$
F_3=\frac{1}{12}s_1s_9^2+s_4s_6s_9-\frac{3}{2}s_5s_7^2-\frac{3}{2}s_5^2s_9+\frac{1}{12}s_6^2s_7+s_3s_7s_9.
$$
Recall that the ring structure with current rescaled pairing has already been proved to be isomorphic to the
quantum ring in the A-model; and moreover the three-point correlators
in the B-model and the A-model are identical.

Using the isomorphism of the ring structure, we make the following identification
 between the basic four-point correlators in the A-model and
those in the B-model:
\begin{align*}
\langle X, X, X^2, XY\rangle_0 &\longleftrightarrow s_6^2 s_3 s_4\\
\langle X, Y, X^2, X^2\rangle_0 &\longleftrightarrow s_6 s_7 s_3^2\\
\langle Y, Y, XY, X^2Y\rangle_0 &\longleftrightarrow s_7^2 s_4 s_1.
\end{align*}

We have the formula for the four-point correlators
$$
C_{ijkl}=\restr{\frac{d}{ds_l}C_{ijk}}{s=0}.
$$
Now it is easy to obtain
$$C_{6634}=-1/9, \quad
C_{6733}=1/9, \dsand
C_{7741}=-1/3.
$$

The part of the fourth-order term of the primary potential we need is
$$
F_4=-\frac{1}{18}s_3s_4s_6^2-\frac{1}{6}s_1s_4s_7^2+\frac{1}{18}s_3^2s_6s_7.
$$
The computation here coincides with the result in \cite{No2} and in
\cite{KTS} (under a quasi-homogeneous coordinate transformation).

\subsubsection{Four-point correlators for $E_6$}

Assume that the deformation of $E_6$ is given by
$$
W=x_1^3+x_2^4+t_2x_2^2x_1+t_5 x_1x_2+t_6x_2^2+t_8x_1+t_9x_2+t_{12}.
$$
We choose the primitive form $12 dx_1\wedge dx_2$.  The metric and the third- and fourth-order
terms of the potential are given below:
\begin{align*}
&\eta_{i j}=\delta_{i,14-j},\;\text{for}\;i,j \in \{2,5,6,8,9,12\}\\
&F_3=
s_6s_8s_{12}+s_5s_9s_{12}+\frac{1}{2}s_2s_{12}^2+\frac{1}{2}s_8s_9^2\\
&F_4= -\frac{1}{8}s_5s_9s_6^2-\frac{1}{12}s_8^2
s_5^2-\frac{1}{18}s_2s_8^3-\frac{1}{8}s_2s_6s_9^2.
\end{align*}

We make the following identification between the A- and the B-models
$$
\langle Y,Y,Y^2,XY^2\rangle \longleftrightarrow s_9^2 s_6
s_2\dsand \langle X,X,XY,XY\rangle \longleftrightarrow s_8^2 s_5^2,
$$
and we get the basic four-point correlators in the B-model:
\begin{equation}
C_{9962}=-\frac{1}{4}\dsand C_{8855}=-\frac{1}{3}.
\end{equation}

\subsubsection{Four-point correlators for $E_8$}

Assume that the deformation of $E_8$ is given by
$$
W=x_1^3+x_2^5+t_1x_2^3x_1+t_4
x_2^2x_1+t_6x_2^3+t_7x_2x_1+t_9x_2^2+t_{10}x_1+t_{12}x_2+t_{15}.
$$
We choose the primitive form $dx_1\wedge dx_2$. In the same manner as before, we obtain:
\begin{align*}
&\eta_{i j}=\delta_{i,16-j},\;\text{for}\;i,j \in \{1,4,6,7,9,10,12,15\}\\
&F_3=
s_4s_{12}s_{15}+s_7s_9s_{15}+s_6s_{10}s_{15}+\frac{1}{2}s_1s_{15}^2+s_9s_{10}s_{12}+\frac{1}{2}s_7s_{12}^2\\
&F_4=
-\frac{1}{18}s_7^3s_{10}-\frac{1}{10}s_6s_7s_9^2-\frac{1}{10}s_7s_6^2s_{12}-\frac{1}{15}s_4s_9^3
-\frac{1}{6}s_4s_7s_{10}^2\\
&-\frac{1}{5}s_4s_6s_9s_{12}-\frac{1}{18}s_1s_{10}^3-\frac{1}{10}s_1s_9^2s_{12}
-\frac{1}{10}s_1s_6s_{12}^2.
\end{align*}
By the following correspondence between the A- and the B-models
$$
\langle Y,Y,Y^3,XY^3\rangle \longleftrightarrow s_{12}^2 s_6 s_1
s_2\dsand \langle X,X,X,XY^3\rangle \longleftrightarrow s_{10}^3 s_1,
$$
we get the basic four-point correlators in the B-model:
\begin{equation}
C_{(12)(12)61}=-\frac{1}{5}\dsand \ C_{(10)(10)(10)1}=-\frac{1}{3}.
\end{equation}

\subsubsection{Four point correlators for $D_{n+1}$}

Assume that the deformation is given by
$$
W=x_1^n+x_1x_2^2+\sum_{i=0}^{n-1}t_i x_1^i+t_{01}x_2.
$$
Then we have the flat coordinates by Noumi's formula
$$
\left\{
\begin{array}{l}
s_\nut \doteq t_\nut +\ct_\nut \sum_{k\ge 1}t_{\nut +k}t_{n-k}\\
s_{01}\doteq t_{01}
\end{array}\right.
$$
Here $\ct_\nut $ is just the Noumi-Yamada function $c_{\nut ,0}$ defined before, but if
$\nut +k=n-k$ then $\ct_\nut :=c_{\nut ,0}/2$.

Now the inverse function is given by
$$
\left\{
\begin{array}{l}
t_\nut \doteq s_\nut -\ct _\nut \sum_{k\ge 1}s_{\nut +k}s_{n-k}\\
t_{01}\doteq s_{01}
\end{array}\right.
$$

We have the derivative formula
\begin{equation}
\frac{\partial t_\nut }{\partial s_j}=\left\{\begin{array}{ll}
0\;&\text{if }\;j<\nut \nonumber\\
(1-\delta_{\nut  j})(-c_\nut  s_{n+\nut -j})+\delta_{\nut
j},\;&\text{if}\;j\ge \nut .
\end{array}\right.
\end{equation}
Here the indices should satisfy the restriction
$$
n+\nut -j\ge 1,\;j\ge \nut +1.
$$
We have the basic computation
\begin{align*}
&\partial_{x_1}W=nx_1^{n-1}+x_2^2+\sum_{i=1}^{n-1}i t_i x_1^{i-1},
\;\;\partial_{x_2}W=2x_1x_2+t_{01}\\
&\Hess _W=(-2)(n+1)x_2^2.
\end{align*}
The $n+1$ primary fields $\phi_i(s),0\le i\le n-1$ and
$\phi_{(01)}(s)$ are given as below,  which are functions of the
flat coordinates $s$:
\begin{align*}
&\phi_i(s)=\frac{\partial W}{\partial
s_i}=\sum_{j=0}^{n-1}\frac{\partial t_j}{\partial s_i}x_1^j\\
&\phi_{(01)}(s)=\frac{\partial W}{\partial s_{(01)}}=x_2
\end{align*}
Choose primitive form $2n\,dx_1\wedge dx_2$. Then, we re-scale pairing and potential function by $2n$. Let $\langle\phi \rangle:=2n
\Res_W(\frac{\phi}{\partial_{x_1}W\cdot\partial_{x_2}W})$. Then in
flat coordinates, we can normalize the metric $\eta$ and the three-point functions such that
\begin{align*}
&\eta_{pq}=\langle\phi_p\phi_q \rangle\\
&C_{pqr}(s)=\langle\phi_p\phi_q\phi_r\rangle.
\end{align*}

Actually $C_{pqr}(s)$ is the coefficient of the equality
$$
\phi_p\phi_q\phi_r=C_{pqr}\cdot(\Hess _W/(n+1)) \mod
\partial_{x_i}W.
$$

After a straightforward calculation, we obtain

\begin{prop} The three-point correlators of $D_{n+1}$ are given as follows:
\begin{equation}\left\{
\begin{array}{ll}
C_{ijk}=\delta_{i+j+k,n-1},\;\text{for}\;0\le i,j,k\le n-1\label{prop:3-poin-Dn}\\
C_{i(01)(01)}=-n\delta_{0i},\;\text{for}\;0\le i\le n-1;\\
C_{(01)(01)(01)}=0
\end{array}\right.
\end{equation}
The four-point correlators are
\begin{equation}\left\{
\begin{aligned}
&C_{ijkl}=(-\frac{1}{n})(l-(n-i-j-\frac{1}{2})\delta_{i+j\le
n-1}-(n-k-j-\frac{1}{2})\delta_{k+j\le
n-1}-(n-i-k-\frac{1}{2})\delta_{i+k\le n-1}),\nonumber\\
&\quad\text{for}\;0\le i,j,k\le n-1\label{prop:4-poin-Dn}\\
&C_{ij(01)(01)}=-\frac{1}{2}\delta_{i+j,n},\;\text{for}\;0\le i\le n-1;\\
&C_{i(01)(01)n-i}=-\frac{1}{2}\;\text{for}\;0\le i\le n-1;
\end{aligned}\right.
\end{equation}
The function $\delta_{x\le y}$ is defined as
$$
\delta_{x\le y }=\left\{\begin{array}{ll} 1, &\;\text{if}\;x\le
y\\
0\;&\text{if}\;x>y
\end{array}
\right.
$$
\end{prop}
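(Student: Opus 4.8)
The plan is to compute the three- and four-point correlators of the $B$-model for $D_{n+1}$ directly from the Noumi-Yamada flat coordinates, following exactly the same template used above for the exceptional singularities $E_6,E_7,E_8$. First I would write the miniversal deformation $W=x_1^n+x_1x_2^2+\sum_{i=0}^{n-1}t_ix_1^i+t_{01}x_2$ and substitute the inverse flat-coordinate expansion $t_\nut\doteq s_\nut-\ct_\nut\sum_{k\ge1}s_{\nut+k}s_{n-k}$, $t_{01}\doteq s_{01}$ (valid to second order, which is all that is needed for four-point correlators) into the partials $\partial_{x_1}W,\partial_{x_2}W$ and the primary fields $\phi_i(s)=\partial W/\partial s_i$, $\phi_{(01)}(s)=x_2$. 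The Hessian is $\Hess_W=-2(n+1)x_2^2$, so $\hat H_W=\Hess_W/\mu=-2\,x_2^2$ after dividing by the Milnor number $\mu=n+1$, and one reads off $C_{pqr}(s)$ as the coefficient in $\phi_p\phi_q\phi_r=C_{pqr}\cdot\hat H_W\bmod\Jac_W$.

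Next I would establish the three-point formula~(\ref{prop:3-poin-Dn}). Setting $s=0$, the primary fields are $\phi_i(0)=x_1^i$ for $0\le i\le n-1$ and $\phi_{(01)}(0)=x_2$, and the residue pairing with primitive form $2n\,dx_1\wedge dx_2$ gives $\eta_{ij}=\langle x_1^ix_1^j\rangle$, which is nonzero exactly when $i+j=n-1$ (the top degree in the $x_1$-part of $\milnor_{D_{n+1}}$), yielding $C_{ijk}=\delta_{i+j+k,n-1}$. For the mixed terms, $x_1^i x_2^2=x_1^i\cdot x_2^2$, and modulo $\partial_{x_1}W$ we have $x_2^2\equiv -nx_1^{n-1}+\dots$, which produces $C_{i(01)(01)}=-n\delta_{0i}$ after comparing against $\hat H_W$; and $x_2^3\equiv 0$ modulo $\partial_{x_2}W=2x_1x_2+t_{01}$ at $s=0$, giving $C_{(01)(01)(01)}=0$. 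Then I would obtain the four-point correlators via $C_{ijkl}=\restr{\tfrac{d}{ds_l}C_{ijk}}{s=0}$, exactly as in the $E$-cases. The derivative formula $\partial t_\nut/\partial s_j=\delta_{\nut j}-(1-\delta_{\nut j})c_\nut s_{n+\nut-j}$ feeds into this, and the key point is that $c_\nut$ for $D_{n+1}$ reduces (using the explicit Noumi-Yamada $D_n$ formula with $\nu_2=0$) to a simple rational constant; tracking the three places where $i+j\le n-1$, $j+k\le n-1$, $i+k\le n-1$ can fail produces the three indicator terms $\delta_{i+j\le n-1}$ etc., and the overall factor $-1/n$ comes from the normalization $\hat H_W=\Hess_W/\mu$ together with the rescaling of the primitive form by $2n$. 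The mixed four-point terms $C_{ij(01)(01)}$ and $C_{i(01)(01)\,n-i}$ are handled the same way, differentiating $C_{i(01)(01)}=-n\delta_{0i}$ and $C_{ij(01)(01)}$ in the remaining flat coordinate.

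The main obstacle will be the bookkeeping in the four-point computation: correctly identifying which shifted-factorial constants $(\tfrac{\nu_1+1}{n-1}-\tfrac{1}{2(n-1)};k)$ appear, handling the diagonal correction $\ct_\nut=c_{\nut,0}/2$ when $\nut+k=n-k$, and verifying that all the scattered contributions assemble into the clean closed form with the three $\delta_{x\le y}$ indicators and the uniform coefficient $-1/n$. A secondary check I would include is cross-validation against the known $D$-series potentials in \cite{NY,No1} and, crucially, against the $A$-model four-point correlator $\corf{D_{n+1}}{X,X,X^{2n-2},X^{2n-2}}=1/n$ computed above via Theorem~\ref{thm:ConcaveCodimOne}, together with the mirror isomorphism of Frobenius algebras from Section~\ref{sec:mirror}: matching these fixes the scaling of the primitive form and confirms the sign conventions. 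Once the three- and four-point correlators agree in both models, Theorem~\ref{thm:FourPtReduction} guarantees the full genus-zero theories coincide, which is what feeds into the proof of Theorem~\ref{thm:IntHierMirror}.
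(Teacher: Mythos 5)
Your plan reproduces the paper's own setup: Noumi--Yamada flat coordinates for the miniversal deformation, the deformed primary fields $\phi_p(s)=\partial W/\partial s_p$, three-point correlators from the residue at $s=0$, and four-point correlators by a single $s_l$-derivative of $C_{ijk}(s)$. The paper does not actually carry out the computation -- it lists the same ingredients and then says ``after a straightforward calculation, we obtain'' -- so your proposal is a faithful outline of the intended route rather than a divergence from it.

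One concrete gap to close before trusting the bookkeeping: you conflate two normalizations of $C_{pqr}$ that differ by a factor of $2n$. You write that $C_{pqr}(s)$ is read off as the coefficient in $\phi_p\phi_q\phi_r\equiv C_{pqr}\,\hat H_W \pmod{\Jac_W}$ with $\hat H_W=\Hess_W/\mu=-2x_2^2$, but you also invoke the residue pairing with primitive form $2n\,dx_1\wedge dx_2$. If you use the coefficient prescription literally, then at $s=0$ one has $\phi_0\phi_{(01)}^2=x_2^2\equiv -n x_1^{n-1}=-\tfrac12\hat H_W$, giving $C_{0(01)(01)}=-\tfrac12$, and $x_1^{n-1}=\tfrac{1}{2n}\hat H_W$, giving $C_{ijk}=\tfrac{1}{2n}$ when $i+j+k=n-1$ -- both off by $2n$ from the Proposition. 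The values stated in the Proposition come from $C_{pqr}=\langle\phi_p\phi_q\phi_r\rangle$ with $\langle\phi\rangle:=2n\,\Res_W\bigl(\phi/(\partial_{x_1}W\,\partial_{x_2}W)\bigr)$, i.e., with the $2n$-rescaled primitive form already folded in. The paper's sentence ``$C_{pqr}$ is the coefficient of $\phi_p\phi_q\phi_r=C_{pqr}\cdot(\Hess_W/(n+1))$'' is in tension with its own definition of $\langle\cdot\rangle$, so you inherited the slip; commit to the residue normalization throughout, and compute $\eta_{(01)(01)}=-n$ (not $1$), not just the statement that $\eta_{ij}$ is supported on $i+j=n-1$. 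Once the normalization is fixed, your plan to differentiate $C_{ijk}(s)$ in a single flat coordinate, tracking both the explicit $\partial t_\nu/\partial s_l$ terms and the implicit $s$-dependence of $\Jac_{W(s)}$, is the right mechanism for producing the three $\delta_{x\le y}$ indicators and the leading $-1/n$. The $A$-model cross-check you propose is worth doing; note that the Corollary immediately after the Proposition records $C_{11(n-1)(n-2)}=1/2n$ with a sign that disagrees with the Proposition's own formula (which yields $-1/(2n)$, the value the paper in fact uses when matching the $A$-model after the $\lambda=-1$ rescaling), so do not be alarmed if your cross-check produces the minus sign.
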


\begin{crl} The basic four-point correlator for $n>3$ is
$C_{11(n-1)(n-2)}={1}/{2n}$.
\end{crl}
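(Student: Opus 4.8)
The plan is to obtain the corollary as a one-line specialization of the preceding proposition, which already records the entire list of genus-zero four-point correlators $C_{ijkl}$ of $D_{n+1}$. First I would identify the correlator named in the corollary, $\corf{D_{n+1}}{X,X,X^{n-1},X^{n-2}}$, with $C_{1,1,n-1,n-2}$: under the ring isomorphism used in this subsection, $\milnor_{D_{n+1}}=\C[x_1,x_2]/(nx_1^{n-1}+x_1x_2^2,\dots)$ with monomial basis $\phi_i=x_1^i$ and $\phi_{(01)}=x_2$, the primitive class $X$ is $x_1=\phi_1$, so $X^{n-1}=\phi_{n-1}$ and $X^{n-2}=\phi_{n-2}$, and both exponents lie in the admissible range $0\le\nu\le n-1$ so the ``$\phi_i$-block'' formula of the proposition applies.

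The only content is then the bookkeeping of the three Kronecker deltas in
\[
C_{ijkl}=\Bigl(-\tfrac1n\Bigr)\Bigl(l-(n-i-j-\tfrac12)\delta_{i+j\le n-1}-(n-k-j-\tfrac12)\delta_{k+j\le n-1}-(n-i-k-\tfrac12)\delta_{i+k\le n-1}\Bigr)
\]
at $i=j=1$, $k=n-1$, $l=n-2$. For $n>3$ one has $i+j=2<n-1$, so $\delta_{i+j\le n-1}=1$, while $j+k=i+k=n>n-1$, so $\delta_{j+k\le n-1}=\delta_{i+k\le n-1}=0$; hence only the first correction term survives and $C_{1,1,n-1,n-2}=\tfrac1n\bigl((n-\tfrac52)-(n-2)\bigr)=\tfrac1n\cdot\tfrac12=\tfrac{1}{2n}$, with the overall sign fixed (as in the proposition) by the choice of primitive form $2n\,dx_1\wedge dx_2$; this also matches, under the mirror map of Theorem~\ref{thm:mirror}, the A-model value $\corf{D^T_{n+1}}{X,X,X^{n-1},X^{n-2}}=\tfrac{1}{2n}$ computed earlier, which serves as a consistency check.

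There is essentially no obstacle here, so the main thing worth a sentence of care is the (tacit) symmetry of the four-point function in all four indices, since the displayed formula singles out the last one: for the ordered tuple $(1,1,n-1,n-2)$ this is immediate, as permuting the two entries equal to $1$ changes nothing and interchanging the two ``large'' entries $n-1$, $n-2$ leaves every Kronecker delta in every position unchanged, so the value is independent of which index is taken to play the role of $l$. I would also remark why $n=3$ is excluded: there $i+j=2=n-1$ is exactly the boundary value, and in that case the relevant basic correlators are instead $\corf{D_4}{X,X,X^2,X}$ and $\corf{D_4}{X,X,Y,X^2}$ as in Theorem~\ref{thm:FourPtReduction}(3), which involve the $\phi_{(01)}$-type class and are governed by the separate branches of the proposition's formula.
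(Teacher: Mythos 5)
Your approach is exactly the paper's — the corollary is a one-line specialization of the $C_{ijkl}$ formula in the preceding proposition — and your bookkeeping of the Kronecker deltas is right: at $(i,j,k,l)=(1,1,n-1,n-2)$ with $n>3$ only $\delta_{i+j\le n-1}$ survives, with coefficient $n-i-j-\tfrac12=n-\tfrac52$, while $j+k=i+k=n$ kill the other two. Your remarks on the symmetry of the four-point function and on why $n=3$ is excluded are also apt.

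The last arithmetic step, however, has a sign slip: $(n-\tfrac52)-(n-2)=-\tfrac52+2=-\tfrac12$, not $+\tfrac12$. So the proposition's formula as displayed actually produces $C_{1,1,n-1,n-2}=-\tfrac1n\bigl((n-2)-(n-\tfrac52)\bigr)=-\tfrac1n\cdot\tfrac12=-\tfrac{1}{2n}$, the negative of the corollary's asserted value. Your error happened to undo the overall $(-\tfrac1n)$ prefactor and return you to $+\tfrac{1}{2n}$, so the agreement with the stated answer should not be read as confirmation. What it does expose is a genuine sign discrepancy between the proposition and the corollary in the paper itself: $+\tfrac{1}{2n}$ is the value required downstream — it matches the A-model computation $\corf{D^T_{n+1}}{X,X,X^{n-1},X^{n-2}}=\tfrac1{2n}$, and the quoted B-model $F^B_4=-\tfrac{1}{2n}s_1^2s_{n-2}s_{n-1}$ under primitive form $-4n\,dx_1\wedge dx_2$ rescales to $+\tfrac{1}{4n}s_1^2s_{n-2}s_{n-1}$ under $2n\,dx_1\wedge dx_2$, whose fourth derivative is $+\tfrac{1}{2n}$ — so the $(-\tfrac1n)$ prefactor, or the internal minus signs on the delta terms (contrast the $A_n$ formula, where the analogous terms carry a plus), in the proposition appear to be off by a sign. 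A careful proof of the corollary should make this explicit rather than let a fortuitous slip absorb it.
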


\subsubsection{Four-point correlators for $D_{n+1}^T$}

The singularity $D_{n+1}^T$ is isomorphic to $A_{2n-1}=x'^{2n}+y'^2$ by the quasi-homogeneous isomorphism
$$x=(2i)^{\frac{1}{n}}x', \  y=y'-ix'^n.$$
This induces an isomorphism of Saito's Frobenius manifolds with  primitive forms
$c dx\wedge dy \rightarrow c(2i)^{\frac{1}{n}}dx'\wedge dy'.$

\subsubsection{Four-point correlators for $A_n$}

 The three- and four-point correlators have already been calculated
in \cite{Wi2}.  Suppose that the deformation is given by
$$
W=x^{n+1}+\sum_{i=0}^{n-1}t_ix^i.
$$
We choose primitive form $dx$. We list the metric, three- and four-point correlators below:
\begin{align*}
&\eta_{ij}=(n+1)\delta_{i+j,n-1},\\
&C_{ijk}=\delta_{i+j+k,n-1},\\
&C_{ijkl}=-\frac{1}{n+1}\left(l+(n-j-k)\delta_{j+k\le
n-1}+(n-i-k)\delta_{i+k\le n-1}+(n-i-j)\delta_{i+j\le n-1}\right),
\end{align*}
for $0\le i,j,k\le n-1$.

\subsection{Proof of Theorem~\ref{thm:IntHierMirror}}

Because of our reconstruction theorem, to prove Theorem~\ref{thm:IntHierMirror}, it suffices to compare the
two-point, three-point, and the basic four-point functions in our theory (A-model) with
their analogues in the  B-model.

We have established the isomorphism of Frobenius algebras in Section~\ref{sec:mirror}. This means that
we have matched the unit, the pairing, and the multiplication and hence all three-point functions, by the explicit identification of state spaces. The remaining task is
to match the four-point basic correlators. We shall keep the identification of the unit and multiplication fixed.
The main idea is to explore the flexibility of
re-scaling the primitive form by a constant.  Re-scaling the primitive form by $c$ corresponds to re-scaling
$h$ by $1/c$. Hence, it still satisfies the corresponding
hierarchies. However, the corresponding Frobenious manifold
structure is different in general. 
This approach seems to give a better conceptual picture.
For the reader's convenience, we shall list the explicit value of constant we used in the proof.

Our main technical tool is the
following observation.
Let $F^A_3, F^A_4$ be the three- and basic four-point functions of the A-model and $F^B_3, F^B_4$ be the
three- and basic four-point functions of the B-model. Suppose that $F^A_3=F^B_3$. Now, we re-scale the primitive form by $c$ and make
an additional  change of variable $s_i\rightarrow \lambda^{1-deg_{\C}(s_i)}s_i$. Notice that the above change of variable preserves the unit $e$ of the Frobenius algebras.
This change of variables gives $F^B_3\rightarrow c\lambda^{\hat{c}_W}F^B_3$ and $F^B_4\rightarrow c\lambda^{\hat{c}_W+1}F^B_4$. If we choose $c=\lambda^{-\hat{c}_W}$, then
$F^B_3$ remains the same and $F^B_4\rightarrow \lambda F^B_4$. Since the linear map $s_i\rightarrow \lambda^{1-deg_{\C}(s_i)}s_i$ preserves
the unit, it preserves the metric as well.

\subsubsection{\textbf {$E_7$ A-model versus $E_7$ B-model of primitive form $(-1)^{-\frac{8}{9}}9dx_1\wedge dx_2$.}}

The Frobenius manifold in the A-model is given by the small phase space
quantum cohomology. Take the flat coordinates
$\{T_1,T_3,T_4,T_5,T_6,T_7,T_9\}$ corresponding to the primary
fields $\{\be_8=X^2Y, \be_4=X^2,\be_2=XY,\pm
y^2\be_0=Y^2,\be_7=X,\be_5=Y,\be_1=1\}$. The three-point correlators give
the cubic term of the primary potential $F^A_3$:
$$
F^A_3=\frac{1}{2}T_9^2T_1+T_9T_7T_3+T_9T_6T_4-\frac{3}{2}T_9T_5^2-\frac{3}{2}T_7^2T_5+\frac{1}{2}T_7T_6^2.
$$
The basic four-point function is
\begin{equation}
\frac{1}{18}T_6^2 T_3 T_4+\frac{1}{6}T_7^2 T_4 T_1-\frac{1}{18}T_6
T_7 T_3^2.
\end{equation}
Choose primitive form $9\,dx_1\wedge dx_2$ on the B-model side. We obtain the same metric
and the same cubic terms. However, $F^B_4=-F^A_4$.  Then, we choose $\lambda=-1$ and $c=(-1)^{-\frac{8}{9}}$.
It means that we choose primitive form $(-1)^{-\frac{8}{9}}9 \, dx_1 \wedge dx_2$. The corresponding linear map between state spaces is
$$T_i \to (-1)^{1-deg_{\C}(s_i)}s_i.$$

\subsubsection{\textbf{$E_6$ A-model versus $E_6$ B-model of primitive form $(-1)^{-\frac{5}{6}}12\,dx_1\wedge dx_2$.}}

Consider the A-model. Let the flat coordinates
$\{T_2,T_5,T_6,T_8,T_9,T_{12}\}$ correspond to the primary fields
$\{XY^2,XY,Y^2,X,Y,1\}$. Then we obtain the three-point potential
functions:
\begin{equation}
F^A_3=\frac{1}{2}T_2T_{12}^2+T_5T_9T_{12}+T_6T_8T_{12}+\frac{1}{2}T_8T_9^2.
\end{equation}
The polynomial corresponding to the basic four-point correlators is
$$
\frac{1}{8}T_2T_6T_9^2+\frac{1}{12}T_5^2T_8^2.
$$

On the B-model side, we start from primitive form $12\,dx_1\wedge dx_2$ and a linear map between state spaces $T_i\rightarrow s_i$.
It matches the unit, pairing and multiplications and hence $F^A_3=F^B_3$. But we have
\begin{equation}\label{eq:mirr-E6}
F^B_4=-F^A_4.
\end{equation}
Similar to the $E_7$, a choice of $\lambda=-1$ and $c=(-1)^{-\frac{5}{6}}$ will match the A-model to the B-model of
the primitive form $(-1)^{-\frac{5}{6}}12 \, dx_1 \wedge dx_2$.

\subsubsection{\textbf{ $E_8$ A-model versus $E_8$ B-model of primitive form $(-1)^{-\frac{14}{15}}\,dx_1\wedge dx_2$.}}

Let $\{T_1,T_4,\linebreak[4] T_6,T_7,T_9,T_{10}, T_{12}, T_{15}\}$ be
the flat coordinates in the A-model corresponds to the primary fields
$\{XY^3,XY^2,Y^3, XY, Y^2,X,Y,1 \}$. we can obtain the three-point
potential, the basic polynomials, and the basic four-point potential:
\begin{equation}
F^A_3=\frac{1}{2}T_1T_{15}^2+T_7T_9T_{15}+T_6
T_{10}T_{15}+T_4T_{12}T_{15}+\frac{1}{2}T_7T_{12}^2+T_9T_{10}T_{12}.
\end{equation}
$$
F^A_4=\frac{1}{10}T_1T_6T_{12}^2+\frac{1}{18}T_1T_{10}^3.
$$

Choose primitive form $\,dx_1\wedge dx_2$ and the linear map $T_i\rightarrow s_i$.  Then, we match the unit, pairing, multiplication. Hence, we have
$F^A_3=F^B_3$. But $F^A_4=-F^B_4$. Then, a choice of $\lambda=-1$ and $c=(-1)^{-\frac{14}{15}}$
will match the A-model with the B-model.

\subsubsection{\textbf{$(D_{n+1},\langle J\rangle,(n\;odd\;))$ A-model versus $D_{n+1}$ B-model of primitive form $(-1)^{1-\frac{n-1}{n}}4n\, dx_1 \wedge dx_2$.}}

Consider the A-model. Let $\{T_0,T_1,\cdots, T_{n-1},T_{01}\}$ be
the flat coordinates corresponding to the primary field $1,X,\cdots,
X^{n-1},Y$. Here $\{X,Y\}$ has already been identified with
$\{\be_3, \pm 2nr x^{\frac{n-1}{2}}\be_n+\mp 2ns y\be_n\}$ in the
state space $\ch_{D_{n+1},\langle J\rangle}$. We have the
computation of the $2$-point correlators (metric)
$$
\langle X^{n-1},1\rangle=-2\quad \text{and}\quad \langle Y^2,
1\rangle=2n.
$$
The
three-point potential is
$$
 F^A_3=-2\sum_{i+j+k=(n-1)}a_{ijk}T_iT_jT_k
+n T_0T_{01}^2,
$$
where
$$
a_{ijk}=\left\{\begin{array}{ll} 1& \quad\text{if $i,j,k$ are
mutually
not equal;}\\
1/2&\quad\text{if only two of $i,j,k$ are equal;}\\
1/6&\quad\text{if $i=j=k$}
\end{array},\right.
$$
and the basic four-point polynomial for $n>3$ is
$$
F^A_4=\frac{1}{2n}T_1^2T_{n-2}T_{n-1}.
$$
In the B-model, by choosing primitive form $-4n\,dx_1\wedge dx_2$ and linear map $T_i\rightarrow s_i$, we have the same pairing as the A-model and cubic term
$F^B_3=F^A_3$
and the basic four-point polynomial for $n>3$
$$
-\frac{1}{2n} s_1^2 s_{n-2}s_{n-1}.
$$
Then, a choice of $\lambda=-1$ and $c=(-1)^{-\frac{n-1}{n}}$ will match the A-model with the B-model.

\subsubsection{\textbf{  $D_{n+1}(G_{D_{n+1}})$ A-model versus $A_{2n-1}$ B-model of primitive form $2n(\frac{n}{5-4n})^{\frac{1-n}{n}}dx$.}}

Recall that
\begin{align*}
X^i &\mapsto \begin{cases}
\bone_{n+1+i} & \text{ for $0\le i < n-1$}\\
\mp 2y\bone_{0}& \text{ for $i=n-1$}\\
\bone_{i-n+1} & \text{ for $n\le i < 2n-1$}
\end{cases}
\end{align*}
is an isomorphism of graded algebras $\ch_{D_{n+1}, G_{D_{n+1}}}\rightarrow \milnor_{A_{2n-1}}$.  The pairing on $\milnor_{A_{2n-1}}$ is given by
$\langle X^{2n-2},1\rangle^{\milnor_{A_{2n-1}}} = 1/2n,$ whereas the pairing on $\ch_{D_{n+1},G_{D_{n+1}}}$ is easily seen to be given by $$\langle X^{2n-2}, \unit\rangle^{\ch_{D_{n+1}}} = \langle \bone_{n-1}, \bone_{n+1} \rangle^{\ch_{D_{n+1}}} = 1.$$
The basic four-point correlator is
$$
\corf{D_{n+1}}{X,X,X^{2n-2},X^{2n-2}}= \corf{D_{n+1}}{\bone_{n+2},\bone_{n+2},\bone_{n-1},\bone_{n-1}}=\frac{1}{n}.
$$
We start with the primitive form $2n\,dx$ on the B-model side. Then, we have an isomorphism between the A-model and the B-model ring with pairing, and hence
the potential functions have the same cubic terms, i.e., $F^A_3=F^B_3$. The basic four-point correlator of the B-model is $C_{11(2n-2)(2n-2)}=-(4n-5).$
Hence, $F^B_4=-\frac{n}{4n-5} F^A_4$. Now, a choice of $\lambda=-\frac{n}{4n-5}$ and $c=\lambda^{-\frac{n-1}{n}}$ will
match the A-model with the B-model.

\subsubsection{\textbf{  $D_{n+1}^T$ A-model versus $D_{n+1}$ B-model of primitive form $2n \, dx_1 \wedge dx_2$.}}

In the $D_{n+1}^T$ A-model, the state space $\ch_{W,G_W}$ is
generated by $n+1$ elements
$\{nx^{n-1}\be_0,\be_1,\be_3,\cdots,\be_{2i+1},\be_{2n-1}\}$.
Identify $\be_{2i+1}$ with $X^i$ and $nx^{n-1}\be_0$ with $Y$. We
have computed the metric and the three-point correlators:
$$
\langle X^i, X^j, X^k \rangle=1\quad \text{if}\quad i+j+k=n-1,\quad
\langle 1, Y,Y\rangle=-n,
$$
and the other three-point correlators are zero.

The basic four-point correlator is
$$
\langle X, X, X^{n-1},X^{n-2}\rangle=\frac{1}{2n}.
$$
Let $\{T_0,T_1,\cdots, T_{n-1},T_{01}\}$ be the flat coordinates
with respect to the primary fields $\{1,X,\cdots,X^{n-1},Y\}$.
On the B-model side, we choose primitive form $2n \, dx_1 \wedge dx_2$ and the linear map $T_i\rightarrow s_i$. Comparing the A-model and the B-model, we have the identity
\begin{equation}
F^A_{3+4}(T)=F^B_{3+4}(T).
\end{equation}
This shows that $F^A=F^B$ with primitive form $2n\, dx_1 \wedge dx_2$ and  completes the proof of Theorem~\ref{thm:IntHierMirror}.

\bibliographystyle{amsplain}

\providecommand{\bysame}{\leavevmode\hbox
to3em{\hrulefill}\thinspace}

\end{document}